\documentclass[12pt,a4paper]{amsart}

\usepackage{amsfonts}
\usepackage{amsmath}
\usepackage{amssymb}
\usepackage{amsthm}
\usepackage{mathrsfs}
\usepackage{color}

\usepackage[normalem]{ulem}

\usepackage{graphicx}
\usepackage[all]{xy}

\usepackage{enumerate}

\usepackage[colorlinks]{hyperref}

\textheight=230mm
\voffset=-20mm
\textwidth=160mm
\hoffset=-10mm
\linespread{1.3}
\unitlength=1mm

\newcommand{\f}{\varphi}

\newcommand{\aA}{{\mathcal{A}}}
\newcommand{\bB}{{\mathcal{B}}}
\newcommand{\cC}{{\mathcal{C}}}
\newcommand{\dD}{{\mathcal{D}}}

\newcommand{\eE}{{\mathcal{E}}}
\newcommand{\fF}{{\mathcal{F}}}
\newcommand{\gG}{{\mathcal{G}}}

\newcommand{\lL}{{\mathcal{L}}}

\newcommand{\oO}{{\mathcal{O}}}

\newcommand{\sS}{{\mathcal{S}}}
\newcommand{\tT}{{\mathcal{T}}}

\newcommand{\tr}{{\mbox{$t$-struc}\-tu\-r}}

\newcommand{\aff}{\mathop{\textrm{aff}}\nolimits}
\newcommand{\Hom}{\mathop{\textrm{Hom}}\nolimits}
\newcommand{\Ext}{\mathop{\operatorname{Ext}}\nolimits}
\newcommand{\End}{\mathop{\textrm{End}}\nolimits}

\newcommand{\Coh}{\mathop{\operatorname{Coh}}\nolimits}
\newcommand{\QCoh}{\mathop{\textrm{QCoh}}\nolimits}
\newcommand{\Lex}{\mathop{\textrm{Lex}}\nolimits}
\newcommand{\Rex}{\mathop{\textrm{Rex}}\nolimits}
\newcommand{\Id}{\mathop{\textrm{Id}}\nolimits}

\newcommand{\opp}{\mathop{\textrm{op}}\nolimits}
\newcommand{\Ab}{\mathop{\mathcal{A}\textrm{b}}\nolimits}

\newcommand{\Spec}{\mathop{\textrm{Spec}}\nolimits}
\newcommand{\Rref}{\mathop{\operatorname{Ref}}\nolimits}
\newcommand{\Pic}{\mathop{\operatorname{Pic}}\nolimits}
\newcommand{\Aut}{\mathop{\operatorname{Aut}}\nolimits}
\newcommand{\Auteq}{\mathop{\operatorname{Auteq}}\nolimits}

\newcommand{\wt}[1]{\widetilde{#1}}
\newcommand{\wh}[1]{\widehat{#1}}
\newcommand{\ol}[1]{\overline{#1}}

\newtheorem{LEM}{Lemma}[section]

\newtheorem*{THM*}{Theorem}
\newtheorem{THM}[LEM]{Theorem}
\newtheorem{PROP}[LEM]{Proposition}
\newtheorem{COR}[LEM]{Corollary}

\theoremstyle{definition}
\newtheorem{EXM}[LEM]{Example}
\newtheorem{REM}[LEM]{Remark}

\hyphenpenalty=5000 \tolerance=5000

%===========================================================================
\begin{document}
\title{Reconstruction of a surface from the category of reflexive sheaves}

\today
	\author{Agnieszka Bodzenta}
\author{Alexey Bondal}
\date{\today}

\address{Agnieszka Bodzenta\\
	Institute of Mathematics, 
	University of Warsaw \\ Banacha 2 \\ Warsaw 02-097,
	Poland} \email{A.Bodzenta@mimuw.edu.pl}

\address{Alexey Bondal\\
	Steklov Mathematical Institute of Russian Academy of Sciences, Moscow, Russia, and \\
	Centre for Pure Mathematics, Moscow Institute of Physics and Technology, Russia, and\\
	Kavli Institute for the Physics and Mathematics of the Universe (WPI), The University of Tokyo, Kashiwa, Chiba 277-8583, Japan} 
\email{bondal@mi-ras.ru}

	\begin{abstract}
		We define a normal surface $X$ to be codim-2-saturated  if any open embedding of $X$  into a normal surface with the complement of codimension 2 is an isomorphism.
   We show that  any normal surface $X$ allows a codim-2-saturated model $\widehat{X}$ together with the canonical open embedding $X\to \widehat{X}$. 

    Any normal surface which is proper over its affinisation is codim-2-saturated, but the converse does not hold.
    We give a criterion for a surface to be codim-2-saturated in terms of its Nagata compactification and the boundary divisor.  
    
    We reconstruct the codim-2-saturated model of a normal surface $X$ from the additive category of reflexive sheaves on $X$. We show that the category of reflexive sheaves on $X$ is quasi-abelian  and we use its canonical exact structure for the reconstruction.  
    
	In order to deal with categorical issues, we introduce a class of weakly localising Serre subcategories in abelian categories. These are Serre subcategories whose categories of closed objects are quasi-abelian. This general technique might be of independent interest.	
\end{abstract}
	\maketitle
	\tableofcontents
	
	\section{Introduction}
In this paper we address the problem of reconstructing 
a normal surface $X$ (a scheme, not an algebraic space) from the category $\Rref(X)$ of reflexive sheaves on it. The important issue here is that the category is considered as an additive category, not endowed with the (geometric) exact structure induced by the embedding into the category of coherent sheaves on $X$. In the case of smooth surfaces, reflexive sheaves are just vector bundles. The typical instance of the problem is how to recover a projective plane ${\mathbb P}^2$ from the category of vector bundles without using any additional data.

It is clear that the surface $X$ cannot be uniquely reconstructed if no condition is imposed on it. For instance, one can remove a closed point from $X$ and the category  $\Rref(X)$ would not change.

For this reason, we reconstruct the maximally saturated model $\wh{X}$ of $X$. 
The surface $X$ is an open subset of $\wh{X}$ with complement of dimension zero  and $\wh{X}$ is maximal with this property, i.e. it does not allow `adding' one more closed point while keeping the surface normal and separated. We say that a normal surface is \emph{codim-2-saturated} if it has this property.   Any proper surface is such, as well as any surface which is proper over its affinisation, though the inverse implication does not hold.
 
The key point in our reconstruction is that $\Rref(X)$ is {\it quasi-abelian}, hence it is endowed with the canonical exact structure. Remarkably, this exact structure is not geometric, but rather corresponds to a `surface' with all closed points removed. We use the right abelian envelope of the canonical exact structure to categorically identify the Weil prime divisors of the surface. 

To recover $\wh{X}$ from $\Rref(X)$ we find those divisors whose complements are quasi-affine surfaces, using the fact that categories equivalent to $\Rref(X)$ for $X$ quasi-affine can be characterised intrinsically (we call them CM affine categories). Then $\wh{X}$ is basically glued from affinisations of the quasi-affine open pieces that cover $X$.

Actually,  the construction of $\wh{X}$ out of $X$ can be performed in purely geometric terms. 
We prove that the process of adding closed points to a separated normal surface while preserving the property of it being normal and separated must terminate after a finite number of steps.
All these points can be added simultaneously giving the codim-2-saturated model $\wh{X}$ of $X$. 
The explicit construction of $\wh{X}$ is via the affinisations: 
\begin{equation}\label{eqtn_intro}
%\wh{X} =\varinjlim_{\mathcal{U}^{\textrm{qa}}_{/X}} U^{\aff},
\wh{X} =\varinjlim_{U\in\mathcal{QA}(X)} U^{\aff},
\end{equation}	
where the colimit is taken over the category $\mathcal{QA}(X)$
%$\mathcal{U}^{\textrm{qa}}_{/X}$ 
of quasi-affine open subsets $U$ of $X$.

The canonical open embedding $X\to \wh{X}$ turns out to be the unit for a pair of adjoint functors.
We consider the category $\mathscr{D}$ of normal surfaces and open embeddings with complement of codimension 2. Morphisms in $\mathscr{D}$ form a multiplicative system, so we consider the groupoid 
$\wh{\mathscr{D}} := \mathscr{D}[\mathscr{D}^{-1}]$ by inverting all morphisms in $\mathscr{D}$. 
We prove
\begin{THM*}(\emph{cf.} Theorem \ref{thm_adj_unit})
	Functor $\mathfrak{s}$ admits the right adjoint $\mathfrak{s}_*$. For any $X\in \mathscr{D}$, we have $\wh{X} =\mathfrak{s}_*\mathfrak{s}(X)$ and the open embedding $j_X \colon X\to \wh{X} $ is the adjunction unit.
\end{THM*}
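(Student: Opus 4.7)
The plan is to construct the right adjoint $\mathfrak{s}_*$ as the unique factorization through $\mathfrak{s}$ of a ``saturation'' functor $\sigma\colon \mathscr{D} \to \mathscr{D}$, $X \mapsto \wh{X}$.

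First I would extend the object assignment $X \mapsto \wh{X}$ to a functor on $\mathscr{D}$. Given $i\colon X' \hookrightarrow X$ in $\mathscr{D}$, the composition $X' \hookrightarrow X \hookrightarrow \wh{X}$ is an open embedding with codim-2 complement into the codim-2-saturated surface $\wh{X}$, so by the uniqueness of the saturated model, $\wh{X}$ itself serves as the codim-2-saturation of $X'$, and one obtains a canonical isomorphism $\wh{i}\colon \wh{X'} \xrightarrow{\sim} \wh{X}$ compatible with the units $j_{X'}$ and $j_X$. This can equivalently be read off from the colimit formula (\ref{eqtn_intro}): since $X\setminus X'$ is a finite set of closed points, the restriction $U \mapsto U\cap X'$ identifies $\mathcal{QA}(X')$ with a cofinal subsystem of $\mathcal{QA}(X)$, and normality ($S_2$) gives $U^{\aff} = (U\cap X')^{\aff}$, so the two colimits coincide canonically. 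This produces a functor $\sigma \colon \mathscr{D} \to \mathscr{D}$ and a natural transformation $j \colon \Id_{\mathscr{D}} \Rightarrow \sigma$ with components $j_X$; by construction $\sigma$ inverts every morphism of $\mathscr{D}$.

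By the universal property of the localization $\mathfrak{s} \colon \mathscr{D} \to \wh{\mathscr{D}}$, the functor $\sigma$ factors uniquely as $\sigma = \mathfrak{s}_* \circ \mathfrak{s}$ for some $\mathfrak{s}_* \colon \wh{\mathscr{D}} \to \mathscr{D}$; in particular $\mathfrak{s}_*\mathfrak{s}(X) = \wh{X}$, and $j$ descends to $\Id_{\mathscr{D}} \Rightarrow \mathfrak{s}_* \circ \mathfrak{s}$. It then remains to verify the adjunction $\mathfrak{s} \dashv \mathfrak{s}_*$ via the hom-set bijection
$$\Hom_{\wh{\mathscr{D}}}(\mathfrak{s}(X), \mathfrak{s}(Y)) \;\cong\; \Hom_{\mathscr{D}}(X, \wh{Y})$$
with unit $j_X$. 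Since $\mathscr{D}$ admits a calculus of fractions, a morphism on the left is represented by a span $X \xleftarrow{i} X' \xrightarrow{k} Y$; applying $\sigma$ yields isomorphisms $\wh{i}\colon \wh{X'} \xrightarrow{\sim} \wh{X}$ and $\wh{k}\colon \wh{X'} \xrightarrow{\sim} \wh{Y}$, and one associates to the span the composition $X \xrightarrow{j_X} \wh{X} \xrightarrow{\wh{k}\circ \wh{i}^{-1}} \wh{Y}$, which is a morphism of $\mathscr{D}$. Functoriality of $\sigma$ makes this well-defined on equivalence classes of spans. The inverse map sends $f \colon X \to \wh{Y}$ to the span $X \xleftarrow{} f^{-1}(Y) \xrightarrow{} Y$, and a direct check using the universal property of $\wh{Y}$ shows the two maps are mutually inverse.

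The principal obstacle is the first step: the functoriality of $X \mapsto \wh{X}$, i.e.\ producing $\wh{i}$ canonically and naturally in $i$, together with the compatibility with units. After that everything is formal---the universal property of the localization delivers $\mathfrak{s}_*$, and the hom-set bijection is a routine spans-versus-arrows check resting on the calculus of fractions in $\mathscr{D}$.
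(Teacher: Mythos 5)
Your proposal is correct and follows essentially the same route as the paper: both rest on the two facts that any morphism $X'\to Z$ in $\mathscr{D}$ extends uniquely to $Z\to\wh{X'}$ (Proposition \ref{prop_map_to_X_hat}) and that morphisms out of a codim-2-saturated surface are isomorphisms (Proposition \ref{prop_X_hat_sat}), and both conclude by exhibiting the bijection $\Hom_{\wh{\mathscr{D}}}(X,Y)\simeq\Hom_{\mathscr{D}}(X,\wh{Y})$ via the calculus of fractions. The only differences are presentational: the paper defines $\mathfrak{s}_*$ directly on left-fraction roofs $X\to Z\leftarrow Y$ rather than first packaging $X\mapsto\wh{X}$ as an endofunctor $\sigma$ of $\mathscr{D}$ and invoking the universal property of the localisation, and it works with cospans where you use spans (both are legitimate, since $\mathscr{D}$ is a left and a right multiplicative system).
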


A normal surface $X$ is codim-2-saturated if $X\to \wh{X}$ is an isomorphism, or, equivalently, if any morphism in $\mathscr{D}$ with domain $X$ is an isomorphism.
As it was already mentioned above, 
any normal surface which is proper over its affinisation is codim-2-saturated, but the converse is not true. 

Actually, the converse holds in the category of algebraic spaces, but recovering a normal algebraic space of dimension two from its category of reflexive sheaves is beyond the scope of this paper. Instead, we characterize codim-2-saturated surfaces as open subschemes in proper surfaces via a geometric criterion on contractibility of connected components of the complement divisor.

The codim-2-saturated model $\wh{X}$ of $X$ clearly determines $\Rref(X)$, because the pullback along  $X\to \wh{X}$ yields an equivalence $\Rref(\wh{X}) \xrightarrow{\simeq} \Rref(X)$. Conversely, we show that  $\wh{X}$ can be recovered from the (additive) category $\Rref(X)$.  We 
recover $\Coh(\wh{X})$ from $\Rref(X)$ and then use the standard argument to reconstruct $\wh{X}$ from $\Coh(\wh{X})$.

The canonical exact structure on the quasi-abelian category $\Rref(X)$ can be described as follows: conflations are complexes $F\to G\to H$, such that $0 \to F\to G \to H \to T \to 0$ is an exact sequence in $\Coh(X)$ with $T$ in  the Serre subcategory $\Coh_{\leq 0}(X)$ of Artinian sheaves. 
We prove
\begin{THM*}(\emph{cf.} Corollary \ref{cor_env_of_E_n-2})
	The category $\Coh^{\geq 1}(X) :=\Coh(X)/\Coh_{\leq 0}(X)$ is the right abelian envelope $\aA_r(\Rref(X))$ of $\Rref(X)$ endowed with the canonical exact structure.
\end{THM*}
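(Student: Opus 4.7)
The plan is to invoke the general theorem on right abelian envelopes for weakly localising Serre subcategories, established in the categorical section of the paper, applied to $\mathcal{S} := \Coh_{\leq 0}(X)$ inside $\mathcal{A} := \Coh(X)$. Under that machinery, the Verdier quotient $\mathcal{A}/\mathcal{S} = \Coh^{\geq 1}(X)$ is automatically the right abelian envelope of the quasi-abelian category of $\mathcal{S}$-closed objects, provided $\mathcal{S}$ is weakly localising. The proof therefore reduces to three verifications: (i) $\Coh_{\leq 0}(X)$ is weakly localising in $\Coh(X)$; (ii) the category of $\mathcal{S}$-closed objects in $\Coh(X)$ coincides with $\Rref(X)$; (iii) the induced quasi-abelian exact structure on closed objects matches the canonical exact structure on $\Rref(X)$.

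The heart of the argument is (ii). An object $F\in\Coh(X)$ is $\mathcal{S}$-closed precisely when $\Hom(S,F)=0=\Ext^1(S,F)$ for every $S\in\Coh_{\leq 0}(X)$. Reducing along filtrations of Artinian sheaves by residue fields $k(x)$, these vanishings translate into the local depth condition $\mathrm{depth}_{\oO_{X,x}}(F_x)\geq 2$ at every closed point $x\in X$. Since $X$ is a normal surface of dimension $2$, this forces $F$ to have full-dimensional support (hence to be torsion-free) and to satisfy Serre's condition $S_2$; for torsion-free coherent sheaves on a normal noetherian scheme, $S_2$ is equivalent to reflexivity, so every closed object is reflexive. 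Conversely, any reflexive sheaf on a normal surface is $S_2$ and hence has depth $2$ at every closed point of its support, so it is $\mathcal{S}$-closed. I expect this to be the main obstacle: one must carefully pass between the categorical $\Ext$-vanishing and the local depth condition, and then invoke the standard equivalence "torsion-free plus $S_2$ equals reflexive" on a normal noetherian surface.

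Verification of (i) rests on standard properties of $\Coh_{\leq 0}(X)$: it is closed under subobjects, quotients, extensions, and filtered colimits, so it is a localising Serre subcategory of $\Coh(X)$; the weakly localising axioms of the earlier section then reduce to the existence of the inclusion-quotient adjunction together with the $\Ext^1$-vanishing of $\mathcal{S}$ on closed objects, which is exactly what (ii) establishes. For (iii), the general construction declares a complex $F\to G\to H$ in the closed subcategory to be a conflation precisely when $0\to F\to G\to H\to T\to 0$ is exact in $\mathcal{A}$ with $T\in\mathcal{S}$; this is verbatim the canonical exact structure on $\Rref(X)$ described in the statement. With (i)--(iii) in place, the general theorem yields $\Coh^{\geq 1}(X)=\aA_r(\Rref(X))$.
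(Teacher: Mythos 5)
Your proposal has a genuine gap at the final step. Being weakly localising is \emph{not} enough to conclude that $\Coh^{\geq 1}(X)$ is the right abelian envelope of $\Rref(X)$: weakly localising only says that $\Rref(X)$ is the torsion-free part of \emph{some} torsion pair in the quotient, which yields that $\Rref(X)$ is quasi-abelian and identifies its canonical exact structure (your point (iii), correctly stated). But the correspondence between quasi-abelian categories and their right abelian envelopes goes through \emph{cotilting} torsion pairs: one must additionally show that every object of $\Coh^{\geq 1}(X)$ is a quotient of an object of $\Rref(X)$. For a general torsion pair the ambient abelian category is strictly larger than the right abelian envelope of its torsion-free part, so your claim that the quotient is ``automatically'' the envelope is false as stated. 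The paper supplies the missing generation statement via the resolution property of normal surfaces (Schr\"oer--Vezzosi: every coherent sheaf on a normal surface is a quotient of a locally free sheaf), which makes the torsion pair $(\Coh_{\leq 1}(X)/\Coh_{\leq 0}(X),\Rref(X))$ cotilting and only then invokes the equivalence between cotilting torsion pairs and quasi-abelian categories. This is a nontrivial geometric input (the surface need not be quasi-projective) and it is entirely absent from your argument.

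Two smaller remarks. First, your verification of ``weakly localising'' is muddled: the $\Ext^1$-vanishing of Artinian sheaves on closed objects is part of the \emph{definition} of a closed object, not what makes $\Coh_{\leq 0}(X)$ weakly localising. What is actually needed is the embedding condition: every torsion-free sheaf $F$ embeds into a reflexive sheaf with Artinian cokernel, which the paper gets from the double-dual map $F\to F^{\vee\vee}$ and normality ($X$ regular in codimension one). Second, the identification of $\Coh_{\leq 0}(X)$-closed objects with reflexive sheaves, which you treat as the heart of the matter via the depth-$2$/Serre's $S_2$ translation, is correct but is a standard fact that the paper simply cites; the real content of the statement lies in the cotilting step you omitted.
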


The category $\Coh^{\geq 1}(X)$ has a (cotiliting) torsion pair $\tau_X=(\bigoplus_{D \in X^1} O_{X,D}\textrm{-mod}, \Rref(X))$, where $X^1$ is the set of 1-dimensional points of $X$, $O_{X,D}$ the local ring of a prime divisor $D \in X^1$ and $O_{X,D}\textrm{-mod}$ is the category of finite length $\oO_{X,D}$-modules. The category $\bigoplus_{D \in X^1} O_{X,D}\textrm{-mod}$ is the subcategory of finite length objects in $\aA_r(\Rref(X))$ with simple objects $\oO_D$,  
where $D$ runs over $X^1$. %Moreover, these are the only simple objects in the whole category $A_r(\Rref(X))$. 
Hence, the torsion pair $\tau_X$ is canonical with the torsion part generated by all simple objects in $\Coh^{\geq 1}(X)$.

This allows us to interpret prime divisors in $X$ as simple objects in the right abelian envelope of $\Rref(X)$. By taking a finite set $I$ of simple objects and moding out of $\Coh^{\geq 1}(X)$ the Serre subcategory $\sS_I$ generated by them, we get the quotient category $\Coh^{\geq 1}(X\setminus D_I)$ of coherent sheaves on the open subset $X\setminus D_I$, where $D_I=\bigcup_{i \in I} D_i$ is the divisor corresponding to $I$. 
We define  the category $\fF_I$ as the right orthogonal to all simple objects in $\Coh^{\geq 1}(X)/\sS_I$.  We show that $\fF_I$ is equivalent to $\Rref(X\setminus D_I)$.

The affinisation of a normal surface $X$ can be easily read off $\Rref(X)$. Indeed, $H^0(X, \oO_X)$ is the center of $\Rref(X)$. Recall that the center of an additive category $\eE$ is 
the endomorphism algebra of the identity endofunctor, {\it i.e.} $Z(\eE) = \End(\Id_{\eE})$.

In order to mimic formula (\ref{eqtn_intro}) in terms of the categories $\Rref(X\setminus D_I)$, we need to characterise quasi-affine normal surfaces in terms of the categories of reflexive sheaves on them.
To this end, we define  \emph{CM affine} categories. These are the categories $\eE$ equivalent to $\Rref(\Spec Z(\eE))$ via the functor $\Hom(L, -)$, for an object $L$ in $\eE$ with $\End(L) =Z(\eE)$.
\begin{THM*}(\emph{cf.} Theorem \ref{thm_CM_aff=q-aff})
	A normal surface $X$ is quasi-affine if and only if $\Rref(X)$ is CM affine.
\end{THM*}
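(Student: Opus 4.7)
The forward direction is direct. Set $R := H^0(X,\oO_X)$. Since $X$ is normal and quasi-affine, $X^{\aff} = \Spec R$ is a normal affine surface and the canonical morphism $j\colon X \hookrightarrow X^{\aff}$ is an open embedding with complement of codimension $\geq 2$; pullback along $j$ yields an equivalence $j^*\colon \Rref(X^{\aff}) \simto \Rref(X)$, and Serre's equivalence identifies $\Rref(X^{\aff})$ with the category of reflexive $R$-modules via global sections. Taking $L := \oO_X$, which satisfies $\End(\oO_X) = R = Z(\Rref(X))$, the composite $\Hom_X(\oO_X, -) = \Gamma$ exhibits $\Rref(X)$ as CM affine.

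For the converse, assume $\Rref(X)$ is CM affine with witness $L$: put $R := Z(\Rref(X)) = H^0(X,\oO_X)$, $Y := \Spec R$, and $\Phi := \Hom_X(L, -)\colon \Rref(X) \simto \Rref(Y)$. The affinisation provides a canonical morphism $f\colon X \to Y$, and the task is to show $f$ is an open embedding with codim-$2$ complement. The strategy is to transport geometric data through $\Phi$ using the intrinsic canonical exact structure: by Corollary~\ref{cor_env_of_E_n-2}, $\Phi$ preserves canonical conflations and extends to an equivalence of right abelian envelopes $\wt{\Phi}\colon \Coh^{\geq 1}(X) \simto \Coh^{\geq 1}(Y)$. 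Matching simple objects yields a bijection $\phi$ between prime Weil divisors of $X$ and of $Y$, and matching finite-length subcategories identifies the DVRs $\oO_{X,D} \simeq \oO_{Y,\phi(D)}$ at corresponding primes. Using the $R$-linearity of $\Phi$ and the identification $\Phi(L) = \oO_Y$, one verifies that $\phi$ is the codim-$1$ restriction of $f$, whence $f$ is an isomorphism at the generic point and at every codim-$1$ point of $X$. Combined with control of closed points coming from the full equivalence $\Phi$ on $\Rref$, one concludes that $f$ is an open embedding with codim-$2$ complement, i.e.\ $X$ is quasi-affine.

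The main obstacles are twofold. First, bridging the abstract categorical bijection $\phi$ to the scheme morphism $f$: one must show $\wt{\Phi}(\oO_D) \simeq \oO_{f(D)}$ for every prime Weil divisor $D \subset X$. The natural route is to characterise $\oO_X$ and each $\oO_D$ intrinsically in $\Coh^{\geq 1}(X)$ via the $R$-action supplied by $Z(\Rref(X)) = R$, and then transport these characterisations through $\wt{\Phi}$. Second, promoting iso-on-codim-$1$ to the full statement that $f$ is an open embedding: this requires extracting local information at closed points of $X$ from the equivalence $\Phi$ on the whole of $\Rref$, typically by localising $\Phi$ along closed points of $Y$ and matching the resulting categories of reflexive sheaves. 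Once both points are in hand, the geometric conclusion is a routine application of the openness of the isomorphism locus for a morphism between normal surfaces.
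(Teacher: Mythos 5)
Your forward direction matches the paper's. The converse, however, is an outline with two genuine gaps. The first is that you write $\Phi=\Hom_X(L,-)\colon \Rref(X)\xrightarrow{\sim}\Rref(Y)$ for $Y=\Spec Z(\Rref(X))$, silently identifying $\MCM(Z(\Rref(X)))$ with $\Rref(Y)$. This identification requires $Y$ to be two-dimensional, which is not automatic: for a proper surface $X$ one has $Z(\Rref(X))=k$, and $X^{\aff}$ can also be a curve. The paper's proof of Proposition \ref{prop_affin_of_CM_affine} spends its first step ruling these cases out (if $\dim X^{\aff}=0$ then $\MCM(X^{\aff})$ is abelian, contradicting $\Rref(X)\subsetneq \aA_r(\Rref(X))$; if $\dim X^{\aff}=1$ the simple objects of $\aA_r(\MCM(X^{\aff}))$ have finite-dimensional endomorphism algebras, whereas the simples $\oO_D$ of $\Coh^{\geq 1}(X)$ have the function fields $\oO_{X,D}/\mathfrak{m}_{X,D}$ as endomorphisms). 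Without this step your equivalence $\Phi$ has the wrong target and the rest of the argument does not start.

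The second gap is the bridge you yourself flag but do not cross: showing that the induced equivalence $\wt{\Phi}$ of right abelian envelopes is actually $\rho_{X*}$ (equivalently $\wt\Phi(\oO_D)\simeq\oO_{f(D)}$), rather than some abstract equivalence permuting divisors. Your proposed route via an "intrinsic characterisation using the $R$-action" is left unexecuted, and matching simple objects only gives an abstract bijection of $X^1$ with $Y^1$; it does not by itself relate that bijection to the affinisation morphism $f$. The paper resolves this differently: it first uses Proposition \ref{prop_Cal_Pir} to replace $L$ by $\oO_X$ (the image $\oO_{X^{\aff}}$ of $L$ satisfies the Calabrese--Pirisi conditions, hence so does $L$, and the resulting autoequivalence preserves the canonical torsion pair), and then identifies $\wt\Phi$ with $\Hom_{\Coh^{\geq 1}(X)}(\oO_X,-)$ by transporting the geometric exact structure, observing that $\oO_X$ is projective for it, and comparing two right exact functors agreeing on $\Rref(X)$. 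Finally, your closing step "openness of the isomorphism locus" is not the right tool: what one actually needs is that $f$ is quasi-finite (no divisor is contracted, because each divisor generates a Serre subcategory of $\Coh^{\geq 1}(X)$ preserved by the equivalence), separated, of finite type, and satisfies $f_*\oO_X=\oO_{X^{\aff}}$, whence Zariski's main theorem in the form \cite[Theorem 12.83]{GorWed} makes $f$ an open immersion, with codimension-two complement by Theorem \ref{thm_cal_pir}. As written, your plan would not compile into a proof without supplying both missing arguments.
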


Now we are ready to recover $\Coh(\wh{X})$:

	\begin{THM*}(\emph{cf.} Theorem \ref{thm_final_model_from_Ref})
		Let $X$ be a normal surface. Then 
	\begin{equation}\label{eqtn_intro_2}
\Coh(\wh{X})\simeq \underleftarrow{\textrm{2-lim} }\,
%\varprojlim 
\textrm{mod--}Z(\fF_I), 
\end{equation}
where the 2-limit is taken over the category of finite sets $I$ of simple objects in $\aA_r(\Rref(X))$ for which the category $\fF_I$ defined above is CM affine. 
\end{THM*}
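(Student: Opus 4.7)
The approach is to categorify the geometric colimit formula (\ref{eqtn_intro}), $\wh{X}=\varinjlim_{U\in\mathcal{QA}(X)}U^{\aff}$, by applying $\Coh$.

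First, I would translate the indexing category into geometry. By Corollary \ref{cor_env_of_E_n-2} the simple objects in $\aA_r(\Rref(X))\simeq\Coh^{\geq 1}(X)$ are precisely the structure sheaves $\oO_D$ of prime Weil divisors $D\subset X$, so a finite set $I$ of simples is the same datum as a finite union $D_I=\bigcup_{i\in I}D_i$ of prime divisors. The discussion preceding the theorem identifies $\fF_I\simeq\Rref(X\setminus D_I)$, and Theorem \ref{thm_CM_aff=q-aff} says that $\fF_I$ is CM affine precisely when $U_I:=X\setminus D_I$ is quasi-affine. For such $I$ one has $Z(\fF_I)=Z(\Rref(U_I))=H^0(U_I,\oO)=H^0(U_I^{\aff},\oO)$, the last equality because $U_I\hookrightarrow U_I^{\aff}$ has codim-two complement on a normal surface; since $U_I^{\aff}$ is a noetherian affine scheme, the Serre equivalence gives $\textrm{mod--}Z(\fF_I)\simeq\Coh(U_I^{\aff})$.

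Next, I would check that the family $\{U_I^{\aff}\}$ realises $\wh{X}$ as a directed Zariski cover closed under finite intersections. The subcategory of $\mathcal{QA}(X)$ consisting of opens of the special form $U_I$ with $\fF_I$ CM affine is cofinal: for any $U\in\mathcal{QA}(X)$, let $D_J$ be the divisorial part of $X\setminus U$; then $U\hookrightarrow U_J$ has codim-two complement, so $H^0(U,\oO)=H^0(U_J,\oO)$, whence $U_J$ is itself quasi-affine and $U_J^{\aff}=U^{\aff}$. Combined with (\ref{eqtn_intro}), this exhibits $\wh{X}$ as the union of the opens $U_I^{\aff}\subset\wh{X}$ indexed by CM-affine $I$, and the identity $U_I^{\aff}\cap U_{I'}^{\aff}=U_{I\cup I'}^{\aff}$ inside $\wh{X}$ shows that the cover is stable under finite intersections, so $I\mapsto U_I^{\aff}$ is functorial in the expected direction.

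Finally, $\Coh$ is a sheaf of abelian categories in the Zariski topology of a noetherian scheme, so descent along this cover yields $\Coh(\wh{X})\simeq\underleftarrow{\textrm{2-lim}}\,\Coh(U_I^{\aff})\simeq\underleftarrow{\textrm{2-lim}}\,\textrm{mod--}Z(\fF_I)$, as desired. The main obstacle is the geometric input of the second step: one must verify that quasi-affineness of a normal surface is preserved under enlarging by codim-two loci, and that the abstract colimit (\ref{eqtn_intro}) in $\wh{\mathscr{D}}$ genuinely presents $\wh{X}$ as a Zariski union of the open subschemes $U_I^{\aff}$, rather than merely as a formal colimit. Both assertions are local in nature on $\wh{X}$ and should follow from its codim-2-saturated character together with the explicit description of affinisation via $H^0$.
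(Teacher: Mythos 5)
Your proposal is correct and follows essentially the same route as the paper's proof: identify finite sets of simples with finite unions of prime divisors, show the divisorial complements $U_I$ are cofinal among quasi-affine opens (the paper's Lemma \ref{lem_another_def_of_X_hat}), identify $\fF_I\simeq\Rref(U_I)$ and $\textrm{mod--}Z(\fF_I)\simeq\Coh(U_I^{\aff})$ via Propositions \ref{prop_Ref_U_from_Ref_X} and \ref{prop_cent_of_E_0} and Theorem \ref{thm_CM_aff=q-aff}, and conclude by Zariski gluing of coherent sheaves. The one step you flag as an obstacle --- that $U_J$ is quasi-affine when $U$ is and $U\subset U_J$ has codimension-two complement --- does not follow from equality of global sections alone but is exactly what the paper's Proposition \ref{prop_map_to_X_hat} supplies, realising $U_J$ as an open subscheme of $U^{\aff}$.
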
 
Above,  $\textrm{mod-}Z(\fF_I)$ denotes the category of finitely generated modules over the center $Z(\fF_I)$ of $\fF_I$. 

In Section \ref{ssec_cont_of_X}, we introduce a geometric model $\ol{\eE}$ of what we call a height-one quasi-abelian category $\eE$. $\Rref(X)$ on a normal surface $X$ is an example of such a category.
\begin{THM*}(\emph{cf.} Corollary \ref{cor_reconst_of_X})
	A codim-2-saturated surface $X$ is isomorphic to $\ol{\Rref(X)}$.
\end{THM*}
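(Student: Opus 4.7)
The plan is to combine the reconstruction of $\Coh(\wh X)$ furnished by Theorem \ref{thm_final_model_from_Ref} with the hypothesis of codim-2-saturatedness, thereby identifying $\ol{\Rref(X)}$ first with $\wh X$ and then with $X$ itself.

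First I would unpack the definition of the geometric model $\ol{\eE}$ introduced in Section \ref{ssec_cont_of_X}. For a height-one quasi-abelian $\eE$, this scheme is assembled from purely categorical data: pass to the right abelian envelope $\aA_r(\eE)$ and its canonical torsion pair, treat the simple objects of the torsion part as categorical ``divisors,'' form the subcategories $\fF_I$ indexed by finite sets $I$ of such simples, restrict to those $I$ for which $\fF_I$ is CM affine, and glue the affine pieces $\Spec Z(\fF_I)$ according to the 2-limit of the categories $\textrm{mod--}Z(\fF_I)$. By construction, $\Coh(\ol{\eE})$ agrees with this 2-limit.

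Now set $\eE=\Rref(X)$. By Corollary \ref{cor_env_of_E_n-2}, $\aA_r(\Rref(X))=\Coh^{\geq 1}(X)$, and the discussion following it matches simple objects of the canonical torsion part bijectively with prime divisors $D\in X^1$. For each finite set $I$ of such simples, $\fF_I\simeq \Rref(X\setminus D_I)$; Theorem \ref{thm_CM_aff=q-aff} then makes $\fF_I$ CM affine precisely when $X\setminus D_I$ is quasi-affine, and in that case $Z(\fF_I)=H^0(X\setminus D_I,\oO_X)$ is the coordinate ring of the affinisation of $X\setminus D_I$. Hence the indexing category of the 2-limit in Theorem \ref{thm_final_model_from_Ref} matches $\mathcal{QA}(X)$ via $I\mapsto X\setminus D_I$, and Theorem \ref{thm_final_model_from_Ref} together with (\ref{eqtn_intro}) gives $\Coh(\ol{\Rref(X)})\simeq \Coh(\wh X)$. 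The standard reconstruction of a scheme from its category of coherent sheaves produces an isomorphism $\ol{\Rref(X)}\simeq \wh X$, and codim-2-saturatedness of $X$ makes the canonical $j_X\colon X\to \wh X$ an isomorphism, so $X\simeq \ol{\Rref(X)}$.

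The step I expect to be the main obstacle is verifying that $\ol{\Rref(X)}$, as \emph{defined} in Section \ref{ssec_cont_of_X}, literally coincides with the scheme produced by applying Gabriel-style reconstruction to the 2-limit of Theorem \ref{thm_final_model_from_Ref}; equivalently, one must check that the transition morphisms in the 2-limit are induced by the open embeddings of quasi-affine pieces into their affinisations, so that the colimit of affinisations (\ref{eqtn_intro}) on the geometric side and the 2-limit (\ref{eqtn_intro_2}) on the categorical side encode the same gluing datum. Once this compatibility is in place, the corollary follows at once from the chain of isomorphisms above.
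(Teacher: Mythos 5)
Your proposal is essentially correct, and its core identifications are exactly those of the paper: $\fF_I\simeq \Rref(U_I)$ via Propositions \ref{prop_canon_torsion_pair_Coh_1} and \ref{prop_Ref_U_from_Ref_X}, $Z(\fF_I)=H^0(U_I,\oO_{U_I})$ via Proposition \ref{prop_cent_of_E_0}, and the matching of the indexing category (\ref{eqtn_diagram_from_Ref}) with the geometric one via Theorem \ref{thm_CM_aff=q-aff}. Where you diverge is the final step: you pass through $\Coh(\wh{X})\simeq\Coh(\ol{\Rref(X)})$ and then invoke Gabriel--Rouquier reconstruction to extract a scheme isomorphism. The paper avoids this detour entirely, because the geometric model is \emph{defined} in (\ref{eqtn_geom_mode}) directly as the colimit of schemes $\varinjlim\Spec Z(\fF_I)$, not by applying reconstruction to the 2-limit (\ref{eqtn_def_of_E_hat}); once each $\Spec Z(\fF_I)$ is identified with $U_I^{\aff}$ and the diagram (\ref{eqtn_diagram_from_Ref}) with (\ref{eqtn_set_for_colim}), the conclusion is immediate from Lemma \ref{lem_another_def_of_X_hat}, which says $\wh{X}\simeq\varinjlim U_I^{\aff}$ over precisely that diagram. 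Your route is workable but costs more: to apply Theorem \ref{thm_Rou} you would first have to know that $\ol{\Rref(X)}$ is a separated Noetherian scheme of finite type, and the "main obstacle" you flag --- that the transition morphisms on the categorical and geometric sides encode the same gluing --- is exactly the compatibility one must check in either approach; having checked it, the scheme-level identification is already done and the Gabriel step becomes redundant. So the honest advice is: drop the reconstruction layer and argue at the level of the colimit of affinisations, as the definition of $\ol{\eE}$ invites you to do.
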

	
We develop a general categorical set-up which $\Rref(X)$ fits in as a subcategory of $\Coh(X)$. We believe this is of independent interest and can be applied to other relevant (noncommutative) situations. 

Given a Serre subcategory $\bB \subset \aA$, the subcategory $\eE$ of $\bB$-closed objects in $\aA$ is classically known. This category is fully faithfully embedded by the quotient functor into $\aA / \bB$. We say that $\bB$ is a {\it weakly localising} Serre subcategory in $\aA$ if $\aA / \bB$ possesses a torsion pair with the torsion free part being $\eE$. We introduce the embedding condition (\ref{eqtn_ast}) for the torsion free part $\fF$ of a torsion pair in $\aA$ which allows us to induce a required torsion pair in $\aA / \bB$ from the torsion pair in $\aA$. The model geometric example here is the case when $\aA =\Coh (X)$, $\bB =\Coh_{\leq 0}(X)$, $\eE =\Rref(X)$, and $\fF$ the category of torsion free sheaves on $X$.

This paper can be regarded as the contemplation on exact structures on the category $\Rref(X)$. The embedding $\Rref(X) \subset \Coh(X)$ as an extension closed subcategory induces the \emph{geometric exact structure} on $\Rref(X)$ in which conflations are complexes $F\to G \to H$ such that $0 \to F\to G \to H \to 0$ is an exact sequence in $\Coh(X)$. For any non-isomorphism $i\colon X \to Y$ in $\mathscr{D}$ the geometric exact structures on $\Rref(X)$ induced by $\Coh(X)$ and $\Coh(Y)$ are different, the one induced by $Y$ has less conflations. Hence, the geometric exact structure corresponding to a codim-2-saturated surface is minimal among possible geometric exact structures, while the canonical exact structure can be thought of as the maximal one among those, when all the closed points from the surface are removed. Note that there are other interesting exact structures which are related to various non-commutative resolutions of $\Coh(X)$. We postpone the study of them to another publication.

We expect that the extension of ideas of this paper to higher dimensions will involve the machinery of the Minimal Model Program from Birational Geometry. 

\noindent 
\textbf{Structure of the paper.}
In Section \ref{sec_codim_2_sat_model} we introduce the categories $\mathscr{D}$ and its groupoid $\wh{\mathscr{D}} := \mathscr{D}[\mathscr{D}^{-1}]$. We define the codim-2-saturated model $\wh{X}$ of a normal surface $X$, see (\ref{eqtn_def_of_X_hat}), and prove that the canonical embedding $j_X\colon X\to \wh{X}$ is a morphism in $\mathscr{D}$, see Theorem \ref{thm_prop_of_X_hat}. We show that $j_X$ is the adjunction unit $X\to \mathfrak{s}_* \mathfrak{s}(X)$, for the canonical functor $\mathfrak{s}\colon \mathscr{D}\to \wh{\mathscr{D}}$ and its right adjoint, see Theorem \ref{thm_adj_unit}. We also characterise a  codim-2-saturated surfaces as appropriate open subsschemes in proper surfaces, see Theorem \ref{thm_open_in_prop_final}.

The category $\eE$ of closed objects for a Serre subcategory  $\bB\subset \aA$ is studied in Section \ref{sec_closed_obj}. We show that $\eE$ is a full subcategory of the quotient $\aA/\bB$, see Corollary \ref{cor_ff_on_closed}. After discussing when a torsion pair in $\aA$ induces a torsion pair in $\aA/\bB$, see Section \ref{ssec_tor_pair_on_quotient}, we give a condition for $\eE$ to be the torsion-free part of a torsion pair in $\aA/\bB$, i.e. for the category $\bB$ to be \emph{weakly localising}, see Theorem \ref{thm_emb_cond}. Given a pair $E_1$, $E_2$ of objects in $\eE$, we describe the abelian group $\Ext^1_{\aA/\bB}(E_1, E_2)$ as the colimit of  Ext-groups in $\aA$, see Proposition \ref{prop_Ext_as_colim}.

In Section \ref{sec_ref_sheaves} we study the category $\Rref(X)$ of reflexive sheaves on a normal surface $X$. We show that the subcategory $\Coh_{\leq 0}(X)\subset \Coh(X)$ of Artinian sheaves is weakly localising and the quotient $\Coh^{\geq 1}(X) = \Coh(X)/\Coh_{\leq 0}(X)$ is the right abelian envelope of the quasi-abelian category $\Rref(X)$ of $\Coh_{\leq 0}(X)$ closed objects, see Theorem \ref{thm_tor_pair_for_eqidim_scheme} and Corollary \ref{cor_env_of_E_n-2}. We prove that $\Coh^{\geq 1}(X)$ admits a torsion pair with torsion-free part $\Rref(X)$ and the Serre subcategory generated by all simple objects as the torsion part, see Proposition \ref{prop_canon_torsion_pair_Coh_1}.

We further discuss that $\Coh(X)$ is the right abelian envelope of $\Rref(X)$ endowed with the geometric exact structure. This allows us to conclude that $H^0(X, \oO_X)$ is the center of $\Rref(X)$, see Proposition \ref{prop_cent_of_E_0}. We prove that $\Rref(X) \simeq \Rref(Y)$ if and only if $X$ and $Y$ are isomorphic outside of dimension zero, see Theorem \ref{thm_iso_iff_E_n-2}, and, given an open subset $U\subset X$, we describe the category $\Rref(U)$ in terms of $\Rref(X)$ and its right abelian envelope, see Proposition \ref{prop_Ref_U_from_Ref_X}.

In Section \ref{sec_reconstr} we introduce CM affine categories discussed above and prove that $\Rref(X)$ is CM affine if and only if the normal surface $X$ is quasi-affine, see Theorem \ref{thm_CM_aff=q-aff}. Finally, we reconstruct the category $\Coh(\wh{X})$ of coherent sheaves on a codim-2-saturated model of a normal surface $X$ from $\Rref(X)$, i.e. we prove equivalence (\ref{eqtn_intro_2}), see Theorem \ref{thm_final_model_from_Ref}.

We recall the notion of an exact category and its right abelian envelope in Appendix \ref{sec_ab_env_of_q-a_cat}. We discuss torsion pairs in abelian categories and argue that the passage from a cotilting torsion pair to its torsion-free part gives a correspondence between torsion pairs and quasi-abelian categories. 

In the Appendix \ref{sec_X_from_coh_X} we recall, following \cite{Gabriel}, the reconstruction of a Noetherian scheme $Z$ from the abelian category $\Coh(Z)$.

\noindent
\textbf{Acknowledgements.}
We are indebted to Joachim Jelisiejew, Karol Szumi{\l}o, Jaros{\l}aw Wi{\'s}niewski and Yulya Zaitseva for useful discussions. The first named author was partially supported by Polish National Science Centre grants No. 2018/31/D/ST1/03375, 2018/29/B/ST1/01232 and 2021/41/B/ST1/03741. 
The reported study was funded by RFBR and CNRS, project number 21-51-15000. This work was supported by JSPS KAKENHI grant number JP20H01794.
This research was partially supported by World Premier International Research Center Initiative (WPI Initiative), MEXT, Japan.

\noindent
\textbf{Notation.}
We work over an arbitrary ground field $k$.

\section{The codim-2-saturated model of a normal surface}\label{sec_codim_2_sat_model}

Throughout this section $X$  is a normal surface, i.e. an irreducible two dimensional separated normal $k$-scheme of finite type.

Define the category $\mathscr{D}$ whose objects are normal surfaces and morphisms are open inclusions with complement of codimension two.
We say that a normal surface $X$ is \emph{codim-2-saturated} if one cannot add any closed points to it, i.e.  if any morphism in $\mathscr{D}$ with domain $X$ is an isomorphism.

Any normal surface $X$ admits a canonical morphism to a codim-2-saturated surface $\wh{X}$ which is the adjunction unit for the functor $\mathscr{D}\to \wh{\mathscr{D}}$, where $\wh{\mathscr{D}}$ is the groupoid obtained from $\mathscr{D}$ by inverting all morphisms.

\vspace{0.3cm}
\subsection{The affinisation of a normal surface}~\\

We denote by $X^{\aff} = \Spec H^0(X, \oO_X)$ the \emph{affinisation} of $X$ and by $\rho_X \colon X\to X^{\aff}$ the canonical morphism.
By \cite[Corollary 6.3]{Schroer}, $H^0(X, \oO_X)$ is an integrally closed $k$-algebra of finite type, hence $X^{\aff}$ is a normal variety of dimension 0,1 or 2. 

Note that the affinisation $X^{\aff}$ has the universal property for morphisms to affine schemes. In particular, a morphism $X\to Y$ induces a unique morphism $X^{\aff} \to Y^{\aff}$.

\begin{PROP}\label{prop_aff_of_quasi-affine}
	Let $X$ be a quasi-affine normal surface. Then 
	\begin{enumerate}
		\item the affinisation $\rho_X \colon X\to X^{\aff}$ is an open embedding into a normal scheme with complement of codimension two.
		\item Given an open embedding $j\colon U \to X$ of a quasi-affine $U$, 	
		$j^{\aff} \colon U^{\aff} \to X^{\aff}$ is an open embedding. 
	\end{enumerate}
\end{PROP}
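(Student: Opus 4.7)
My plan for (1) combines three ingredients. First, quasi-affineness of $X$ gives that $\rho_X$ is an open immersion, by the classical characterisation of quasi-affine schemes as those for which the affinisation map is an open immersion. Second, the target $X^{\aff}=\Spec R$ with $R=H^0(X,\oO_X)$ is normal, since $R$ is integrally closed by \cite[Corollary~6.3]{Schroer}. Third, for the codimension-two claim, I would use the Hartogs formula for the structure sheaf of a normal Noetherian integral scheme: as subrings of $K(X)$,
\[
R = H^0(X^{\aff},\oO_{X^{\aff}}) = \bigcap_{x\in(X^{\aff})^1}\oO_{X^{\aff},x}, \qquad R = H^0(X,\oO_X) = \bigcap_{x\in X^1}\oO_{X,x}.
\]
Since $X^1\subset (X^{\aff})^1$ and both Krull intersections coincide, the irredundancy of the family of essential DVRs of the Krull domain $R$ forces $X^1=(X^{\aff})^1$, i.e.\ $X^{\aff}\setminus X$ contains no codimension-one points.

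For part (2), the composite $\rho_X\circ j\colon U\to X^{\aff}$ is an open embedding into an affine scheme, so by the universal property of the affinisation it factors uniquely as $U\xrightarrow{\rho_U}U^{\aff}\xrightarrow{j^{\aff}}X^{\aff}$. The plan is to prove $j^{\aff}$ is an open embedding via Zariski's main theorem. The morphism $j^{\aff}$ is birational (its restriction to the dense open $\rho_U(U)\subset U^{\aff}$ agrees with the open embedding $U\hookrightarrow X^{\aff}$), separated (a morphism of affine schemes), and of finite type (any finite set of $k$-algebra generators of $\Gamma(U^{\aff},\oO)$ automatically generates it over $R=\Gamma(X^{\aff},\oO)$).

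The main step will be the quasi-finiteness of $j^{\aff}$. By (1), $U^{\aff}\setminus\rho_U(U)$ is a finite set of closed points. Since $j^{\aff}$ restricted to $\rho_U(U)$ coincides with the open embedding $U\hookrightarrow X^{\aff}$, over any $q\in j(U)$ exactly one preimage lies in $\rho_U(U)$, while the remaining preimages form a subset of the fixed finite set $U^{\aff}\setminus\rho_U(U)$; for a point $q\notin j(U)$ the whole fibre lies inside that finite set. Every fibre is therefore topologically finite, and hence zero-dimensional for a finite-type morphism between Noetherian schemes, so $j^{\aff}$ is quasi-finite. Zariski's main theorem then factorises $j^{\aff}$ as an open immersion followed by a finite morphism onto the normal integral target $X^{\aff}$; this finite morphism, being birational to a normal scheme, must be an isomorphism, and $j^{\aff}$ itself is an open embedding. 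The subtlest ingredient is this quasi-finiteness check, where the codimension-two assertion from part (1) is essential in controlling the points added by the affinisation of $U$.
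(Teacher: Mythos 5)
Your argument is correct, but it follows a genuinely different route from the paper. The paper proves both parts at once by choosing open embeddings $U\subset X\subset Y$ with $Y$ affine and normal, removing the divisorial parts $D_1\subset D_2$ of $Y\setminus X$ and $Y\setminus U$, and invoking Nagata's theorem that $Y\setminus D_i$ is again an affine normal surface; Hartogs then identifies $X^{\aff}$ and $U^{\aff}$ with these affine opens, so the codimension-two claim and the nesting $U^{\aff}\subset X^{\aff}$ are read off from an explicit affine model. You instead get part (1) from the classical characterisation of quasi-affine schemes (the canonical map to $\Spec\Gamma(X,\oO_X)$ is an open immersion) together with the irredundancy of the essential valuations of the Krull domain $R$ — which does need the approximation theorem for Krull domains to see that omitting even one height-one localisation strictly enlarges the intersection — and part (2) from Zariski's main theorem after a fibre count establishing quasi-finiteness. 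Your approach avoids the surface-specific input (Nagata/Goodman's theorem on complements of divisors in affine normal surfaces) entirely; indeed your part (1) works for normal quasi-affine varieties of any dimension, whereas your quasi-finiteness count in part (2) does use that $U^{\aff}\setminus\rho_U(U)$ is a \emph{finite} set, which is special to surfaces. The ZMT step you use is exactly the form of the theorem the paper itself invokes later (in Proposition \ref{prop_f_U_proper}), so your proof is well aligned with the paper's toolkit even though it departs from the proof of this particular proposition. The only blemishes are notational (e.g.\ ``$q\in j(U)$'' should be $q\in\rho_X(j(U))$), not mathematical.
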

\begin{proof}
	Let $U \subset X\subset Y$ be open embeddings with $Y$ affine.
	By replacing $Y$ with its normalisation, if necessary, we can assume that $Y$ is normal. Let $D_1\subset Y$, $D_2\subset Y$ be the divisorial parts of $Y \setminus X$, respectively of $Y\setminus U$. By  \cite[Theorem 5]{Nagata1} \emph{cf.} \cite[Theorem 3.4]{Brenner}, the open subsets $Y_1 = Y \setminus D_1$ and $Y_2 = Y \setminus D_2$ are affine normal surfaces. Since $D_1 \subset D_2$, $Y_2$ is an open subset of $Y_1$. As $X\subset Y_1$ and $U \subset Y_2$ are open subsets with complement of codimension two, a version of Hartogs' Lemma for normal surfaces
	implies that $X^{\aff} \simeq Y_1$ and $U^{\aff} \simeq Y_2$. The statement follows.
\end{proof}

\begin{PROP}\label{prop_aff_is_sat}
	An affine normal surface is codim-2-saturated.
\end{PROP}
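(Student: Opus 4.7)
\begin{PROOF}{ (proposal).}
Let $X$ be an affine normal surface and suppose $j \colon X \hookrightarrow Y$ is a morphism in $\mathscr{D}$, i.e.\ an open embedding into a normal surface $Y$ whose complement $Y\setminus j(X)$ has codimension two (so it is a finite set of closed points of $Y$). We want to show $j$ is an isomorphism.

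The key idea is to exhibit an inverse using the affinisation, after first controlling global sections via Hartogs. Concretely, the plan is the following. First, since $Y$ is a normal surface and $Y\setminus j(X)$ consists of finitely many closed points (codimension two), the Hartogs extension property for normal surfaces gives an isomorphism $H^0(Y,\oO_Y) \xrightarrow{\sim} H^0(X,\oO_X)$. This is exactly the input used in the proof of Proposition~\ref{prop_aff_of_quasi-affine}, so I would invoke the same mechanism here. In particular, the ring $H^0(Y,\oO_Y)$ is the finitely generated normal $k$-algebra $A := H^0(X,\oO_X)$, and $Y^{\aff} = \Spec A = X$.

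Second, I would apply the universal property of the affinisation to $Y$. The canonical morphism $\rho_Y \colon Y \to Y^{\aff} = X$ satisfies $\rho_Y \circ j = \Id_X$ because $j$ induces the identity on $H^0(X,\oO_X) = A$, and morphisms from an affine $k$-scheme into $X = \Spec A$ are determined by their effect on global sections. Thus $j$ is a section of $\rho_Y$.

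Third, I would upgrade $j$ from an open immersion to a clopen immersion. Since both $X$ and $Y$ are separated over $k$, the morphism $\rho_Y \colon Y \to X$ is separated (being a morphism between two $k$-separated schemes). A section of a separated morphism is always a closed immersion, so $j$ is simultaneously open and closed. Its image is therefore a non-empty open and closed subset of $Y$. Since $Y$ is irreducible (normal surfaces in this paper are by definition irreducible), this image must be all of $Y$, so $j$ is a bijective open immersion, hence an isomorphism.

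The main obstacle is really only conceptual rather than technical: checking that Hartogs applies in the present setup (which is identical to how it is used in Proposition~\ref{prop_aff_of_quasi-affine}), and observing that the abstract ``section of a separated morphism is a closed immersion'' argument kicks in with no further hypothesis, since all schemes here are separated over $k$. No combinatorial or birational input is needed.
\end{PROOF}
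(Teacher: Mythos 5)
Your argument is correct and follows essentially the same route as the paper: Hartogs identifies $H^0(Y,\oO_Y)$ with $H^0(X,\oO_X)$, so the affinisation $\rho_Y\colon Y\to Y^{\aff}=X$ is a left inverse to $j$, forcing $j$ to be an isomorphism. The only difference is cosmetic: where the paper simply asserts that an open embedding with a left inverse is an isomorphism, you justify it via the (correct) observation that a section of a separated morphism is a closed immersion, so $j(X)$ is clopen in the irreducible surface $Y$.
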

\begin{proof}
Let $X$ be an affine surface and $j\colon X\to Z$ a morphism in $\mathscr{D}$. By Hartogs' Lemma, the affinisations of $X$ and $Z$ coincide. Since $X$ is affine, it
 is the affinisation of $Z$, and the composition of $j$ with $\rho_Z \colon Z\to Z^{\aff} \simeq X$ is $\Id_X$. Hence, $j$ is an open embedding with a left inverse, i.e. an isomorphism.
\end{proof}

\vspace{0.3cm}
\subsection{Inverting open embeddings}~\\

We will consider the category $\wh{\mathscr{D}}$ obtained from $\mathscr{D}$ by inverting all morphisms. 

Recall that a class $\mathscr{S}$ of morphisms in a category $\cC$ is a \emph{left multiplicative system} if 
\begin{itemize}
	\item[(LMS 1)] The identity on every object in $\cC$ is in $\mathscr{S}$ and the composition of two composable morphisms in $\mathscr{S}$ is in $\mathscr{S}$.
	\item[(LMS 2)] Every  diagram of solid arrows
	\[
	\xymatrix{A \ar[r]^f \ar[d]_s & B\ar@{-->}[d]^t\\ C \ar@{-->}[r]_g & D }
	\]
	with $s \in \mathscr{S}$ can be completed by dashed arrows to a commutative square with $t\in \mathscr{S}$.
	\item[(LMS 3)] Given a pair of morphisms $f,g\colon C_1 \to C_2$ such that $f\circ s = g\circ s$, for some $s\in \mathscr{S}$ with codomain $C_1$, there exists $t\in \mathscr{S}$ with domain $C_2$ such that $t\circ f= t \circ g$.
\end{itemize}
A left multiplicative system $\mathscr{S}$ allows one to consider the category $\cC[\mathscr{S}^{-1}]$ whose objects are objects of $\cC$ and morphisms are equivalence classes of `roofs': 
\[
\xymatrix{&D' & \\
C\ar[ur]^f & & D \ar[ul]_s}
\]
with $s\in \mathscr{S}$. We denote such a roof by $s^{-1}f$.

Two roofs $C\rightarrow D' \leftarrow  D$, $C \rightarrow D'' \leftarrow D$ are equivalent if there exists a roof $C\rightarrow\wt{D}  \leftarrow D$ which  fits into a commutative diagram
\begin{equation}\label{eqtn_equiv_of_roofs}
\xymatrix{& D' \ar[d] & \\
	C\ar[r] \ar[ur] \ar[dr] & \wt{D}  & \ar[l] \ar[ul] \ar[dl] D\\
	& D''\ar[u]  & }
\end{equation}

Given a right multiplicative system $\mathscr{S}_r$, the category $\cC[\mathscr{S}_r^{-1}]$ is defined analogously.

\begin{PROP}\label{prop_left_ms}
	All morphisms in $\mathscr{D}$ form a left multiplicative system.
\end{PROP}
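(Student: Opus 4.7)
The plan is to verify the three axioms (LMS 1), (LMS 2), (LMS 3) in turn, with $\mathscr{S} = \mathrm{Mor}(\mathscr{D})$. Axiom (LMS 1) is immediate: the identity on any normal surface is an open embedding with empty complement, and if $X \hookrightarrow Y$ and $Y \hookrightarrow Z$ are open embeddings with codimension-$2$ complements, then $Z \setminus X = (Z \setminus Y) \cup (Y \setminus X)$ is a union of two closed subsets of codimension $2$ in $Z$. Axiom (LMS 3) follows at once from separatedness: given $s \colon A \to C_1$ and $f, g \colon C_1 \to C_2$ in $\mathscr{D}$ with $f \circ s = g \circ s$, the image of $s$ is a dense open subscheme of $C_1$ (codim-$2$ complement in an irreducible scheme), so $f$ and $g$ coincide on a dense open; since $C_1$ is integral and $C_2$ is separated this forces $f = g$, and one takes $t := \mathrm{Id}_{C_2}$.

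The bulk of the argument is axiom (LMS 2). Given $f \colon A \to B$ and $s \colon A \to C$ in $\mathscr{D}$, the plan is to construct a completion $D$ as a ``separated pushout'' of $B$ and $C$ along $A$. Concretely, I would first form the amalgamated union $B \sqcup_A C$ as a locally ringed space, and then identify a closed point $p \in B \setminus A$ with a closed point $q \in C \setminus A$ whenever $\oO_{B,p}$ and $\oO_{C,q}$ coincide as subrings of the common function field $K := K(A)$. Normality, being of finite type, and $2$-dimensionality are local properties and transfer from $B$ and $C$ to $D$; density of $A$ in both $B$ and $C$ yields irreducibility of $D$; and the complement $D \setminus B$ sits inside the codim-$2$ set $C \setminus A$, and symmetrically for $D \setminus C$. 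Hence, once separatedness is checked, $B \to D$ and $C \to D$ are morphisms of $\mathscr{D}$ completing the commutative square.

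The crux, and the expected main obstacle, is verifying that $D$ is separated, which I would do via the valuative criterion. Given a DVR $R$ and two morphisms $\mathrm{Spec}(R) \to D$ that agree on the generic point, one reduces to the case where one morphism factors through $B$ with closed image $p \in B \setminus A$ and the other factors through $C$ with closed image $q \in C \setminus A$; then $R$ dominates both $\oO_{B,p}$ and $\oO_{C,q}$ inside $K$. By Serre's $R_1 + S_2$ criterion each of these $2$-dimensional normal local rings equals the intersection $\bigcap \oO_{A,D'}$ taken over the divisors $D'$ on the respective surface passing through the point. Using that divisors on $A$ extend uniquely to divisors on any codim-$2$ extension, one matches the two sets of divisors and concludes $\oO_{B,p} = \oO_{C,q}$, whence $p \sim q$ in $D$. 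The delicate step is precisely this divisor-matching argument, which expresses the Hartogs-type principle that the local data at a codim-$2$ point is intrinsic to the open surface $A$. An alternative route, should the valuative argument prove cumbersome, is to realize $D$ inside a normalised closure of the diagonal of $A$ in $\bar B \times_k \bar C$ for Nagata compactifications $\bar B \supset B$ and $\bar C \supset C$, thereby getting separatedness for free and reducing the question to controlling the exceptional loci of the projections over $B$ and $C$.
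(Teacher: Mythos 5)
Your handling of (LMS~1) and (LMS~3) is fine and agrees with the paper, and your overall plan for (LMS~2) --- glue $B$ and $C$ along $A$, identify added closed points when appropriate, and verify separatedness by the valuative criterion --- is the same strategy the paper follows (it glues punctured affine charts $U_x^o\subset U_x$ and compares them via Hartogs and Zariski's Main Theorem). The gap is in the one step that carries all the mathematical content: the claim that if a valuation ring $R\subset K$ dominates both $\oO_{B,p}$ and $\oO_{C,q}$, then these local rings coincide. You reduce this to ``matching'' the set of prime divisors of $A$ whose closure in $B$ passes through $p$ with the set of those whose closure in $C$ passes through $q$, but you give no argument for why these sets agree, and the domination hypothesis does not yield it by element-chasing: it only provides the two inclusions $\oO_{B,p}\subset R$ and $\oO_{C,q}\subset R$ into a much larger ring, so an $f\in\oO_{B,p}$ with a pole along a divisor through $q$ cannot be played off against anything in $\oO_{C,q}$. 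Note that without the common dominating valuation the two divisor sets genuinely differ (take $B$ and $C$ to fill in two different punctures of $A$), so extracting the matching from $R$ is exactly the separatedness one is trying to prove; as written the argument is essentially a restatement of the goal. The paper closes this point by a different mechanism: it shows that $U_p^o\cup\{q\}$ is \emph{open} in $C$, after which Hartogs gives $\Gamma(U_p^o\cup\{q\},\oO)=\Gamma(U_p,\oO)$ and Zariski's Main Theorem identifies $q$ with $p$.

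The ``alternative route'' you mention in your last sentence is in fact the one that can be pushed through cleanly, and I would promote it to the main argument. Let $\bar Y$ be the normalisation of the closure of the diagonal $\Delta_A$ in $\bar B\times_k\bar C$ for Nagata compactifications $\bar B$, $\bar C$; both projections are proper, birational, and restrict to isomorphisms over $A$, and the centre $\bar y$ of $R$ on $\bar Y$ lies over both $p$ and $q$. One then checks that the fibre over $p$ is contracted by the second projection to $q$ and vice versa: a curve in $\bar C$ through $q$ whose generic point lies outside $A$ must have generic point outside $C$ (since $C\setminus A$ is finite) and hence be contained in the closed set $\bar C\setminus C$, contradicting $q\in C$. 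Consequently either both fibres equal the single point $\bar y$, in which case both projections are local isomorphisms near $\bar y$ and $\oO_{B,p}=\oO_{\bar Y,\bar y}=\oO_{C,q}$, or they coincide with a common connected curve $E$, in which case $\oO_{B,p}$ and $\oO_{C,q}$ are both the colimit of $\Gamma(V,\oO_{\bar Y})$ over shrinking neighbourhoods $V$ of $E$, hence equal. You should also record the (routine but necessary) verification that your identification space is a scheme, i.e.\ that points with equal local rings have compatibly isomorphic affine neighbourhoods; this again uses Zariski's Main Theorem exactly as in the paper.
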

\begin{proof}
	Condition (LMS 1) is clear. Since the image of a morphism in $\mathscr{D}$ is a dense open subset, (LMS 3) holds. Indeed, let $f,g\colon X_1 \to X_2$ be morphisms such that $f\circ s = g\circ s$, for some open embedding $s\colon X\to X_1$. Then $f =g$, in particular $\Id_{X_2} \circ f = \Id_{X_2} \circ g$.
	
	It remains to consider a diagram of solid arrows 
		\[
		\xymatrix{X \ar[r]^{i_1} \ar[d]_{i_2} & X_1 \ar@{-->}[d]^{j_1} \\ X_2 \ar@{-->}[r]_{j_2} & \ol{X}}
	\]
	and complete it  by dashed arrows to a commutative square.
	
We define $\ol{X}$ by gluing affine open sets. Then we check that it is a normal surface.

For a (closed) point $x\in X_1 \setminus X$, let $U_x \subset X_1$ be an affine open neighbourhood such that $U_x^o:=U_x\setminus \{x\} \subset X$. 
Consider the domain $\wt{X}$ of the birational map $i_1\circ i_2^{-1} \colon X_2 \dashrightarrow X_1$. We define $V_x\subset \wt{X} \subset X_2$ as the preimage of $U_x\subset X_1$. Then $V_x$ is open in $X_2$ and 
admits a morphism $\kappa \colon V_x \to U_x$ extending the inclusion $U_x^o \to U_x$.

Since $V_x\subset X_2$, the domain of $\kappa$ is separated, hence so is $\kappa$,
see \cite[Proposition 9.13]{GorWed}. Further, as $V_x$ is of finite type and $U_x$ is separated, $\kappa$ is of finite type, see \cite[Proposition 10.7]{GorWed}. Since the complement of $X$ in $\wt{X}$ consists of finitely many points, 
so does the complement of $U_x^o$ in $V_x$. It follows that $\kappa^{-1}(x)$ is a finite set. Finally, Hartogs' Lemma 
implies that $\kappa_*\oO_{V_x} \simeq \oO_{U_x}$, hence, by \cite[Theorem 12.83]{GorWed}, $\kappa\colon V_x \to U_x$ is an open immersion, i.e. we have open subsets $U_x^o \subset V_x \subset U_x$ such that $U_x \setminus U_x^o = \{x\}$. It follows that either $V_x \simeq U_x^o$ or $V_x\simeq U_x$.

The set $\mathcal{V}:=X\cup \bigcup_{x\in X_1\setminus X} V_x$ is open in $X_2$. For any $y \in X_2\setminus \mathcal{V}$, let $V_y \subset X_2$ be an affine open neighbourhood such that $V_y^o:=V_y \setminus \{y\} \subset X$.

The set $\mathcal{U}:=\bigcup_{x\in X_1\setminus X} U_x^o \cup \bigcup_{y\in X_2\setminus \mathcal{V}} V_y^o$ is open in $X$. Fix affine open $W_1,\ldots, W_r\subset X$ such that $X = \mathcal{U} \cup W_1 \cup \ldots \cup W_r$. We define $\ol{X}$ as the gluing of $\bigsqcup_{x\in X_1\setminus X} U_x \sqcup \bigsqcup_{y\in X_2 \setminus \mathcal{V}} V_y \sqcup \bigsqcup W_i$ along the open subsets defined as the intersections of $U_x^o$, $V_y^o$ and $W_i$ in $X$.

By construction, $\ol{X}$ is a Noetherian normal surface of finite type over $\Spec k$. It remains to check that $\ol{X}$ is separated. We use the valuative criterion for separatedness \cite[Proposition 7.2.3]{EGAII}. Since $\ol{X}$ is a Noetherian scheme, $\ol{X} \to \Spec (k)$ is quasi-separated \cite[Corollary 10.24]{GorWed}. Let now $R$ be a discrete valuation ring with the field of fractions $K=\textrm{Frac} R$.  Assume that $\xi_1,\xi_2 \colon \Spec R \to \ol{X}$ restrict to $\eta \colon \Spec K \to \ol{X}$.

Denote by $x_1$ and $x_2$ the images of the closed point of $\Spec R$ under $\xi_1$ and $\xi_2$. Since $X_1$ and $X_2$ are separated, we can assume that $x_1 \in \ol{X} \setminus X_2$ and $x_2\in \ol{X} \setminus X_1$. 	Let $U = \Spec A \subset X_1$ be the affine open neighbourhood of $x_1$ that we used in the definition of $\ol{X}$. As before, let $U^o:=U\setminus \{x\}$ be an open subset in $X$.

We claim that $V:=U^o \cup \{x_2\}$ is an open subset in $X_2$. Assume this is not the case. Then the closed set $X_2\setminus U^o$ contains a 1-dimensional component passing through $x_2$. Indeed, if $x_2$ is an isolated point of $X_2\setminus U^o$, there exists an open set $W\subset X_2$ such that $W\cap (X_2\setminus U^o) = \{x_2\}$. It follows that $W\cup U^o = U^o\cup \{x_2\}$ is open in $X_2$, which contradicts our assumption.

Let $C_2\subset X_2$ be the curve in $X_2\setminus U^o$ such that $\{x_2\}\in C_2$. Then $C = C_2\cap S$ is a closed curve in $X$, hence $X\setminus C$ is an open subset in $X_1$. It follows that $U\cap (X\setminus C) \neq \emptyset$, hence also $U^o \cap (X\setminus C) \neq \emptyset$ and $U^o \cap (X_2\setminus C_2) \neq \emptyset$. The contradiction with the definition of $C_2$ implies that $V\subset X_2$ is indeed an open subset.

Hartogs' Lemma  
implies that $\Gamma(V, \oO_V) =A$, hence the affinisation gives a map $V\to U$, which commutes with the inclusion of $U^o$. It follows that $V\simeq U$ is an open subset of $X_2$. Therefore, $\xi_1$ and $\xi_2$ factor via the inclusion of $U$ into $\ol{X}$. As $U$ is separated, $\xi_1=\xi_2$, i.e. $\ol{X}$ is separated.	
\end{proof}

Proposition \ref{prop_left_ms} allows us to consider the category $\wh{\mathscr{D}}:=\mathscr{D}[\mathscr{D}^{-1}]$. We denote by $\mathfrak{s}\colon \mathscr{D} \to \wh{\mathscr{D}}$ the canonical functor.

It is easy to check that  all morphisms in $\mathscr{D}$ form a right multiplicative system too. We choose to prove that they form a left multiplicative system as property (LMS 2) will be useful in the construction of the right adjoint $\mathfrak{s}_*\colon \wh{\mathscr{D}} \to \mathscr{D}$.

\vspace{0.3cm}
\subsection{The codim-2-saturated model of $X$}~\\

We will check that $\mathfrak{s} \colon \mathscr{D}\to \wh{\mathscr{D}}$ admits the right adjoint. To define it we introduce the codim-2-saturated model of $X$.

A normal surface $X$ admits a  quasi-affine open cover, 
$$
X = \varinjlim U,
$$
where the colimit is taken over the category $\mathcal{QA}(X)$ of quasi-affine open subsets of $X$ and open inclusions.

Proposition \ref{prop_aff_of_quasi-affine} allows us to consider the scheme
\begin{equation}\label{eqtn_def_of_X_hat}
\wh{X} =\varinjlim_{\mathcal{QA}(X)} U^{\aff}.
\end{equation}	

Indeed, if $U$ and $V$ are quasi-affine and $U\subset V$ is open, then so is $U^{\aff} \subset V^{\aff}$, hence (\ref{eqtn_def_of_X_hat}) is a gluing of schemes along open subschemes, i.e. a scheme itself, \emph{cf}. \cite[01JC]{stacks-project}.

The open embeddings $U \to U^{\aff}$, see Proposition \ref{prop_aff_of_quasi-affine}, yield a natural transformation of functors on $\mathcal{QA}(X)$, hence a morphism 
\begin{equation} \label{eqtn_incl_to_sat_model}
j_X \colon X \to \wh{X}.
\end{equation}

We check that $\wh{X}$ is a normal codim-2-saturated surface and $j_X$ is a morphism in $\mathscr{D}$.

Note that for a quasi-affine $U$,  $\wh{U} = U^{\aff}$ and $j_U = \rho_U$ is the affinisation morphism.

Surface $X$ admits two morphisms to affine schemes, the affinisation morphism $\rho_X \colon X\to X^{\aff}$ and the structure morphism $\pi \colon X\to \Spec k$.
Since $X$ is separated and of finite type over $k$, so are $\rho_X$ and $\pi$ \cite[Propositions 9.13, 10.7]{GorWed}. Hence, by Nagata's compactification theorem, $\rho_X$ and $\pi$ admit  decompositions 
\begin{equation}\label{eqtn_Nagata}
\xymatrix{X \ar[r]^i \ar[dr]_{\rho_X} & Y \ar[d]^p && X \ar[r]^\alpha \ar[dr]_{\pi} & \ol{Y} \ar[d]^{\beta} \\ & X^{\aff} & &&	 	 \Spec k}
\end{equation}
into open immersions $i$, $\alpha$ and proper morphisms $p$, $\beta$ \cite[Theorem 12.70]{GorWed}. By considering the normalisation, if necessary, one can assume that both $Y$ and $\ol{Y}$ are normal surfaces.

For an open quasi-affine $U\subset X$ and the corresponding $\wh{U} \subset \wh{X}$ denote by $Y_U$, resp. $\ol{Y}_U$, the domain of the birational map $f_U\colon Y \dashrightarrow \wh{U}$, resp. $\ol{f}_U \colon \ol{Y} \dashrightarrow \wh{U}$, which restricts to the identity on $U$ considered as an open subset of both $Y$, resp. $\ol{Y}$, and $\wh{X}$.

\begin{PROP}\label{prop_f_U_proper}
	The morphisms $f_U \colon Y_U \to \wh{U}$, $\ol{f}_U \colon \ol{Y}_U \to \wh{U}$ are proper.
\end{PROP}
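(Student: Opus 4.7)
The plan is to realize each morphism as a base change of a proper map.

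For $f_U$, I claim $Y_U = p^{-1}(\wh{U})$. The inclusion $p^{-1}(\wh{U}) \subseteq Y_U$ holds because $p|_{p^{-1}(\wh{U})}$ extends the affinization $\rho_U \colon U \to \wh{U}$ (using that $p|_U = \rho_X|_U$ factors through $\wh{U}$ by the universal property of the affinization combined with Proposition \ref{prop_aff_of_quasi-affine}(2)), and maximality of $Y_U$ then yields the inclusion. Conversely, on $Y_U$ the morphisms $f_U$ and $p|_{Y_U}$ both land in $X^{\aff}$ and agree on the dense open $U$, hence agree on all of $Y_U$ by separatedness of $X^{\aff}$, forcing $Y_U \subseteq p^{-1}(\wh{U})$. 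Therefore $f_U = p|_{Y_U}$ is the base change of the proper morphism $p$ along the open immersion $\wh{U} \hookrightarrow X^{\aff}$, hence proper.

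For $\ol{f}_U$, the analogous strategy uses the absolute properness $\beta \colon \ol{Y} \to \Spec k$. Let $\ol{Y}^X \subseteq \ol{Y}$ denote the domain of the rational map $\ol{Y} \dashrightarrow X^{\aff}$ extending $\rho_X$, and let $\Gamma \subseteq \ol{Y} \times_k X^{\aff}$ be the scheme-theoretic closure of its graph. The second projection $\ol{Y} \times_k X^{\aff} \to X^{\aff}$ is proper as a base change of $\beta$, so $\Gamma \to X^{\aff}$ is proper as a closed subscheme of a proper morphism, and its base change $\Gamma_U := \Gamma \times_{X^{\aff}} \wh{U} \to \wh{U}$ is therefore proper. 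I aim to identify $\Gamma_U$ with $\ol{Y}_U$ via the first projection, which would realize $\ol{f}_U$ as a proper morphism.

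On $\ol{Y}^X$, the projection $\Gamma \to \ol{Y}^X$ is an isomorphism with inverse given by the graph, and $\ol{Y}_U = (\ol{Y}^X \to X^{\aff})^{-1}(\wh{U})$, so the graph embeds $\ol{Y}_U$ into $\Gamma_U$ as a section of the first projection. To see that this section is surjective, I argue that the fibre of $\Gamma \to \ol{Y}$ is empty over every point $y \in D^X := \ol{Y} \setminus \ol{Y}^X$. By Hartogs' Lemma on the normal surface $\ol{Y}$ applied to the generators of $B = H^0(X^{\aff}, \mathcal{O})$ viewed as rational functions on $\ol{Y}$, the locus $D^X$ has pure codimension one and equals the union of the pole divisors of these generators. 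Any lift $\Spec R \to \Gamma$ centred at $y \in D^X$ would provide a ring map $B \hookrightarrow R$; however, for a pole divisor component through $y$ with local equation $g \in \mathfrak{m}_y$, one has $v_R(g) > 0$ via the local homomorphism $\mathcal{O}_{\ol{Y}, y} \hookrightarrow R$, and writing the problematic generator as $b_i = f/g^n$ with $n > 0$ and $f$ a unit at the generic point of that component yields $v_R(b_i) < 0$, contradicting $b_i \in R$.

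The main obstacle is to handle codimension-two closed points $y \in D^X$: here $R$ can be a divisorial valuation corresponding to a prime divisor on a blow-up of $\ol{Y}$, and one must verify that the pole of $b_i$ is still detected by $v_R$. The argument nonetheless works because $g \in \mathfrak{m}_y \subseteq \mathfrak{m}_R$ forces $v_R(g) > 0$ for any DVR dominating $\mathcal{O}_{\ol{Y}, y}$, so the valuation count $v_R(b_i) = v_R(f) - n\, v_R(g) < 0$ goes through uniformly in both codimensions.
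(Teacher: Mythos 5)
Your argument breaks at its very first step: you treat $\wh{U}=U^{\aff}$ as an open subscheme of $X^{\aff}$, citing Proposition \ref{prop_aff_of_quasi-affine}(2). That proposition is stated (and is only true) when the \emph{ambient} surface is itself quasi-affine, whereas in Proposition \ref{prop_f_U_proper} the surface $X$ is an arbitrary normal surface. If, say, $X$ is proper, then $X^{\aff}=\Spec k$ while $\wh{U}$ is a two-dimensional affine surface, so $U^{\aff}\to X^{\aff}$ collapses dimension and is nowhere near an open immersion; the same problem occurs when $X^{\aff}$ is a curve. Consequently $p^{-1}(\wh{U})$ is not defined, the identification $Y_U=p^{-1}(\wh{U})$ and the description of $f_U$ as a base change of $p$ along an open immersion both collapse, and in the second half $\Gamma_U=\Gamma\times_{X^{\aff}}\wh{U}$ becomes (in the proper case) the four-dimensional scheme $\Gamma\times_k\wh{U}$, which cannot be $\ol{Y}_U$. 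There is also an independent error in your valuative argument: the conclusion $v_R(b_i)=v_R(f)-n\,v_R(g)<0$ requires $v_R(f)=0$, but you only know that $f$ is a unit at the generic point of the pole component, not along $R$. For $x/y$ on $\mathbb{A}^2$ and the divisorial valuation measuring order of vanishing at the origin one gets $v_R(x/y)=0$, and indeed the graph closure in $\mathbb{A}^2\times\mathbb{A}^1$ contains an entire line over the origin; so the fibre of $\Gamma\to\ol{Y}$ over a codimension-two point of the pole divisor can be positive-dimensional, and your ``uniform'' count fails exactly there.

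The idea your proof is missing is the one the paper uses. Both $Y$ and $\wh{U}$ map to $X^{\aff}$ (no open-immersion hypothesis is needed for this), so one may form $Y\times_{X^{\aff}}\wh{U}$ and take the closure $Z$ of the diagonally embedded $U$. The essential geometric input is that $\wh{U}\setminus U$ is \emph{finite} (Proposition \ref{prop_aff_of_quasi-affine}(1) applied to the quasi-affine $U$): the fibre of $Z\to Y$ over any $y\in Y\setminus U$ projects into $\wh{U}\setminus U$, so $p_1|_Z$ is quasi-finite and birational, hence an open immersion by Zariski's main theorem, with image $Y_U$. Then $f_U$ is the composite of a closed immersion into $Y\times_{X^{\aff}}\wh{U}$ with the proper projection $p_2$ (a base change of $p$), hence proper. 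Your proposal never invokes the finiteness of $\wh{U}\setminus U$, which is precisely what prevents the graph closure from acquiring extra fibre components and is the crux of the statement.
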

\begin{proof}
	Let $p_1$ and $p_2$ denote the canonical projections from $Y\times_{X^{\aff}} \wh{U}$ to  $Y$ and $\wh{U}$. The graph of $f_U$ is the closure $Z$ of the diagonal embedding of $U$ into $Y\times_{X^{\aff}} \wh{U}$. 
	
	For any $y\in Y\setminus U $ the projection $p_2(p_1^{-1}(y) \cap Z)$ is contained in $\wh{U} \setminus U$. Indeed, if it is not the case then $f_U^{-1} \colon \wh{U} \dashrightarrow Y$ would not be the identity on $U$. It follows that $p_1|_Z \colon Z\to Y$ is a quasi-finite birational morphism. Since $Y$ is normal, the Zariski's main theorem \cite[Corollary 12.88]{GorWed} implies that $Z\to Y$ is an open inclusion. Hence, $Z = Y_U\subset Y$ is the domain of $f_U$. It follows that $f_U \colon Y_U \to Y\times_{X^{\aff}} \wh{U} \xrightarrow{p_2} \wh{U}$ is the composition of the closed embedding of $Z$ and a proper morphism $p_2$. Hence, $f_U$ is proper.
	
	An analogous proof of the properness of $\ol{f}_U$ is omitted.	
\end{proof}

Let $V\subset Y$, respectively $\ol{V}\subset \ol{Y}$ be the domain of the birational map $Y\dashrightarrow \wh{X}$, resp. $\ol{Y} \dashrightarrow \wh{X}$, which restricts to the identity on $X$. Since $\wh{X}$ is covered by affinisations of quasi-affine open subsets of $X$,  Proposition \ref{prop_f_U_proper} implies that 
$$
f\colon V\to \wh{X}, \quad \ol{f}\colon \ol{V} \to \wh{X}
$$ 
are proper.

\begin{PROP}\label{prop_finite_cov_of_X_hat}
	The scheme $\wh{X}$ admits a finite open covering $\wh{X} = \bigcup_{i=1}^n \wh{U_i}$, for open quasi-affine $U_i\subset X$.
\end{PROP}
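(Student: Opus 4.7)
My plan is to establish quasi-compactness of $\wh{X}$; since the $\wh{U}$'s form an open cover by the gluing construction, a finite subcover will then exist automatically.

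First I would spell out that the colimit (\ref{eqtn_def_of_X_hat}) together with Proposition \ref{prop_aff_of_quasi-affine}(2) realises $\wh{X}$ as the union of the open subschemes $\wh{U} = U^{\aff}$, $U \in \mathcal{QA}(X)$, glued along open embeddings. Thus $\{\wh{U}\}_{U \in \mathcal{QA}(X)}$ is a genuine open cover of $\wh{X}$ as a topological space, and the task reduces to finding finitely many $U_1,\ldots,U_n$ whose $\wh{U_i}$ exhaust $\wh{X}$.

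Next, I would exploit the Nagata compactification $\alpha \colon X \hookrightarrow \ol{Y}$ from (\ref{eqtn_Nagata}) and the proper morphism $\ol{f} \colon \ol{V} \to \wh{X}$ introduced after Proposition \ref{prop_f_U_proper}. Since $\ol{Y}$ is proper over $\Spec k$ and of finite type, it is a Noetherian scheme, so its open subset $\ol{V}$ is quasi-compact. The crucial step is surjectivity of $\ol{f}$: for each $U \in \mathcal{QA}(X)$ one has $\ol{Y}_U \subset \ol{V}$ with $\ol{f}|_{\ol{Y}_U}$ factoring through the open inclusion $\wh{U} \hookrightarrow \wh{X}$ as $\ol{f}_U$. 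Proposition \ref{prop_f_U_proper} gives properness of $\ol{f}_U$, and since $\ol{f}_U$ restricts to the identity on the dense open $U \subset \wh{U}$, its image is both closed and dense in the irreducible scheme $\wh{U}$, hence all of $\wh{U}$. Taking the union over $U$ yields $\ol{f}(\ol{V}) = \bigcup_U \wh{U} = \wh{X}$.

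Quasi-compactness of $\wh{X}$ then follows as the continuous image of the quasi-compact $\ol{V}$, and the tautological open cover $\{\wh{U}\}_{U \in \mathcal{QA}(X)}$ admits a finite subcover. The main bookkeeping obstacle I anticipate is verifying the compatibility $\ol{f}|_{\ol{Y}_U} = \iota_U \circ \ol{f}_U$ together with the inclusion $\ol{Y}_U \subset \ol{V}$, which requires unwinding the maximal-domain descriptions of the birational maps $\ol{Y} \dashrightarrow \wh{U}$ and $\ol{Y} \dashrightarrow \wh{X}$. This is routine once one observes that both rational maps agree with the identity on $U \subset X$, so that any regular extension to $\ol{Y}_U$ lands in $\wh{U} \subset \wh{X}$ and hence enlarges the domain of the rational map to $\wh{X}$; the universal property of the maximal domain then forces $\ol{Y}_U \subset \ol{V}$ with the asserted factorisation.
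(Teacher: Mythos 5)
Your argument is correct and is essentially the paper's own proof: the paper likewise pulls the cover $\{\wh{U}\}$ back along the proper (hence surjective) birational morphism from the quasi-compact domain $V\subset Y$ of $Y\dashrightarrow\wh{X}$ and extracts a finite subcover there, which is the same mechanism as your ``image of a quasi-compact space'' phrasing. The only cosmetic difference is that you work with $\ol{V}\subset\ol{Y}$ (the compactification over $\Spec k$) rather than $V\subset Y$ (the one over $X^{\aff}$); both are available and interchangeable here.
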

\begin{proof}
	By construction, $\wh{X}$ is a union of $\wh{U}$, where $U$ runs over open quasi-affine subsets in $X$.
	Since $V$ is quasi-compact, there exists a finite open subcovering $\{f^{-1}(\wh{U_1}), \ldots, f^{-1}(\wh{U_n})\}$ of $V$. As the birational morphism $f$ is proper, hence surjective, $\{\wh{U_1}, \ldots, \wh{U_n}\}$ is an affine open covering of $\wh{X}$.
\end{proof}

\begin{THM}\label{thm_prop_of_X_hat}
	Consider a normal surface $X$. Then $\wh{X}$ is a separated normal surface which admits $X$ as an open subset with complement of codimension two, i.e. $j_X\colon X\to \wh{X}$ is a morphism in $\mathscr{D}$.
\end{THM}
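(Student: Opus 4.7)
The plan is to deduce the local properties of $\widehat{X}$ (normality, irreducibility, finite type, the open embedding $j_X$, codimension-two complement) from the construction of $\widehat{X}$ as a gluing of affinisations, and then to verify separatedness globally via the valuative criterion, using the proper morphism $f\colon V\to \widehat{X}$ introduced after Proposition~\ref{prop_f_U_proper}.

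First I would note that on each quasi-affine $U\in \mathcal{QA}(X)$ the morphism $j_X$ restricts to the affinisation $U\to U^{\aff}$, which is an open embedding with complement of codimension two by Proposition~\ref{prop_aff_of_quasi-affine}(1). Since these restrictions are compatible with the gluing defining $\widehat{X}$ (they form the natural transformation yielding (\ref{eqtn_incl_to_sat_model})), $j_X$ is an open embedding whose complement has codimension two. As each $U^{\aff}$ is a normal affine $k$-scheme of finite type and $\widehat{X}$ admits a finite open covering by such pieces (Proposition~\ref{prop_finite_cov_of_X_hat}), $\widehat{X}$ is a Noetherian normal $k$-scheme of finite type. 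Irreducibility and two-dimensionality then follow because the irreducible two-dimensional $X$ is dense in $\widehat{X}$.

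The main obstacle is separatedness, which I would establish by the valuative criterion. Let $R$ be a discrete valuation ring with fraction field $K$, and suppose $\xi_1,\xi_2\colon \Spec R\to \widehat{X}$ agree on $\eta=\Spec K$; write $z$ for the common image of $\eta$. If $z$ is a closed point of $\widehat{X}$, both $\xi_i$ are topologically constant at $z$ and the corresponding local ring map $\oO_{\widehat{X},z}\to R$ is determined by its composition with the injection $R\hookrightarrow K$, forcing $\xi_1=\xi_2$. Otherwise $z$ has codimension at most one in $\widehat{X}$, so it cannot lie in the codimension-two set $\widehat{X}\setminus X$; hence $z\in X\subset V$, where $V\subset Y$ is the domain of the birational map $Y\dashrightarrow \widehat{X}$ restricting to the identity on $X$.

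The valuative criterion for properness applied to the proper morphism $f\colon V\to \widehat{X}$ then produces unique lifts $\widetilde{\xi}_i\colon \Spec R\to V$ of $\xi_i$ extending the common map $\eta\to z\in V$. Since $Y$ is proper over the separated scheme $X^{\aff}$, $Y$ is separated, and so is its open subscheme $V$. The valuative criterion for separatedness applied to $V$ yields $\widetilde{\xi}_1=\widetilde{\xi}_2$, and postcomposing with $f$ gives $\xi_1=\xi_2$, completing the verification.
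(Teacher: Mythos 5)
Your proof is correct and follows essentially the same strategy as the paper: the structural claims (normality, finite type, irreducibility, the open embedding with finite closed complement) are read off the affine charts $U^{\aff}$ via Propositions \ref{prop_aff_of_quasi-affine} and \ref{prop_finite_cov_of_X_hat}, and separatedness is checked by the valuative criterion using the properness of $f\colon V\to \wh{X}$ to lift both valuations into $V$ and the separatedness of $Y$ over $X^{\aff}$ to identify the lifts. The only substantive difference is in how you set up the valuative criterion: the paper works relative to $X^{\aff}$ and asserts directly that $u\colon \Spec K\to \wh{X}$ factors through $X$, whereas you first split off the case where the generic point of $\Spec R$ lands on a closed point $z$ of $\wh{X}$ (where lifting to $V$ may be unavailable if $z\notin X$) and dispose of it by the elementary observation that a topologically constant map $\Spec R\to \wh{X}$ is determined by the induced map $\oO_{\wh{X},z}\to R\hookrightarrow K$. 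This extra case is a genuine refinement: it justifies a reduction the paper leaves implicit, at no real cost to the length of the argument.
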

\begin{proof}
	By \cite[Corollary 6.3]{Schroer}, the affinisation of any normal surface is again normal. Hence, $\wh{X}$ is normal. By Proposition \ref{prop_aff_of_quasi-affine}, for any $U\subset X$ open quasi-affine, $\wh{U}\setminus U$ is a union of finitely many points. Since $\wh{X}$ is covered by finitely many open subset of the form $\wh{U}$ (see Proposition \ref{prop_finite_cov_of_X_hat}), $\wh{X}$ is of finite type over $k$ and $\wh{X}\setminus X$ is also a finite union of closed points, i.e. a closed subset of $\wh{X}$.
	Since $X$ is irreducible, so is $\wh{X}$. It remains to check that $\wh{X}$ is separated. To this end we shall show that $\wh{\rho_X} \colon \wh{X} \to X^{\aff}$ is separated. 
	
	We shall use the valuative criterion \cite[Theorem 15.9]{GorWed}. Fix a discrete valuation ring $R$ with the field of fractions $K$ and consider 
	$v\colon \Spec R \to X^{\aff}$ such that $v|_{\Spec K}$ decomposes as $\wh{\rho_X} \circ u$:
	\[
	\xymatrix{& V \ar[r]^i \ar[d]^f & Y \ar[ddl]^p \\
		\Spec K \ar[r]^u \ar[d] \ar[ur]^{\wt{u}}& \wh{X} \ar[d]^{\wh{\rho_X}} & \\
		\Spec R \ar[r]^v \ar@{-->}[ur]^{\wt{v}_i}& X^{\aff}.&}
	\]
	Assume that $\wt{v}_1, \wt{v}_2 \colon \Spec R\to \wh{X}$ are morphisms such that $\wh{\rho_X} \circ \wt{v}_1 = v = \wh{\rho_X} \circ \wt{v}_2$. Since $j_X\colon X\to \wh{X}$ is an open embedding with complement of codimension two, morphism $u$ admits a decomposition $u = j_X\circ u'$, for some $u' \colon \Spec K \to X$. Consider the composite $\wt{u} \colon \Spec K \xrightarrow{u'}X \to V$. Morphism $f$ is proper, hence there exist $w_1, w_2 \colon \Spec R \to V$ such that $f\circ w_i = \wt{v}_i$.  Then $p \circ i \circ w_1 = v = p \circ i \circ w_2$. The properness of $p$ implies that $i\circ w_1 = i\circ w_2$. As $i$ is an open embedding, $w_1 = w_2$, hence $\wt{v}_1 = f\circ w_1 = f\circ w_2 = \wt{v}_2$.
\end{proof}

\begin{PROP}\label{prop_equiv_cond_for_X=X_hat}
	For a normal surface $X$, $j_X\colon X\to \wh{X}$ is an isomorphism if and only if for any quasi-affine open subset $i \colon U\to X$  its affinisation $\wh{U}$ admits an open embedding $i'\colon \wh{U} \to X$ such that $i'|_U = i$. 
\end{PROP}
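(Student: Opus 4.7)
The plan is to exploit the universal-property / gluing description of $\wh{X}$. For each quasi-affine open $U\subset X$, write $\iota_U\colon U\to \wh{U}$ for the open embedding of Proposition \ref{prop_aff_of_quasi-affine} and $\kappa_U\colon \wh{U}\to\wh{X}$ for the canonical open embedding coming from the colimit presentation (\ref{eqtn_def_of_X_hat}); by construction of $j_X$ from the natural transformation $\{\iota_U\}$, one has the commutative square $j_X\circ i=\kappa_U\circ\iota_U$ for every $U\in\mathcal{QA}(X)$.

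The ``only if'' direction is then immediate: assuming $j_X$ is an isomorphism, set $i':=j_X^{-1}\circ\kappa_U$. This is an open embedding (composition of the open embedding $\kappa_U$ with an isomorphism), and $i'\circ\iota_U=j_X^{-1}\circ\kappa_U\circ\iota_U=j_X^{-1}\circ j_X\circ i=i$.

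For the ``if'' direction, I plan to show $j_X$ is surjective on points; combined with $j_X$ being an open embedding (Theorem \ref{thm_prop_of_X_hat}), this forces $j_X$ to be an isomorphism. The crucial step is to compare the two morphisms $j_X\circ i'_U$ and $\kappa_U$ from $\wh{U}$ to $\wh{X}$. Both pre-composed with $\iota_U$ equal $j_X\circ i$, so the two morphisms agree on the dense open $U\subset\wh{U}$; since $\wh{U}$ is integral and $\wh{X}$ is separated (Theorem \ref{thm_prop_of_X_hat}), they coincide. This forces $\kappa_U(\wh{U})\subset j_X(X)$ as subsets of $\wh{X}$. Finally, Proposition \ref{prop_finite_cov_of_X_hat} produces a finite cover $\wh{X}=\bigcup_{i=1}^{n}\kappa_{U_i}(\wh{U_i})$, whence $\wh{X}=j_X(X)$.

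The main technical step is the compatibility identity $j_X\circ i'_U=\kappa_U$ in the backward direction; it relies on the (nontrivial) separatedness of $\wh{X}$ established in Theorem \ref{thm_prop_of_X_hat}, together with the density of $U$ in $\wh{U}$. Once this is in place, the argument is formal and the forward direction is automatic from the universal property.
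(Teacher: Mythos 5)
Your proof is correct and follows essentially the same route as the paper's (much terser) argument: the forward direction transports the open embedding $\kappa_U\colon\wh{U}\to\wh{X}$ through $j_X^{-1}$, and the backward direction deduces surjectivity of $j_X$ from the fact that $\wh{X}$ is covered by the images of the $\wh{U}$. The one detail you supply that the paper leaves implicit --- identifying $j_X\circ i'_U$ with $\kappa_U$ via separatedness of $\wh{X}$ and density of $U$ in the integral scheme $\wh{U}$ --- is exactly what is needed to make the paper's appeal to ``the definition of $\wh{X}$'' rigorous.
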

\begin{proof}
	If $\wh{U}$ is an open subset of $X$, for any quasi-affine
	open $U\subset X$, then the definition (\ref{eqtn_def_of_X_hat}) of $\wh{X}$ implies that $\wh{X} \subset X$.
	
	In the opposite direction, let $j_X \colon X\to \wh{X}$ be an isomorphism. Consider $U\subset X$ quasi-affine. Then $\wh{U}$ is an open subset of $\wh{X} \simeq X$.
\end{proof}

\begin{PROP}\label{prop_X_hat_sat}
	Given a normal surface $X$, $\wh{X}$ is codim-2-saturated.
\end{PROP}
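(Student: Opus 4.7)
The plan is to assume $\wh{X}$ is not codim-2-saturated and derive a contradiction. So I would pick a morphism $j\colon \wh{X} \to Z$ in $\mathscr{D}$ that is not an isomorphism, choose a closed point $z \in Z \setminus j(\wh{X})$, take an affine open neighbourhood $W$ of $z$ in $Z$, and show that $W$ is forced to lie inside $\wh{X}$, contradicting $z \in W$.

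To carry this out, I would set $V := W \cap \wh{X}$ and $U := V \cap X$. The first key step is to apply Hartogs' Lemma twice: once along the codimension-two inclusion $V \subset W$ (using normality of the affine $W$) and once along $U \subset V$ (whose complement lies in the finite set $\wh{X} \setminus X$). This gives $H^0(U, \oO_U) = H^0(V, \oO_V) = H^0(W, \oO_W)$, whence $U^{\aff} = V^{\aff} = W$. Since $U$ is quasi-affine as an open subset of the affine $W$, we have $U \in \mathcal{QA}(X)$, and the defining colimit (\ref{eqtn_def_of_X_hat}) of $\wh{X}$ then yields a canonical open embedding $W = U^{\aff} \hookrightarrow \wh{X}$.

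Composing this embedding with $j$ produces a morphism $W \to Z$ which, restricted to the dense open $U \subset W$, coincides with the tautological inclusion $W \subset Z$, since both reduce to $U \subset X \subset \wh{X} \subset Z$ by the compatibility built into the construction of $\wh{X}$. Because $Z$ is separated and $W$ is reduced, two morphisms from $W$ to $Z$ agreeing on the dense open $U$ must be equal, so the image of $W$ lies in $\wh{X}$. In particular $z \in \wh{X}$, contradicting the choice of $z$.

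The main subtlety I expect is verifying cleanly that the two maps $W \to Z$ agree on $U$: this amounts to tracing through how the open embedding $U^{\aff} \hookrightarrow \wh{X}$ coming from the colimit is compatible, on $U$, with both the canonical embedding $U \hookrightarrow U^{\aff}$ (Proposition \ref{prop_aff_of_quasi-affine}(1)) and with the structural map $X \to \wh{X}$. Once this compatibility is pinned down, separatedness of $Z$ does the rest, and everything else is a routine application of Hartogs' Lemma combined with the fact that open subsets of affines are quasi-affine.
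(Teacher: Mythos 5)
Your argument is correct and is essentially the paper's own proof: both identify an affine open neighbourhood $W$ of a putative extra point as the affinisation of the quasi-affine open $W\cap X$ via Hartogs' Lemma, and then invoke the colimit definition (\ref{eqtn_def_of_X_hat}) of $\wh{X}$ to see that $W$ already sits inside $\wh{X}$. The only differences are cosmetic (you argue by contradiction at a single point and spell out the compatibility/separatedness step that the paper leaves implicit), so no further comparison is needed.
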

\begin{proof}
	Let $j \colon \wh{X} \to Y$ be a morphism in $\mathscr{D}$. An affine open $U\subset Y$ is the affinisation of the quasi-affine $U\cap \wh{X}$. Hence, by Proposition \ref{prop_equiv_cond_for_X=X_hat}, $U$ admits an open embedding into $ \wh{X}$. Since $U$ was arbitrary, $Y$ is an open subset of $\wh{X}$. Hence, $j$ is an isomorphism.
\end{proof}
\begin{PROP}\label{prop_map_to_X_hat}
	Given a morphism $j\colon X\to Y$ in $\mathscr{D}$, there exists a unique $i\colon Y\to\wh{X}$ such that $i \circ j = j_X$.
\end{PROP}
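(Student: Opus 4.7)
The plan is to deduce the existence of $i$ from property (LMS 2) of Proposition \ref{prop_left_ms} combined with the codim-2-saturat\-ed\-ness of $\wh{X}$ established in Proposition \ref{prop_X_hat_sat}, and to obtain uniqueness from separatedness of $\wh{X}$ (Theorem \ref{thm_prop_of_X_hat}) together with density of $j(X)$ in $Y$.

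For existence, I would apply (LMS 2) to the span $\wh{X}\xleftarrow{j_X} X \xrightarrow{j} Y$: since $j_X\in \mathscr{D}$, the axiom produces a normal surface $\ol{X}$ fitting into a commutative square
\[
\xymatrix{X \ar[r]^{j} \ar[d]_{j_X} & Y \ar[d]^{t}\\ \wh{X} \ar[r]_{s} & \ol{X} }
\]
with $t\colon Y\to \ol{X}$ in $\mathscr{D}$. The construction of the completion in the proof of Proposition \ref{prop_left_ms} is symmetric in the two given arrows, so $s\colon \wh{X}\to \ol{X}$ is an open embedding with complement of codimension two as well, i.e.\ $s\in \mathscr{D}$. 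Now Proposition \ref{prop_X_hat_sat} says that any morphism in $\mathscr{D}$ with domain $\wh{X}$ is an isomorphism, hence $s$ is invertible. Setting $i := s^{-1}\circ t\colon Y\to \wh{X}$ gives a morphism of schemes with
\[
i\circ j \;=\; s^{-1}\circ t\circ j \;=\; s^{-1}\circ s\circ j_X \;=\; j_X,
\]
as required.

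For uniqueness, suppose $i_1,i_2\colon Y\to \wh{X}$ both satisfy $i_k\circ j = j_X$. They then coincide on the open subscheme $j(X)\subset Y$, which is dense because $Y$ is irreducible and $j(X)$ is non-empty open. Since $\wh{X}$ is separated by Theorem \ref{thm_prop_of_X_hat}, the equaliser of $i_1$ and $i_2$ is closed in $Y$; as $Y$ is reduced and this closed subscheme contains the dense $j(X)$, it must equal $Y$, so $i_1=i_2$.

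The only step requiring a little care is the observation that in the (LMS 2) square both dashed arrows land in $\mathscr{D}$, not just the one guaranteed by the axiom. This is not a formal consequence of (LMS 2) in general, but it is immediate from the specific gluing construction used in the proof of Proposition \ref{prop_left_ms}, which treats the two incoming arrows on equal footing; once that is recorded, the rest of the argument is routine.
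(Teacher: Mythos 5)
Your argument is correct and is essentially the paper's own proof: apply (LMS~2) to the span $\wh{X}\xleftarrow{j_X}X\xrightarrow{j}Y$, invoke Proposition~\ref{prop_X_hat_sat} to invert the arrow out of $\wh{X}$, and get uniqueness from density of $X$ in $Y$. Your one point of ``extra care'' is actually automatic here, since every morphism of $\mathscr{D}$ is by definition an open inclusion with complement of codimension two, so the arrow $\wh{X}\to\ol{X}$ produced by the completion lies in $\mathscr{D}$ without appealing to the details of the gluing construction.
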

\begin{proof}
	By Proposition \ref{prop_left_ms}, the diagram of solid arrows 
	\[
	\xymatrix{X\ar[r]^j \ar[d]_{j_X} & Y \ar@{-->}[d]^f \\ \wh{X} \ar@{-->}[r]_g & Z}
	\]
	can be completed to a commutative square with dashed arrows added. By Proposition \ref{prop_X_hat_sat}, $g$ is an isomorphism. Hence, $g^{-1}\circ f\colon Y \to \wh{X}$ is the sought morphism. 
	
	Morphism $i$ is unique because it is fixed on a dense open subset $X\subset Y$ with complement of codimension two. 
\end{proof}

\vspace{0.3cm}
\subsection{The right adjoint $\mathfrak{s}_* \colon \wh{\mathscr{D}} \to \mathscr{D}$}~\\

A roof $s^{-1} f$ in $\Hom_{\wh{\mathscr{D}}}(X, Y)$ gives a commutative diagram
\begin{equation}\label{eqtn_morphism_from_roof}
\xymatrix{\wh{X} & \wh{Y}\ar[l]_g & \\
X \ar[u]^{j_X} \ar[r]_f & Z \ar[u]_j & Y \ar[l]_s \ar[ul]_{j_Y}}
\end{equation}
where $j$, respectively $g$, are the morphisms given by Proposition \ref{prop_map_to_X_hat} for $s\colon Y \to Z$, respectively $j\circ f\colon X\to \wh{Y}$. Note that, by Proposition \ref{prop_X_hat_sat}, $g$ is an isomorphism. 

Consider functor $\mathfrak{s}_*\colon \wh{\mathscr{D}} \to \mathscr{D}$ which maps an object $X$ to $\wh{X}$ and a roof $s^{-1}f\colon X\to Y$ to $g^{-1}$, where $g\colon \wh{Y} \to \wh{X} $ is as in (\ref{eqtn_morphism_from_roof}).

It is immediate to check that $\mathfrak{s}_*$ is well-defined on equivalence classes of roofs and $\mathfrak{s}_*$ is indeed a functor.

\begin{THM}\label{thm_adj_unit}
	Functor $\mathfrak{s}_*$ is right adjoint to $\mathfrak{s}\colon \mathscr{D} \to \wh{\mathscr{D}}$. For any $X\in \mathscr{D}$, $j_X \colon X\to \wh{X} = \mathfrak{s}_*\mathfrak{s}(X)$ is the adjunction unit.
\end{THM}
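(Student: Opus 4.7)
The goal is to exhibit, for every $X\in\mathscr{D}$ and $Y\in\wh{\mathscr{D}}$, a bijection
\[
\Phi_{X,Y}\colon \Hom_{\wh{\mathscr{D}}}(\mathfrak{s}(X),Y)\xrightarrow{\simeq}\Hom_{\mathscr{D}}(X,\mathfrak{s}_*(Y))
\]
natural in both variables, with $\Phi_{X,X}(\Id_{\mathfrak{s}(X)})=j_X$; this would simultaneously establish the adjunction $\mathfrak{s}\dashv\mathfrak{s}_*$ and identify the unit. The plan is to read everything off Proposition \ref{prop_map_to_X_hat} together with the formula for $\mathfrak{s}_*$ in diagram (\ref{eqtn_morphism_from_roof}); no further geometric input should be required.

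I would set $\Phi_{X,Y}(s^{-1}f) := j\circ f$, where for the roof $X\xrightarrow{f}Z\xleftarrow{s}Y$ the morphism $j\colon Z\to\wh{Y}$ is the unique one provided by Proposition \ref{prop_map_to_X_hat} and satisfies $j\circ s=j_Y$. Equivalently $\Phi_{X,Y}(\rho)=\mathfrak{s}_*(\rho)\circ j_X$, as is seen by composing the identity $g\circ j\circ f = j_X$ of (\ref{eqtn_morphism_from_roof}) with $g^{-1}$. The candidate inverse $\Psi_{X,Y}$ sends $h\colon X\to\wh{Y}$ to the class $j_Y^{-1}h$ of the roof $X\xrightarrow{h}\wh{Y}\xleftarrow{j_Y}Y$.

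Checking that $\Phi$ and $\Psi$ are mutually inverse should be direct: for $\Phi\circ\Psi=\Id$, the roof $\Psi(h)$ has codomain $\wh{Y}$ and backward leg $j_Y$, so uniqueness in Proposition \ref{prop_map_to_X_hat} forces the associated $j$ to be $\Id_{\wh{Y}}$; for $\Psi\circ\Phi=\Id$, the roofs $s^{-1}f$ and $j_Y^{-1}(j\circ f)$ are linked via diagram (\ref{eqtn_equiv_of_roofs}) with $D'=Z$, $D''=\wt{D}=\wh{Y}$ and mediating arrows $j$ and $\Id_{\wh{Y}}$, commutativity being immediate from $j\circ s=j_Y$. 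Naturality in $Y$ then follows from $\Phi=(-\circ j_X)\circ\mathfrak{s}_*$ and the functoriality of $\mathfrak{s}_*$, while naturality in $X$ reduces to the commutativity of
\[
\xymatrix{X'\ar[r]^{j_{X'}}\ar[d]_\alpha & \wh{X'}\ar[d]^{\mathfrak{s}_*\mathfrak{s}(\alpha)}\\ X\ar[r]_{j_X} & \wh{X}}
\]
for $\alpha\colon X'\to X$ in $\mathscr{D}$, which is the very identity produced by applying $\mathfrak{s}_*$ to the roof $\Id_X^{-1}\alpha$. Finally, for the identity roof on $X$ one has $j=j_X$, so $\Phi_{X,X}(\Id_{\mathfrak{s}(X)})=j_X$. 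The main point of caution is the well-definedness of $\Phi$ on equivalence classes of roofs, but this should reduce cleanly to the uniqueness clause in Proposition \ref{prop_map_to_X_hat} and is not expected to present a serious obstacle.
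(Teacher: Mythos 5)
Your proposal is correct and follows essentially the same route as the paper: the paper's proof defines the adjunction bijection by $s^{-1}f\mapsto g^{-1}j_X=jf$ with inverse $h\mapsto j_Y^{-1}h$, exactly your $\Phi$ and $\Psi$, and verifies they are mutually inverse using the uniqueness clause of Proposition \ref{prop_map_to_X_hat}. You merely spell out the well-definedness, the roof-equivalence diagram, and naturality in more detail than the paper does.
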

\begin{proof}
	We check that for any $X, Y \in \mathscr{D}$,
	$$
	i\colon \Hom_{\wh{\mathscr{D}}}(X,Y) \to \Hom_{\mathscr{D}}(X, \wh{Y}),\quad s^{-1}f \mapsto g^{-1}j_X = jf
	$$
	is a bijection, where $j$ and $g$ are as in (\ref{eqtn_morphism_from_roof}).
	
	The inverse of $i$ maps $h\in \Hom_{\mathscr{D}}(X, \wh{Y})$ to $j_{Y}^{-1}h$. Indeed, $i^{-1}i(s^{-1}f) = i^{-1}(jf) = j_Y^{-1}jf$ is a roof equivalent to $s^{-1}f$ while $ii^{-1}(h) = i(j_Y^{-1}h) = \wt{g}^{-1}j_X = h$, where $\wt{g} \colon \wh{Y} \to \wh{X}$ is the unique map such that $\wt{g}h = j_X$.
\end{proof}

\begin{PROP}
	Functor $\mathfrak{s}_* \colon \wh{\mathscr{D}}\to \mathscr{D}$ is fully faithful.
\end{PROP}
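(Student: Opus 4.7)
The approach is to use the standard categorical criterion that a right adjoint is fully faithful if and only if the counit of the adjunction is an isomorphism. Combined with Theorem \ref{thm_adj_unit}, this reduces the proof to showing that the counit
\[
\varepsilon_X \colon \mathfrak{s}(\mathfrak{s}_*(X)) = \mathfrak{s}(\wh{X}) \to X
\]
is an isomorphism in $\wh{\mathscr{D}}$ for every object $X$.

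My first step would be to pin down the counit explicitly. By the defining property of $\varepsilon_X$, it is the image of $\Id_{\wh{X}} \in \Hom_{\mathscr{D}}(\wh{X}, \wh{X})$ under the inverse of the bijection $i \colon \Hom_{\wh{\mathscr{D}}}(\wh{X}, X) \to \Hom_{\mathscr{D}}(\wh{X}, \wh{X})$ of Theorem \ref{thm_adj_unit}. The explicit inverse described in the proof of that theorem sends $h \in \Hom_{\mathscr{D}}(X, \wh{Y})$ to the roof $j_Y^{-1} h$; specialising to $Y = X$ and $h = \Id_{\wh{X}}$, the counit $\varepsilon_X$ is represented by the roof
\[
\wh{X} \xrightarrow{\Id_{\wh{X}}} \wh{X} \xleftarrow{j_X} X,
\]
which is nothing but $\mathfrak{s}(j_X)^{-1}$ in $\wh{\mathscr{D}}$.

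Once the counit is identified in this form, the conclusion is immediate: since $j_X \colon X \to \wh{X}$ lies in $\mathscr{D}$ by Theorem \ref{thm_prop_of_X_hat}, its image $\mathfrak{s}(j_X)$ is automatically inverted in the localisation $\wh{\mathscr{D}} = \mathscr{D}[\mathscr{D}^{-1}]$, so $\varepsilon_X = \mathfrak{s}(j_X)^{-1}$ is an isomorphism for every $X$. Hence $\mathfrak{s}_*$ is fully faithful. There is no real technical obstacle beyond carefully tracking the identification of the counit through the roof calculus; the substantive work has already been done in Theorem \ref{thm_adj_unit}.
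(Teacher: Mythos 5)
Your proof is correct and follows the same route as the paper: both reduce the statement to showing the adjunction counit is an isomorphism. The paper dispatches the final step even more quickly by observing that $\wh{\mathscr{D}}=\mathscr{D}[\mathscr{D}^{-1}]$ is a groupoid, so \emph{every} morphism in it (in particular the counit) is an isomorphism, with no need to identify the counit explicitly as $\mathfrak{s}(j_X)^{-1}$ — though your explicit computation is also valid.
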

\begin{proof}
	By \cite[Corollary 1.23]{Huy} it suffices to check that the adjunction counit $\mathfrak{s} \mathfrak{s}_* \to \Id$ is an isomorphism when applied to any object in $\wh{\mathscr{D}}$. This is clear, as any morphism in $\wh{\mathscr{D}}$ is an isomorphism.
\end{proof}

	\vspace{0.3cm}
\subsection{Criteria for codim-2-saturatedness}~\\

We give several characterisations of codim-2-saturated surfaces. Since $\wh{X}$ is codim-2-saturated and $j_X\colon X\to \wh{X}$ is an open embedding, surface $X$ is saturated if and only if $j_X$ is an isomorphism.

\begin{THM}\label{thm_proper_is_final}
	A normal surface $X$ proper over its affinisation $X^{\aff}$ is codim-2-saturated. 
\end{THM}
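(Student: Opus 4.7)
The plan is to show that any morphism $j\colon X\to Z$ in $\mathscr{D}$ is an isomorphism, given that $\rho_X\colon X\to X^{\aff}$ is proper.

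First I would observe that since $j$ is an open embedding with complement of codimension two and $Z$ is normal, Hartogs' lemma gives $H^0(Z,\oO_Z)=H^0(X,\oO_X)$. Consequently $Z^{\aff}=X^{\aff}$, and by the universal property of affinisation, $\rho_X=\rho_Z\circ j$. So $j$ fits into a factorisation of a proper morphism:
\[
X \xrightarrow{\ j\ } Z \xrightarrow{\rho_Z} X^{\aff},
\]
where the composite $\rho_Z\circ j = \rho_X$ is proper by assumption.

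Next, I would apply the standard fact that if $g\circ h$ is proper and $g$ is separated, then $h$ is proper. Here this is the graph argument: $j=p_2\circ \Gamma_j$ with $\Gamma_j\colon X\to X\times_{X^{\aff}}Z$ a closed immersion (because $\rho_Z$ is separated, being a morphism of separated $k$-schemes), and $p_2$ proper as the base change of the proper map $\rho_X$. Hence $j$ is proper.

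Finally, $j$ is a proper open immersion, so its image is both open and closed in $Z$. Since $Z$ is irreducible (being a normal surface in the sense of this paper), $j(X)=Z$ and $j$ is an isomorphism. By the characterisation recalled just before the statement, $X$ is codim-2-saturated.

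The steps are routine once the factorisation is set up; the only conceptual point is recognising that the hypothesis on $\rho_X$ being proper transfers through any extension $j\colon X\to Z$ in $\mathscr{D}$, which ultimately follows from Hartogs' lemma for normal surfaces.
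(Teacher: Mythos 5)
Your proof is correct, and it takes a genuinely different route from the paper's. The paper argues via the codim-2-saturated model: it takes a closed point $x\in\wh{X}$, a germ of a curve $v\colon\Spec R\to\wh{X}$ through $x$, applies the valuative criterion of properness to $\rho_X$ to lift $\wh{\rho_X}\circ v$ to $X$, and then uses the separatedness of $\wh{\rho_X}$ (established in Theorem \ref{thm_prop_of_X_hat}) to conclude $v=j_X\circ\wt{v}$, so that $x\in X$. Your argument instead works directly with an arbitrary morphism $j\colon X\to Z$ in $\mathscr{D}$: Hartogs identifies $Z^{\aff}$ with $X^{\aff}$ and gives the factorisation $\rho_X=\rho_Z\circ j$; the cancellation property of properness (via the graph, using that $\rho_Z$ is separated) makes $j$ proper; and a proper open immersion into an irreducible scheme is surjective, hence an isomorphism. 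What your approach buys is independence from the construction of $\wh{X}$ and from Theorem \ref{thm_prop_of_X_hat} — it is essentially the same cancellation trick the paper uses in Proposition \ref{prop_aff_is_sat} for affine surfaces, upgraded from ``split by $\rho_Z$'' to ``proper by cancellation.'' What the paper's route buys is that the valuative-criterion computation is already set up in the proof of Theorem \ref{thm_prop_of_X_hat}, so the argument there is a one-diagram corollary of that machinery. Both proofs are complete; yours is arguably the more self-contained and elementary of the two.
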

\begin{proof}
	It suffices to check that any closed point $x$ of $\wh{X}$ is a point of $X$. Let $v\colon \Spec R \to \wh{X}$ be the germ of a curve via $x$. Then the restriction of $v$ to the filed of fractions $K$ of $R$ decomposes as $j_X \circ u$, for some $u \colon \Spec K \to X$. The properness of $\rho_X \colon X\to X^{\aff}$ implies that there exists a lift $\wt{v}\colon \Spec R \to X$ of $\wh{\rho_X} \circ v$ along $\rho_X$, i.e. $\rho_X\circ \wt{v} = \wh{\rho_X} \circ v$. 
	\[
	\xymatrix{\Spec K \ar[r]^u \ar[d] & X \ar[r]^{j_X} \ar[d]|(0.3){\rho_X} & \wh{X} \ar[dl]^{\wh{\rho_X}} \\ \Spec R \ar[r]_{\wh{\rho_X} \circ v} \ar[ur]^{\wt{v}} \ar[urr]|(0.4){v} 
		& X^{\aff}.&}
	\]
	In particular, $\wh{\rho_X} \circ v = \wh{\rho_X} \circ j_X \circ \wt{v}$ . As $\wh{\rho_X}$ is separated (see Theorem \ref{thm_prop_of_X_hat}), $v = j_X \circ \wt{v}$, in particular, the point $x$ lies in the image of $j_X$.
\end{proof}

\begin{EXM}\label{exm_exm_Hironaka}
	A codim-2-saturated normal surface $X$ is not necessarily proper over its affinisation.
	The counterexample is given over $\mathbb{C}$ by the Nagata-Mumford example of the complement $X$ to the strict transform of an elliptic curve $C\subset \mathbb{P}^2$ under the blow up of $\mathbb{P}^2$ in 10 points on $C$ in general position. The strict transform of $C$ has self-intersection $-1$, hence it can be contracted in the category of normal algebraic spaces \cite[Corollary 6.12(b)]{Art5}, but not schemes \cite[Section 4]{Art_alg_st}. By Theorem \ref{thm_open_in_prop_final} below, $X$ is codim-2-saturated, but it is not proper over its affinisation $\Spec \mathbb{C}$.	
\end{EXM}

Consider a proper normal surface $Y$ and a connected 
 curve $D\subset Y$. We say that $D$ is \emph{contractible} if there exists a normal surface $Y_D$ (a scheme) and a contraction $Y\to Y_D$, i.e. a proper birational morphism which contracts $D$ to a point and is an isomorphism on $Y\setminus D$.

We call an open subset $X\subset Y$ a \emph{non-contractible divisorial complement}  if $Y\setminus X$ is a divisor in $Y$ and any connected component of $Y \setminus X$ is not contractible. 

\begin{THM}\label{thm_open_in_prop_final}
	A normal surface $X$ is codim-2-saturated if and only if $X$ is isomorphic to a non-contractible divisorial complement in a normal surface $Y$ proper over $\Spec k$.
\end{THM}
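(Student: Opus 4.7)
The plan is to prove each direction separately, with the main work in the converse.

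For the forward direction ($\Rightarrow$), I would first apply Nagata's compactification to realise $X$ as an open subset of a proper normal surface and then blow up the finitely many isolated points in the complement, obtaining $Y \supset X$ proper normal with $Y \setminus X$ a divisor. The key observation is that if any connected component $D$ of $Y \setminus X$ were contractible via $c \colon Y \to Y_D$ sending $D$ to $p_D$, then the disjoint connected components $D' \neq D$ map under $c$ to divisors avoiding $p_D$, so $X \cup \{p_D\} = Y_D \setminus \bigcup_{D' \neq D} c(D')$ is open in $Y_D$. This gives a non-isomorphism $X \hookrightarrow X \cup \{p_D\}$ in $\mathscr{D}$, contradicting codim-2-saturation. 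Hence no connected component of $Y \setminus X$ is contractible and such a $Y$ exists.

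For the converse ($\Leftarrow$), let $Y$ be as in the statement and assume for contradiction $X$ is not codim-2-saturated. Iterating one added point at a time, there is a non-isomorphism $X \hookrightarrow Z = X \cup \{p\}$ in $\mathscr{D}$. I would compactify $Z$ to a proper normal surface $\ol{Z}$ with divisorial boundary (blowing up isolated points of $\ol{Z} \setminus Z$ if needed) and form the common resolution $\wt{Y}$ as the normalisation of the closure of the graph of $X \hookrightarrow \ol{Z}$ in $Y \times \ol{Z}$. Then $\wt{Y}$ is a proper normal surface with proper birational projections $q_1 \colon \wt{Y} \to Y$ and $q_2 \colon \wt{Y} \to \ol{Z}$, each an isomorphism on the open copy of $X$. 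Stein factorisation (with normal target $\ol{Z}$) applied to $q_2$ shows that $E := q_2^{-1}(p)$ is connected, closed, and disjoint from $X$; its image $D := q_1(E) \subset Y \setminus X$ is closed and connected. The plan is to show that $D$ is a whole connected component of $Y \setminus X$ and is contractible to $p$, contradicting the hypothesis on $Y$.

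The main obstacle is this last step. I would fix an affine open neighbourhood $V_p \subset Z$ of $p$ (so $V_p \cap X = V_p \setminus \{p\}$) and set $\wt{V}_p := q_2^{-1}(V_p) \subset \wt{Y}$; note $\wt{V}_p = (V_p \cap X) \sqcup E$. Using connectedness of the fibres of $q_1$ (Stein factorisation, $Y$ normal) together with openness of $\wt{V}_p$, each fibre of $q_1$ lies either entirely in $\wt{V}_p$ or disjointly from it, so $U := q_1(\wt{V}_p)$ is open in $Y$ with $q_1^{-1}(U) = \wt{V}_p$, $U \cap X = V_p \cap X$ and $U \cap (Y \setminus X) = D$. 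In particular $D$ is both open and closed in its connected component of $Y \setminus X$, so it is the whole component. For any $u \in D$ the fibre $q_1^{-1}(u) \subset \wt{V}_p$ avoids $X$ (since $u \notin X$ and $q_1$ is an isomorphism on $X$), so it lies in $E$; hence $q_2$ is constant equal to $p$ on each such fibre, and the composition $\wt{V}_p \to V_p$ descends to a morphism $f \colon U \to V_p$ with $f(D) = \{p\}$ and $f|_{V_p \cap X} = \mathrm{id}$. Finally, gluing $f$ with the identity on $Y \setminus D$ along the common open $V_p \cap X \subset X$ yields a proper birational morphism $Y \to Y_D := (Y \setminus D) \cup_{V_p \cap X} V_p$ contracting $D$ to $p$; separatedness of $Y_D$ follows from the valuative criterion using that limits in $V_p$ hitting $p$ correspond, under the resolution $\wt{Y}$, to limits in $Y$ hitting $D$ rather than $Y \setminus D$. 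Thus $D$ is a contractible connected component of $Y \setminus X$, the desired contradiction.
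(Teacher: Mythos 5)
Your proof is correct, but the route is genuinely different from the paper's, most markedly in the ``if'' direction. For ``only if'', the paper does not blow up the isolated boundary points: it shows directly that the Artinian part of $\ol{Y}\setminus X$ is empty, because an isolated boundary point $z$ is the affinisation of the quasi-affine $U_z\setminus\{z\}\subset X$ and Proposition \ref{prop_equiv_cond_for_X=X_hat} then forces $U_z\subset X$; the non-contractibility of boundary components is obtained by the same mechanism you use (the contracted point could be added to $X$). For ``if'', the paper verifies the saturation criterion of Proposition \ref{prop_equiv_cond_for_X=X_hat} head-on: for each quasi-affine open $U\subset X$ it uses the proper map $\ol{f}_U\colon\ol{Y}_U\to U^{\aff}$ of Proposition \ref{prop_f_U_proper} and shows that a one-dimensional fibre over a point of $U^{\aff}\setminus U$ would be a contracted connected component of $\ol{Y}\setminus X$, so all such fibres are points and $U^{\aff}$ embeds into $X$. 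You instead argue by contradiction from a single added point $p$, building a common resolution $\wt{Y}$ and then an explicit contraction $Y\to Y_D$ by gluing. Your version makes the correspondence ``added point $\leftrightarrow$ contractible component'' completely explicit and is more self-contained, at the price of constructing a new scheme by gluing and verifying its separatedness; the paper's version reuses the affinisation machinery already in place and never has to build the contraction. Two steps of yours deserve slightly more care than you give them: (i) the claim that each fibre $F$ of $q_1$ over a point of $Y\setminus X$ lies entirely in or out of $\wt{V}_p$ does not follow from connectedness and openness alone --- you need that $F\cap\wt{V}_p=F\cap E$ (since $F$ avoids the copy of $X$) with $E$ closed, so that this intersection is clopen in $F$; and (ii) set-theoretic constancy of $q_2$ on the fibres of $q_1$ does not by itself descend $q_2|_{\wt{V}_p}$ to a morphism $f\colon U\to V_p$ --- one should invoke $(q_1)_*\oO_{\wt{V}_p}=\oO_U$ (Zariski's main theorem, $Y$ normal) together with the affineness of $V_p$ to produce $f$ from the ring map on global sections. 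Both repairs use only ingredients you already have on the table.
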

\begin{proof}
	Let $\ol{Y}$ be a compactification of $X$ as in (\ref{eqtn_Nagata}). Let $\ol{Y} \setminus X = D\sqcup Z$ be the decomposition of the complement of $X$ into the divisorial part $D$ and the Artinian part $Z$. If $Z\neq \emptyset$, let $U_z$ be an affine open neighbourhood of $z\in Z$ such that $V_z:=U_z\setminus \{z\} \subset X$. Then $V_z$ is a quasi-affine open subset of $X$ with affinisation $U_z$. By Proposition \ref{prop_equiv_cond_for_X=X_hat}, $U_z$ is an open subset of $X$ which contradicts the choice of $z\in \ol{Y}\setminus X$. Hence, $Z= \emptyset$, i.e. $X$ is a divisorial complement in $\ol{Y}$. 
	
	Let $D\subset \ol{Y}\setminus X$ be a negative definite connected component. If $g_D\colon \ol{Y} \to Y_D$ is the contraction of $D$ then the complement to $X$ in $Y_D$ has non-trivial Artinian part. Indeed, as $D$ is a connected component of $\ol{Y}\setminus X$, the point $g_D(D)\notin X$ does not lie on any of the divisorial components of $Y_D\setminus X$. Since $Y_D$ is a compactification of $X$, an analogous argument as above yields a contradiction with the assumption that $X$ is codim-2-saturated. Hence, $D$ is not contractible.
	
	In the opposite direction, assume that $X\subset \ol{Y}$ is a non-contractible divisorial complement. In particular, $\ol{Y}$ is a compactification of $X$ as in (\ref{eqtn_Nagata}). Let $\wh{U}$ be the affinisation of a quasi-affine open subset $U\subset X$ and let $\ol{Y}_U$ be the domain of the rational map $\ol{Y} \dashrightarrow \wh{U}$ which restricts to the identity on $U$. By Proposition \ref{prop_f_U_proper}, $\ol{f}_U \colon \ol{Y}_U \to \wh{U}$ is proper. 
	
	Assume that, for a point $p\in \wh{U}\setminus U$, the fiber of $\ol{f}_U$ over $p$ is of dimension one, i.e. $\ol{f}_U^{-1}(p) =D$ is a proper connected curve contained in $\ol{Y}\setminus X$. Proposition \ref{prop_aff_of_quasi-affine} implies that $U\cup \{p\}$ is an open subset of $\wh{U}$. Hence, $X\cup D = \ol{f}_U^{-1}(U \cup \{p\})$ is an open subset of $Y$. It follows that $D$ is a connected component of $\ol{Y}\setminus X$. Indeed, if this is not the case, there exists a point  on $D$ whose any open neighbourhood is not contained in $X\cup D$. Hence, $\ol{f}_U$ contracts a connected component of $\ol{Y}\setminus X$ which contradicts the assumption on $X$.
	
	It follows that, for any $p\in \wh{U}\setminus U$ the fiber of $\ol{f}_U$ over $p$ is a point. As $X\subset \ol{Y}$ is a divisorial complement, $\ol{f}_U^{-1}(p)$ is a point of $X$. Hence, $\wh{U}\simeq \ol{f}_U^{-1}(\wh{U})$ is an open subset of $X$. By Proposition \ref{prop_equiv_cond_for_X=X_hat}, $X$ is codim-2-saturated.
\end{proof}

Recall that a curve $D = \bigcup D_i$ in a proper normal surface $Y$ is said to be \emph{negative definite} if the intersection matrix $|D_i \cdot D_j|$ is negative definite.

In view of 
\begin{THM}\cite[Theorem 3.3]{Schroer}
	A connected negative definite curve $D\subset Y$ is contractible if and only if there is a curve $A\subset Y$ disjoint from $D$ such that $A\cdot C>0$ for every curve $C\subset Y$ not supported by $D$.
\end{THM}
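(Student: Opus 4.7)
The plan rests on two classical inputs: (i) Artin's contractibility theorem, which for a negative definite curve $D$ on a normal surface produces a contraction $g\colon Y\to Y_D$ in the category of normal algebraic spaces; and (ii) the Nakai--Moishezon criterion on a proper normal algebraic surface, which characterises ample divisors via positivity of intersection numbers. The strategy is to translate ``$Y_D$ is a scheme'' into the existence of a suitable ample divisor on $Y_D$ missing the contraction point, and to transport such a divisor across $g$ to produce, respectively, the curves $A$ and $A'$ in the two directions.

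For the direction ($\Rightarrow$), assume $g\colon Y\to Y_D$ is a contraction to a normal surface scheme and set $p=g(D)$. A proper normal surface over $k$ is projective (Zariski), so $Y_D$ carries very ample divisors; choosing a general member of a sufficiently positive multiple of such a linear system, I may assume it is a curve $A'\subset Y_D$ avoiding $p$ and meeting every irreducible curve of $Y_D$ positively. The pull-back $A=g^{*}A'$ is then a curve in $Y$ disjoint from $D$. For any irreducible $C\subset Y$ not supported by $D$, its image $g_{*}C$ is an irreducible curve in $Y_D$, and the projection formula gives
\[
A\cdot C \;=\; g^{*}A' \cdot C \;=\; A'\cdot g_{*}C \;>\; 0.
\]

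For the converse ($\Leftarrow$), apply Artin's theorem to obtain a contraction $g\colon Y\to Y_D$ in the category of normal algebraic spaces. Since $A$ is disjoint from the exceptional locus $D$, its image $A':=g_{*}A$ is a Weil divisor on $Y_D$ with support avoiding $p$, and every irreducible curve $C'\subset Y_D$ is of the form $g_{*}C$ for the unique irreducible $C\subset Y$ not supported by $D$ whose image contains a point of $C'$. The projection formula then converts the hypothesis into $A'\cdot C'>0$ for every irreducible $C'\subset Y_D$, and the case $C=A$ yields $(A')^{2}=A^{2}>0$. Nakai--Moishezon applied to the normal algebraic surface $Y_D$ shows that $A'$ is ample, so $Y_D$ is projective, in particular a scheme.

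The main obstacle is to apply Nakai--Moishezon when $Y_D$ is only a priori a normal algebraic space and $A'$ is only a Weil divisor: one has to be sure both that intersection numbers are well defined (e.g.\ via Mumford's rational pull-back on a resolution of singularities $\widetilde{Y_D}\to Y_D$) and that positivity of those intersections forces ampleness of a Cartier multiple. The cleanest way around this is to pass at the start to a resolution, pull $A'$ back, verify Nakai--Moishezon on the smooth projective model, and then descend ampleness to $Y_D$; negative definiteness of $D$ guarantees that the descent succeeds because no nonzero combination of the components of the exceptional locus is nef.
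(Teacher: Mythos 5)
The paper offers no proof of this statement---it is quoted directly from Schr\"{o}er---so I can only judge your argument on its own merits, and there is a genuine gap in both directions, stemming from the same misconception: you treat ``proper normal surface'' as if it implied ``projective'', and you treat numerical positivity of the Weil divisor $A$ as if it implied ampleness of some Cartier multiple. Neither holds. Zariski's projectivity theorem is for \emph{smooth} complete surfaces; proper normal surfaces can fail to be projective (Schr\"{o}er's own examples, some with trivial Picard group), and contractions of negative definite curves on projective surfaces are precisely the standard source of such examples. Concretely, if $S$ is a non-projective proper normal surface and $Y$ is the blow-up of a smooth point $p\in S$ with exceptional curve $D$, then $D$ is negative definite and contractible (to the scheme $S$), yet $Y_D=S$ is not projective. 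This invalidates your forward direction as written (there is no very ample system on $Y_D$ from which to choose $A'$), and it shows your backward direction is aiming at a false intermediate statement: the existence of $A$ can at most force $Y_D$ to be a scheme, not to be projective. Note also that replacing the hyperplane section by the divisorial boundary of an affine neighbourhood of $p$ does not suffice either: the boundary of an affine open need not meet its own components positively (e.g.\ $\mathbb{A}^2\subset\mathbb{F}_1$ has boundary $\sigma+f$ with $(\sigma+f)\cdot\sigma=0$), whereas the criterion demands $A\cdot C>0$ for \emph{every} curve $C$ not supported by $D$, including the components of $A$.

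Your proposed repair for the converse---pull $A'$ back to a resolution $\pi\colon\widetilde{Y_D}\to Y_D$ and verify Nakai--Moishezon there---cannot work: Mumford's rational pullback is by construction orthogonal to every exceptional curve of $\pi$, so $\pi^*A'$ is at best nef and big on $\widetilde{Y_D}$ and never ample, and Nakai--Moishezon fails upstairs; downstairs, $Y_D$ need not be $\mathbb{Q}$-factorial, so no multiple of the Weil divisor $A'$ need be Cartier and there is nothing to which Nakai--Moishezon applies. The actual content of the theorem is the passage from the numerical condition to the existence of an \emph{affine open neighbourhood} of the point $g(D)$ in the algebraic-space contraction (and back), which requires relating affine opens on a possibly non-projective proper normal surface to strictly numerically positive effective Weil divisors; this is the part of Schr\"{o}er's argument that your outline does not supply and that the classical ampleness machinery you invoke cannot replace.
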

Theorem \ref{thm_open_in_prop_final} can be reformulated as the following geometric statement
\begin{COR}\label{cor_final_num_cond}
	A normal surface $X$ is codim-2-saturated if and only if $X$ is isomorphic to an open subset in a normal surface $Y$ proper over $\Spec k$ such that $Y\setminus X$ is a divisor in $Y$ and, for any negative definite connected component $D$ of $Y\setminus X$ and for any curve $A\subset Y$ disjoint from $D$, there exists a curve $C\subset Y$ not supported by $D$ such that $A\cdot C \leq 0$.
\end{COR}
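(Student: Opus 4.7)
The plan is to combine Theorem \ref{thm_open_in_prop_final} with Schr\"oer's contractibility criterion cited above. By Theorem \ref{thm_open_in_prop_final}, the codim-2-saturated surfaces are precisely the non-contractible divisorial complements in proper normal surfaces $Y$; it therefore suffices to show that the non-contractibility of \emph{every} connected component of $Y\setminus X$ is equivalent to the stated numerical condition, which only constrains negative definite components.

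First I would dispose of components that are not negative definite. By the classical Mumford--Grauert criterion for normal (algebraic) surfaces, the exceptional divisor of any proper birational morphism $Y\to Y_D$ contracting a connected curve $D$ to a normal point has negative definite intersection form. Consequently, a connected component $D$ of $Y\setminus X$ whose intersection matrix is not negative definite is automatically non-contractible, and no condition on such components is required in the corollary's statement.

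Next I would treat the negative definite connected components $D$ of $Y\setminus X$ using Schr\"oer's theorem quoted immediately before the corollary. That theorem asserts $D$ is contractible if and only if there exists a curve $A\subset Y$ disjoint from $D$ such that $A\cdot C>0$ for every curve $C\subset Y$ not supported by $D$. The logical negation reads: $D$ is non-contractible if and only if for every curve $A\subset Y$ disjoint from $D$ there exists a curve $C\subset Y$ not supported by $D$ with $A\cdot C\leq 0$. This is exactly the condition imposed on $D$ in the corollary.

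Finally, assembling the two parts, the divisor $Y\setminus X$ is non-contractible (in the sense that each connected component is non-contractible) if and only if the condition of the corollary holds verbatim: for every negative definite connected component $D$ and every curve $A$ disjoint from $D$, there exists $C\subset Y$ not supported by $D$ with $A\cdot C\leq 0$. Combined with Theorem \ref{thm_open_in_prop_final}, this yields the claimed equivalence. The only nontrivial input, besides Theorem \ref{thm_open_in_prop_final} itself and Schr\"oer's theorem, is the fact that contractibility forces negative definiteness, which I would simply cite from \cite{Schroer} or the Mumford--Artin theory of rational singularities.
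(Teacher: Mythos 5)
Your proposal is correct and follows exactly the route the paper intends: the corollary is presented there as an immediate reformulation of Theorem \ref{thm_open_in_prop_final} via the negation of Schr\"oer's contractibility criterion, with no written proof. The one detail you make explicit that the paper leaves implicit --- that a connected component which is not negative definite is automatically non-contractible, by the classical necessity of negative definiteness for contractions to a normal point --- is indeed needed and correctly supplied.
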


\section{Closed objects for a Serre subcategory}\label{sec_closed_obj}

Throughout this section we consider a Serre subcategory $\bB$ in an abelian category $\aA$ and the full subcategory 
\begin{equation}\label{eqtn_def_of_closed_obj}
\eE = \{A\in \aA\,|\, \Hom(\bB, A) = 0 = \Ext^1(\bB, A)\}
\end{equation}
of $\bB$-{\it closed objects} \cite[Lemme III.2.1.b]{Gabriel}.

We denote by
\begin{equation}\label{eqtn_def_quot_funct}
q\colon \aA\to \aA/\bB =: \cC
\end{equation} 
the quotient functor.

We show that $\eE$ is a full subcategory in $\cC$ and discuss when it is the torsion-free part of a torsion pair, i.e. when $\bB$ is weakly localising. To this end, we introduce the embedding condition (\ref{eqtn_ast}) which allows us to induce a torsion pair in $\cC$ from a torsion pair in $\aA$. We show that under further assumptions on $\aA$ the embedding condition is equivalent to the existence of a torsion pair $(\ol{\tT}, \eE)$ in $\cC$, where $\eE$ is the torsion free part. We describe also Ext-groups of objects in $\eE$ over the quotient category $\cC$ as colimits of Ext-groups over $\aA$.

\vspace{0.3cm}
\subsection{Closed objects as a full subcategory of the quotient category}~\\

Recall that a Serre subcategory $\bB\subset \aA$ yields a  multiplicative system in $\aA$:
$$
\sS= \{f \,|\, \ker f, \textrm{coker} f \in \bB\}.
$$
Since we are interested in $\bB$-closed objects, it is more convenient to view $\sS$ as a right multiplicative system. Then morphisms in the quotient category $\cC \simeq \aA[\sS^{-1}]$ 
between two objects $A_0$ and $A_1$ in $\aA$ are equivalence classes of roofs
\begin{equation}\label{eqtn_roof}
\xymatrix{ & A' \ar[dl]_s \ar[dr]^f & \\ A_0 & & A_1}
\end{equation}
with $s\in \sS$. We denote (\ref{eqtn_roof}) by $fs^{-1}: A_0\to A_1$. The equivalence relation on roofs is analogous to (\ref{eqtn_equiv_of_roofs}).

We check that the quotient functor (\ref{eqtn_def_quot_funct}) is fully faithful when restricted to the category $\eE$  of $\bB$-closed objects  (\ref{eqtn_def_of_closed_obj}). It allows us to consider $\eE$ as a full subcategory both in $\cC$ and in $\aA$. Even more is true,

\begin{PROP}\label{prop_Hom_to_closed}
	For any $A\in \aA$, $E\in \eE$,  the quotient functor induces an isomorphism:
	\begin{equation}\label{hom-closed}
	q_{A,E} \colon \Hom_{\aA}(A, E) \simeq \Hom_{\cC}(q(A), q(E)).
	\end{equation}
	
\end{PROP}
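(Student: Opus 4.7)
The plan is to verify bijectivity of $q_{A,E}$ separately, using the two defining properties $\Hom(\bB,E)=0$ and $\Ext^1(\bB,E)=0$ of closed objects.

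For injectivity, take $g\in\Hom_\aA(A,E)$ with $q(g)=0$. Since $q$ is exact and kills precisely objects of $\bB$, the condition $q(g)=0$ is equivalent to $\mathrm{image}(g)\in\bB$. The monomorphism $\mathrm{image}(g)\hookrightarrow E$ is then a morphism from an object of $\bB$ to $E$, and by $\Hom(\bB,E)=0$ it must vanish, whence $g=0$. Note that only the $\Hom$-vanishing part of the definition of $\eE$ is needed here.

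For surjectivity, represent a morphism $\varphi\colon q(A)\to q(E)$ by a roof $A\xleftarrow{s} A'\xrightarrow{f} E$ with $s\in\sS$. Set $K=\ker s$, $C=\coker s$, $I=\mathrm{image}(s)$, so both $K$ and $C$ lie in $\bB$ and we have short exact sequences $0\to K\to A'\to I\to 0$ and $0\to I\to A\to C\to 0$. The composition $K\hookrightarrow A'\xrightarrow{f}E$ lies in $\Hom(K,E)=0$, so $f$ factors uniquely as $A'\twoheadrightarrow I\xrightarrow{\bar f}E$. Now apply $\Hom(-,E)$ to the second sequence: since $C\in\bB$ and $E$ is $\bB$-closed, both $\Hom(C,E)=0$ and $\Ext^1(C,E)=0$, giving an isomorphism $\Hom(A,E)\simeq\Hom(I,E)$. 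Hence $\bar f$ extends uniquely to some $g\colon A\to E$.

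It remains to check $q(g)=\varphi$, which amounts to the identity $g\circ s=f$ in $\aA$: but by construction $g|_I=\bar f$ and $s$ factors as $A'\twoheadrightarrow I\hookrightarrow A$, so $g\circ s=\bar f\circ(A'\twoheadrightarrow I)=f$. This exhibits an equivalence of the roofs $(s,f)$ and $(\mathrm{id}_A,g)$, proving surjectivity. The main obstacle is nothing more than keeping the two vanishings straight and factoring through the image correctly; no real difficulty arises since the $\Ext^1$ vanishing exactly cancels the obstruction to lifting $\bar f$ from $I$ to $A$.
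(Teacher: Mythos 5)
Your proof is correct and follows essentially the same route as the paper: the paper applies $\Hom_{\aA}(-,E)$ to the four-term exact sequence $0\to B'\to A'\xrightarrow{s} A\to B\to 0$ and uses the vanishing of $\Hom_{\aA}(\bB,E)$ and $\Ext^1_{\aA}(\bB,E)$ to conclude $\Hom_{\aA}(A,E)\simeq\Hom_{\aA}(A',E)$, which is exactly your argument with the sequence split into its two short exact pieces and injectivity and surjectivity checked separately.
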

\begin{proof}
	Let $fs^{-1} \colon A\to E$ be a morphism in $\cC$, where $f: A' \to E$ is a morphism in $\aA$ and $s\colon A'\to A$ is in $\sS$. We have an exact sequence:
	\begin{equation}\label{b'a'ab}
	0\to B'\to A' \xrightarrow{s} A\to B\to 0,
	\end{equation}
	where $B,B' \in \bB$. Since $\Hom_{\aA}(-, E)$ and $\Ext^1_{\aA}(-, E)$ vanish on $\bB$, the standard spectral sequence for $\Ext^{\bullet}_{\aA}(-,E)$, when applied to (\ref{b'a'ab}), implies $\Hom_{\aA} (A,E)\simeq_{\aA} \Hom (A', E)$, which gives a unique lifting of $f$ to a morphism in $\Hom_{\aA} (A,E)$. The lifting is independent of the choice of an element in the equivalence class of roofs.
\end{proof}

\begin{COR}\label{cor_ff_on_closed}
	Functor $q|_\eE \colon \eE\to \cC$ is fully faithful.
\end{COR}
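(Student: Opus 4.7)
The proof is essentially an immediate specialization of Proposition \ref{prop_Hom_to_closed}. My plan is as follows.

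First, observe that $\eE$ is a full subcategory of $\aA$ by its very definition in (\ref{eqtn_def_of_closed_obj}), so for any two objects $E_1, E_2 \in \eE$ we have $\Hom_\eE(E_1, E_2) = \Hom_\aA(E_1, E_2)$. The restriction $q|_\eE$ then coincides, on morphism sets, with the map
\[
q_{E_1, E_2} \colon \Hom_\aA(E_1, E_2) \to \Hom_\cC(q(E_1), q(E_2)).
\]

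Second, I invoke Proposition \ref{prop_Hom_to_closed} in the special case $A = E_1 \in \eE$ and $E = E_2 \in \eE$. The proposition asserts that $q_{E_1,E_2}$ is a bijection. This is precisely the statement that $q|_\eE$ is full and faithful on the hom-set $\Hom_\eE(E_1, E_2)$. Since $E_1, E_2$ were arbitrary objects of $\eE$, this proves full faithfulness.

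There is no real obstacle here; the content of the corollary is entirely packaged into Proposition \ref{prop_Hom_to_closed}, whose proof used the vanishing of both $\Hom_\aA(-,E)$ and $\Ext^1_\aA(-,E)$ on $\bB$ (the two defining conditions for $E \in \eE$) applied to the four-term sequence (\ref{b'a'ab}) associated to a roof. The corollary is just the diagonal case of that proposition.
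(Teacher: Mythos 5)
Your proof is correct and is exactly the paper's intended argument: the corollary is stated without a separate proof precisely because it is the specialization of Proposition \ref{prop_Hom_to_closed} to $A=E_1\in\eE$, $E=E_2\in\eE$, combined with the fullness of $\eE$ in $\aA$. Nothing is missing.
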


\vspace{0.3cm}
\subsection{Inducing a torsion pair in the quotient category}\label{ssec_tor_pair_on_quotient}~\\

Given a full subcategory $\mathcal{G}\subset \aA$ we denote by $q(\gG)\subset \cC$ the \emph{essential image} of the quotient functor (\ref{eqtn_def_quot_funct}) restricted to $\gG$. In other words, $q(\gG)$ is the (strictly) full subcategory of $\cC$ with objects $C\in \cC$ for which there exists $G\in \gG$ and an isomorphism  $C\simeq q(G)$ in $\cC$.

We discuss sufficient conditions for a  torsion pair $(\tT, \fF)$ in $\aA$ to  induce a torsion pair $(q(\tT), q(\fF))$ in $\cC$.

\begin{LEM}\label{prop_torsion_pair_if_no_Hom}
	Let $(\tT, \fF)$ be a torsion pair in $\aA$. Then the subcategories $q(\tT)$, $q(\fF)$ form a torsion pair in $\cC$ if and only if $\Hom_{\cC}(q(\tT), q(\fF) ) =0$.
\end{LEM}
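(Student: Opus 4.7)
The forward direction is immediate from the definition of a torsion pair, so the substance lies in the converse. Assume $\Hom_\cC(q(\tT), q(\fF)) = 0$ and take an arbitrary object $C \in \cC$. The plan is to produce a short exact sequence $0 \to C_\tT \to C \to C_\fF \to 0$ in $\cC$ with $C_\tT \in q(\tT)$ and $C_\fF \in q(\fF)$; together with the Hom-vanishing assumption this gives the required torsion pair.

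The first step is to use the fact that the quotient functor $q \colon \aA \to \cC$ is essentially surjective: pick an object $A \in \aA$ together with an isomorphism $\alpha \colon q(A) \simto C$. Since $(\tT, \fF)$ is a torsion pair in $\aA$, there is an exact sequence
\begin{equation*}
0 \to T \to A \to F \to 0
\end{equation*}
with $T \in \tT$ and $F \in \fF$. The second step is to apply $q$, using that the quotient functor by a Serre subcategory is exact, to obtain an exact sequence
\begin{equation*}
0 \to q(T) \to q(A) \to q(F) \to 0
\end{equation*}
in $\cC$, and then transport it along $\alpha$ to a short exact sequence with middle term $C$. Setting $C_\tT := q(T) \in q(\tT)$ and $C_\fF := q(F) \in q(\fF)$ finishes the construction of the torsion sequence.

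It remains only to remark that $q(\tT)$ and $q(\fF)$, being essential images, are closed under isomorphism in $\cC$, and that for a pair of full subcategories closed under isomorphism the two conditions ``$\Hom(q(\tT), q(\fF)) = 0$'' and ``existence of the above short exact sequence for every $C$'' are precisely what is needed to form a torsion pair; closure under quotients, subobjects and extensions then follows formally. There is no substantive obstacle here: the whole argument rests on exactness and essential surjectivity of $q$, and the Hom-vanishing hypothesis is used only at the end to certify that the produced sequence is indeed a torsion decomposition.
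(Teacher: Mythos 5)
Your proof is correct and follows essentially the same route as the paper: lift $C$ to an object $A\in\aA$, decompose $A$ using the torsion pair in $\aA$, and apply the exact quotient functor $q$ to obtain the required short exact sequence, with the Hom-vanishing hypothesis supplying the other half of the definition. No further comment is needed.
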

\begin{proof}
	We check that any object $C\in \cC$ is an extension of an object in $q(\fF)$ by an object in $q(\tT)$. Indeed,  $C$ is isomorphic to $q(A)$, for some $A\in \aA$. Let $0 \to T\to A \to F \to 0$ be the decomposition of $A$ in the torsion pair $(\tT, \fF)$. Then $0 \to q(T) \to q(A) \to q(F) \to 0$ is a short exact sequence in $\cC$ with $q(T)\in q(\tT)$ and $q(F)\in q(\fF)$.
\end{proof}

\begin{LEM}\label{lem_tor_pair_on_quot_if_T_Serre}
	Let $(\tT, \fF)$ be a torsion pair in $\aA$ with $\tT \subset \aA$ a Serre subcategory. If $\bB$ is a subcategory of $\tT$ then $(q(\tT), q(\fF))$ is a torsion pair in $\cC$.
\end{LEM}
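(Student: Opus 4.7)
My plan is to reduce the statement to Lemma \ref{prop_torsion_pair_if_no_Hom} and then verify the vanishing hypothesis directly by analysing roofs.

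By Lemma \ref{prop_torsion_pair_if_no_Hom}, it is enough to show $\Hom_\cC(q(T), q(F)) = 0$ for every $T \in \tT$ and $F \in \fF$. Fix such $T$ and $F$, and represent an arbitrary morphism $q(T) \to q(F)$ in $\cC$ by a roof
\[
T \xleftarrow{\ s\ } A' \xrightarrow{\ f\ } F
\]
with $s \in \sS$, i.e.\ $\ker s, \operatorname{coker} s \in \bB$.

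The key step is to argue that $A' \in \tT$. Since $\bB \subset \tT$ by hypothesis, both $\ker s$ and $\operatorname{coker} s$ lie in $\tT$. From the short exact sequence
\[
0 \to \operatorname{im} s \to T \to \operatorname{coker} s \to 0
\]
and the fact that $\tT$ is Serre (hence closed under subobjects), we obtain $\operatorname{im} s \in \tT$. Applying the Serre closure under extensions to
\[
0 \to \ker s \to A' \to \operatorname{im} s \to 0
\]
yields $A' \in \tT$.

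Now $f \colon A' \to F$ is a morphism in $\aA$ from an object of $\tT$ to an object of $\fF$, so $f = 0$ because $(\tT, \fF)$ is a torsion pair in $\aA$. Hence the roof $fs^{-1}$ represents the zero morphism in $\cC$. As the roof was arbitrary, $\Hom_\cC(q(T), q(F)) = 0$, and Lemma \ref{prop_torsion_pair_if_no_Hom} gives the conclusion. There is no real obstacle here: the only thing to notice is that the Serre hypothesis on $\tT$ (not just extension-closedness) is exactly what is needed to propagate membership in $\tT$ from $T$ and the kernel/cokernel of $s$ back to the apex $A'$ of the roof.
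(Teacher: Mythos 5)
Your proof is correct and follows essentially the same route as the paper: reduce to Lemma \ref{prop_torsion_pair_if_no_Hom}, then show the apex of the roof lies in $\tT$ by combining closure of the Serre subcategory $\tT$ under subobjects (for $\operatorname{im} s$) and under extensions (using $\ker s \in \bB \subset \tT$), forcing $f = 0$. No issues.
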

\begin{proof}
	By Lemma \ref{prop_torsion_pair_if_no_Hom} it suffices to check that $\Hom_{\cC}(q(\tT), q(\fF)) =0$. For $T\in \tT$ and $F\in \fF$, let $f \circ s^{-1} \colon T\to F$ be a morphism in $\cC$ with $s\colon A \to T$ in $\sS$. Since $\tT \subset \aA$ is a Serre subcategory, the image of $s$ is an  object of $\tT$.  Since $A$ is an extension of the image by the kernel of $s$ and $\bB \subset \tT$, then $A$ is an object of $\tT$. Hence, $f \colon A\to F$ is the zero morphism.
\end{proof}

Let $\gG$ be a full subcategory of the category $\eE$ of $\bB$-closed objects (\ref{eqtn_def_of_closed_obj}). We say that a full subcategory $\fF\subset \aA$ satisfies the \emph{embedding condition with respect to $\gG$} if 
\begin{equation}\label{eqtn_ast}
\forall \, F\in \fF \, \exists\, G\in \gG\, \exists \, B\in \bB, \, \textrm{and a short exact sequence }0\to F\to G \to B \to 0.
\end{equation}

\begin{REM}\label{rem_uniqueness_of_emb}
The short exact sequence (\ref{eqtn_ast}) can be considered as an exact triangle 
	\begin{equation} \label{eqtn_tr}
	B[-1] \to F\to G \to B
	\end{equation} 
	in $\dD(\aA)$ or any triangulated category with a \tr e whose heart is equivalent to $\aA$. Since $\Ext^{-1}_{\dD(\aA)}(B[-1],G) \simeq 0 \simeq \Hom_{\dD(\aA)}(B[-1], G)$, the standard argument \cite[Proposition 1.1.9]{BBD} shows that if (\ref{eqtn_tr}) exists it is unique.
\end{REM}

\begin{PROP}\label{prop_emb_into_closed_gives_w_loc}
	Consider a torsion pair $(\tT, \fF)$ in $\aA$ and a full subcategory $\gG \subset \fF\cap \eE$. Assume that $\fF$ satisfies the embedding condition with respect to $\gG$. Then $(q(\tT), \gG)$ is a torsion pair in $\cC$. 
\end{PROP}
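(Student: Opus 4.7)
The plan is to verify directly the two defining properties of a torsion pair in $\cC$ for the pair of subcategories $(q(\tT), \gG)$: first, the vanishing of Hom between the torsion and torsion-free classes; second, the existence of a decomposition of every object of $\cC$ as an extension of an object of $\gG$ by an object of $q(\tT)$. Both ingredients are already almost in place; the proof only needs to thread them together via the quotient functor $q$.

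First I would handle Hom-vanishing. Pick $T \in \tT$ and $G \in \gG \subset \fF \cap \eE$. Because $G$ is $\bB$-closed, Proposition \ref{prop_Hom_to_closed} gives an isomorphism
$$\Hom_{\cC}(q(T), q(G)) \simeq \Hom_{\aA}(T, G),$$
and the right-hand side vanishes since $(\tT, \fF)$ is a torsion pair in $\aA$ and $G \in \fF$. Any object of $q(\tT)$ is isomorphic in $\cC$ to some $q(T)$ with $T \in \tT$, and similarly for $\gG$ after applying $q$, so $\Hom_\cC(q(\tT), \gG) = 0$.

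Next I would produce the decomposition. Given $C \in \cC$, essential surjectivity of $q$ lets me write $C \simeq q(A)$ for some $A \in \aA$. The torsion pair $(\tT, \fF)$ in $\aA$ yields a short exact sequence $0 \to T \to A \to F \to 0$ with $T \in \tT$ and $F \in \fF$. The embedding condition (\ref{eqtn_ast}) for $\fF$ with respect to $\gG$ then furnishes a short exact sequence $0 \to F \to G \to B \to 0$ with $G \in \gG$ and $B \in \bB$. Since $q$ is exact and annihilates $\bB$, applying it to the second sequence gives $q(F) \simeq q(G)$ in $\cC$. Splicing this isomorphism onto the image of the first sequence under $q$ produces a short exact sequence
$$0 \to q(T) \to C \to q(G) \to 0$$
in $\cC$, which is the required decomposition with $q(T) \in q(\tT)$ and $q(G) \in \gG$.

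There is no serious obstacle: the substantive content has been moved into Proposition \ref{prop_Hom_to_closed} and the embedding condition, and the argument reduces to combining them. The only point worth care is keeping track of what it means for $\gG$ to be a torsion-free class in $\cC$ — namely, that we implicitly replace $\gG \subset \eE$ by its essential image under the fully faithful functor $q|_{\eE}$ of Corollary \ref{cor_ff_on_closed} — but this identification is already built into the statement.
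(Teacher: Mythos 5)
Your proposal is correct and follows essentially the same route as the paper: the Hom-vanishing via Proposition \ref{prop_Hom_to_closed} together with the embedding condition identifying $q(\fF)$ with $\gG$ is exactly the paper's argument, and your explicit decomposition step merely inlines the content of Lemma \ref{prop_torsion_pair_if_no_Hom}, which the paper cites instead.
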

\begin{proof}
	Sequence (\ref{eqtn_ast}) implies that any object $q(F) \in q(\fF)$ is equivalent to $q(G)$, for some $G\in \eE\cap \fF$. By Proposition \ref{prop_Hom_to_closed}, the vanishing of $ \Hom_{\aA}(T, G)$, for any $T\in \tT$ and $G\in \gG$, implies the vanishing of  $\Hom_{\cC}(q(T), q(F)) = \Hom_{\cC}(q(T), q(G)) \simeq  \Hom_{\aA}(T, G)$, for any $T\in \tT$, $F\in \fF$. Hence, $(q(\tT), q(\fF))$ is a torsion pair in $\cC$ by Lemma  \ref{prop_torsion_pair_if_no_Hom}.
\end{proof}

\begin{PROP}\label{prop_cotilitng_on_quot}
			Consider a cotilting torsion pair $(\tT, \fF)$ in $\aA$ (see Appendix \ref{ssec_tros_pairs}) and a full subcategory $\gG \subset \fF\cap \eE$. Assume that $\fF$ satisfies the embedding condition with respect to $\gG$. Then the torsion pair $(q(\tT), \gG)$ in $\cC$ is cotilting.
\end{PROP}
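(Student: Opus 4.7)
The plan is to reduce the cotilting property of $(q(\tT), \gG)$ to the cotilting property of $(\tT, \fF)$ combined with the embedding condition (\ref{eqtn_ast}). Recall that a torsion pair is called cotilting precisely when its torsion-free part cogenerates the ambient abelian category, i.e., every object admits a monomorphism into an object of the torsion-free part. Proposition \ref{prop_emb_into_closed_gives_w_loc} already supplies the torsion pair structure of $(q(\tT), \gG)$ in $\cC$, so the only thing that remains is to verify that $\gG$ cogenerates $\cC$.

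The key steps are the following. Given an arbitrary $C \in \cC$, I would lift it to some $A \in \aA$ with $q(A) \simeq C$ using essential surjectivity of $q$. The cotilting hypothesis on $(\tT, \fF)$ furnishes a monomorphism $A \hookrightarrow F$ in $\aA$ with $F \in \fF$. The embedding condition applied to this $F$ produces a short exact sequence $0 \to F \to G \to B \to 0$ with $G \in \gG$ and $B \in \bB$. Applying the exact Serre quotient functor $q$, the monomorphism $A \hookrightarrow F$ becomes a monomorphism $q(A) \hookrightarrow q(F)$, and the short exact sequence becomes a short exact sequence $0 \to q(F) \to q(G) \to 0$ (because $q(B) = 0$), that is, an isomorphism $q(F) \simeq q(G)$. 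Composition then exhibits $C \simeq q(A)$ as a subobject of $q(G) \in \gG$, as required.

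The argument is essentially mechanical once one has the embedding condition and the exactness of Serre quotients at hand; I do not anticipate any substantial obstacle. The only piece of bookkeeping to watch is the convention on cotilting torsion pairs adopted in Appendix \ref{ssec_tros_pairs}: one should make sure the criterion used there is indeed the cogeneration property (or an obviously equivalent variant), so that the construction above matches the definition \emph{on the nose}. In particular, the monomorphism $q(A) \hookrightarrow q(G)$ constructed this way is automatically the embedding of $C$ into an object of the torsion-free part of the induced torsion pair, since exactness of $q$ preserves the torsion-theoretic decomposition.
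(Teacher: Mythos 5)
Your argument breaks at the first substantive step because you have recalled the wrong definition of cotilting. In the paper's convention (Appendix \ref{ssec_tros_pairs}, which is the standard Happel--Reiten--Smal{\o} / Bondal--Van den Bergh one), a torsion pair $(\tT,\fF)$ is \emph{cotilting} if every object of $\aA$ is a \emph{quotient} of an object of $\fF$, i.e.\ $\fF$ generates $\aA$ by epimorphisms. It does not furnish a monomorphism $A\hookrightarrow F$ with $F\in\fF$; no such monomorphism can exist for a nonzero $A\in\tT$, since $\Hom_{\aA}(\tT,\fF)=0$. For the same reason the property you set out to verify in $\cC$ --- that every object of $\cC$ embeds into an object of $\gG$ --- is false whenever $q(\tT)\neq 0$, so the proof cannot succeed as written. (You flagged exactly this convention as the point to double-check; it is indeed where the argument fails.)

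The repair is immediate, and your use of the embedding condition is exactly the right (and the paper's) move. Given $C\simeq q(A)$, take an epimorphism $\f\colon F\to A$ with $F\in\fF$ (this is what cotilting actually provides); then $q(\f)\colon q(F)\to q(A)$ is an epimorphism in $\cC$. Condition (\ref{eqtn_ast}) gives a short exact sequence $0\to F\to G\to B\to 0$ with $G\in\gG$ and $B\in\bB$, so $q(F)\simeq q(G)$ with $G \in \gG$. This exhibits $C$ as a quotient of an object of $\gG$, which is precisely the cotilting property for the torsion pair $(q(\tT),\gG)$ supplied by Proposition \ref{prop_emb_into_closed_gives_w_loc}.
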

\begin{proof}
	The existence of the torsion pair $(q(\tT), \gG)$ follows from Proposition \ref{prop_emb_into_closed_gives_w_loc}.
	
	An object $C\in \cC$ is isomorphic to $q(A)$, for some $A\in \aA$. Let $\f\colon F\to A$ be an epimorphism in $\aA$ with $F\in \fF$. Condition (\ref{eqtn_ast}) implies that $q(F) \simeq q(G)$, for some $G\in \gG$. Hence, $q(\f)\colon q(F)\to C$ is the required epimorphism with $q(F) \simeq q(G)\in q(\gG)\simeq \gG$. 
\end{proof}

\vspace{0.3cm}
\subsection{Weakly localising Serre subcategories}~\\

Given a full subcategory $\gG$ of an abelian category $\aA$ we denote by
\begin{align*}
&\gG^{\perp} = \{A\in \aA \,|\, \Hom(\gG, A) =0\},& &{}^\perp \gG= \{A \in \aA \,|\, \Hom(A, \gG) =0\}&
\end{align*} 
the full subcategories right, respectively left, orthogonal to $\gG$. Clearly, $\gG^\perp$ is closed under extensions and subobjects while ${}^\perp \gG$ is closed under extensions and quotients.

Recall the subcategory $\eE$ of $\bB$-closed objects (\ref{eqtn_def_of_closed_obj}). 
We say that $\bB \subset \aA$ is \emph{weakly localising} if $\eE$ is the torsion-free part of a torsion pair $(\ol{\tT}, \eE)$ in the quotient category $\cC$.

\begin{REM}
	If $\bB$ is a localising Serre subcategory, i.e. $q$ admits the right adjoint $r$, then $\eE \simeq \cC$ is the torsion-free part of the trivial torsion pair $(0, \eE)$ on $\cC$. Indeed, $r\colon \cC \to \eE$ is essentially surjective by \cite[Corollaire III.2]{Gabriel} and fully faithful by Corollary \ref{cor_ff_on_closed}.
\end{REM}

We show that under the assumption that ${}^\perp \eE$ is the torsion part of a torsion pair $({}^\perp \eE, \fF)$ in $\aA$, the subcategory $\bB\subset \aA$ is weakly localising if and only if $\fF$ satisfies the embedding condition (\ref{eqtn_ast}) with respect to $\eE$.

If the category $\aA$ is Noetherian the torsion pair $({}^\perp \eE, \fF)$ always exists:
\begin{LEM}\label{lem_torsion_in_noetherian}({\it cf.} \cite[Lemma 1.1.3]{Pol1})
	Let $\aA$ be a Noetherian abelian category and $\tT \subset \aA$ a subcategory closed under extensions and quotients. Then $\tT$ is the torsion part of a torsion pair in $\aA$. 
\end{LEM}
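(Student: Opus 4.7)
\smallskip
\noindent\textbf{Proof plan.} The plan is to define the candidate torsion-free part as the right orthogonal
\[
\fF := \tT^\perp = \{A \in \aA \mid \Hom_{\aA}(\tT, A) = 0\},
\]
and to produce, for each object $A \in \aA$, a maximal subobject lying in $\tT$ using the Noetherian hypothesis.

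\smallskip
\noindent First I would observe that the family $\Sigma(A)$ of subobjects of $A$ belonging to $\tT$ is stable under finite sums: if $T_1, T_2 \subset A$ are both in $\tT$, then $T_1 + T_2$ is a quotient of $T_1 \oplus T_2$, and $T_1 \oplus T_2$ is an extension of $T_2$ by $T_1$, so $T_1 + T_2 \in \tT$ because $\tT$ is closed under extensions and quotients. Since $\aA$ is Noetherian, $\Sigma(A)$ admits a maximal element $T \subset A$ with respect to inclusion, and the previous remark upgrades this to a \emph{largest} element: every subobject of $A$ in $\tT$ is contained in $T$.

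\smallskip
\noindent Next I would verify that $F := A/T$ lies in $\fF$. Suppose for contradiction that there is a nonzero morphism $\f \colon T' \to F$ with $T' \in \tT$, and let $I := \mathrm{im}(\f) \subset F$. Then $I$ is a quotient of $T'$, hence $I \in \tT$. Taking the pullback of the surjection $A \twoheadrightarrow F$ along $I \hookrightarrow F$ gives a subobject $A' \subset A$ fitting in a short exact sequence $0 \to T \to A' \to I \to 0$; as $\tT$ is closed under extensions, $A' \in \tT$, and $A' \supsetneq T$, contradicting the maximality of $T$. Hence $F \in \fF$, and the short exact sequence $0 \to T \to A \to F \to 0$ is the required torsion decomposition. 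The vanishing $\Hom_{\aA}(\tT, \fF) = 0$ is built into the definition of $\fF$, so $(\tT, \fF)$ is indeed a torsion pair.

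\smallskip
\noindent The only real content is the existence of the maximal subobject in $\tT$, which is where the Noetherian hypothesis enters; the rest is a standard pullback/maximality argument. There is no substantive obstacle beyond setting up this argument cleanly.
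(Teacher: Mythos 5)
Your proof is correct and is essentially the standard argument behind the result the paper cites (Polishchuk, Lemma 1.1.3), for which the paper itself gives no proof: Noetherianity yields a largest subobject in $\tT$, and the pullback argument shows the quotient lies in $\tT^\perp$. No gaps; the only (trivial) point left implicit is that $\Sigma(A)$ is nonempty because $0\in\tT$.
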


\begin{PROP}\label{prop_torsion_pair_given_by_B}
	Assume that the Serre subcategory $\bB\subset \aA$ is weakly localising and that category $\aA$ admits a torsion pair $({}^\perp \eE, \fF)$. Then $\fF$ satisfies the embedding condition with respect to $\eE$.
\end{PROP}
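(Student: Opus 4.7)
My plan is to start with an object $F\in\fF$, use the torsion pair $(\ol{\tT},\eE)$ in $\cC$ guaranteed by the weak localising hypothesis to obtain a short exact sequence
\[
0\to \ol{T}\to q(F)\to q(G)\to 0
\]
in $\cC$, where $\ol{T}\in\ol{\tT}$ and $G\in\eE$ (using that the torsion-free part is identified with $\eE\subset \aA$ via $q|_{\eE}$, which is fully faithful by Corollary~\ref{cor_ff_on_closed}). The map $q(F)\to q(G)$ is a morphism in $\cC$ between an object of $\aA$ and a $\bB$-closed object, so Proposition~\ref{prop_Hom_to_closed} lifts it uniquely to a morphism $f\colon F\to G$ in $\aA$ with $q(f)$ equal to the torsion-quotient map. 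This $f$ is my candidate embedding.

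Next I would compute $\ker f$ and $\coker f$ using exactness of $q$. Since $q(\coker f)=\coker(q(f))=0$, we have $\coker f\in\bB$ automatically. For the kernel, $q(\ker f)=\ker(q(f))=\ol{T}$. To show $\ker f=0$, I combine two observations: first, $\ker f$ is a subobject of $F\in\fF$, and since $\fF$ is the torsion-free part of a torsion pair it is closed under subobjects, so $\ker f\in\fF$. Second, for any $G'\in\eE$, Proposition~\ref{prop_Hom_to_closed} gives
\[
\Hom_{\aA}(\ker f,G')\simeq \Hom_{\cC}(q(\ker f),q(G'))=\Hom_{\cC}(\ol{T},q(G'))=0,
\]
where the last equality is the defining orthogonality of the torsion pair $(\ol{\tT},\eE)$ in $\cC$. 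Hence $\ker f\in {}^{\perp}\eE$. But membership in $\fF$ means $\Hom_{\aA}({}^{\perp}\eE,\ker f)=0$; applied to $\ker f$ itself this forces $\ker f=0$.

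Combining, $f\colon F\hookrightarrow G$ is a monomorphism with $\coker f\in\bB$ and $G\in\eE$, which is precisely the short exact sequence required by condition (\ref{eqtn_ast}) (and it is automatically unique by Remark~\ref{rem_uniqueness_of_emb}).

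The only step I expect to require care is the vanishing of $\ker f$, where the two sides (torsion-freeness in $\aA$ and membership in ${}^{\perp}\eE$) have to be played against each other through Proposition~\ref{prop_Hom_to_closed}; everything else is a formal consequence of exactness of $q$ and the universal property of the torsion decomposition in $\cC$.
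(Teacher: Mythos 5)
Your argument is correct and is essentially the paper's own proof: both lift the torsion-quotient map $q(F)\to q(G)$ to $\aA$ via Proposition~\ref{prop_Hom_to_closed}, observe that the cokernel lies in $\bB$ by exactness of $q$, and kill the kernel by showing it lies in ${}^{\perp}\eE\cap\fF=0$ (using that $\fF$ is closed under subobjects and that $\Hom_{\aA}(K,\eE)\simeq\Hom_{\cC}(q(K),\eE)$). The only cosmetic difference is that the paper runs the construction for an arbitrary $A\in\aA$ before specializing to $F\in\fF$.
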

\begin{proof}
	Let $(\ol{\tT}, \eE)$ be the torsion pair in the quotient category $\cC$. Given $A\in \aA$, the decomposition of $q(A)$ yields an epimorphism $q(A) \to q(E)$, for some $E\in \eE$. In particular, by Proposition \ref{prop_Hom_to_closed}, we get a canonical morphism $\f_A\in \Hom_{\aA}(A,E)$. Let $K$, respectively $Q$, denote the kernel, respectively the cokernel of $\f_A$. Then $q(K)\in \ol{\tT}$ and $Q\in \bB$. Note that the condition $q(K)\in \ol{\tT}$ is equivalent to the condition $K\in {}^\perp \eE$. Indeed, by Proposition \ref{prop_Hom_to_closed} $q(K)$ is an object of $\ol{\tT}$ if and only if $0\simeq\Hom_{\cC}(q(K), \eE) \simeq \Hom_{\aA}(K, \eE)$.

	As subcategory $\fF\subset \aA$ is closed under subobjects, for any $F\in \fF$, the kernel of $\f_F$ is an object of ${}^\perp \eE \cap \fF=\{0\}$, i.e. $\f_F$ is a monomorphism. Then $0\to F\xrightarrow{\f_F} E \to Q \to 0$, with $Q\in \bB$ is the required short exact sequence. 
\end{proof}

\begin{THM}\label{thm_emb_cond}
	Assume that category $\aA$ admits a torsion pair $({}^\perp \eE, \fF)$. Then $\bB\subset \aA$ is weakly localising if and only if $\fF$ satisfies the embedding condition with respect to $\eE$.
\end{THM}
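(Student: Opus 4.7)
The theorem is a biconditional, so I would organise the proof as two implications, each of which appears to be essentially packaged in a proposition proved earlier in the section. The plan is to combine Propositions \ref{prop_emb_into_closed_gives_w_loc} and \ref{prop_torsion_pair_given_by_B}, with a small preliminary observation needed to align the hypotheses.

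For the implication that the embedding condition implies weak localisation, I would apply Proposition \ref{prop_emb_into_closed_gives_w_loc} with $\tT = {}^\perp \eE$ and $\gG = \eE$. The hypothesis of that proposition demands $\gG \subset \fF \cap \eE$, so first I would record that $\eE \subset \fF$. This is automatic: for a torsion pair $(\tT,\fF)$ one has $\fF = \tT^\perp$ (any $A \in \tT^\perp$ decomposes as $0 \to T \to A \to F \to 0$ with $T \in \tT \cap \tT^\perp = 0$), so $\fF = ({}^\perp \eE)^\perp$, and clearly $\eE \subset ({}^\perp \eE)^\perp$ by definition of ${}^\perp \eE$. Then Proposition \ref{prop_emb_into_closed_gives_w_loc} produces a torsion pair $(q({}^\perp \eE),\, q(\fF))$ in $\cC$, and the embedding condition forces each $q(F)$ with $F \in \fF$ to be isomorphic to $q(G)$ for some $G \in \eE$; together with $\eE \subset \fF$ this gives $q(\fF) = q(\eE) \simeq \eE$ (via the fully faithful embedding of Corollary \ref{cor_ff_on_closed}). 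Setting $\ol{\tT} := q({}^\perp \eE)$ exhibits $\bB$ as weakly localising.

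For the converse, I would follow the argument of Proposition \ref{prop_torsion_pair_given_by_B} verbatim: given a torsion pair $(\ol{\tT},\eE)$ in $\cC$ and $F \in \fF$, decomposing $q(F)$ yields an epimorphism $q(F) \twoheadrightarrow q(E)$ with kernel in $\ol{\tT}$; this lifts uniquely to $\varphi_F \colon F \to E$ in $\aA$ by Proposition \ref{prop_Hom_to_closed}. The cokernel of $\varphi_F$ is killed by $q$, hence lies in $\bB$. The kernel $K$ satisfies $q(K) \in \ol{\tT}$, which by Proposition \ref{prop_Hom_to_closed} is equivalent to $\Hom_{\aA}(K,\eE)=0$, i.e.\ $K \in {}^\perp \eE$. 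But $K$ is also a subobject of $F$ and $\fF$ is closed under subobjects as the torsion-free part of a torsion pair, so $K \in \fF \cap {}^\perp \eE = 0$. Thus $\varphi_F$ is monic and $0 \to F \to E \to Q \to 0$ with $Q \in \bB$ is the required sequence.

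The main conceptual point — the only place where something non-formal happens — is the identification $q(\fF) \simeq \eE$ in the first direction, since this is what promotes the torsion pair produced by Proposition \ref{prop_emb_into_closed_gives_w_loc} to one whose torsion-free part is \emph{canonically} $\eE$ rather than merely some essential image. Everything else is bookkeeping around the orthogonality relations defining the two torsion pairs; I expect no obstacle beyond verifying the compatibility $\eE \subset \fF$ noted above.
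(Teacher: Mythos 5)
Your proposal is correct and follows essentially the same route as the paper: the paper's proof is precisely the observation that $\eE\subset\fF$ together with an appeal to Propositions \ref{prop_emb_into_closed_gives_w_loc} and \ref{prop_torsion_pair_given_by_B}, and your two implications unpack exactly those two propositions (with the converse reproducing the argument of Proposition \ref{prop_torsion_pair_given_by_B} verbatim). Your explicit verification that $\fF=({}^\perp\eE)^\perp\supset\eE$ is the content of the paper's ``by construction'' remark.
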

\begin{proof}
	By construction, $\eE\subset \fF$. The statement follows from Propositions \ref{prop_emb_into_closed_gives_w_loc} and \ref{prop_torsion_pair_given_by_B}.
\end{proof}

\vspace{0.3cm}
\subsection{Ext-groups between closed objects in the quotient category}~\\

A category $\gG \subset \eE$ satisfying the assumptions of Proposition \ref{prop_emb_into_closed_gives_w_loc} is closed under extensions in $\cC$. We shall present $\Ext^1_{\cC}(E_1,E_2)$, for $E_1, E_2 \in \gG$, as a colimit of appropriate Ext-groups in the category $\aA$.

Recall from Corollary \ref{cor_ff_on_closed} that the category $\eE$ of $\bB$-closed objects (\ref{eqtn_def_of_closed_obj}) is a full subcategory both in $\aA$ and in $\cC$. This allows us to consider a short exact sequence in $\cC$ with all terms in $\eE$ as a complex in $\aA$.
\begin{LEM}\label{lem_4-term_in_A}
	A short exact sequence 
	\begin{equation}\label{eqtn_ses_in_C}
	0\to E_2 \xrightarrow{f} E\xrightarrow{g} E_1 \to 0
	\end{equation}
	in $\cC$ with all terms in $\eE$ yields a 4-term exact sequence 
	\begin{equation}\label{eqtn_4-term_in_A}
	0\to E_2 \xrightarrow{f} E \xrightarrow{g} E_1 \to B \to 0
	\end{equation}
	in $\aA$, with $B\in \bB$.
	
	Conversely, any 4-term sequence (\ref{eqtn_4-term_in_A}) in $\aA$ yields a short exact sequence (\ref{eqtn_ses_in_C}) in $\cC$.
\end{LEM}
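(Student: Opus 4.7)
\begin{PROOF}{ (plan for Lemma \ref{lem_4-term_in_A})}
The plan is to lift (\ref{eqtn_ses_in_C}) to a complex in $\aA$, measure the defect from exactness term by term in $\bB$, and then use the fact that objects of $\eE$ admit no nonzero $\bB$-subobjects to force the defect to vanish except at the rightmost place.

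First, since $E,E_1\in\eE$, Proposition \ref{prop_Hom_to_closed} provides unique lifts $\tilde f\colon E_2\to E$ and $\tilde g\colon E\to E_1$ in $\aA$ with $q(\tilde f)=f$, $q(\tilde g)=g$. The composition $\tilde g\tilde f$ lifts $gf=0$, so by uniqueness of lifts it vanishes in $\aA$, giving a complex $0\to E_2\xrightarrow{\tilde f}E\xrightarrow{\tilde g}E_1\to \textrm{coker}(\tilde g)\to 0$. Applying the exact functor $q$ and comparing with the short exactness of (\ref{eqtn_ses_in_C}) shows that $\ker\tilde f$, $\ker\tilde g/\textrm{im}\,\tilde f$ and $\textrm{coker}(\tilde g)$ all lie in $\bB$.

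Now I would dispose of the three defects. The subobject $\ker\tilde f\hookrightarrow E_2$ is a $\bB$-object mapping into $E_2\in\eE$, and $\Hom_{\aA}(\bB,E_2)=0$ forces $\ker\tilde f=0$. For middle exactness, consider the short exact sequence $0\to E_2\to E\to E/E_2\to 0$ (using $\tilde f$ injective, so $\textrm{im}\,\tilde f\simeq E_2$); the corresponding piece of the long exact $\Hom(B,-)$-sequence, for any $B\in\bB$, reads
\[
\Hom_{\aA}(B,E)\to \Hom_{\aA}(B,E/E_2)\to \Ext^1_{\aA}(B,E_2),
\]
with both outer terms zero since $E,E_2\in\eE$. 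Hence $E/E_2$ has no nonzero $\bB$-subobjects, and the $\bB$-subobject $\ker\tilde g/\textrm{im}\,\tilde f\subset E/E_2$ must vanish, i.e.\ $\ker\tilde g=\textrm{im}\,\tilde f$. The object $B:=\textrm{coker}(\tilde g)\in\bB$ is exactly what is required to complete (\ref{eqtn_4-term_in_A}).

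For the converse, apply the exact quotient functor $q$ to (\ref{eqtn_4-term_in_A}); since $q(B)=0$ (as $B\in\bB$), one immediately obtains the short exact sequence $0\to q(E_2)\to q(E)\to q(E_1)\to 0$ in $\cC$, which by Corollary \ref{cor_ff_on_closed} is (\ref{eqtn_ses_in_C}). I expect the only slightly delicate step to be the middle exactness above; the clean way is the observation that $\Hom_{\aA}(\bB,E/E_2)=0$, which rules out $\bB$-subobjects of $E/E_2$ without having to check that $E/E_2$ itself is $\bB$-closed (which would require information about $\Ext^2$).
\end{PROOF}
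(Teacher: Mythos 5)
Your proof is correct and follows essentially the same route as the paper: lift the sequence to a complex in $\aA$ with cohomology in $\bB$, kill the cohomology at $E_2$ using $\Hom_{\aA}(\bB,E_2)=0$, and kill the middle cohomology using the vanishing of both $\Hom_{\aA}(\bB,-)$ and $\Ext^1_{\aA}(\bB,E_2)$ on closed objects. The only (cosmetic) difference is in the middle step: the paper splits $0\to E_2\to\ker_{\aA}g\to H\to 0$ via $\Ext^1(\bB,E_2)=0$ and then embeds $H$ into $E$, whereas you deduce $\Hom_{\aA}(\bB,E/E_2)=0$ from the long exact sequence and embed $H$ into $E/E_2$ — both arguments rest on exactly the same two vanishings.
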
 
\begin{proof}
	Since the restriction of the quotient functor $q|_{\eE} \colon \eE \to \cC$ is fully faithful by Corollary \ref{cor_ff_on_closed}, sequence (\ref{eqtn_ses_in_C}) admits a lift to a complex $E_2\xrightarrow{f} E\xrightarrow{g} E_1$ in $\aA$ with cohomology in $\bB$.

	Since there are no morphisms from $\bB$ to $\eE$, the cohomology at $E_2$ is zero. The cohomology $H$ at $E$ fits into a short exact sequence $0\to E_2 \to \ker_{\aA} g \to H \to 0$. Since $\Ext^1(\bB, E_2) =0$, this sequence splits.
Then $H$ is a subobject of $E\in \eE$, hence, as above, $H\simeq 0$.
	
	The inverse statement is clear as $q(B)=0$.
\end{proof}

For an object $E\in \eE\subset \aA$ denote by $\aA/^{\bB}E$ the category whose objects are pairs $(A, i)$ of an object $A\in \aA$ and a  monomorphism $i\colon A\xrightarrow{i} E$ in $\aA$ with the cokernel in $\bB$. Morphisms $(A',i') \to (A,i)$ in $\aA/^{\bB}E$ are $\f \in \Hom_{\aA}(A',A)$ such that $i\circ \f = i'$.

Given $E_2\in \aA$, consider a functor $\Ext^1_{\aA}(-,E_2) \colon \aA/^{\bB}E ^{\opp}\to \textrm{Ab}$ which maps $(A, i)$ to $\Ext^1_{\aA}(A,E_2)$. Given $f\in \Hom((A,i), (A',i'))$, the map $\Ext^1_{\aA}(f,E_2)\colon \Ext^1_{\aA}(A', E_2) \to \Ext^1_{\aA}(A,E_2)$ is the precomposition with $f$.

\begin{PROP}\label{prop_Ext_as_colim}
	Consider a full subcategory $\gG\subset \eE$ closed under extensions in $\cC$. 
	Then, for any $G_1, G_2\in \gG$, 
	$$
	\mathrm{Ext}^1_{\cC}(G_1,G_2) = \varinjlim_{\aA/^{\bB}G_1^{\opp}} \mathrm{Ext}^1_{\aA}(-,G_2).
	$$
\end{PROP}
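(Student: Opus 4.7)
The plan is to construct a natural map $\Phi \colon \varinjlim_{\aA/^{\bB}G_1^{\opp}} \Ext^1_{\aA}(-,G_2) \to \Ext^1_{\cC}(G_1,G_2)$ using the exactness of $q$, then establish bijectivity separately. First I would record two preliminary observations. The category $\aA/^{\bB}G_1$ is a poset, since morphisms $(A',i')\to (A,i)$ are uniquely determined (the target inclusion $i$ is mono); and its opposite is filtered, because for $(A_1,i_1),(A_2,i_2)$ the intersection $A_1\cap A_2\subseteq G_1$ lies in $\aA/^{\bB}G_1$: the cokernel $G_1/(A_1\cap A_2)$ is an extension of $G_1/A_1$ by the subobject $A_1/(A_1\cap A_2)\simeq (A_1+A_2)/A_2 \subseteq G_1/A_2$, both in $\bB$ since $\bB$ is Serre. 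Hence the colimit is an ordinary filtered colimit of abelian groups under pullback of extensions along inclusions of subobjects of $G_1$.

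The map $\Phi$ sends $\xi\colon 0\to G_2\to X\to A\to 0$ in $\aA$ to $q(\xi)\colon 0\to G_2\to q(X)\to q(A)\to 0$ in $\cC$, viewed as an extension of $G_1$ by $G_2$ via the isomorphism $q(i)^{-1}\colon G_1\simto q(A)$ (available because $\mathrm{coker}(i)\in\bB$). Functoriality under restriction along $A'\subseteq A$ is immediate from exactness of $q$. Surjectivity then follows at once from Lemma \ref{lem_4-term_in_A}: since $\gG$ is closed under extensions in $\cC$, any extension $\eta\colon 0\to G_2\to C\to G_1\to 0$ in $\cC$ has $C\in\gG\subset\eE$, so lifts to a 4-term sequence $0\to G_2\to C\to G_1\to B\to 0$ in $\aA$ with $B\in\bB$; setting $A:=\ker(G_1\to B)$ exhibits the required preimage.

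The main obstacle is injectivity. Suppose $\Phi(\xi)$ splits, so $q(\xi)$ admits a section $\sigma\colon q(A)\to q(X)$, represented by a roof $A\xleftarrow{s}A'\xrightarrow{f}X$ with $s\in\sS$. Passing to $\mathrm{im}(s)$, I may assume $s$ is mono with $A/A'\in\bB$; this is legitimate because $f|_{\ker s}$ lands in $G_2$ (as $\pi f|_{\ker s}=s|_{\ker s}=0$ for $\pi\colon X\to A$ the projection), whence vanishes as $\Hom(\bB,G_2)=0$. The section identity $q(\pi)\circ\sigma=\Id_{q(A)}$ translates, by the equivalence of roofs, into the existence of a mono $h\colon A''\hookrightarrow A'$ with $A'/A''\in\bB$ such that $\pi\circ f\circ h=s\circ h$ already in $\aA$. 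Then $f\circ h\colon A''\to X$ is an honest splitting of the pulled-back extension $\xi|_{A''}$, so $\xi|_{A''}=0$ in $\Ext^1_{\aA}(A'',G_2)$. A final Serre-closure check places $(A'', i\circ s\circ h)$ in $\aA/^{\bB}G_1$: the cokernel of $A''\hookrightarrow G_1$ is an extension of $A/A''\in\bB$ by $G_1/A\in\bB$, hence lies in $\bB$. Thus $\xi$ vanishes in the colimit. Compatibility of $\Phi$ with Baer sums, and so the group-homomorphism property, follows from exactness of $q$ preserving the pullbacks and pushouts defining the abelian-group structure on $\Ext^1$.
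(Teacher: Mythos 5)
Your construction of the comparison map and your surjectivity argument coincide with the paper's: both send an extension of $A$ by $G_2$ to its image under $q$ (equivalently, to the associated 4-term sequence of Lemma \ref{lem_4-term_in_A}), and both use closure of $\gG$ under extensions in $\cC$ to lift a $\cC$-extension back to a 4-term sequence in $\aA$. There are two genuine differences. First, you make explicit that $\aA/^{\bB}G_1$ is a cofiltered poset, so the colimit is filtered; the paper uses this only implicitly when it represents an element of the kernel of the comparison map by a single extension class, so this observation is a worthwhile addition. Second, your injectivity argument takes a different route: the paper lifts the splitting $s\in\Hom_{\cC}(G_1,G)$ to a morphism in $\aA$ via Corollary \ref{cor_ff_on_closed} and concludes that $\zeta$ vanishes already in $\Ext^1_{\aA}(A,G_2)$, whereas you unwind the roof representing the section and conclude vanishing only after restricting to a smaller $A''\subseteq A$ with $A/A''\in\bB$ --- which is all the filtered colimit requires. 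Your route is longer but has the advantage of not needing the middle term of the lifted extension to be $\bB$-closed. One step needs repair: in the reduction to $s$ mono you assert $\pi f|_{\ker s}=s|_{\ker s}$, but at that stage the identity $\pi f=s$ is only known after applying $q$. The correct justification is that $q(\pi f)=q(s)$ forces $\mathrm{im}(\pi f-s)\in\bB$, and since $A$ embeds into $G_1\in\eE$ it has no nonzero subobjects in $\bB$, whence $\pi f|_{\ker s}=(\pi f-s)|_{\ker s}=0$ and $f|_{\ker s}$ factors through $G_2$ as you claim. Alternatively, perform the roof refinement first (your $A''$, on which $\pi fh=sh$ holds on the nose) and dispense with the intermediate reduction altogether.
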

\begin{proof}
	Consider $(A, i) \in \aA/^{\bB}G_1$ and $\zeta \in \Ext^1_{\aA}(A, G_2)$. The  short exact sequence $0\to G_2\xrightarrow{f}G\xrightarrow{g} A \to 0$ given by $\zeta$ combined with the short exact sequence $0\to A\xrightarrow{i} G_1 \to B\to 0$ yields a 4-term exact sequence $0\to G_2 \to G\to G_1\to B \to 0$, i.e. an element $\xi$ of $\Ext^1_{\cC}(G_1,G_2)$  (see Lemma \ref{lem_4-term_in_A}). Denote by $r_A\colon \Ext^1_{\aA}(A, G_2) \to \Ext^1_{\cC}(G_1,G_2)$ the morphism which sends $\zeta$ to $\xi$. It is straightforward to check that $r_A$ is well-defined. Indeed, if $s\colon A\to G$ is a section of $g$, then $q(s)\circ q(i)^{-1}$ is a section of $q(g)$. 
	
	Let now $\f$ be an element of $\Hom((A', i'), (A,i))$. The snake lemma for diagram
	\[
	\xymatrix{0 \ar[r] & A \ar[r]^i & G_1 \ar[r] & B \ar[r] & 0\\
		0 \ar[r] & A' \ar[r] \ar[u]^{\f} & A'\ar[r] \ar[u]^{i'} & 0 \ar[r] \ar[u] & 0}
	\]
	implies that $\f$ is a monomorphism with cokernel in $\bB$. The pull back of an extension $0 \to G_2 \to G\to A \to 0$ along $\f$ fits into a commutative diagram
	\begin{equation}\label{eqtn_4_1}
	\xymatrix{0 \ar[r] & G_2 \ar[r] & G\ar[r] & A \ar[r]^i & G_1\\
	0 \ar[r] & G_2\ar[r] \ar[u]^{\Id} & G'\ar[r] \ar[u]^{\psi} & A'\ar[r]^{i'} \ar[u]^{\f}& G_1 \ar[u]^{\Id}}
	\end{equation}
	Since $\textrm{coker} \psi \simeq \textrm{coker} \f$ is an object of $\bB$, $q(\psi)$ is an isomorphism in $\cC$, i.e. (\ref{eqtn_4_1}) yields a commutative diagram in $\cC$:
	\[
	\xymatrix{0 \ar[r] & G_2\ar[r]& G\ar[r] & G_1 \ar[r] & 0\\
	0 \ar[r] & G_2\ar[r]\ar[u]^{\Id}& G' \ar[r] \ar[u]^{q(\psi)} & G_1 \ar[r]\ar[u]^{\Id} & 0}
	\]
	 It follows that the diagram
	\[
	\xymatrix{\Ext^1_{\aA}(A,G_2) \ar[rr]^{\Ext^1_{\aA}(\f, G_2)} \ar[dr]_{r_A} && \Ext^1_{\aA}(A',G_2) \ar[dl]^{r_{A'}}\\
	& \Ext^1_{\cC}(G_1,G_2)& }
	\]
	commutes, hence we get a morphism $\Upsilon\colon \varinjlim_{\aA/^{\bB}G_1^{\opp}} \Ext^1_{\aA}(-,G_2)\to \Ext^1_{\cC}(G_1,G_2)$.
	
	As $\gG\subset \cC$ is closed under extensions, Lemma \ref{lem_4-term_in_A} implies that $\Upsilon$ is surjective.
	
	An element $\zeta$ in the kernel of $\Upsilon$ is a class of a short exact sequence $0\to G_2\xrightarrow{f} G \xrightarrow{g} A \to0$, for some $(A,i)\in \aA/^{\bB}G_1$. The splitting of $r_A(\zeta)$ is a morphism $s\in \Hom_{\cC}(G_1,G)$. By Corollary \ref{cor_ff_on_closed}, $s$ can be considered as a morphism in $\aA$. Since $s$ is the splitting of $r_A(\zeta)$, $i\circ g \circ s = \Id_{G_1}$. Hence, $i\circ g \circ s \circ  i = i$. As $i\colon A\to G_1$ is a monomorphism, the last equality implies that $g\circ s\circ i = \Id_{A}$. It follows that $s\circ i$ is a section of $g$, i.e. $\zeta =0$. 
\end{proof}

\begin{COR}\label{cor_can_exact_str}
	Consider a weakly localising Serre subcategory $\bB$ in an abelian category $\aA$. Then the category $\eE$ of $\bB$-closed objects is quasi-abelian and conflations in the canonical exact structure on $\eE$ are 4-term exact sequences $0\to E_1 \to E_2\to E_3 \to B \to 0$ in $\aA$ with $E_i \in \eE$ and $B\in \bB$. Moreover, $\Ext^1_{(\eE, \sS_{\textrm{can}})}(E_1, E_2) =\varinjlim_{\aA/^{\bB}E_1^{\opp}} \Ext^1_{\aA}(-,E_2)$. 
\end{COR}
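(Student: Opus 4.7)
The plan is to assemble the corollary from three ingredients already developed in the paper: the general correspondence between torsion pairs in an abelian category and quasi-abelian structures on the torsion-free part (recalled in Appendix \ref{sec_ab_env_of_q-a_cat}); Lemma \ref{lem_4-term_in_A} to translate short exact sequences in $\cC$ supported on $\eE$ into 4-term sequences in $\aA$; and Proposition \ref{prop_Ext_as_colim} for the colimit description of $\Ext^1$.

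First, since $\bB$ is weakly localising, by definition $\eE$ is the torsion-free part of a torsion pair $(\ol{\tT},\eE)$ in $\cC=\aA/\bB$. Invoking the appendix, every torsion-free part of a torsion pair in an abelian category is canonically a quasi-abelian category, and its canonical exact structure $\sS_{\mathrm{can}}$ has as conflations precisely the short exact sequences of the ambient abelian category $\cC$ whose three terms all lie in $\eE$. This immediately yields that $\eE$ is quasi-abelian.

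Second, I would apply Lemma \ref{lem_4-term_in_A}: a short exact sequence $0\to E_1\to E_2\to E_3\to 0$ in $\cC$ with all terms in $\eE$ corresponds bijectively to a 4-term exact sequence $0\to E_1\to E_2\to E_3\to B\to 0$ in $\aA$ with $B\in\bB$. Combining with the previous paragraph reinterprets the conflations of $\sS_{\mathrm{can}}$ as such 4-term sequences, which is the second claim.

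For the $\Ext^1$ formula, note that $\eE\subset\cC$ is closed under extensions (being the torsion-free part of a torsion pair). Hence Proposition \ref{prop_Ext_as_colim} applies with $\gG=\eE$ and gives
\[
\Ext^1_{\cC}(E_1,E_2)\;=\;\varinjlim_{\aA/^{\bB}E_1^{\opp}}\Ext^1_{\aA}(-,E_2).
\]
By the description of $\sS_{\mathrm{can}}$ above, $\Ext^1_{(\eE,\sS_{\mathrm{can}})}(E_1,E_2)=\Ext^1_{\cC}(E_1,E_2)$, yielding the claimed formula. No step is a real obstacle; the only subtlety is to make sure one correctly invokes from the appendix that the canonical exact structure on the torsion-free part of a torsion pair is exactly inherited from ambient short exact sequences, so that the equality $\Ext^1_{(\eE,\sS_{\mathrm{can}})}(E_1,E_2)=\Ext^1_{\cC}(E_1,E_2)$ is justified and not just plausible.
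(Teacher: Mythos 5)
Your proposal is correct and follows essentially the same route as the paper: it invokes Proposition \ref{prop_t_tf_is_q-a} to get quasi-abelianness and the induced exact structure from the torsion pair $(\ol{\tT},\eE)$ in $\aA/\bB$, then Lemma \ref{lem_4-term_in_A} for the 4-term description of conflations, and Proposition \ref{prop_Ext_as_colim} (with $\gG=\eE$, closed under extensions) for the colimit formula. Nothing to add.
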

\begin{proof}
	Category $\eE$ is the torsion-free part of a torsion pair in the quotient category $\aA/\bB$.  It follows (see Proposition \ref{prop_t_tf_is_q-a})  that $\eE$ is quasi-abelian and the canonical exact structure $(\eE, \sS_{\textrm{can}})$ is the one induced by the inclusion $\eE\subset \aA/\bB$. Since the torsion-free part of a torsion pair is closed under extensions, the description of conflations in $\sS_{\textrm{can}}$ follows from Lemma \ref{lem_4-term_in_A} and Proposition \ref{prop_Ext_as_colim}.
\end{proof}

	\section{Reflexive sheaves on a surface and their right abelian envelope}\label{sec_ref_sheaves}

		Here we discuss the category $\Rref(X)$ of reflexive sheaves on a  normal surface $X$. We prove that it is quasi-abelian and its right abelian envelope is the quotient of $\Coh(X)$ by the subcategory of Artinian sheaves. We show that the cotiliting torsion pair in $\aA_r(\Rref(X))$ (see Proposition \ref{prop_cotilt_is_quasi-ab}) is canonical. We argue that $H^0(X, \oO_X)$ is the center of $\Rref(X)$ and 
 discuss the functor $\Rref(X) \to \Rref(U)$, for an open subset $U\subset X$.

	\vspace{0.3cm}
	\subsection{The right abelian envelope of $\Rref(X)$ }~\\
	
	Denote by $\Coh_{\leq 0 }(X) \subset \Coh(X)$ the Serre subcategory of Artinian sheaves and by $\Coh^{\geq 1}(X) = \Coh(X)/\Coh_{\leq 0}(X)$ the quotient. More generally, 	given a Noetherian scheme $Z$, denote by $\Coh_{\leq k}(Z)\subset\Coh(Z)$ the full subcategory of sheaves supported in dimension less than or equal to $k$ any by $\Coh^{\geq k+1}(Z) = \Coh(Z)/\Coh_{\leq k}(Z)$ the quotient.
	
	We show that $\Coh^{\geq 1}(X)$ is the right abelian envelope of $\Rref(X)$ endowed with its canonical exact structure.

	 It is well-known, \emph{cf.} \cite[Proposition 1.5.3]{Yoshino}, that the category $\Rref(X)$ of reflexive sheaves on $X$ is the the category of $\Coh_{\leq 0}(X)$--closed objects:
 	\begin{equation}\label{eqtn_locally_closed} 
	\Rref(X)=\{E\in \Coh(X) \,|\, \Hom_X(\Coh_{\leq 0}(X), E) =0 = \Ext^1_X(\Coh_{\leq 0}(X), E)\}.
	\end{equation} 
	
	Let $(\Coh_{\leq 1}(X), \fF(X))$ be the torsion pair in $ \Coh(X)$ of torsion and torsion-free sheaves. 
	\begin{LEM}\label{lem_emb_cond_for_Noeth_sch}
		Category $\fF(X)$ satisfies the embedding condition with respect to $\Rref(X)$.
	\end{LEM}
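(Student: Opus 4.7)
The plan is to take the standard reflexive hull: for a torsion-free sheaf $F \in \fF(X)$, set $G := F^{**} = \hHom_X(\hHom_X(F, \oO_X), \oO_X)$, which is reflexive, and use the canonical biduality map $\eta_F \colon F \to F^{**}$ as the desired embedding. Two properties need verification: that $\eta_F$ is a monomorphism and that $\mathrm{coker}(\eta_F)$ lies in $\Coh_{\leq 0}(X)$.

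For injectivity, I would argue that since $X$ is normal (hence integral) and $F$ is torsion-free, the kernel $\ker(\eta_F)$ is itself torsion-free; but stalkwise over the generic point $\eta_F$ becomes the identity on a finite-dimensional vector space over the function field $k(X)$, so $\ker(\eta_F)$ has zero generic stalk, and combined with being torsion-free this forces $\ker(\eta_F)=0$. For the support of the cokernel, the key fact is that on a normal surface the biduality map $\eta_F$ is an isomorphism at every point of depth $\geq 2$ (i.e.\ every point where $\oO_{X,x}$ is at least 2-dimensional regular or, more generally, satisfies Serre's $(S_2)$ condition, which holds on a normal surface by Serre's criterion). Hence $\mathrm{coker}(\eta_F)$ is supported on a closed subset of codimension $\geq 2$ in the 2-dimensional scheme $X$, i.e.\ on a finite set of closed points, so it is Artinian and belongs to $\Coh_{\leq 0}(X)$.

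Thus I would state the short exact sequence
\[
0 \to F \xrightarrow{\eta_F} F^{**} \to B \to 0
\]
with $F^{**} \in \Rref(X)$ and $B \in \Coh_{\leq 0}(X)$, which is exactly condition (\ref{eqtn_ast}) for $\fF(X)$ with respect to $\Rref(X)$.

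The main obstacle is really just the support statement for the cokernel; everything else is formal. The cleanest way to justify it is to invoke Serre's $(S_2)$ property for the normal surface $X$ (which holds since normal schemes of dimension $\leq 2$ are Cohen--Macaulay in codimension $\leq 1$ and satisfy $(S_2)$ by Serre's normality criterion), together with the standard characterization that on an $(S_2)$ scheme a torsion-free coherent sheaf $F$ is reflexive if and only if it satisfies $(S_2)$ itself, which in turn forces the non-reflexive locus (equivalently, the support of $\mathrm{coker}\,\eta_F$) to have codimension $\geq 2$. Since $\dim X = 2$, this is precisely dimension $0$.
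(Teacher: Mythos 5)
Your proposal is correct and is essentially the paper's own argument: the paper also embeds a torsion-free sheaf $F$ into its double dual $F^{\vee\vee}$ and observes that, since a normal surface is regular in codimension one, the cotorsion $F^{\vee\vee}/F$ is Artinian, i.e.\ lies in $\Coh_{\leq 0}(X)$. Your extra verifications (injectivity of the biduality map via torsion-freeness, and the codimension bound on the support of the cokernel) just fill in the details the paper leaves implicit.
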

\begin{proof}
	For $F\in \fF(X)$, the canonical morphism to its double dual is an embedding. Since $X$ is regular in codimension one, the cotorsion $F^{\vee\vee}/F \in \Coh_{\leq 0}(X)$ is an Artinian sheaf.
\end{proof}

	\begin{THM}\label{thm_tor_pair_for_eqidim_scheme}
		Let $X$ be a normal surface.
		Then 
		$\Coh^{\geq 1}(X)$ admits a cotilting torsion pair $(\Coh_{\leq 1}(X)/\Coh_{\leq 0}(X), \Rref(X))$. In particular, $\Rref(X)$ is quasi-abelian.
	\end{THM}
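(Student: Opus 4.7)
The plan is to deduce this from Proposition~\ref{prop_cotilitng_on_quot} applied to $\aA=\Coh(X)$, $\bB=\Coh_{\leq 0}(X)$, $\eE=\Rref(X)$, with the torsion pair $(\tT,\fF)=(\Coh_{\leq 1}(X),\fF(X))$ in $\aA$ and $\gG=\Rref(X)$. Identification (\ref{eqtn_locally_closed}) already tells us that $\Rref(X)$ coincides with the category of $\Coh_{\leq 0}(X)$-closed objects, so $\gG\subset\fF\cap\eE$ is automatic (reflexive sheaves are torsion-free). The embedding condition (\ref{eqtn_ast}) for $\fF(X)$ with respect to $\Rref(X)$ is exactly Lemma~\ref{lem_emb_cond_for_Noeth_sch}: any $F\in\fF(X)$ embeds into $F^{\vee\vee}\in\Rref(X)$ with cokernel in $\Coh_{\leq 0}(X)$ thanks to normality of $X$ in codimension one.

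It remains to verify the two hypotheses on $(\Coh_{\leq 1}(X),\fF(X))$. That it is a torsion pair in $\Coh(X)$ is standard on any integral Noetherian scheme, since the torsion subsheaf of a coherent sheaf is coherent (the category is Noetherian), $\Coh_{\leq 1}(X)$ is closed under quotients and extensions, and the pair $(\Coh_{\leq 1}(X),\fF(X))$ is orthogonal by definition; alternatively one can invoke Lemma~\ref{lem_torsion_in_noetherian}. The more substantive point is that this pair is \emph{cotilting}, i.e.\ every coherent sheaf on $X$ is a quotient of a torsion-free coherent sheaf. The quickest route is to use quasi-projectivity of $X$: by Nagata compactification and normalization, $X$ is an open subscheme of a proper normal surface $\ol{X}$, and Zariski's theorem that every proper normal surface is projective provides an ample line bundle $L$ on $\ol{X}$. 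Twisting and applying Serre's theorem then shows any $A\in\Coh(X)$ is a quotient of a direct sum of powers of $L^{-1}|_X$, which is locally free and in particular torsion-free.

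Having verified all the hypotheses, Proposition~\ref{prop_cotilitng_on_quot} produces a cotilting torsion pair $(q(\Coh_{\leq 1}(X)),\Rref(X))$ in $\Coh^{\geq 1}(X)$. Finally, $q(\Coh_{\leq 1}(X))=\Coh_{\leq 1}(X)/\Coh_{\leq 0}(X)$ under the canonical identification of the essential image of a Serre quotient containing $\bB$ with the corresponding quotient category. The \textquotedblleft in particular\textquotedblright{} assertion follows from the general principle (Proposition~\ref{prop_t_tf_is_q-a}, invoked in Corollary~\ref{cor_can_exact_str}) that the torsion-free part of a cotilting torsion pair in an abelian category carries the structure of a quasi-abelian category with canonical exact structure induced by the ambient abelian embedding.

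The only step that is not a direct invocation of earlier machinery is checking the cotilting property for $(\Coh_{\leq 1}(X),\fF(X))$, and I expect this to be the main obstacle: it genuinely requires a global source of torsion-free sheaves on $X$. Quasi-projectivity of normal surfaces is the cleanest input; if one wishes to avoid appealing to Zariski's theorem, an alternative is to argue that on any normal surface one can obtain the required surjections by extending $A$ to a Nagata compactification, covering it by a suitable locally free sheaf there, and restricting.
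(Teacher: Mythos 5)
Your overall route is the same as the paper's: verify the embedding condition via Lemma \ref{lem_emb_cond_for_Noeth_sch}, feed the torsion pair $(\Coh_{\leq 1}(X),\fF(X))$ into Proposition \ref{prop_cotilitng_on_quot} (and Proposition \ref{prop_emb_into_closed_gives_w_loc}), and identify the resulting torsion part with $\Coh_{\leq 1}(X)/\Coh_{\leq 0}(X)$. All of that is fine. The problem is exactly at the step you yourself single out as the main obstacle: your justification that $(\Coh_{\leq 1}(X),\fF(X))$ is cotilting rests on the claim that ``Zariski's theorem'' makes every proper normal surface projective, hence every normal surface quasi-projective. That statement is false. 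Zariski's theorem applies to \emph{smooth} (or $\mathbb{Q}$-factorial) complete surfaces; there exist proper normal surfaces that are not projective (see Schr\"oer, \emph{On non-projective normal surfaces}), and consequently normal surfaces that are not quasi-projective. So the ample line bundle $L$ on $\ol{X}$ you invoke need not exist, and Serre's theorem is unavailable.

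Your fallback --- extend $A$ to a Nagata compactification and cover it there by a locally free sheaf --- begs the same question: the existence of a locally free sheaf surjecting onto an arbitrary coherent sheaf on a (possibly non-projective) proper normal surface is precisely the resolution property for surfaces, which is a genuinely non-trivial theorem. This is what the paper cites at this point: by \cite[Theorem 2.1]{SchrVezz}, every coherent sheaf on a separated surface is a quotient of a locally free sheaf, and locally free sheaves are torsion-free, so the pair is cotilting. With that citation substituted for the projectivity argument, the rest of your proof goes through and coincides with the paper's.
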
	
	\begin{proof}
		By Lemma \ref{lem_emb_cond_for_Noeth_sch}, $\fF(X)$ satisfies the embedding condition with respect to $\Rref(X)$. Since $\Rref(X)$ is a full subcategory of $\fF(X)$, the existence of the torsion pair on $\Coh(X)/\Coh_{\leq 0}(X)$ follows from Proposition \ref{prop_emb_into_closed_gives_w_loc}.

		 By \cite[Theorem 2.1]{SchrVezz}, any coherent sheaf on $X$ is a quotient of a locally free sheaf. In particular, the pair $(\Coh_{\leq 1}(X), \fF(X))$ is cotilting. Hence, so is the induced torsion pair 
		 on the quotient, see Proposition \ref{prop_cotilitng_on_quot}.
		\end{proof}
	
	In view of Corollary \ref{cor_can_exact_str} category $\Rref(X)$ admits the \emph{canonical exact structure} in which conflations correspond to 4-term exact sequences in $\Coh(X)$:
	$$
	0 \to E_1 \to E_2 \to E_3 \to T \to 0
	$$
	with $E_i \in \Rref(X)$ and $T\in \Coh_{\leq 0}(X)$.
	\begin{COR}\label{cor_env_of_E_n-2}
		Let $X$ be a normal surface. Then $\Coh^{\geq 1}(X)$ is the right abelian envelope of $\Rref(X)$ endowed with the canonical exact structure.
	\end{COR}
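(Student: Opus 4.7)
The plan is to deduce this directly from Theorem \ref{thm_tor_pair_for_eqidim_scheme} combined with the general correspondence between cotilting torsion pairs and right abelian envelopes of quasi-abelian categories (recalled in Appendix \ref{sec_ab_env_of_q-a_cat} and referenced via Proposition \ref{prop_cotilt_is_quasi-ab}). Concretely, if $(\tT,\fF)$ is a cotilting torsion pair in an abelian category $\aA$, then $\fF$ is quasi-abelian with canonical exact structure induced by the embedding $\fF\subset \aA$, and $\aA$ is the right abelian envelope $\aA_r(\fF)$ of $\fF$ endowed with this canonical exact structure. Our task is to match the data in the statement to this abstract situation.

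First, I would invoke Theorem \ref{thm_tor_pair_for_eqidim_scheme}, which gives that $\Coh^{\geq 1}(X)$ admits the cotilting torsion pair $(\Coh_{\leq 1}(X)/\Coh_{\leq 0}(X),\Rref(X))$. This puts $\Rref(X)$ in the role of $\fF$ and $\Coh^{\geq 1}(X)$ in the role of $\aA$. The quoted correspondence then yields that $\Coh^{\geq 1}(X)$ is the right abelian envelope of $\Rref(X)$ with respect to the exact structure inherited from $\Coh^{\geq 1}(X)$ via the inclusion as torsion-free part.

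The remaining verification is that this inherited exact structure on $\Rref(X)$ coincides with the canonical exact structure described in the corollary, whose conflations are $4$-term sequences $0\to E_1\to E_2\to E_3\to T\to 0$ in $\Coh(X)$ with $E_i\in\Rref(X)$ and $T\in\Coh_{\leq 0}(X)$. This is precisely the content of Corollary \ref{cor_can_exact_str}, applied to the weakly localising Serre subcategory $\Coh_{\leq 0}(X)\subset \Coh(X)$, whose closed objects are $\Rref(X)$ by (\ref{eqtn_locally_closed}); indeed, Corollary \ref{cor_can_exact_str} identifies the canonical exact structure on the torsion-free part of $(\ol{\tT},\eE)$ in $\aA/\bB$ with the one whose conflations are the $4$-term exact sequences in $\aA$ with cokernel in $\bB$.

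I do not anticipate a serious obstacle: the entire argument is bookkeeping, combining the cotilting torsion pair from Theorem \ref{thm_tor_pair_for_eqidim_scheme}, the identification of conflations from Corollary \ref{cor_can_exact_str}, and the general abelian envelope correspondence from the appendix. The only point that deserves care is ensuring that the two a priori different exact structures on $\Rref(X)$ — the one coming from the cotilting pair in $\Coh^{\geq 1}(X)$ and the one described explicitly in terms of $4$-term sequences in $\Coh(X)$ — are literally the same, but this is exactly what Corollary \ref{cor_can_exact_str} establishes, so the conclusion follows immediately.
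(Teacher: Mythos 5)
Your proposal is correct and follows essentially the same route as the paper, whose proof simply cites Theorem \ref{thm_tor_pair_for_eqidim_scheme} together with Proposition \ref{prop_cotilt_is_quasi-ab}. Your extra care in checking, via Corollary \ref{cor_can_exact_str}, that the exact structure induced from the cotilting pair agrees with the canonical one described by $4$-term sequences is a worthwhile bit of bookkeeping the paper leaves implicit, but it is not a different argument.
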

	\begin{proof}
		Follows from Proposition \ref{prop_cotilt_is_quasi-ab} and Theorem \ref{thm_tor_pair_for_eqidim_scheme}.
	\end{proof}
For a Noetherian scheme $X$ denote by
 $\Pic_{\geq k}(X)$ the group of isomorphism classes in $\Coh^{\geq k}(X)$ of $j_*\lL$ where $j \colon U \to X$ is an open embedding with complement of codimension $k-1$ and an element $\lL$ of the group $\textrm{Pic}(U)$ of isomorphism classes of invertible sheaves on $U$. Any $\lL\in \Pic_{\geq k}(X)$ gives an autoequivalence $T_{\lL}$ of $\Coh^{\geq k}(X)$ defined as the composition of the equivalence $j^*\colon \Coh^{\geq k}(X)\xrightarrow{\simeq} \Coh^{\geq k}(U)$ (see Theorem \ref{thm_cal_pir} below) with the twist by $\lL$ and $(j^*)^{-1}$.
\begin{THM}\cite[Theorem 4.1]{CalPir}\label{thm_CP2}
	Let $X$ be a Noetherian scheme of finite type over an algebra of finite type over a field. 
	%Let $X$ and $Y$ be Cohen-Macaulay schemes. 
	Given an equivalence $\Phi \colon \Coh^{\geq k}(X) \xrightarrow{\simeq} \Coh^{\geq k}(X)$ there exists $\lL\in \Pic_{\geq k}(X)$ and a birational morphisms $\alpha \colon X\dashrightarrow X$ which is biregular on open subsets $U,V\subset X$ with complement of codimension $k-1$ such that $\Phi \simeq T_{\lL} \circ \alpha^*$.
\end{THM}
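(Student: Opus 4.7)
The plan is to adapt the Gabriel--Rosenberg reconstruction of a scheme from its category of coherent sheaves to the quotient abelian categories $\Coh^{\geq k}(X)$, exploiting the fact that such a category only remembers $X$ up to modifications in codimension $k-1$ and a line bundle twist.

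First, I would isolate the simple objects of $\Coh^{\geq k}(X)$. By construction the quotient by $\Coh_{\leq k-1}(X)$ kills everything supported in dimension $< k$, so the simples correspond bijectively to points $p\in X$ whose closure has dimension exactly $k$, and the endomorphism algebra of such a simple recovers the residue field at $p$. The equivalence $\Phi$ therefore permutes these simples and induces a bijection of the set of $k$-dimensional points of $X$ together with field isomorphisms of the residue fields.

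Second, I would upgrade this bijection to a genuine scheme isomorphism between suitable open subsets. The categorical data sees only points of dimension $\geq k$, so it is natural to restrict attention to open $U,V\subset X$ with complement of codimension $k-1$. By Theorem \ref{thm_cal_pir}, passage to such opens gives equivalences $\Coh^{\geq k}(X)\simeq \Coh^{\geq k}(U)$ and $\Coh^{\geq k}(X)\simeq \Coh^{\geq k}(V)$, through which $\Phi$ becomes an equivalence $\Coh^{\geq k}(U)\xrightarrow{\simeq}\Coh^{\geq k}(V)$. The Gabriel spectrum construction, performed inside these quotient categories (recovering the Zariski topology from the poset of simples and the local rings from $\mathrm{Ext}$-algebras between sheaves of the form $\oO_Z$ for subvarieties $Z$ of codimension $\leq n-k$), then produces a scheme isomorphism $\alpha\colon V\xrightarrow{\sim}U$. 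Composed with the inclusions, $\alpha$ is a birational self-map $X\dashrightarrow X$ which is biregular on $V\to U$.

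Third, after replacing $\Phi$ by $\Phi\circ(\alpha^*)^{-1}$, I may assume the modified equivalence $\Psi$ fixes (up to isomorphism) every simple and every $\oO_Z$ for $\ol{\{p\}}=Z$ of dimension $\geq k$. A standard argument then shows that $\Psi$ is a twist by an invertible object of $\Coh^{\geq k}(X)$: the image $\lL:=\Psi(\oO_X)$ has rank one at the generic point and, being compatible with all the identifications of local modules over the residue fields, must agree locally with a line bundle on a sufficiently small open $U'\subset X$ with complement of codimension $k-1$, i.e.\ with an element of $\Pic_{\geq k}(X)$. Finally, naturality identifies $\Psi$ with $T_\lL$, which yields $\Phi\simeq T_\lL\circ\alpha^*$.

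The main obstacle is the second step: carrying out a Gabriel-style reconstruction inside $\Coh^{\geq k}(X)$ despite having discarded all lower-dimensional sheaves. One must check that enough objects survive in the quotient to separate points of dimension $\geq k$ and to compute local rings categorically, which is where the Noetherian and finite-type-over-a-field hypotheses enter through the availability of sufficiently many locally free sheaves whose images in the quotient remain nonzero and functorially well-behaved.
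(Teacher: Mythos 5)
Before anything else: the paper does not prove this statement. It is imported from \cite[Theorem 4.1]{CalPir}, and the only original contribution here is the remark that follows it, which corrects the formulation (the source asks only that $\alpha$ be \emph{defined}, rather than \emph{biregular}, on an open set containing all points of dimension $\geq k$, and the Cremona involution shows that the weaker condition is inadequate) while asserting that the argument of \cite{CalPir} proves the corrected version. So there is no in-paper proof to compare you against; what you have written is, in outline, the same Gabriel--Rosenberg-type reconstruction that \cite{CalPir} carries out: match the simples of $\Coh^{\geq k}(X)$ with $k$-dimensional points, rebuild a ringed space from the categorical data to get a biregular $\alpha\colon V\to U$ compatible with $\Phi$, and recognise the residual autoequivalence as $T_{\lL}$ for some $\lL\in\Pic_{\geq k}(X)$. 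To your credit, by producing $\alpha$ as an isomorphism of ringed spectra on opens containing all points of dimension $\geq k$, your step 2 builds in exactly the biregularity that the paper's remark flags as the necessary correction.

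As a proof, however, there are two gaps. The substantive one is the one you name yourself: step 2 \emph{is} the theorem, and ``perform the Gabriel spectrum construction inside $\Coh^{\geq k}(X)$'' is not carried out. One must actually show that the Serre subcategories of finite type in the quotient are exactly those of objects supported on a given closed subset meeting the locus of points of dimension $\geq k$, that the resulting bijection on points underlies an isomorphism of ringed spaces onto a genuine open $U\subset X$ whose complement has dimension $\leq k-1$, and that this isomorphism is natural in $\Phi$; none of this is free, and it is precisely where the Noetherian and finite-type hypotheses enter. The second is a logical slip in step 3: you cannot assume that $\Psi=\Phi\circ(\alpha^*)^{-1}$ fixes \emph{every} $\oO_Z$ with $\dim Z\geq k$ up to isomorphism --- if it fixed $\oO_X$ there would be nothing left to twist by and you would be forced to conclude $\lL\simeq\oO_X$. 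What the reconstruction actually yields is that $\Psi$ preserves supports; it does fix the simples, because a line-bundle twist of $\oO_D$ is isomorphic to $\oO_D$ after localising at the generic point of $D$, but for higher-dimensional $Z$ one only knows that $\Psi(\oO_Z)$ is supported on $Z$ and generically of rank one. The correct continuation is to check that $\Psi(\oO_X)$ satisfies conditions (1)--(2) of Proposition \ref{prop_Cal_Pir}, conclude $\Psi(\oO_X)\simeq q(j_*\lL)$ with $\lL\in\Pic_{\geq k}(X)$, and only then run the module-structure argument identifying $\Psi$ with $T_{\lL}$.
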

\begin{REM}
	In \cite{CalPir} the authors did not say clear	ly that $\alpha$ should be biregular on an open subset with complement of codimension $k-1$ but only defined on it. Cremona transformation and its inverse show that it is not enough. Indeed, they are defined on the complement to three points in $\mathbb{P}^2$ but are biregular on the complement to three lines.
	Nevertheless, the argument of the proof of \cite[Theorem 4.1]{CalPir} proves Theorem \ref{thm_CP2} as formulated above.
\end{REM}

Now we are ready to describe the group $\Auteq(\Rref(X))$ of autoequivalences of $\Rref(X)$. 
For a normal surface $X$ denote by $X^o$ its non-singular part, $X^o = X\setminus X_{\textrm{sing}}$. Clearly, $X^o \to X$ is a morphism in $\dD$. 
\begin{THM}\label{thm_autoeq}
	Let $X$ be a normal surface. Then $\Auteq(\Rref(X)) \simeq \Aut(\wh{X}) \ltimes \Pic(X^o)$.
\end{THM}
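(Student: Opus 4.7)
The strategy is to reduce the computation to the classification of autoequivalences of the right abelian envelope $\Coh^{\geq 1}(X)$ given by Theorem \ref{thm_CP2}, and then to translate the two pieces of that classification (the Picard twist and the birational self-map) into the geometric data stated in the theorem.

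The first step is to establish a canonical group isomorphism $\Auteq(\Rref(X)) \simeq \Auteq(\Coh^{\geq 1}(X))$. In one direction, any autoequivalence of $\Rref(X)$ respects its canonical exact structure (which depends only on the additive category) and therefore extends uniquely to an autoequivalence of the right abelian envelope $\Coh^{\geq 1}(X)$ by Corollary \ref{cor_env_of_E_n-2}. In the other direction, by Proposition \ref{prop_canon_torsion_pair_Coh_1} the torsion pair $(\bigoplus_{D\in X^1}\oO_{X,D}\textrm{-mod},\Rref(X))$ on $\Coh^{\geq 1}(X)$ is intrinsic: its torsion part is generated by the simple objects. Any autoequivalence permutes simple objects, hence preserves the torsion pair, and its restriction to $\Rref(X)$ is an autoequivalence of $\Rref(X)$. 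These two constructions are mutually inverse group homomorphisms.

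Next, I apply Theorem \ref{thm_CP2} with $k=1$: every autoequivalence of $\Coh^{\geq 1}(X)$ is of the form $T_\lL\circ\alpha^*$, where $\lL\in\Pic_{\geq 1}(X)$ and $\alpha\colon X\dashrightarrow X$ is biregular on open subsets of $X$ whose complements are zero-dimensional. I then identify $\Pic_{\geq 1}(X)$ with $\Pic(X^o)$: pushforward along the open embedding $j^o\colon X^o\hookrightarrow X$ (whose complement $X_{\textrm{sing}}$ is finite) defines $\Pic(X^o)\to\Pic_{\geq 1}(X)$, while in the opposite direction a class in $\Pic_{\geq 1}(X)$ represented by $(U,\lL_U)$ restricts to a line bundle on $U\cap X^o$ and extends uniquely across the finite set $X^o\setminus U$ of smooth points, yielding the inverse. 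Similarly, the birational self-maps in Theorem \ref{thm_CP2} are identified with $\Aut(\wh{X})$: given such $\alpha$ which is an isomorphism $U\xrightarrow{\sim} V$ for $U,V\subset X$ with codim-two complements, Theorem \ref{thm_adj_unit} supplies canonical identifications $\wh{U}\simeq\wh{X}\simeq\wh{V}$, so $\alpha$ induces an automorphism of $\wh{X}$. Conversely, any $\beta\in\Aut(\wh{X})$ restricts to a birational self-map of $X$ biregular on $\beta^{-1}(X)\cap X$, whose complement in $X$ is finite.

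Finally, the semidirect product structure comes from the identity $\alpha^*\circ T_\lL=T_{(\alpha^{-1})^*\lL}\circ\alpha^*$, with $\Aut(\wh{X})$ acting on $\Pic(X^o)$ by pullback through the restriction of each automorphism of $\wh{X}$ to a birational automorphism of $X^o$. The main obstacle is Step 1: one must ensure that the correspondence between $\Auteq(\Rref(X))$ and $\Auteq(\Coh^{\geq 1}(X))$ is genuinely a group isomorphism (not merely a bijection), which rests on the canonicity of the torsion pair and the functoriality of the right abelian envelope. A secondary subtlety is parsing precisely which birational maps are produced by Theorem \ref{thm_CP2} for $k=1$ on a surface, namely those biregular on open subsets with zero-dimensional complement, since this matches exactly the morphisms in $\mathscr{D}$ used to define $\wh{X}$.
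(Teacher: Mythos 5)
Your proposal is correct and follows essentially the same route as the paper: pass to the right abelian envelope $\Coh^{\geq 1}(X)$ via Corollary \ref{cor_env_of_E_n-2}, invoke Theorem \ref{thm_CP2}, identify $\Pic_{\geq 1}(X)$ with $\Pic(X^o)$ by unique extension of line bundles across a finite set of smooth points, and extend the birational self-maps to automorphisms of $\wh{X}$ using the universal property of the codim-2-saturated model. If anything, your Step 1 is slightly more explicit than the paper's proof in verifying that the restriction map back to $\Auteq(\Rref(X))$ (via the canonicity of the torsion pair of Proposition \ref{prop_canon_torsion_pair_Coh_1}) is inverse to the extension map, which the paper leaves to the reader.
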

\begin{proof}
	Since $\Coh^{\geq 1}(X)$ is the right abelian envelope of $\Rref(X)$ (see Corollary \ref{cor_env_of_E_n-2}) any autoequivalence of $\Rref(X)$ induces an autoequivalence of $\Coh^{\geq 1}(X)$. 
	
	According to Theorem \ref{thm_CP2}, any autoequivalence of $\Coh^{\geq 1}(X)$ is given by $\lL \in \Pic_{\geq 1}(X)$ and a birational morphism $\alpha \colon X\dashrightarrow X$ which is a biregular isomorphism $\alpha \colon U \to V$ of open sets $U, V \subset X$ such that $i\colon U \to X$, $j\colon V\to X$ are morphisms in $\dD$.  
	Proposition \ref{prop_left_ms} allows us to extend $j_X \circ i, j_X \circ j \circ \alpha \colon U \to \wh{X}$ to a commutative square
		\[
	\xymatrix{ U \ar[rr]^{j_X \circ j\circ \alpha} \ar[d]^{j_X\circ i}  && \wh{X} \ar[d]^{\wt{i}}  \\
		\wh{X} \ar[rr]_{\wt{j}} & & Z} 
	\]
	with $\wt{i}$ and $\wt{j}$ in $\dD$. Since $\wt{X}$ is codim-2-saturated, $\wt{i}$ and $\wt{j}$ are biregular isomorpisms. Then $\wt{i}^{-1} \circ \wt{j}$ is a biregular isomorphism of $\wt{X}$ which extends $\alpha$. 
	
	On the other hand, any $\lL\in \Pic_{\geq 1}(X)$, i.e. $\lL\in \Pic(U)$ for an open $U\subset X$ with complement of codimension two, yields a line bundle on $U \cap X^o$. As $U\cap X^o \to X^o$ is an open embedding of non-singular normal surfaces with complements of codimension two, $\lL|_{U\cap X^o}$ admits a unique extension to $\wt{\lL} \in \Pic(X^o)$. Hence, $\Pic_{\geq 1}(X) \simeq \Pic(X^o)$.
	
	We conclude by Theorem \ref{thm_CP2} that $\Auteq(\Coh^{\geq 1}(X)) \simeq \Aut(\wh{X}) \ltimes \Pic(X^o)$. Any element $T_{\lL} \circ \alpha^*$ of this group restricts to an autoequivalence of $\Rref(X)$. The statement follows.
\end{proof}
\begin{EXM}\label{exm_cyclic}
	Consider  a field $k$ of characteristic not dividing $n$, a linear action of $\mathbb{Z}_n$ on $k^2= \Spec S$ and an affine variety $X = k^2/\mathbb{Z}_n$ with an isolated cyclic singularity at $x_0\in X$. By Auslander Theorem \cite[Proposition 2.2]{Auslander}, the category $\Rref(X)$ is equivalent to the category of projective $S[\mathbb{Z}_n]$-modules, hence to the category 
	%By Auslander theorem, the category $\Rref(X)$ is equivalent to the category 
	$\Rref_{\mathbb{Z}_n}(k^2)$ of $\mathbb{Z}_n$-equivariant  vector bundles on $k^2$. Indecomposable objects in $\Rref_{\mathbb{Z}_n}(k^2)$ are parametrised by characters $\chi$ of $\mathbb{Z}_n$, they are of the form $k_{\chi}\otimes \oO_{k^2}$. The corresponding reflexive sheaf on $X$ is a line bundle when restricted to $X^o = X\setminus \{x_0\}$, hence defines an element of $\Pic(X^o)$. Since the product of characters of $\mathbb{Z}_n$ corresponds to product of elements in $\Pic(X^o)$, the latter group is isomorphic to $\mathbb{Z}_n$. It follows that the group $\Pic(X^o)$ cyclically permutes indecomposable objects in  $\Rref(X)$. For $k= \mathbb{C}$ the group $\Aut(X)$ is described in \cite[Theorem 4.2]{ArzZai}. 
\end{EXM}

	\vspace{0.3cm}
	\subsection{The canonical torsion pair on $\aA_r(\Rref(X))$}~\\
	
	Recall that an object of an abelian category is \emph{simple} if it has no non-trivial subobjects. An object is of \emph{finite length} if it admits a finite filtration with simple graded factors.

	\begin{PROP}\label{prop_Cal-Pir}\cite[Proposition 1.4]{CalPir}
		For a scheme $Z$ and $k\leq \dim Z$, 
		the subcategory $\Coh_{\leq k-1}(Z)/\Coh_{\leq k-2}(Z) \subset \Coh^{\geq k-1}(Z)$ is the Serre subcategory of objects of finite length in  $\Coh^{\geq k-1}(Z)$. 
	\end{PROP}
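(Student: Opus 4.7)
The plan is to identify the simple objects of $\Coh^{\geq k-1}(Z)$, to define a length function on $\Coh_{\leq k-1}(Z)$ that descends to the quotient, and to deduce both the length bound and the matching of simple objects.

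First, I would show that the simple objects of $\Coh^{\geq k-1}(Z)$ are precisely the images $q(\oO_V)$ under the quotient functor $q\colon \Coh(Z)\to \Coh^{\geq k-1}(Z)$, where $V\subset Z$ runs over integral closed subschemes of dimension $k-1$. Indeed, any nonzero proper subsheaf of $\oO_V$ is of the form $\iI_W\cdot \oO_V$ for a proper closed $W\subsetneq V$, so the cokernel is supported in dimension $\le k-2$, whence $q(\oO_V)$ has no nonzero proper subobject. Conversely, any simple object $S$ of $\Coh^{\geq k-1}(Z)$ is represented by some $F\in\Coh(Z)$ whose support has dimension exactly $k-1$. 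If $\eta$ is a $(k-1)$-dimensional associated point of $F$ with reduced closure $V$, then the canonical morphism $\oO_V\to F$ defined locally around $\eta$ induces a nonzero map $q(\oO_V)\to S$ which is an isomorphism by simplicity.

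Second, I would introduce on $\Coh_{\leq k-1}(Z)$ the generic length function
\[
\ell(F)\;=\;\sum_{\eta}\mathrm{length}_{\oO_{Z,\eta}}F_{\eta},
\]
where $\eta$ ranges over the finitely many $(k-1)$-dimensional points of $Z$ lying in $\mathrm{supp}(F)$. This is additive on short exact sequences in $\Coh_{\leq k-1}(Z)$ and vanishes precisely on $\Coh_{\leq k-2}(Z)$. By the Serre-quotient formalism, a strict chain of subobjects of $q(F)$ in $\Coh^{\geq k-1}(Z)$ lifts, after taking saturations, to a chain of subsheaves of $F$ whose successive quotients are nonzero in $\Coh^{\geq k-1}(Z)$, hence non-trivial modulo $\Coh_{\leq k-2}(Z)$. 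Therefore each strict inclusion in the quotient forces $\ell$ to strictly decrease, so $q(F)$ has length at most $\ell(F)$. This proves $q(\Coh_{\leq k-1}(Z))$ is contained in the finite-length subcategory, and since $\Coh_{\leq k-1}(Z)$ is Serre in $\Coh(Z)$ and contains $\Coh_{\leq k-2}(Z)$, its image is a Serre subcategory of $\Coh^{\geq k-1}(Z)$.

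For the reverse inclusion, any finite-length object $T\in\Coh^{\geq k-1}(Z)$ admits a finite composition series whose simple factors, by the first paragraph, lie in $q(\Coh_{\leq k-1}(Z))$. Since the latter is closed under extensions, $T$ itself lies in $q(\Coh_{\leq k-1}(Z))$, which finishes the proof.

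The main technical point, and the place I expect to be slightly delicate, is the correspondence between chains of subobjects of $q(F)$ in the Serre quotient and chains of subsheaves of $F$ modulo $\Coh_{\leq k-2}(Z)$; but this is routine from the general description of morphisms in a Serre quotient as equivalence classes of roofs, together with the observation that enlarging or shrinking a subsheaf of $F$ by something in $\Coh_{\leq k-2}(Z)$ leaves $\ell$ unchanged.
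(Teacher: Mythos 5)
The paper offers no proof of this statement---it is imported verbatim from \cite[Proposition 1.4]{CalPir}---so there is nothing internal to compare against; I will judge your argument on its own. Your forward inclusion is correct and essentially complete: the generic length function $\ell$ is additive on $\Coh_{\leq k-1}(Z)$, vanishes exactly on $\Coh_{\leq k-2}(Z)$ there, and bounds the length of $q(F)$ because subobjects of $q(F)$ in a Serre quotient are images of subsheaves of $F$. The verification that $q(\oO_V)$ is simple for $V$ integral of dimension $k-1$ is also fine (modulo the cosmetic point that a nonzero subsheaf of $\oO_V$ is an ideal sheaf $\jJ\subset\oO_V$ cutting out a proper closed $W\subsetneq V$, not necessarily of the form $\iI_W\cdot\oO_V$ for the ambient ideal sheaf).

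The genuine gap is in the converse classification of simple objects, on which your final paragraph rests. You assert that any simple $S$ is represented by a sheaf $F$ with $\dim\mathrm{supp}(F)=k-1$; this is precisely the nontrivial point and does not follow from $S\neq 0$, since a priori $S=q(F)$ with $\dim\mathrm{supp}(F)\geq k$. To rule that out: let $T\subset F$ be the maximal subsheaf supported in dimension $\leq k-1$; if $q(T)\neq 0$ then $q(T)=S$ by simplicity and $T$ is the desired representative, so assume $q(T)=0$ and replace $F$ by $F/T$. Now pick an integral closed $W\subset\mathrm{supp}(F)$ of dimension $k-1$; then $q(\iI_WF)$ is a nonzero subobject of $q(F)$ (its support still contains a component of dimension $\geq k$) and is proper, because $(F\otimes\oO_W)_{\eta_W}=F_{\eta_W}/\mathfrak{m}_{\eta_W}F_{\eta_W}\neq 0$ by Nakayama, so $F/\iI_WF\notin\Coh_{\leq k-2}(Z)$---contradicting simplicity. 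A second, smaller soft spot: your ``canonical morphism $\oO_V\to F$ defined locally around $\eta$'' exists only on an open $U\ni\eta$, and promoting it to a nonzero morphism $q(\oO_V)\to S$ requires an extension step, e.g.\ $\Hom_U(G|_U,F|_U)=\varinjlim_n\Hom_Z(\iI^nG,F)$ for $\iI$ the ideal of $Z\setminus U$, together with the observation that $V\setminus U$ has dimension $\leq k-2$ since $V$ is integral of dimension $k-1$. Both issues can be avoided at once by filtering any $F\in\Coh_{\leq k-1}(Z)$ with graded pieces pushed forward from ideal sheaves on integral closed subschemes $V_i$ (\cite[Tag 01YF]{stacks-project}): this exhibits the composition factors of $q(F)$ as exactly the $q(\oO_{V_i})$ with $\dim V_i=k-1$, and a simple object, being $q(F)$ for such an $F$ by the torsion argument above, is then one of these factors.
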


Denote by $X^1$ the set of points in the scheme $X$ corresponding to prime divisors. Given $D\in X^1$, we denote by $\Coh_D^{\geq 1}(X)$ the quotient of the category $\Coh_D(X)$ of sheaves supported on the closure $\ol{D}$ of $D$ by the Serre subcategory $\Coh_D(X)\cap \Coh_{\leq 0}(X)$. 

By \cite[Lemma 2.5]{CalPir} the quotient of $\Coh^{\geq 1}(X)$ by the minimal Serre subcategory containing $\oO_Z$, for all irreducible closed subsets $Z\subset X$ such that $D\notin \textrm{Supp }\oO_Z$, is equivalent to the category of finitely generated modules over  the local ring $\oO_{X,D}$ of $X$ at the point $D$. This equivalence restricts to an equivalence of $\Coh^{\geq 1}_D(X)$ with the category $\textrm{mod--}\oO_{X,D}$ of finite length modules over $\oO_{X,D}$. In particular,  $\Coh_D^{\geq 1}(X)$ is a finite length category with one simple object $\oO_D$.

\begin{PROP}\label{prop_canon_torsion_pair_Coh_1}
	Consider a normal surface $X$. Category $\Coh^{\geq 1}(X)$ admits a
	cotilting torsion pair $(\bigoplus_{D\in X^1} \textrm{mod--}\oO_{X,D}, \Rref(X))$, where $\bigoplus_{D\in X^1}\textrm{mod--}\oO_{X,D}$ is the Serre subcategory of objects of finite length.
\end{PROP}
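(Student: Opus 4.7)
The plan is to upgrade the torsion pair of Theorem \ref{thm_tor_pair_for_eqidim_scheme} to the more refined statement. By Theorem \ref{thm_tor_pair_for_eqidim_scheme}, $\Coh^{\geq 1}(X)$ already carries the cotilting torsion pair $(\Coh_{\leq 1}(X)/\Coh_{\leq 0}(X), \Rref(X))$. The remaining task is two-fold: first, to identify the torsion part $\Coh_{\leq 1}(X)/\Coh_{\leq 0}(X)$ with the Serre subcategory of finite length objects in $\Coh^{\geq 1}(X)$; second, to produce the direct sum decomposition indexed by $X^1$.

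For the first identification, I would invoke Proposition \ref{prop_Cal-Pir} with $Z = X$ and $k = 2$. This immediately gives that $\Coh_{\leq 1}(X)/\Coh_{\leq 0}(X)$ is the Serre subcategory of objects of finite length in $\Coh^{\geq 1}(X)$, whose simple objects are precisely $\{\oO_D : D \in X^1\}$ as noted in the paragraph before the proposition.

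For the direct sum decomposition, each subcategory $\Coh^{\geq 1}_D(X) \subset \Coh^{\geq 1}(X)$ is fully faithful and equivalent to $\textrm{mod--}\oO_{X,D}$ with unique simple object $\oO_D$, as recalled above. What must be verified is that for distinct $D_1, D_2 \in X^1$ the categories $\Coh^{\geq 1}_{D_1}(X)$ and $\Coh^{\geq 1}_{D_2}(X)$ are $\Hom$- and $\Ext^1$-orthogonal inside $\Coh^{\geq 1}(X)$. Hom-orthogonality is automatic since any morphism has image supported on the zero-dimensional intersection $\ol{D_1} \cap \ol{D_2}$, hence vanishes in the quotient. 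For $\Ext^1$-orthogonality, given an extension $0 \to T_2 \to T \to T_1 \to 0$ in $\Coh^{\geq 1}(X)$ with $T_i \in \Coh^{\geq 1}_{D_i}(X)$, one lifts via Lemma \ref{lem_4-term_in_A} to a 4-term exact sequence in $\Coh(X)$ and considers the subsheaf $T^{(1)} \subseteq T$ consisting of sections supported on $\ol{D_1}$. Its intersection with $T_2$ lies over $\ol{D_1} \cap \ol{D_2}$ and is therefore zero-dimensional, while at the generic point of $D_1$ the map $T^{(1)} \to T_1$ is an isomorphism; both the kernel and cokernel of $T^{(1)} \to T_1$ thus lie in $\Coh_{\leq 0}(X)$, so the map becomes an isomorphism in $\Coh^{\geq 1}(X)$ and yields a splitting of the extension.

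Once orthogonality is in hand, any finite length object $T$ in $\Coh^{\geq 1}(X)$ decomposes uniquely as $T \simeq \bigoplus_{D \in X^1} T_D$ with $T_D \in \Coh^{\geq 1}_D(X) \simeq \textrm{mod--}\oO_{X,D}$: pick a composition series and iteratively split off simple subfactors belonging to different divisors using the $\Ext^1$-vanishing. This identifies the Serre subcategory of finite length objects with $\bigoplus_{D \in X^1} \textrm{mod--}\oO_{X,D}$ and completes the proof. The main technical hurdle will be the $\Ext^1$-orthogonality, where one has to combine the 4-term lift of Lemma \ref{lem_4-term_in_A} with the set-theoretic support decomposition in $\Coh(X)$; everything else is a direct application of results already established in the excerpt.
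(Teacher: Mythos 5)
Your proposal follows the same skeleton as the paper's proof: reduce via Theorem \ref{thm_tor_pair_for_eqidim_scheme} to identifying the torsion part, invoke Proposition \ref{prop_Cal-Pir} to recognize it as the subcategory of finite length objects, identify the simples as the $\oO_D$, prove $\Ext^1$-vanishing across distinct divisors, and conclude by d\'evissage. The one place where you genuinely diverge is the $\Ext^1$-vanishing. The paper proves it only for the simple objects $\oO_{D_1}$, $\oO_{D_2}$, and does so by localizing: since $\ol{D_1}\cap\ol{D_2}$ is zero-dimensional, $\Coh^{\geq 1}(X)\simeq \Coh^{\geq 1}(U)$ for $U$ the complement of the intersection points (Theorem \ref{thm_cal_pir}), and on $U$ the supports are disjoint, so the extension group vanishes for trivial reasons. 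You instead lift an arbitrary extension between objects of $\Coh^{\geq 1}_{D_1}(X)$ and $\Coh^{\geq 1}_{D_2}(X)$ to $\Coh(X)$ and split it there by a support decomposition. This proves a stronger statement (orthogonality of the whole blocks, not just of the simples) and the idea is sound, but your justification of the lift is flawed: Lemma \ref{lem_4-term_in_A} applies only to short exact sequences all of whose terms lie in $\eE=\Rref(X)$, i.e.\ are $\Coh_{\leq 0}(X)$-closed, whereas your $T_i$ are torsion sheaves, which receive nonzero maps from Artinian sheaves and so are never $\Coh_{\leq 0}(X)$-closed. You would need instead the standard (but not established in the paper) fact that a short exact sequence in $\Coh(X)/\Coh_{\leq 0}(X)$ is represented by a complex in $\Coh(X)$ with cohomology in $\Coh_{\leq 0}(X)$ whose outer terms may be chosen supported on $\ol{D_2}$ and $\ol{D_1}$ up to finitely many points; with that in hand your support-splitting argument goes through. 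The paper's localization trick sidesteps this bookkeeping entirely and is the cleaner route.
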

\begin{proof}
	In view of Theorem \ref{thm_tor_pair_for_eqidim_scheme}, it suffices to show that $\Coh_{\leq 1}(X)/\Coh_{\leq 0}(X) \simeq \bigoplus_{D\in X^1} \textrm{mod--}\oO_{X,D}$. 
	By Proposition \ref{prop_Cal-Pir}, $\Coh_{\leq 1}(X)/\Coh_{\leq 0}(X) $ is the subcategory of finite length objects in $\Coh^{\geq 1}(X)$.
	By\cite[Lemma 1.6]{CalPir}, any simple object in $\Coh(X)/\Coh_{\leq 0}(X)$ is isomorphic to $\oO_D$, for a prime divisor $D$. 
	Note that, for different prime divisors $D_1, D_2 \in X^1$,  we have $\Ext^1_{\Coh^{\geq 1}(X)}(\oO_{D_1}, \oO_{D_2})=0$. Indeed,  as $\Coh^{\geq 1}(X) \simeq \Coh^{\geq 1}(U)$ for an open subset $U\subset X$ complement to the intersection points of $D_1$ and $D_2$,
	we can assume that the support of $D_1$ and $D_2$ do not intersect. Hence, there are no nontrivial extensions of $\oO_{D_1}$ by $\oO_{D_2}$. The statement follows.
\end{proof}

	\begin{PROP}\label{prop_Ref_from_Coh}
	Given a normal surface $X$, the category $\Rref(X)$ is the full subcategory of $\Coh^{\geq 1}(X)$ right orthogonal to the full subcategory of simple objects.
	\end{PROP}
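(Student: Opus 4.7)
The plan is to deduce this directly from the canonical torsion pair established in Proposition \ref{prop_canon_torsion_pair_Coh_1}. Write $\tT := \bigoplus_{D\in X^1}\textrm{mod--}\oO_{X,D}$ for the torsion part, so by definition of a torsion pair
\[
\Rref(X) = \{F\in \Coh^{\geq 1}(X)\mid \Hom_{\Coh^{\geq 1}(X)}(T,F)=0 \text{ for all } T\in \tT\}.
\]
Thus $\Rref(X)$ is the right orthogonal of $\tT$, and the task is to replace the orthogonality against all of $\tT$ with orthogonality against the simple objects $\oO_D$, $D\in X^1$.

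The first step is to recall (from Proposition \ref{prop_Cal-Pir} and the description preceding Proposition \ref{prop_canon_torsion_pair_Coh_1}) that $\tT$ is precisely the Serre subcategory of finite length objects in $\Coh^{\geq 1}(X)$, and every object of $\tT$ admits a finite filtration whose subquotients are of the form $\oO_D$ for $D\in X^1$. The second step is the general observation that, for any abelian category $\aA$ and any object $F\in \aA$, the class of objects $T\in\aA$ with $\Hom(T,F)=0$ is closed under extensions, quotients and subobjects. Hence if $\Hom(\oO_D,F)=0$ for every $D\in X^1$ then $\Hom(T,F)=0$ for every finite length object $T$, by induction on the length of a composition series. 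The converse inclusion is trivial since each $\oO_D\in\tT$. Combining these two steps yields the equality
\[
\Rref(X)=\{F\in \Coh^{\geq 1}(X)\mid \Hom_{\Coh^{\geq 1}(X)}(\oO_D,F)=0 \text{ for all } D\in X^1\},
\]
which is the asserted right orthogonal to the subcategory of simple objects.

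There is no real obstacle here; the statement is a reformulation of Proposition \ref{prop_canon_torsion_pair_Coh_1} once one notes that the torsion class is a length category with simple objects exactly the $\oO_D$. The only point to be slightly careful about is that ``right orthogonal to the full subcategory of simple objects'' is interpreted with $\Hom$ (not $\Ext^1$), which is consistent with the convention $\gG^\perp=\{A\mid \Hom(\gG,A)=0\}$ fixed in Section \ref{sec_closed_obj}.
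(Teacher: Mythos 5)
Your proof is correct and takes the same route as the paper, whose entire proof is ``Follows from Proposition \ref{prop_canon_torsion_pair_Coh_1}''; you have simply made explicit the standard facts that the torsion-free part of a torsion pair is the right orthogonal of the torsion part, and that orthogonality to a length category is detected on its simple objects.
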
 
	\begin{proof}
		Follows from Proposition \ref{prop_canon_torsion_pair_Coh_1}.
	\end{proof}

	\vspace{0.3cm}
	\subsection{ $H^0(X, \oO_X)$ as the center of the category $\Rref(X)$}~\\

	We show that the affinisation of a normal surface $X$ can be recovered from $\Rref(X)$.
	
	For a category $\eE$ denote by $Z(\eE) :=\End(\Id_{\eE})$ its \emph{center}.
	
	\begin{LEM}\label{lem_center_of_envelope}
		Let $\eE$ be an additive category. For any exact structure $\sS$ on $\eE$ such that $i_R\colon \eE \to \aA_r(\eE,\sS)$ is fully faithful, the center $Z(\eE)$ of $\eE$ equals the center $Z(\aA_r(\eE, \sS))$ of its right abelian envelope.
	\end{LEM}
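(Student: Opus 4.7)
The plan is to show that restriction along $i_R$ gives an isomorphism between the two centers. Full faithfulness of $i_R$ yields a ring homomorphism $\rho \colon Z(\aA_r(\eE, \sS)) \to Z(\eE)$ sending $\eta$ to the natural transformation on $\eE$ whose component at $E$ corresponds under $i_R$ to $\eta_{i_R(E)}$. The whole argument rests on the standard fact, recalled in Appendix \ref{sec_ab_env_of_q-a_cat}, that every object $A \in \aA_r(\eE, \sS)$ admits a presentation $i_R(E_1) \xrightarrow{\phi} i_R(E_0) \xrightarrow{p} A \to 0$ with $E_0, E_1 \in \eE$.

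Injectivity of $\rho$ is immediate: if $\eta_{i_R(E)} = 0$ for every $E \in \eE$, then for any $A$ with surjection $p \colon i_R(E_0) \twoheadrightarrow A$, naturality yields $\eta_A \circ p = p \circ \eta_{i_R(E_0)} = 0$, hence $\eta_A = 0$. To prove surjectivity, given $\zeta \in Z(\eE)$, I would define $\wt{\zeta} \in Z(\aA_r(\eE, \sS))$ by declaring, for any $A$ with presentation as above, $\wt{\zeta}_A \colon A \to A$ to be the unique morphism with $\wt{\zeta}_A \circ p = p \circ \zeta_{E_0}$. This morphism exists because centrality of $\zeta$ applied to $\phi$ gives $p \circ \zeta_{E_0} \circ \phi = p \circ \phi \circ \zeta_{E_1} = 0$, so $p \circ \zeta_{E_0}$ descends through the cokernel $p$.

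Independence of the chosen presentation is handled by comparing any two via the common refinement $E_0 \oplus E_0' \twoheadrightarrow A$. For naturality on a morphism $f \colon A \to B$ with presentations $p_A \colon E \twoheadrightarrow A$ and $p_B \colon F \twoheadrightarrow B$, I would replace $p_B$ by the surjection $(f \circ p_A, p_B) \colon E \oplus F \twoheadrightarrow B$; for this presentation $f$ lifts to the canonical inclusion $\iota_E \colon E \to E \oplus F$, and naturality of $\zeta$ on $\iota_E$ combined with the descent formula delivers $\wt{\zeta}_B \circ f = f \circ \wt{\zeta}_A$. Finally $\rho(\wt{\zeta}) = \zeta$ because $i_R(E)$ is presented by $0 \to i_R(E) \xrightarrow{\Id} i_R(E) \to 0$. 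The main subtlety, which this direct-sum trick is designed to address, is lifting morphisms of $\aA_r(\eE, \sS)$ between chosen presentations without appealing to any projectivity of objects of $\eE$.
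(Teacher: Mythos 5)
Your argument is correct, but it takes a genuinely different route from the paper's. The paper's proof is a two-line application of the universal property of the right abelian envelope: composition with $i_R$ gives an equivalence $\Rex(\aA_r(\eE,\sS), \aA_r(\eE,\sS)) \simeq \Rex(\eE, \aA_r(\eE,\sS))$ carrying $\Id_{\aA_r(\eE,\sS)}$ to $i_R$, hence $Z(\aA_r(\eE,\sS)) = \End(\Id_{\aA_r(\eE,\sS)}) \simeq \End(i_R)$, and the latter equals $\End(\Id_{\eE}) = Z(\eE)$ by full faithfulness. You instead build the inverse of the restriction map by hand, extending a central element of $\eE$ to all of $\aA_r(\eE,\sS)$ via presentations $i_R(E_1)\to i_R(E_0)\to A\to 0$; this is more elementary and makes the isomorphism explicit on objects, but it rests on two points worth flagging. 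First, the fact that every object of $\aA_r(\eE,\sS)$ admits such a presentation is not what Appendix~\ref{sec_ab_env_of_q-a_cat} records --- Proposition~\ref{prop_r_ab_env} is a recognition criterion in the opposite direction; the generation property is a true structural feature of right abelian envelopes, but it is an external input that the paper's proof does not need. Second, in the well-definedness step the combined surjection $(p,p')\colon i_R(E_0\oplus E_0')\to A$ only becomes a usable presentation after you also cover $\ker(p,p')$ (which is strictly larger than $\ker p\oplus\ker p'$) by an object of $i_R(\eE)$ and apply naturality of $\zeta$ to that cover; without this, the descent of $(p,p')\circ i_R(\zeta_{E_0\oplus E_0'})$ through $(p,p')$ is essentially the statement being proved. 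With that elaboration your construction goes through; what the universal-property argument buys is that none of this bookkeeping is necessary, while your approach has the merit of working with concrete morphisms and not presupposing familiarity with the $2$-categorical characterisation.
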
	
	\begin{proof}
		Since $i_R$ is fully faithful, the endomorphism ring of $\Id_{\eE}$ equals the endomorphism ring of $i_R$ in $\Rex(\eE, \aA_r(\eE, \sS))$. The statement follows from the fact that the equivalence $(-)\circ i_R \colon \Rex(\aA_r(\eE, \sS), \aA_r(\eE, \sS)) \xrightarrow{\simeq} \Rex(\eE, \aA_r(\eE, \sS))$ maps $\Id_{\aA_r(\eE, \sS)}$ to $i_R$.
	\end{proof}
	
	The subcategory $\Rref(X)\subset \Coh(X)$ is closed under extensions. Hence, $\Rref(X)$ admits the induced exact structure in which conflations are short exact sequences in $\Coh(X)$ with all terms in $\Rref(X)$. We shall refer to this exact structure as the \emph{geometric} one.
\begin{EXM}
	In Example \ref{exm_cyclic} object $\oO_X\in \Rref(X)$ is the only indecomposable projective object in the geometric exact structure. Since the group $\Pic(X^o)$ permutes indecomposable objects in $\Rref(X)$, the corresponding autoequivalences of $\Rref(X)$ (see Theorem \ref{thm_autoeq}) do not preserve the geometric exact structure. 
\end{EXM}	
	\begin{LEM}\label{lem_coh_as_env_of_E_0}
		Consider a normal surface $X$. Then $\Coh(X)$ is the right abelian envelope of $\Rref(X)$ endowed with the geometric exact structure.
	\end{LEM}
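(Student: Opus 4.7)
The plan is to verify directly that the fully faithful inclusion $\iota\colon \Rref(X) \hookrightarrow \Coh(X)$ satisfies the universal property of the right abelian envelope for the geometric exact structure. By definition, geometric conflations in $\Rref(X)$ are short exact sequences in $\Coh(X)$ with all three terms reflexive, so $\iota$ is automatically exact and fully faithful. The essential input is that $\Rref(X)$ generates $\Coh(X)$ by cokernels: by \cite[Theorem 2.1]{SchrVezz} every coherent sheaf on $X$ is a quotient of a locally free sheaf, which is reflexive, and the kernel of such an epimorphism is again coherent. Iterating once produces a two-term presentation $E_1 \xrightarrow{\phi} E_0 \to F \to 0$ of any $F \in \Coh(X)$ with $E_0, E_1 \in \Rref(X)$, exhibiting $F \simeq \mathrm{coker}_{\Coh(X)}(\iota(\phi))$.

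Given such a presentation, any additive functor $\Psi\colon \Rref(X) \to \bB$ to an abelian category $\bB$ which is right exact with respect to the geometric exact structure admits an essentially unique right exact extension $\widetilde{\Psi}\colon \Coh(X) \to \bB$, defined on objects by $\widetilde{\Psi}(F) = \mathrm{coker}\bigl(\Psi(E_1) \xrightarrow{\Psi(\phi)} \Psi(E_0)\bigr)$. Well-definedness and functoriality follow from the standard comparison of presentations by passing to fibered sums of complexes, which requires that $\Rref(X)$ be closed under extensions in $\Coh(X)$: if $0 \to E_1 \to E \to E_2 \to 0$ is exact in $\Coh(X)$ with $E_1, E_2$ reflexive, then $E$ is torsion-free and the usual depth estimate gives Serre's condition $S_2$ at codimension-two points of the normal surface $X$, so $E$ is reflexive.

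The main obstacle is the well-definedness argument for $\widetilde{\Psi}$ up to natural isomorphism, a routine but slightly tedious diagram chase comparing different reflexive presentations and lifting morphisms between them. Once handled, uniqueness of the right exact extension gives an equivalence between $\Coh(X)$ and the right abelian envelope $\aA_r(\Rref(X), \sS_{\mathrm{geom}})$. Alternatively, the conclusion can be read off from the abstract formalism of right abelian envelopes reviewed in Appendix \ref{sec_ab_env_of_q-a_cat}, applied to the fact that $\Rref(X)$ is a generating extension-closed subcategory of $\Coh(X)$ whose induced exact structure coincides with the geometric one by construction.
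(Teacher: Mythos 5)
Your overall strategy is the same as the paper's: the key inputs are that every coherent sheaf on $X$ is a quotient of a locally free (hence reflexive) sheaf by \cite[Theorem 2.1]{SchrVezz}, that $\Rref(X)$ is closed under extensions in $\Coh(X)$, and then an appeal to the abstract criterion for recognising a right abelian envelope (Proposition \ref{prop_r_ab_env}). However, you omit one hypothesis of that criterion, and it is precisely the one hiding inside the step you dismiss as a ``routine but slightly tedious diagram chase'': closure of $\Rref(X)$ under \emph{kernels of epimorphisms} in $\Coh(X)$. In your direct construction of $\widetilde{\Psi}$, comparing two reflexive presentations of $F$ requires refining them through an epimorphism $E_0''\to E_0\times_F E_0'$ from a reflexive sheaf and then applying right exactness of $\Psi$ to conflations lying entirely in $\Rref(X)$; the sequences $E_0\to F$ themselves are not geometric conflations (the quotient $F$ is not reflexive), so the only way to transport exactness information is through kernel-of-epimorphism sequences between reflexive sheaves. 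Without knowing these kernels are again reflexive, the well-definedness of $\widetilde{\Psi}$ does not follow, and ``extension-closed generating subcategory'' alone is not a sufficient hypothesis in Proposition \ref{prop_r_ab_env}.

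The gap is easily repaired, and the paper's proof shows the cleanest route: use the characterisation (\ref{eqtn_locally_closed}) of $\Rref(X)$ as the category of $\Coh_{\leq 0}(X)$-closed objects. If $0\to K\to E\to E'\to 0$ is exact with $E, E'$ reflexive and $B$ is Artinian, then $\Hom(B,K)\subset\Hom(B,E)=0$ and the exact sequence $\Hom(B,E')\to\Ext^1(B,K)\to\Ext^1(B,E)$ forces $\Ext^1(B,K)=0$, so $K$ is reflexive; the same formalism also gives extension-closedness without invoking depth or the $S_2$ condition. With that added, your argument matches the paper's and is correct.
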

	\begin{proof}
		The property (\ref{eqtn_locally_closed}) of $\Rref(X)$ implies that $\Rref(X)\subset \Coh(X)$ is closed under extensions and kernels of epimorphisms. Clearly, any locally free sheaf on $X$ is an object of $\Rref(X)$. By \cite[Theorem 2.1]{SchrVezz}, any coherent sheaf on $X$ is a quotient of a locally free sheaf. Hence, by Proposition  \ref{prop_r_ab_env}, $\Coh(X) $ is the right abelian envelope of its full subcategory $\Rref(X)$ endowed with the induced exact structure.
	\end{proof}

	\begin{LEM}\label{lem_center_of_Coh}\cite[Remark 4.5]{Rou3}
		Consider a scheme $X$. Then $H^0(X, \oO_X)$ is the center of $\Coh(X)$.
	\end{LEM}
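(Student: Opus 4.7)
The plan is to construct a ring homomorphism $\Phi \colon H^0(X, \oO_X) \to Z(\Coh(X))$ and show that it is a bijection. For any $s \in H^0(X, \oO_X)$, define the natural transformation $\Phi(s)$ whose component at $F \in \Coh(X)$ is multiplication by $s$, i.e.\ $s \cdot \Id_F$. Naturality is automatic because every morphism of $\oO_X$-modules is $\oO_X$-linear, hence commutes with multiplication by a global section, and $\Phi$ is a ring homomorphism for the same reason.

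Injectivity follows at once by evaluation on the structure sheaf: since $\End_{\oO_X}(\oO_X) = H^0(X, \oO_X)$, the component $\Phi(s)_{\oO_X}$ recovers $s$, and so $\Phi(s) = 0$ forces $s = 0$.

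For surjectivity, take $\alpha \in Z(\Coh(X))$ and set $s := \alpha_{\oO_X} \in \End_{\oO_X}(\oO_X) = H^0(X, \oO_X)$. The task is to prove $\alpha_F = s \cdot \Id_F$ for every $F \in \Coh(X)$. I would argue locally on an affine cover. Over any affine open $U = \Spec A \subset X$ the category $\Coh(U)$ consists of finitely generated $A$-modules, and any such $M$ admits a presentation $A^m \to A^n \to M \to 0$. Additivity gives $\alpha_{\oO_X^n}|_U = s|_U \cdot \Id$, and naturality applied to morphisms realising the presentation forces $\alpha_F|_U = s|_U \cdot \Id_{F|_U}$; gluing over an affine cover yields $\alpha_F = s \cdot \Id_F$ globally.

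The main obstacle I anticipate is precisely this last step, because $\alpha$ is a natural transformation on $\Coh(X)$, not on $\Coh(U)$, so the morphisms $\oO_X^n \to F$ and $\oO_X^m \to \oO_X^n$ implementing the free presentation must be realised globally rather than merely on $U$. In the Noetherian case this is routine: coherent sheaves on $U$ extend to coherent sheaves on $X$ and morphisms lift accordingly, so naturality applies directly. In greater generality one would first extend $\alpha$ to a central element of $\QCoh(X)$, using that every quasi-coherent sheaf is the filtered colimit of its coherent subsheaves, and then carry out the local computation inside $\QCoh(U) = A\text{-Mod}$, where the identification of the center with $A$ is standard.
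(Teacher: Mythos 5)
The paper offers no proof of this lemma---it is imported wholesale from \cite[Remark 4.5]{Rou3}---so there is no in-paper argument to compare with; I will assess your proof on its own. The strategy (the multiplication map $\Phi$, injectivity via $\End_{\oO_X}(\oO_X)=H^0(X,\oO_X)$, surjectivity by a local computation over an affine cover) is the standard one, and the injectivity half is complete.

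The gap is in the surjectivity step as written for the Noetherian case. A presentation $\oO_U^m\to\oO_U^n\to F|_U\to 0$ is given by $n$ sections of $F$ over $U$, and these need not extend to global sections of $F$; so in general there is no morphism $\oO_X^n\to F$ in $\Coh(X)$ restricting to the chosen surjection, and the computed value $\alpha_{\oO_X^n}=s\cdot\Id$ is then unusable. The extension theorem for coherent sheaves does produce a coherent $H$ on $X$ with $H|_U\simeq\oO_U^n$ together with a morphism $H\to F$ restricting to the presentation (via the graph trick), but $H$ is not free, so $\alpha_H$ is again unknown and ``naturality applies directly'' does not close the loop: you would first have to prove $\alpha_H|_U=s|_U\cdot\Id$, which is a statement of exactly the kind you are trying to establish. (This can be repaired---extend each coordinate projection $H|_U\to\oO_U$ to a morphism $\Gamma_i\to\oO_X$ from a coherent subsheaf $\Gamma_i\subset H\oplus\oO_X$ whose projection to $H$ is an isomorphism over $U$, and apply naturality against $\alpha_{\oO_X}=s$---but that is a genuine extra argument, not a routine lifting.) By contrast, the route you relegate to ``greater generality'' is the one that actually works and should be the main argument: extend $\alpha$ to $\QCoh(X)$ by writing a quasi-coherent sheaf as the filtered colimit of its coherent subsheaves (checking that this is well defined and natural); for an affine open $j\colon U\to X$, use that $j_*\colon\QCoh(U)\to\QCoh(X)$ is fully faithful to transport $\alpha$ to a central element $\alpha^U$ of $\QCoh(U)=A\textrm{-Mod}$; naturality against the unit $\oO_X\to j_*\oO_U$ identifies $\alpha^U_{\oO_U}$ with $s|_U$; the identification of the center of $A\textrm{-Mod}$ with $A$ (where your free-presentation argument is valid, since all objects and morphisms now live over $U$) gives $\alpha^U=s|_U\cdot\Id$; and naturality against the unit $F\to j_*j^*F$ yields $(\alpha_F)|_U=s|_U\cdot\Id_{F|_U}$, after which gluing over an affine cover finishes the proof. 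In short: right idea, but the step you call routine is where the work is, and your own closing sentence contains the correct fix.
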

	
	\begin{PROP}\label{prop_cent_of_E_0}
		Consider a normal surface $X$. Then $Z(\Rref(X)) = H^0(X, \oO_X)$.
	\end{PROP}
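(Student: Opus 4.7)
The plan is to chain the three preceding lemmas directly. By Lemma \ref{lem_coh_as_env_of_E_0}, the category $\Coh(X)$ is the right abelian envelope $\aA_r(\Rref(X), \sS_{\mathrm{geom}})$ of $\Rref(X)$ equipped with its geometric exact structure $\sS_{\mathrm{geom}}$, and the canonical functor $i_R \colon \Rref(X) \to \Coh(X)$ is just the inclusion, which is fully faithful by construction (it is a full subcategory).

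Having verified the hypothesis of Lemma \ref{lem_center_of_envelope} for $(\eE, \sS) = (\Rref(X), \sS_{\mathrm{geom}})$, I would conclude
\[
Z(\Rref(X)) \;=\; Z(\aA_r(\Rref(X), \sS_{\mathrm{geom}})) \;=\; Z(\Coh(X)).
\]
Finally, Lemma \ref{lem_center_of_Coh} identifies $Z(\Coh(X))$ with $H^0(X, \oO_X)$, which closes the argument.

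There is essentially no obstacle here: the substantive work has already been done in the preceding lemmas. The only subtlety to double-check is that the geometric exact structure (rather than the canonical one) is the relevant one for identifying $\Coh(X)$ as the right abelian envelope, so that the hypothesis of Lemma \ref{lem_center_of_envelope} is satisfied with $i_R$ being a genuine inclusion. One might also add the remark that the statement does \emph{not} depend on the choice of exact structure, since the center is defined at the level of the underlying additive category $\Rref(X)$; the geometric exact structure is merely a convenient tool providing a right abelian envelope in which the center is computable.
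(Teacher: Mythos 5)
Your proposal is exactly the paper's argument: the paper's proof is the one-line chain $Z(\Rref(X)) = Z(\aA_r(\Rref(X))) = Z(\Coh(X)) = H^0(X,\oO_X)$ via Lemmas \ref{lem_center_of_envelope}, \ref{lem_coh_as_env_of_E_0} and \ref{lem_center_of_Coh}, with the geometric exact structure playing precisely the role you assign it. Your added remark that the center is independent of the choice of exact structure is correct and a nice clarification, but not needed.
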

	\begin{proof}
		By Lemmas \ref{lem_center_of_envelope}, \ref{lem_coh_as_env_of_E_0} and \ref{lem_center_of_Coh}, $Z(\Rref(X)) = Z(\aA_r(\Rref(X))) = Z(\Coh(X)) = H^0(X, \oO_X)$.
	\end{proof}
	
		\vspace{0.3cm}
	\subsection{$\Rref(X)$ and isomorphism of surfaces outside of dimension zero}~\\
	
	We show that the categories of reflexive sheaves distinguish surfaces up to closed points.
	
	Following \cite{CalPir} we say that schemes $Z_1, Z_2$ are \emph{isomorphic outside of dimension} $k-1$ if there exist isomorphic open subsets $U_1\subset Z_1$, $U_2\subset Z_2$ such that any point $p\in Z_1$ of dimension greater than or equal to $k$ is contained in $U_1$ and, analogously, any point $q\in Z_2$ of dimension greater than	or equal to $k$ is contained in $U_2$. 
	Clearly, normal surfaces $Z_1, Z_2$ are isomorphic outside of dimension zero if and only if they lie in the  same connected component of the category $\mathscr{D}$.

	\begin{THM}\cite[Theorem 3.7]{CalPir}\label{thm_cal_pir}
		Let $Z_1$, $Z_2$ be schemes of finite type over an algebra $R$ of finite type over a field. Then $Z_1$ and $Z_2$ are isomorphic outside of dimension $k-1$ if and only if $\Coh^{\geq k} \simeq \Coh^{\geq k}(Z_2)$.
	\end{THM}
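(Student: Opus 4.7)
\emph{Easy direction.} Given open embeddings $U_i \hookrightarrow Z_i$ with $U_1 \simeq U_2$ and with every point of dimension $\geq k$ of $Z_i$ lying in $U_i$, the complement $Z_i\setminus U_i$ has dimension $<k$. Hence the kernel of the composite $\Coh(Z_i)\to \Coh(U_i) \to \Coh^{\geq k}(U_i)$ equals $\Coh_{\leq k-1}(Z_i)$, so the induced functor $\Coh^{\geq k}(Z_i)\to \Coh^{\geq k}(U_i)$ is an equivalence. Chaining this with the equivalence $\Coh(U_1)\simeq \Coh(U_2)$ gives $\Coh^{\geq k}(Z_1)\simeq \Coh^{\geq k}(Z_2)$.

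\emph{Hard direction.} My plan is to adapt Gabriel's reconstruction of a Noetherian scheme from its category of coherent sheaves, executed intrinsically inside the quotient $\Coh^{\geq k}(Z)$. By Proposition~\ref{prop_Cal-Pir}, the simple objects of $\Coh^{\geq k}(Z)$ are exactly the structure sheaves $\oO_{\overline{\{p\}}}$ of irreducible closed subschemes of dimension $k$. Applying the same fact to the successive Serre quotients $\Coh^{\geq k+\ell}(Z)$ for $\ell \geq 0$ recovers all points of dimension $\geq k$, and the specialisation order is read off from inclusion of supports (equivalently, from which simples appear as subquotients of a given object). For each point $p$ of dimension $\geq k$ let $\sS_p\subset \Coh^{\geq k}(Z)$ denote the Serre subcategory generated by simples whose supports do not specialise to $p$; the quotient $\Coh^{\geq k}(Z)/\sS_p$ is the dimension-$\geq k$ quotient of $\textrm{mod--}\oO_{Z,p}$, whose centre equals $\oO_{Z,p}$ by Lemmas~\ref{lem_center_of_envelope} and~\ref{lem_center_of_Coh}. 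Since $Z$ is of finite type, the set of points of dimension $\geq k$ is covered by finitely many affine opens $V_\alpha \subset Z$, each yielding a Serre-theoretic piece of $\Coh^{\geq k}(Z)$ whose centre recovers $\Gamma(V_\alpha, \oO_{V_\alpha})$.

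\emph{Main obstacle and conclusion.} The substantive difficulty is executing Gabriel's gluing intrinsically inside $\Coh^{\geq k}(Z)$: the local data (intrinsic topology and centres of localisations) must be assembled into a genuine scheme $U$ admitting an open embedding into $Z$ with complement of dimension $<k$, without ever having access to $\Coh(Z)$ itself. Once this functorial reconstruction $\Coh^{\geq k}(Z) \rightsquigarrow U$ is established, any equivalence $\Phi \colon \Coh^{\geq k}(Z_1)\simeq \Coh^{\geq k}(Z_2)$ preserves simples, Serre subcategories, and centres, and therefore transports $U_1$ to $U_2$ as ringed spaces -- precisely the assertion that $Z_1$ and $Z_2$ are isomorphic outside dimension $k-1$.
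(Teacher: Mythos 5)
The paper does not prove this statement at all: it is imported verbatim as \cite[Theorem 3.7]{CalPir}, so there is no internal proof to compare against. Your ``easy'' direction is fine (the kernel of $\Coh(Z_i)\to\Coh^{\geq k}(U_i)$ is $\Coh_{\leq k-1}(Z_i)$, and fullness/essential surjectivity of the induced functor follow from extending sheaves and morphisms from $U_i$ to the Noetherian scheme $Z_i$ up to objects supported in dimension $<k$). Your ``hard'' direction, however, is not a proof: you explicitly defer the one step that carries all the content, namely assembling the intrinsic data (simples of iterated quotients, specialisation order, centres of localisations) into an actual open subscheme $U\subset Z$ containing all points of dimension $\geq k$, together with a scheme isomorphism induced by the equivalence. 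This is precisely where the difficulty lies, because the locus of points of dimension $\geq k$ is not itself a scheme (for a surface and $k=1$ it is the complement of all closed points), so ``transporting $U_1$ to $U_2$ as ringed spaces'' presupposes that you have already manufactured genuine open subschemes on both sides and matched them up --- the conclusion you are trying to reach.

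Two further points in your sketch are shaky. First, the claim that each affine open $V_\alpha$ yields a Serre-theoretic piece of $\Coh^{\geq k}(Z)$ ``whose centre recovers $\Gamma(V_\alpha,\oO_{V_\alpha})$'' is unjustified: there is no intrinsic characterisation given of which Serre subcategories correspond to complements of affine opens (the present paper spends Section \ref{sec_reconstr} on exactly this problem in the surface case, via CM affine categories), and the centre of $\Coh^{\geq k}(V_\alpha)$ is a Hartogs-type colimit of sections over opens with small complement, which equals $\Gamma(V_\alpha,\oO_{V_\alpha})$ only under hypotheses such as normality that are absent here. Second, even granting the pointwise data ($k$-dimensional irreducible closed subsets, specialisations, local rings $\oO_{Z,p}$), one must still produce a single birational map $Z_1\dashrightarrow Z_2$ biregular on opens containing all points of dimension $\geq k$; the paper's own Remark after Theorem \ref{thm_CP2} warns that exactly this ``defined versus biregular'' distinction is delicate in \cite{CalPir}. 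As it stands your proposal is a plausible plan in the spirit of the cited proof, but with the central construction left as a placeholder.
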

	
	\begin{PROP}\label{prop_Ref_emb_codim_2}
		Given a normal surface $X$ and an open $U\subset X$ with complement of codimension two, the restriction functor $\Rref(X)\to \Rref(U)$ is an equivalence.
	\end{PROP}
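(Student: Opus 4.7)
The plan is to reduce the statement to Theorem \ref{thm_cal_pir} combined with the description of $\Rref$ as the right orthogonal to simple objects in $\Coh^{\geq 1}$ given by Proposition \ref{prop_Ref_from_Coh}. First, I would observe that since $X\setminus U$ has codimension two in the Noetherian surface $X$, it consists of finitely many closed points. Hence $X$ and $U$ are isomorphic outside of dimension zero in the sense required: every point of $X$ of dimension $\geq 1$ lies in $U$, and vice versa. Applying Theorem \ref{thm_cal_pir} with $k=1$ then yields an equivalence $\Coh^{\geq 1}(X)\xrightarrow{\simeq}\Coh^{\geq 1}(U)$, which I would identify with the functor induced by the exact restriction $j^{*}\colon \Coh(X)\to \Coh(U)$; this passes to the quotients because $j^{*}$ sends Artinian sheaves to Artinian sheaves.

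Next, I would match the canonical torsion pairs of Proposition \ref{prop_canon_torsion_pair_Coh_1} on the two sides under $j^{*}$. Their torsion parts are generated by the simple objects $\oO_{D}$ indexed by $D\in X^{1}$ and $D\in U^{1}$, respectively. Because $X\setminus U$ is zero-dimensional, $D\mapsto D\cap U$ yields a bijection $X^{1}\simeq U^{1}$, and $j^{*}\oO_{D}=\oO_{D\cap U}$, so the simple objects correspond. By Proposition \ref{prop_Ref_from_Coh}, $\Rref(X)$ and $\Rref(U)$ are the right orthogonals to these simples in $\Coh^{\geq 1}(X)$ and $\Coh^{\geq 1}(U)$, so $j^{*}$ restricts to an equivalence $\Rref(X)\xrightarrow{\simeq}\Rref(U)$. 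This induced equivalence coincides with the usual restriction functor on reflexive sheaves, since reflexivity on a normal surface is a local property and the embedding $\Rref(X)\hookrightarrow \Coh^{\geq 1}(X)$ from Corollary \ref{cor_ff_on_closed} is the composition $\Rref(X)\subset \Coh(X)\to \Coh^{\geq 1}(X)$, compatible with restriction.

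The main obstacle I anticipate is strictly formal: ensuring that the equivalence produced by Theorem \ref{thm_cal_pir} is literally the one induced by $j^{*}$, so that when restricted to $\Rref$ it coincides with the functor in the statement. As a backup I would prove the proposition directly using the Hartogs-type property of normal surfaces in codimension two: full faithfulness would follow from $j_{*}j^{*}G\simeq G$ for any reflexive (equivalently $S_{2}$) sheaf $G$ on $X$ together with the adjunction $(j^{*},j_{*})$, and essential surjectivity from showing that for $\gG\in \Rref(U)$ the pushforward $j_{*}\gG$ is a coherent reflexive sheaf on $X$ that restricts back to $\gG$.
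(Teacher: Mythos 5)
Your argument is correct, but your primary route is genuinely different from the paper's, which disposes of the proposition in one line: reflexive sheaves on a normal scheme are normal in the sense of Barth (sections extend uniquely over closed subsets of codimension $\geq 2$, \cite[Proposition 1.6]{Har_sta}), so $j_*j^*G\simeq G$ and $j_*$ of a reflexive sheaf on $U$ is reflexive and restricts back — exactly your ``backup'' argument. Your main route instead passes through the quotient categories: $j^*$ descends to an equivalence $\Coh^{\geq 1}(X)\simeq\Coh^{\geq 1}(U)$ (this is how the paper itself uses Theorem \ref{thm_cal_pir}, and follows directly from $\Coh(U)\simeq\Coh(X)/\Coh_Z(X)$ with $Z=X\setminus U$ Artinian), any equivalence matches the subcategories of simple objects, and Proposition \ref{prop_Ref_from_Coh} identifies $\Rref$ on each side as the right orthogonal to the simples; full faithfulness of $q|_{\Rref}$ (Corollary \ref{cor_ff_on_closed}) then lets you transport isomorphisms and Hom-sets back from the quotient to conclude that the honest restriction functor is an equivalence. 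There is no circularity, since Theorem \ref{thm_cal_pir} is external and Propositions \ref{prop_canon_torsion_pair_Coh_1} and \ref{prop_Ref_from_Coh} precede this statement. What each approach buys: the sheaf-theoretic proof is shorter and more elementary, needing only the $S_2$/Hartogs property; your categorical proof is heavier but stays entirely inside the formalism the paper builds around $\Coh^{\geq 1}$ and torsion pairs, and incidentally makes the explicit bijection $X^1\simeq U^1$ unnecessary (the right orthogonal to all simples is preserved by any equivalence). The one point you rightly flag — that the equivalence of Theorem \ref{thm_cal_pir} is the one induced by $j^*$ — is needed to identify the resulting equivalence with the restriction functor, but is settled by the quotient-category description above.
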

\begin{proof}
	This follows from the fact that reflexive sheaves on normal schemes are normal in the sense of Barth, \cite[Proposition 1.6]{Har_sta}.
\end{proof}
	
	\begin{THM}\label{thm_iso_iff_E_n-2}
		Let $X$ and $Y$ be normal surfaces.
		Then $X$ and $Y$ lie in the same connected component of the category $\mathscr{D}$
		if and only if there exists an equivalence $\Rref(X)\simeq \Rref(Y)$ of  additive categories.
	\end{THM}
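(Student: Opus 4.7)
The plan is a two-way argument: pass from surfaces to categories via restriction, and back via the right abelian envelope.

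For the easy direction, suppose $X$ and $Y$ lie in the same connected component of $\mathscr{D}$. Then they are joined by a finite zigzag $X = X_0 \to X_1 \leftarrow X_2 \to \cdots \leftarrow X_n = Y$ of morphisms in $\mathscr{D}$, i.e.\ open embeddings with complements of codimension two. By Proposition \ref{prop_Ref_emb_codim_2}, restriction along any such open embedding induces an equivalence of categories of reflexive sheaves. Composing these equivalences (and their quasi-inverses) along the zigzag yields the desired additive equivalence $\Rref(X)\simeq \Rref(Y)$.

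For the harder direction, the plan is to upgrade an additive equivalence $\Phi\colon \Rref(X)\xrightarrow{\simeq} \Rref(Y)$ to an equivalence of right abelian envelopes $\Coh^{\geq 1}(X)\simeq \Coh^{\geq 1}(Y)$. The point is that everything needed is intrinsic to the additive structure. By Theorem \ref{thm_tor_pair_for_eqidim_scheme}, both $\Rref(X)$ and $\Rref(Y)$ are quasi-abelian, and a quasi-abelian category is determined by its underlying additive category together with the existence of kernels and cokernels, which $\Phi$ automatically preserves. The canonical exact structure on a quasi-abelian category (conflations = strict short exact sequences) is therefore intrinsic, so $\Phi$ lifts to an exact equivalence with respect to the canonical exact structures. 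By the universal property of the right abelian envelope (used in the proof of Corollary \ref{cor_env_of_E_n-2}), this extends uniquely to an equivalence
\[
\aA_r(\Rref(X))\simeq \aA_r(\Rref(Y)).
\]

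Applying Corollary \ref{cor_env_of_E_n-2} on both sides gives $\Coh^{\geq 1}(X)\simeq \Coh^{\geq 1}(Y)$. Theorem \ref{thm_cal_pir}, applied with $k=1$, then shows that $X$ and $Y$ are isomorphic outside of dimension zero. Finally, as noted in the text just before Theorem \ref{thm_cal_pir}, for normal surfaces ``isomorphic outside of dimension zero'' is precisely the relation of lying in the same connected component of $\mathscr{D}$: an isomorphism of such open sets $U_1\subset X$, $U_2\subset Y$ produces a zigzag $X\leftarrow U_1\simeq U_2\to Y$ of morphisms in $\mathscr{D}$. This yields the required equivalent characterization.

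The main subtlety I expect is justifying that the canonical exact structure is preserved by a purely additive equivalence. The argument above relies on the fact that the canonical exact structure on a quasi-abelian category is defined using only kernels, cokernels, and strict morphisms, all of which are categorical notions insensitive to any a priori choice of exact structure; hence any additive equivalence between quasi-abelian categories is automatically exact for the canonical structures, and the invocation of Corollary \ref{cor_env_of_E_n-2} and Theorem \ref{thm_cal_pir} is then formal.
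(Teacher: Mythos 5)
Your proposal is correct and follows essentially the same route as the paper: restriction equivalences (Proposition \ref{prop_Ref_emb_codim_2}) along a zigzag in $\mathscr{D}$ for one direction, and passage to the right abelian envelope $\Coh^{\geq 1}$ via Corollary \ref{cor_env_of_E_n-2} followed by Theorem \ref{thm_cal_pir} for the other. Your explicit justification that the canonical exact structure is intrinsic to the additive category (kernels, cokernels, and strictness being categorical notions) is exactly the point the paper leaves implicit, and it is correctly handled.
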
 
	\begin{proof}
		By Corollary \ref{cor_env_of_E_n-2}, 
		an equivalence $\Rref(X) \simeq \Rref(Y)$ implies an equivalence $\Coh^{\geq 1}(X) \simeq \Coh^{\geq 1}(Y)$ of the right abelian envelopes, i.e. an isomorphism of $X$ and $Y$ outside of dimension zero, see Theorem \ref{thm_cal_pir}.
				
		By Proposition \ref{prop_Ref_emb_codim_2}, if $X$ and $Y$ 
		lie in the same connected component of $\mathscr{D}$, then
		$\Rref(X) \simeq \Rref(Y)$.
	\end{proof}
	
\begin{COR}\label{cor_iso_iff_E_n-2}
	A codim-2-saturated normal surface $X$ can be uniquely reconstructed from the additive category $\Rref(X)$ of reflexive sheaves.
	\end{COR}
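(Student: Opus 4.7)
The plan is to derive the corollary as a formal consequence of Theorem \ref{thm_iso_iff_E_n-2} together with the uniqueness of the codim-2-saturated model. Concretely, it suffices to show that if $Y$ is another normal surface and $\Rref(X)\simeq\Rref(Y)$ as additive categories, then $X\simeq Y$ as schemes.

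First, I would invoke Theorem \ref{thm_iso_iff_E_n-2} to conclude that $X$ and $Y$ lie in the same connected component of $\mathscr{D}$, i.e. they are connected by a zigzag of morphisms in $\mathscr{D}$. Applying the functor $\mathfrak{s}\colon\mathscr{D}\to\wh{\mathscr{D}}$ turns this zigzag into an isomorphism $\mathfrak{s}(X)\simeq\mathfrak{s}(Y)$ in the groupoid $\wh{\mathscr{D}}$. By Theorem \ref{thm_adj_unit}, applying the right adjoint $\mathfrak{s}_*$ yields an isomorphism of the codim-2-saturated models $\wh{X}=\mathfrak{s}_*\mathfrak{s}(X)\simeq\mathfrak{s}_*\mathfrak{s}(Y)=\wh{Y}$. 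Since $X$ is codim-2-saturated, the unit $j_X\colon X\to\wh{X}$ is an isomorphism (it is a morphism in $\mathscr{D}$ with codim-2-saturated domain, hence an isomorphism by definition), and similarly for $Y$. Composing,
\[
X\;\simeq\;\wh{X}\;\simeq\;\wh{Y}\;\simeq\;Y,
\]
which gives the required reconstruction.

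There is no real obstacle: the heavy lifting is packaged inside Theorem \ref{thm_iso_iff_E_n-2} and the adjunction machinery of Section \ref{sec_codim_2_sat_model}. For readers wanting a more constructive route, one could alternatively trace the chain
\[
\Rref(X)\;\rightsquigarrow\;\Coh^{\geq 1}(X)\;\rightsquigarrow\;\Coh(\wh{X})\;\rightsquigarrow\;\wh{X}\simeq X,
\]
where the first step uses Corollary \ref{cor_env_of_E_n-2} (the right abelian envelope of the canonical exact structure), the second uses Theorem \ref{thm_final_model_from_Ref} (the 2-limit description of $\Coh(\wh{X})$), and the third is Gabriel's reconstruction recalled in Appendix \ref{sec_X_from_coh_X}; the final isomorphism $\wh{X}\simeq X$ again uses codim-2-saturatedness. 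For the bare uniqueness statement formulated in the corollary, however, the short adjunction argument above suffices.
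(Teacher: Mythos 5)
Your argument is correct and is exactly the intended one: the paper states this corollary without a separate proof, as an immediate consequence of Theorem \ref{thm_iso_iff_E_n-2} combined with the fact that surfaces in the same connected component of $\mathscr{D}$ have isomorphic codim-2-saturated models (via $\mathfrak{s}_*\mathfrak{s}$) and that a codim-2-saturated $X$ satisfies $X\simeq\wh{X}$. The only nitpick is that your opening reduction should read ``if $Y$ is another \emph{codim-2-saturated} normal surface with $\Rref(X)\simeq\Rref(Y)$, then $X\simeq Y$'' (or, equivalently, that $\wh{Y}\simeq X$ for arbitrary normal $Y$), which is what your argument actually proves.
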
 
	
\begin{REM}
	Since, by Corollary \ref{cor_env_of_E_n-2}, the right abelian envelope of $\Rref(X)$ is $\Coh^{\geq 1}(X)$ and it's derived category is the quotient of $\dD^b(\Coh(X))$ by the full subcategory of complexes whose support of cohomology has codimension two \cite[Proposition 3.6]{MeiPar}, Corollary \ref{cor_iso_iff_E_n-2} for the case of smooth projective varieties follows from \cite[Theorem 5.2]{MeiPar}.
\end{REM} 	
	
	\vspace{0.3cm}
	\subsection{Reflexive sheaves under localisation}\label{ssec_restr_Ref_to_open}~\\
	
	For a normal surface $X$, we discuss the restriction of  $\Rref(X)$ to an open subset $U \subset X$. Let $D$ be the divisorial part of $X\setminus U$. If $D= \emptyset$, then $\Rref(X) \simeq \Rref(U)$ by Proposition \ref{prop_Ref_emb_codim_2}.
	If $D\neq \emptyset$, let $D = \bigcup_{i\in I} D_i$ be its presentation as a union of prime divisors $D_i \in X^1$. Consider subcategory $\sS_D = \bigoplus_I \Coh^{\geq 1}_{D_i}(X)$ in $\Coh^{\geq 1}(X)$. 
Category $\sS_D$ is the subcategory of sheaves with support in $D$, hence it is a Serre subcategory in $\Coh^{\geq 1}(X)$.

	\begin{LEM}\label{lem_quotient_by_S_D}
		The quotient $\Coh^{\geq 1}(X)/\sS_D$ is equivalent to $\Coh^{\geq 1}(U)$.
	\end{LEM}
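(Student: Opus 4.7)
The plan is to rewrite both categories as the same Serre quotient of $\Coh(X)$. The key tool is the iterated Serre quotient principle: if $\bB\subset \aA$ is a Serre subcategory with quotient functor $q\colon \aA\to \aA/\bB$ and $\cC\subset \aA/\bB$ is a Serre subcategory, then $(\aA/\bB)/\cC$ is canonically equivalent to $\aA/q^{-1}(\cC)$, where $q^{-1}(\cC)\subset \aA$ is the (Serre) subcategory of objects $A$ with $q(A)\in \cC$.

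Applying this to $\bB=\Coh_{\leq 0}(X)$ and $\cC=\sS_D$, I obtain $\Coh^{\geq 1}(X)/\sS_D\simeq \Coh(X)/\tT$, where $\tT$ is the Serre subcategory of $\Coh(X)$ generated by $\Coh_{\leq 0}(X)$ and $\Coh_D(X)$. I would then identify $\tT$ explicitly as the subcategory $\tT_U$ of sheaves $F\in\Coh(X)$ such that $F|_U\in \Coh_{\leq 0}(U)$. The inclusion $\tT\subseteq \tT_U$ is immediate since the generators restrict to $0$ or to a $0$-dimensional sheaf on $U$. For the reverse inclusion, given $F\in \tT_U$, the $1$-dimensional part of $\mathop{\rm Supp}(F)$ is contained in $D$, so a Devissage-type filtration of $F$ (by subsheaves with quotients supported on the irreducible components of $\mathop{\rm Supp}(F)$) exhibits $F$ as built by extensions from $\Coh_D(X)$ and $\Coh_{\leq 0}(X)$.

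On the other side, I would invoke the standard localisation equivalence $j^*\colon \Coh(X)/\Coh_{X\setminus U}(X)\xrightarrow{\simeq} \Coh(U)$ for the open embedding $j\colon U\to X$ of a Noetherian scheme. Composing with the Serre quotient $\Coh(U)\to \Coh^{\geq 1}(U)$ and applying the iterated quotient principle again, I get $\Coh^{\geq 1}(U)\simeq \Coh(X)/\tT'$, where $\tT'$ is the preimage of $\Coh_{\leq 0}(U)$ under $j^*$, that is, the sheaves $F\in\Coh(X)$ with $F|_U\in \Coh_{\leq 0}(U)$. Hence $\tT'=\tT_U=\tT$, and the two quotients coincide. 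The equivalence is induced by the exact functor $j^*\colon \Coh(X)\to\Coh(U)$, which descends through both towers of Serre quotients.

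The main obstacle is the explicit identification $\tT=\tT_U$, which requires a Devissage argument on coherent sheaves with $1$-dimensional support inside $D$; once this combinatorial identification is in place, the rest is formal manipulation of Serre quotients. A secondary point is to make sure that applying the iterated quotient principle on the right-hand side is legitimate, i.e.\ that $\Coh_{\leq 0}(U)$ pulls back along $j^*$ to a Serre subcategory of $\Coh(X)$ (which it does, since $j^*$ is exact and $\Coh_{\leq 0}(U)$ is Serre in $\Coh(U)$).
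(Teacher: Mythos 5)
Your proposal is correct and follows essentially the same route as the paper: both arguments identify the two sides as the same iterated Serre quotient of $\Coh(X)$, using the localisation equivalence $\Coh(X)/\Coh_{X\setminus U}(X)\simeq \Coh(U)$ for the open embedding and the third isomorphism theorem for Serre quotients. The only cosmetic difference is that the paper first discards the Artinian part of $X\setminus U$ by invoking Theorem \ref{thm_cal_pir}, whereas you absorb it into the common Serre subcategory $\tT_U$ of sheaves whose restriction to $U$ is Artinian and spell out the d\'evissage identification explicitly.
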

	\begin{proof}
		Let $Z$ be the Artinian part of $X\setminus U$. By Theorem \ref{thm_cal_pir}, $\Coh^{\geq 1}(X) \simeq \Coh^{\geq 1}(X\setminus Z)$, hence without loss of generality we can assume that $Z = \emptyset$.  Then clearly  $ \Coh^{\geq 1}(U) = (\Coh(X)/\Coh_D(X))/ \Coh_{\leq 0}(U)\simeq \Coh^{\geq 1}(X)/\Coh_D^{\geq 1}(X) = \Coh^{\geq 1}(X)/\sS_D$ (see \cite[Corollary I.9.4.8]{EGAI} for the isomorphism $\Coh(X)/\Coh_{D}(X) \simeq \Coh(U)$).
	\end{proof}
	
	\begin{PROP}\label{prop_Ref_U_from_Ref_X}
		Let $U$ be an open subset of a normal surface  $X$ and $D$ the divisorial part of $X\setminus U$. Then $\Rref(U)$ is the essential image of $\Rref(X) \subset \Coh^{\geq 1}(X)$ under the quotient functor $q\colon\Coh^{\geq 1}(X) \to \Coh^{\geq 1}(X)/\sS_D$.
	\end{PROP}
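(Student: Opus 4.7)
The plan is to establish the two inclusions $q(\Rref(X))\subseteq \Rref(U)$ and $\Rref(U)\subseteq q(\Rref(X))$ separately, using the canonical torsion pair on $\Coh^{\geq 1}$ from Proposition~\ref{prop_canon_torsion_pair_Coh_1} at the level of both $X$ and $U$. First I will reduce to the case in which the Artinian part $Z$ of $X\setminus U$ is empty: the restriction $\Rref(X)\to \Rref(X\setminus Z)$ is an equivalence by Proposition~\ref{prop_Ref_emb_codim_2}, while $\Coh^{\geq 1}(X)\simeq \Coh^{\geq 1}(X\setminus Z)$ by Theorem~\ref{thm_cal_pir}, and this equivalence manifestly carries $\sS_D$ to $\sS_D$; so we may assume $U=X\setminus D$.

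Via Lemma~\ref{lem_quotient_by_S_D} I will identify the simple objects of $\Coh^{\geq 1}(U)$ with the classes $q(\oO_{D'})$ for $D'\in X^1$ with $D'\not\subset D$. For the forward inclusion I take $F\in\Rref(X)$ and check that $\Hom_{\Coh^{\geq 1}(U)}(\oO_{D'}, q(F))=0$ for every such $D'$; Proposition~\ref{prop_Ref_from_Coh} applied to $U$ then gives $q(F)\in\Rref(U)$. Unwinding the standard colimit description of $\Hom$ in a Serre quotient, this group is $\varinjlim \Hom_{\Coh^{\geq 1}(X)}(A', F/F')$ over pairs $(A',F')$ with $\oO_{D'}/A'\in\sS_D$ and $F'\in\sS_D$. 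Simplicity of $\oO_{D'}$ combined with $D'\not\subset D$ forces $A'=\oO_{D'}$, while torsion-freeness of $F$ in the canonical torsion pair forces $F'=0$; the surviving group $\Hom_{\Coh^{\geq 1}(X)}(\oO_{D'},F)$ then vanishes by the reflexivity of $F$.

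For the reverse inclusion, given $G\in\Rref(U)$ I lift it to some $\widetilde{G}\in\Coh^{\geq 1}(X)$ with $q(\widetilde{G})\simeq G$ and decompose $\widetilde{G}$ according to the canonical torsion pair on $\Coh^{\geq 1}(X)$ as $0\to T\to \widetilde{G}\to F\to 0$ with $T$ of finite length and $F\in\Rref(X)$. Applying the exact quotient functor $q$ yields a short exact sequence $0\to q(T)\to G\to q(F)\to 0$ in $\Coh^{\geq 1}(U)$. The object $q(T)$ is again of finite length (its summands supported on divisors in $D$ are annihilated, the remaining ones survive) and it embeds into $G$; since $G\in\Rref(U)$ admits no nonzero morphism from a simple object, $q(T)=0$ and therefore $G\simeq q(F)$ with $F\in\Rref(X)$, as required.

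The main delicate step is the Hom computation in the Serre quotient, in particular the observation that the only subobject of the simple $\oO_{D'}$ whose quotient lies in $\sS_D$ is $\oO_{D'}$ itself, precisely because $D'\not\subset D$. Everything else reduces to bookkeeping with the canonical torsion pair already available on both $\Coh^{\geq 1}(X)$ and $\Coh^{\geq 1}(U)$.
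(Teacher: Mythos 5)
Your proof is correct, and it arrives at the paper's conclusion by a more hands-on route. The paper's argument is structural: it invokes Lemma~\ref{lem_tor_pair_on_quot_if_T_Serre} to descend the whole torsion pair $(\bigoplus_{D\in X^1}\Coh^{\geq 1}_D(X),\Rref(X))$ along $q$ (using that the torsion part is a Serre subcategory containing $\sS_D$), identifies the descended torsion part with the canonical one on $\Coh^{\geq 1}(U)$ divisor by divisor, and concludes that the torsion-free parts coincide because a torsion-free part is determined as the right orthogonal of the torsion part. You instead verify the two inclusions at the level of objects: your forward inclusion, via the colimit description of $\Hom$ in the Serre quotient and the observation that a simple $\oO_{D'}$ with $D'\not\subset D$ has no proper subobject with quotient in $\sS_D$, is in effect a direct proof of the orthogonality $\Hom_{\Coh^{\geq 1}(U)}(q(\tT),q(\Rref(X)))=0$ that Lemma~\ref{lem_tor_pair_on_quot_if_T_Serre} supplies abstractly; your reverse inclusion (lift $G$, decompose in the canonical torsion pair upstairs, kill $q(T)$ by simplicity) is a concrete instantiation of the uniqueness of the torsion-free part. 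What your version buys is transparency -- every vanishing is computed rather than quoted -- at the cost of redoing work the paper has already packaged in Section~\ref{sec_closed_obj}; the paper's version buys brevity and makes clear that the only genuinely new input is the identification $\Coh^{\geq 1}_E(X)\simeq\Coh^{\geq 1}_E(U)$ for divisors $E$ meeting $U$. Your initial reduction to empty Artinian part mirrors the proof of Lemma~\ref{lem_quotient_by_S_D} and is harmless.
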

	\begin{proof}
		As $\bigoplus_{D\in X^1} \Coh^{\geq 1}_D(X)$ is a Serre subcategory in $\Coh^{\geq 1}(X)$ and $\sS_D$ is its Serre subcategory, the torsion pair $(\bigoplus_{D\in X^1} \Coh^{\geq 1}_D(X), \Rref(X))$ in $\Coh^{\geq 1}(X)$ descends to the quotient $\Coh^{\geq 1}(U)\simeq \Coh^{\geq 1}(X)/\sS_D$, see Lemmas \ref{lem_tor_pair_on_quot_if_T_Serre} and \ref{lem_quotient_by_S_D}. 
		For any prime divisor $E \subset X$ whose generic point lies in $U$,  $\Coh_E^{\geq 1}(X) \simeq \Coh_E^{\geq 1}(U)$ as both are equivalent to $\textrm{mod--}\oO_{X,E}$. Hence, the torsion part of the torsion pair on $\Coh^{\geq 1}(U)$ induced from $\Coh^{\geq 1}(X)$ is equivalent the torsion part of the canonical torsion pair of Proposition \ref{prop_canon_torsion_pair_Coh_1}. Hence, the torsion-free parts $q(\Rref(X))$ and $\Rref(U)$ are also equivalent.
	\end{proof}

	\section{Reconstruction of $\Coh(\wh{X})$ from $\Rref(X)$}\label{sec_reconstr}

 	We give a categorical characterisation of $\Rref(U)$, for a quasi-affine normal surface $U$.  This allows us to construct the category $\Coh(\wh{X})$ of coherent sheaves on the codim-2-saturated model of  a normal surface $X$ from the (additive) category $\Rref(X)$. By Appendix, we can further reconstruct $\wh{X}$ from $\Coh(\wh{X})$.
 	
	\vspace{0.3cm}
	\subsection{CM affine surface is quasi-affine}~\\

	Given a line bundle $\lL$ on $X$, the ring $\End_{\Coh(X)}(\lL)$ equals $H^0(X, \oO_X)$. Since $\lL$ is an object of $\Rref(X)$, Corollary \ref{cor_ff_on_closed}  implies that 
	\begin{equation}\label{eqtn_center_from_line_bundle}
	H^0(X,\oO_X) = \End_{\Coh^{\geq 1}(X)}(\lL).
	\end{equation} 
	Since an open embedding $U\to X$ with complement of codimension two induces an equivalence $\Coh^{\geq 1}(X) \simeq \Coh^{\geq 1}(U)$, equality (\ref{eqtn_center_from_line_bundle}) holds for any line bundle $\lL\in \Coh(U)$, i.e. for any line bundle $\lL\in \Coh(U)$, $\End_{\Coh^{\geq 1}(X)}(\lL) = Z(\Rref(X))$, see Proposition \ref{prop_cent_of_E_0}.
	
	Denote by 
	$$
	q\colon \Coh(X) \to \Coh^{\geq 1}(X)
	$$
	the quotient functor.
	
	Object $\oO_X\in \Coh^{\geq 1}(X)$ can be recovered from $\Coh^{\geq 1}(X) \simeq \aA_r(\Rref(X))$ up to an auto-equivalence of the category:
	\begin{PROP}\cite[Proposition 2.7, Theorem 4.1]{CalPir}\label{prop_Cal_Pir}
		An object $E\in \Coh^{\geq 1}(X)$ is of the form $q(j_*\lL)$, for  an open  embedding $j\colon U\subset X$ with complement of codimension two and a line bundle $\lL\in \Coh(U)$, if and only if
		\begin{enumerate}
			\item for any simple object $P\in \Coh^{\geq 1}(X)$, $\Hom_{\Coh^{\geq 1}(X)}(E, P)$ is a one-dimensional $\End_{\Coh^{\geq 1}(X)}(P)$-vector space,
			\item $E$ is locally maximal with this property, i.e. for any $E'\in \Coh^{\geq 1}(X)$ satisfying $(1)$ any surjection $E' \to E$ is an isomorphism.
		\end{enumerate}
		For any such $E$, there exists an auto-equivalence $\Phi$ of $\Coh^{\geq 1}(X)$ such that $\Phi(E)\simeq \oO_X$.
	\end{PROP}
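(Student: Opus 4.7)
The plan is to combine the identification of simple objects of $\Coh^{\geq 1}(X)$ from Proposition \ref{prop_canon_torsion_pair_Coh_1} with local-at-a-divisor computations and the cotilting torsion pair on the right abelian envelope. By that proposition, the simples in $\Coh^{\geq 1}(X)$ are the sheaves $\oO_D$ for $D\in X^1$, with $\End(\oO_D)=k(D)$. For each $D$, the Serre subcategory $\sS_D'$ generated by $\{\oO_{D'}\}_{D'\neq D}$ yields a quotient $\Coh^{\geq 1}(X)/\sS_D'$ equivalent to the category of finitely generated $\oO_{X,D}$-modules; the vanishing of $\mathrm{Ext}^1(\oO_{D'},\oO_D)$ for $D'\neq D$ (noted in the proof of Proposition \ref{prop_canon_torsion_pair_Coh_1}) makes $\oO_D$ an $\sS_D'$-closed object, so that
\[
\Hom_{\Coh^{\geq 1}(X)}(E,\oO_D)\;\simeq\;\Hom_{\oO_{X,D}}(E_{(D)},k(D)),
\]
where $E_{(D)}$ denotes the image of $E$ in $\textrm{mod--}\oO_{X,D}$.

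For the forward direction, take $E=q(j_*\lL)$. Since $U$ contains the generic point of every prime divisor, $(j_*\lL)_{(D)}$ is the free rank-one $\oO_{X,D}$-module, hence $\Hom(E,\oO_D)=k(D)$, giving (1). For (2), if $E'\to E$ is a surjection in $\Coh^{\geq 1}(X)$ with $E'$ satisfying (1), then at each $D$ one gets a surjection of cyclic $\oO_{X,D}$-modules onto a free cyclic module, which a quick DVR argument forces to be an isomorphism of free rank-one modules. The kernel of $E'\to E$ in $\Coh^{\geq 1}(X)$ then has vanishing stalk at every prime divisor, so its torsion part is zero and its reflexive part has rank zero, hence vanishes.

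For the backward direction, use the cotilting torsion pair to decompose $E$ as $0\to T\to E\to F\to 0$ with $F\in\Rref(X)$ and $T\in\bigoplus_D\textrm{mod--}\oO_{X,D}$. Condition (1) translates to $E_{(D)}$ being cyclic, so its quotient $F_{(D)}$ is cyclic and free of rank at most one, whence $F$ has generic rank at most one. The case $F=0$ (pure torsion) is excluded because $X^1$ is infinite for a normal surface while $T$ has finite support, so $\Hom(E,\oO_D)=0$ for cofinitely many $D$, contradicting (1). Thus $F$ is a rank-one reflexive sheaf, i.e.\ $F\simeq q(j_*\lL)$ for some line bundle $\lL$ on an open $U\subset X$ with codim-two complement. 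Since $F_{(D)}\simeq\oO_{X,D}$ is free, the stalk extension $0\to T_{(D)}\to E_{(D)}\to\oO_{X,D}\to 0$ splits and
\[
\Hom(E,\oO_D)=k(D)\oplus\Hom_{\oO_{X,D}}(T_{(D)},k(D));
\]
imposing (1) forces $T_{(D)}=0$ for every $D$, so $T=0$ and $E\simeq q(j_*\lL)$; condition (2) gives an independent safeguard via the forward direction applied to the auxiliary reflexive cover.

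For the final assertion, invoke Theorem \ref{thm_autoeq}, which gives $\Auteq(\Rref(X))\simeq\Aut(\wh{X})\ltimes\Pic(X^o)$ and extends these auto-equivalences canonically to $\Coh^{\geq 1}(X)=\aA_r(\Rref(X))$. Writing $E=q(j_*\lL)$ with $\lL\in\Pic(U)\simeq\Pic_{\geq 1}(X)\simeq\Pic(X^o)$, tensoring with $\lL^{-1}$ sends $E$ to $q(j_*\oO_U)=q(\oO_X)=\oO_X$ in $\Coh^{\geq 1}(X)$, since $\oO_X\to j_*\oO_U$ has Artinian cokernel. The main subtlety lies in the backward direction, specifically in excluding pure-torsion objects: the argument crucially uses that $X$ is a surface (hence $X^1$ is infinite), beyond the purely formal interaction of (1) and (2) with the torsion pair.
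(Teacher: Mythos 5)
The paper does not prove this statement at all: it is imported verbatim from \cite[Proposition 2.7, Theorem 4.1]{CalPir}, so there is no in-paper argument to compare against. Your proof is, as far as I can check, a correct self-contained argument for the surface case, and it is built from ingredients the paper does make available: the identification of simple objects with the $\oO_D$, $D\in X^1$ (Proposition \ref{prop_canon_torsion_pair_Coh_1}), the localisation $\Coh^{\geq 1}(X)/\sS_D'\simeq \textrm{mod--}\oO_{X,D}$, the closedness of $\oO_D$ for $\sS_D'$ (which needs the $\Ext^1$-vanishing extended from simples to all of $\sS_D'$ by d\'evissage --- worth saying explicitly), Proposition \ref{prop_Hom_to_closed}, and the cotilting torsion pair. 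The Nakayama/DVR computations at each $D$ and the identification of a rank-one reflexive sheaf with $j_*\lL$ are all sound, and the final twist by $\lL^{-1}$ correctly produces the auto-equivalence without needing the full strength of Theorem \ref{thm_autoeq}. Two remarks. First, your argument actually shows that condition (1) \emph{alone} characterises the objects $q(j_*\lL)$ in this setting, so that (2) becomes a consequence of (1) rather than an independent hypothesis; this is not an error (the ``if and only if'' still holds), but it is a genuinely stronger statement than the cited one, and it relies on features special to $\Coh^{\geq 1}$ of a surface --- chiefly that $X^1$ is infinite while finite-length objects have finite support, which kills the pure-torsion case. In the generality of \cite{CalPir} (e.g.\ the analogous statement in $\Coh(X)$ itself, where $\oO_C$ for a curve $C$ satisfies (1)), condition (2) is indispensable, which is why it appears in the statement. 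Second, the sentence claiming that (2) ``gives an independent safeguard via the forward direction applied to the auxiliary reflexive cover'' is vague and does no work in your argument; it should either be made precise or dropped, since by that point you have already concluded $T=0$ from (1).
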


	Since the center $Z(\eE)$ of an additive category is a commutative ring, the category $\textrm{CM}(Z(\eE))$ of maximal Cohen-Macaulay modules over it is well-defined.
	
	We say that an additive category $\eE$ is \emph{CM affine} if there exists an object $\lL \in \eE$ with $\End_{\eE}(\lL) = Z(\eE)$ such that $\Hom_{\eE}(\lL,-)\colon \eE\to \textrm{Mod--}Z(\eE)$ induces an equivalence $\eE \simeq \textrm{CM}(Z(\eE))$.
	
	We say that a normal surface $X$ is \emph{CM affine} if the category $\Rref(X)$ is.
	
		\begin{PROP}\label{prop_affin_of_CM_affine}
		If $X$ is a  CM affine normal surface then $X$ is quasi-affine.
	\end{PROP}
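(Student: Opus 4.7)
Let $L \in \Rref(X)$ be the distinguished object from the CM affine hypothesis, so $\End(L) = Z := Z(\Rref(X))$ and $\Hom(L,-)\colon \Rref(X) \to \MCM(Z)$ is an equivalence. By Proposition \ref{prop_cent_of_E_0}, $Z = H^0(X,\oO_X)$, so $\Spec Z$ coincides with the affinisation $X^{\aff}$. My plan is to identify $\MCM(Z)$ with $\Rref(X^{\aff})$, thereby promoting the given equivalence to $\Rref(X) \simeq \Rref(X^{\aff})$, and then apply Theorem \ref{thm_iso_iff_E_n-2} to put $X$ and $X^{\aff}$ in the same connected component of $\mathscr{D}$. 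Since $X^{\aff}$ is affine, hence codim-2-saturated by Proposition \ref{prop_aff_is_sat}, this forces $\wh{X} = X^{\aff}$ and exhibits $j_X\colon X \to X^{\aff}$ as an open embedding with complement of codimension two, proving $X$ is quasi-affine.

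The main step is to verify that $X^{\aff}$ is a normal surface, i.e.\ $\dim Z = 2$. By \cite{Schroer}, $Z$ is a finitely generated integrally closed $k$-algebra of dimension at most two, so it remains to rule out $\dim Z \leq 1$. Since $\Rref(X)$ is quasi-abelian (Theorem \ref{thm_tor_pair_for_eqidim_scheme}), so is $\MCM(Z)$, and the equivalence is automatically compatible with the canonical exact structures. It therefore extends to a $k$-linear equivalence of right abelian envelopes $\Coh^{\geq 1}(X) = \aA_r(\Rref(X)) \simeq \aA_r(\MCM(Z))$ by Corollary \ref{cor_env_of_E_n-2}, and in particular preserves endomorphism rings of simple objects. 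By Proposition \ref{prop_canon_torsion_pair_Coh_1}, the simples on the left are $\oO_D$ for $D \in X^1$, with endomorphism rings equal to the residue fields at the codimension-one points $D$, namely the function fields $k(\ol{D})$ of prime divisors; these have transcendence degree one over $k$. On the right, if $\dim Z \leq 1$ then $\aA_r(\MCM(Z)) = \modu Z$, and its simples are the residue fields $Z/\mathfrak{m}$ at closed points of $\Spec Z$, all finite algebraic over $k$. The resulting mismatch of transcendence degrees forces $\dim Z = 2$.

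Once $\dim Z = 2$ is secured, $X^{\aff}$ is a normal affine Cohen--Macaulay surface by Serre's criterion, and on such a surface maximal Cohen--Macaulay modules coincide with reflexive sheaves, so $\MCM(Z) \simeq \Rref(X^{\aff})$. Composing yields an additive equivalence $\Rref(X) \simeq \Rref(X^{\aff})$, and Theorem \ref{thm_iso_iff_E_n-2} places $X$ and $X^{\aff}$ in the same connected component of $\mathscr{D}$. The saturation argument of the first paragraph then concludes the proof. The delicate point is the dimension step, where the geometric content (transcendence degrees of residue fields at divisors) is extracted from purely categorical data; the remaining identifications are routine consequences of the results already established in this paper.
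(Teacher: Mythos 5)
Your proof is correct, and the first half coincides with the paper's own argument: the paper also rules out $\dim Z(\Rref(X))\leq 1$ by comparing simple objects in the right abelian envelopes (treating dimension zero slightly differently, by noting that $\MCM$ would then be abelian and equal to its own envelope, whereas $\Rref(X)\subsetneq \aA_r(\Rref(X))$; your uniform treatment via endomorphism fields of simples covers both cases and is fine, granted the $k$-linearity of the equivalence, which holds since $\Hom(\lL,-)$ is $\End(\lL)$-linear and $k\to Z(\Rref(X))$). Where you genuinely diverge is the endgame. Having identified $\MCM(Z)\simeq\Rref(X^{\aff})$, the paper proves the stronger statement that the affinisation morphism $\rho_X$ itself is an open embedding with complement of codimension two: it uses Proposition \ref{prop_Cal_Pir} to normalise $\lL$ to $\oO_X$, identifies the induced equivalence of envelopes with $\Hom_{\Coh^{\geq 1}(X)}(\oO_X,-)={\rho_X}_*$, deduces that $\rho_X$ contracts no divisor and is quasi-finite, and concludes by a Zariski-type argument. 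You instead feed the additive equivalence $\Rref(X)\simeq\Rref(X^{\aff})$ into Theorem \ref{thm_iso_iff_E_n-2} to place $X$ and $X^{\aff}$ in one connected component of $\mathscr{D}$; since $\mathfrak{s}_*\mathfrak{s}$ is constant on connected components (Propositions \ref{prop_X_hat_sat} and \ref{prop_map_to_X_hat}) and $X^{\aff}$ is codim-2-saturated by Proposition \ref{prop_aff_is_sat}, the saturated model $\wh{X}$ is affine and $X\subset\wh{X}$ is quasi-affine. This is shorter and suffices for the stated conclusion; what it gives up is the identification of the embedding with $\rho_X$ --- your isomorphism $\wh{X}\simeq X^{\aff}$ is a priori only abstract --- but nothing in the application (Theorem \ref{thm_CM_aff=q-aff}) requires that refinement.
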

	\begin{proof}
		By \cite[Corollary 6.3]{Schroer}, $X^{\aff} = \Spec Z(\Rref(X))$ (see Proposition \ref{prop_cent_of_E_0}) is a normal scheme. If dimension of $X^{\aff}$ is less than two, then $X^{\aff}$ is regular and $\textrm{CM}(X^{\aff})$ is the category of torsion-free sheaves on $X^{\aff}$ by \cite[Proposition 1.5.1]{Yoshino}. In particular, if $X^{\aff}$ is zero dimensional, then $\textrm{CM}(X^{\aff})$ is abelian, hence isomorphic to its right abelian envelope. As $\Rref(X) \subsetneq \aA_r(\Rref(X))$, by Proposition \ref{prop_canon_torsion_pair_Coh_1}, we conclude that $\dim X^{\aff} \geq 1$. If $X^{\aff}$ is a (regular) curve, then $\Coh(X^{\aff})$ admits a cotilting torsion pair $(\Coh_{\leq 0}(X^{\aff}), \textrm{CM}(X^{\aff}))$ and $\aA_r(\textrm{CM}(X^{\aff})) \simeq \Coh(X^{\aff})$, by Proposition \ref{prop_cotilt_is_quasi-ab}. In particular, simple objects in $\aA_r(\textrm{CM}(X^{\aff}))$, i.e. structure sheaves of closed points, have endomorphisms algebras of finite dimension over $k$. As this is not the case for endomorphisms $\oO_{X,D}/\mathfrak{m}_{X,D}$ of simple objects $\oO_{D}$ in $\aA_r(\Rref(X)) = \Coh_{\geq 1}(X)$, we conclude that $X^{\aff}$ is a normal surface. In particular, $\textrm{CM}(X^{\aff}) \simeq\Rref(X^{\aff})$ \cite[Proposition 1.5.3]{Yoshino}.
		
		Let $\lL \in \Rref(X)$ be such that $\Psi(-) = \Hom(\lL,-)\colon \Rref(X) \to \Rref(X^{\aff})$ is an equivalence. It induces an equivalence $\ol{\Psi}\colon \Coh^{\geq 1}(X) \xrightarrow{\simeq}\Coh^{\geq 1}(X^{\aff})$ of the right abelian envelopes, by Corollary \ref{cor_env_of_E_n-2}. As the equivalence $\ol{\Psi}$ maps $ \lL$ to $\oO_{X^{\aff}}$, and $\oO_{X^{\aff}}$ satisfies conditions $(1)$ and $(2)$ of Proposition \ref{prop_Cal_Pir}, so does $\lL$. Hence, by the mentioned proposition, there exists an autoequivalence $\Phi$ of $\Coh_{\geq 1}(X)$ such that $\Phi(\lL) = \oO_X$. Since any autoequivalence maps simple objects to simple objects, $\Phi$ preserves the canonical torsion pair $(\bigoplus_{D\in X^1} \Coh_{D}^{\geq 1}(X), \Rref(X))$ of Proposition \ref{prop_canon_torsion_pair_Coh_1}. In particular, $\Phi$ restricts to an autoequivalence of $\Rref(X)$. Hence $\oO_X$ satisfies the same property as $\lL$, and we can assume that $\lL= \oO_X$.			
		
		 We check that the equivalence $\ol{\Psi} \colon \Coh^{\geq 1}(X) \xrightarrow{\simeq} \Coh^{\geq 1}(X^{\aff})$ is given by $\Hom_{\Coh^{\geq 1}(X)}(\oO_X,-)$. To this end, consider the geometric exact structure $\sS_g$ on $\Rref(X^{\aff})$ induced by the inclusion $\Rref(X^{\aff}) \subset \Coh(X^{\aff})$. Then $\aA_r(\Rref(X^{\aff}), \sS_g) \simeq \Coh(X^{\aff})$ by Lemma \ref{lem_coh_as_env_of_E_0},  and for the canonical exact structure $\sS_{\textrm{can}}$, we have that $\aA_r(\Rref(X^{\aff}), \sS_{\textrm{can}}) \simeq \Coh^{\geq 1}(X^{\aff})$ is the quotient of $\Coh(X^{\aff})$ by the Serre subcategory of objects of finite length, according to Proposition \ref{prop_Cal-Pir}. Finally, any object of $\Coh(X^{\aff})$ is the cokernel of a morphism in $\Rref(X^{\aff})$, \cite[Theorem 2.1]{SchrVezz}. Under the equivalence $\Psi^{-1}$ all of the above holds for the right  abelian envelopes of $\Rref(X)$.

		Let $\sS_g^{\Psi}$ be the exact structure on $\Rref(X)$ corresponding to $\sS_g$ under $\Psi$. Functor $\Psi$ induces an equivalence $\wt{\Psi} \colon \aA_r(\Rref(X), \sS_g^{\Psi}) \xrightarrow{\simeq} \Coh(X^{\aff})$. To determine $\wt{\Psi}$ on an object $T\in \aA_r(\Rref(X), \sS_g^{\Psi}) $ we present it as the cokernel of $\f\colon R_1\to R_2$ with $R_1, R_2 \in \Rref(X)$. Then $\wt{\Psi}(T)= \textrm{coker}(\Psi(\f))$. Note that $\oO_X$ is projective in the exact structure $\sS_g^{\Psi}$ as $\oO_{X^{\aff}} = \Psi(\oO_X)$ is projective in $\sS_g$. Since $\wt{\Psi}(-)$ and $\Hom_{\aA_r(\Rref(X), \sS_g^{\Psi})}(\oO_X,-)$ are both right exact functors which coincide on objects of $\Rref(X)$, it follows that they are equivalent. 
		
	Finally, as $\Coh^{\geq 1}(X)$ is the quotient of $\aA_r(\Rref(X), \sS^{\Psi}_g)$ by the Serre subcategory of objects of finite length and the equivalence $\wt{\Psi}$ maps objects of finite length to objects of finite length, one concludes that $\wt{\Psi}$ induces an equivalence $\Hom_{\Coh^{\geq 1}(X)}(\oO_X, -) \colon \Coh^{\geq 1}(X) \xrightarrow{\simeq} \Coh^{\geq 1}(X^{\aff})$.
		
		Any divisor $D\subset X$ yields a Serre subcategory $\Coh^{\geq 1}_D(X)$ in $\Coh^{\geq 1}(X)$. Hence, the fact that ${\rho_X}_*\colon \Coh^{\geq 1}(X) \to \Coh^{\geq 1}(X^{\aff})$ is an equivalence implies that ${\rho_X} \colon X\to X^{\aff}$ does not contract any divisor, i.e. $\rho_X$ is quasi-finite.
		
		Since $X$ is separated and of finite-type, so is $\rho_X$. Finally, by construction, ${\rho_X}_*\oO_X = \oO_{X^{\aff}}$. By \cite[Theorem 12.83]{GorWed}, $\rho_X$ restricted to the open set $V = \{x\in X\,|\, \dim \rho_X^{-1} \rho_X(x)  = 0\}$ is an open embedding. Hence, as $\rho_X$ is quasi-finite, it is an isomorphism of $X$ with an open subscheme of $X^{\aff}$. Since $\rho_X$ induces an equivalence of $\Coh^{\geq 1}(X)$ and $\Coh^{\geq 1}(X^{\aff})$, the complement to $X$ in $X^{\aff}$ is of codimension 2, see Theorem \ref{thm_cal_pir}.
	\end{proof}

	\begin{THM}\label{thm_CM_aff=q-aff}
		A normal surface $X$ is quasi-affine if and only if it is CM affine.
	\end{THM}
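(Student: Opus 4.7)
The theorem combines two implications, one of which is already Proposition \ref{prop_affin_of_CM_affine}. So the plan is to note that the ``if'' direction is precisely Proposition \ref{prop_affin_of_CM_affine}, and then to prove the ``only if'' direction, i.e.\ that a quasi-affine normal surface $X$ is CM affine.

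For the ``only if'' direction, my strategy is to reduce to the affine case and then invoke the classical identification of reflexive sheaves with maximal Cohen--Macaulay modules on a normal affine surface. Concretely, I would first apply Proposition \ref{prop_aff_of_quasi-affine} to obtain the open embedding $\rho_X\colon X\to X^{\aff}$ into the normal affine surface $X^{\aff}=\Spec R$, with complement of codimension two. Then Proposition \ref{prop_Ref_emb_codim_2} yields an equivalence $\rho_X^*\colon \Rref(X^{\aff})\xrightarrow{\simeq}\Rref(X)$.

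Next I would identify the center. By Proposition \ref{prop_cent_of_E_0}, $Z(\Rref(X))=H^0(X,\oO_X)=R$, and the same proposition applied to $X^{\aff}$ (or directly by definition of affinisation) gives $\End_{\Rref(X^{\aff})}(\oO_{X^{\aff}})=R$. Thus taking $L:=\rho_X^*\oO_{X^{\aff}}=\oO_X$, we have $\End_{\Rref(X)}(L)=R=Z(\Rref(X))$. It remains to see that $\Hom_{\Rref(X)}(L,-)$ induces an equivalence $\Rref(X)\simeq \mathrm{CM}(R)$. Via the equivalence $\rho_X^*$, this is the same as saying that $\Hom_{\Rref(X^{\aff})}(\oO_{X^{\aff}},-)=\Gamma(X^{\aff},-)$ induces an equivalence $\Rref(X^{\aff})\simeq \mathrm{CM}(R)$, which is the classical statement \cite[Proposition~1.5.3]{Yoshino} for the normal affine surface $X^{\aff}$ (reflexive = maximal Cohen--Macaulay in dimension two over a normal ring).

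No step should be hard: the only subtlety to verify is that under the chosen $L=\oO_X$ the isomorphism $\End(L)\cong Z(\Rref(X))$ is the identification coming from the definition of the center, which is immediate because multiplication by a global section of $\oO_X$ is precisely the corresponding natural endotransformation of $\Id_{\Rref(X)}$. Combined with Proposition \ref{prop_affin_of_CM_affine} for the converse, this establishes the claimed equivalence.
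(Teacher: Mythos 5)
Your proposal is correct and follows essentially the same route as the paper: one direction is Proposition \ref{prop_affin_of_CM_affine}, and for the other you reduce via Proposition \ref{prop_aff_of_quasi-affine} to the affine model $X^{\aff}$, use the codimension-two equivalence $\Rref(X)\simeq\Rref(X^{\aff})$, and invoke the classical identification of reflexive sheaves with maximal Cohen--Macaulay modules over the normal ring $H^0(X,\oO_X)=Z(\Rref(X))$. Your write-up merely spells out the verification of the CM affine definition (the choice $L=\oO_X$ and the identification of $\Hom(L,-)$ with global sections) that the paper leaves implicit.
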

	\begin{proof}
		By Proposition \ref{prop_affin_of_CM_affine} if $X$ is CM affine, it is quasi-affine. 
		
		By Proposition \ref{prop_aff_of_quasi-affine}, if $X$ is quasi-affine then $\rho_X \colon X\to X^{\aff}$ is an open embedding with complement of codimension two. Hence, $\rho_X$ induces an equivalence $\Rref(X) \simeq \Rref(X^{\aff})$, see Theorem \ref{thm_iso_iff_E_n-2}, and $X$ is CM affine.
	\end{proof}

\vspace{0.3cm}
\subsection{Construction of $\Coh(\wh{X})$ from $\Rref(X)$}\label{ssec_cont_of_X}~\\

Recall that $X^1$ denotes the set of prime divisors in $X$. For $I\subset X^1$, let $U_I\subset X$ be the complement to the divisor $\bigcup_{D\in I} D$.
\begin{LEM}\label{lem_another_def_of_X_hat}
	The codim-2-saturated model $\wh{X}$ is isomorphic to $\varinjlim \wh U_I^{\aff}$, where the colimit is taken over the category
	\begin{equation}\label{eqtn_set_for_colim}
	\mathcal{D}\textrm{iv-}\mathcal{QA}(X)= \{I\subset X^1\,|\, I \textrm{ is finite, } U_I \textrm{ is quasi-affine}\}.
	\end{equation}
\end{LEM}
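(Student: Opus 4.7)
The plan is to prove that the category $\mathcal{D}\textrm{iv-}\mathcal{QA}(X)$, viewed as a full subcategory of $\mathcal{QA}(X)$ via $I\mapsto U_I$, is cofinal in $\mathcal{QA}(X)$. Cofinality of filtered colimits will then yield
\[
\wh{X}=\varinjlim_{\mathcal{QA}(X)}U^{\aff}=\varinjlim U_I^{\aff}.
\]
Since each $U_I$ is quasi-affine, $\wh{U}_I=U_I^{\aff}$ is already affine, so $\wh{U}_I^{\aff}=U_I^{\aff}$, matching the stated formula.

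Fix $V\in\mathcal{QA}(X)$. Noetherianity of $X$ guarantees that the divisorial part of $X\setminus V$ is a finite union $\bigcup_{i\in I}D_i$ indexed by a finite $I\subset X^1$. Setting $U_I:=X\setminus\bigcup_{i\in I}D_i$, we have $V\subset U_I$ and $U_I\setminus V$ is a finite set of closed points in $X$. The essential step is to show $U_I$ is quasi-affine, so that $I\in\mathcal{D}\textrm{iv-}\mathcal{QA}(X)$.

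For each closed point $x\in U_I\setminus V$, the finite set $(U_I\setminus V)\setminus\{x\}$ is closed, so $V\cup\{x\}$ is open in $U_I$ and we may choose an affine open neighbourhood $W_x\subset V\cup\{x\}$ of $x$. Taking the $W_x$ small enough, we further arrange $W_{x_i}\cap W_{x_j}\subset V$ for distinct $x_i,x_j\in U_I\setminus V$. Then $W_x^o:=W_x\setminus\{x\}\subset V$ is quasi-affine with codimension-two complement in the normal affine surface $W_x$, so Hartogs' Lemma gives $(W_x^o)^{\aff}=W_x$. Applying Proposition \ref{prop_aff_of_quasi-affine}(2) to the open embedding $W_x^o\hookrightarrow V$ of quasi-affine surfaces produces an open embedding $W_x\simeq(W_x^o)^{\aff}\hookrightarrow V^{\aff}$ restricting on $W_x^o$ to the composition $W_x^o\hookrightarrow V\hookrightarrow V^{\aff}$.

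By construction, the embeddings $W_x\hookrightarrow V^{\aff}$ and $V\hookrightarrow V^{\aff}$ agree on all pairwise overlaps, which by our shrinking lie inside $V$. Separation of $V^{\aff}$ then allows them to glue into an open embedding $U_I=V\cup\bigcup_x W_x\hookrightarrow V^{\aff}$, exhibiting $U_I$ as a quasi-affine open of the affine $V^{\aff}$ and completing the cofinality argument. The chief obstacle is precisely this gluing step: one must shrink the $W_x$'s so their overlaps land in $V$, and invoke the universal property of the affinisation (together with separation of $V^{\aff}$) to ensure the local embeddings are mutually compatible.
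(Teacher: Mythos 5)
Your overall strategy coincides with the paper's: both proofs reduce the lemma to cofinality of $I\mapsto U_I$ in $\mathcal{QA}(X)$, and both take for $I$ the (finite) set of prime divisors in $X\setminus V$. The difference is in how quasi-affineness of $U_I$ is established. The paper gets it in one line from Proposition \ref{prop_map_to_X_hat}: since $U_I\setminus V$ is finite, $V\to U_I$ is a morphism in $\mathscr{D}$, so $U_I$ embeds as an open subscheme of $\wh{V}=V^{\aff}$. You instead redo a hands-on gluing of affine charts $W_x$ into $V^{\aff}$; this is correct in substance but essentially duplicates the work already packaged in Proposition \ref{prop_left_ms} (whose proof is precisely such a gluing), so the shortcut via the universal property of $\wh{V}$ is available and cheaper.

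Two points need tightening. First, $\mathcal{QA}(X)$ is \emph{not} filtered -- the union of two quasi-affine opens need not be quasi-affine (e.g.\ two standard charts of $\mathbb{P}^2$ union to $\mathbb{P}^2$ minus a point, which has $H^0=k$ and is not quasi-affine) -- so you cannot invoke ``cofinality of filtered colimits'': you must check that each comma category $(\mathcal{D}\textrm{iv-}\mathcal{QA}(X))^V$ is connected, not merely nonempty. Your $U_I$ does repair this, since any $U_J\supseteq V$ satisfies $J\subseteq I$, hence $U_I\subseteq U_J$, i.e.\ $U_I$ is an initial object of the comma category (this is exactly what the paper verifies); but the point has to be made. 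Second, at the last step the glued map $U_I\to V^{\aff}$ is a priori only a local isomorphism, and separatedness of the \emph{target} $V^{\aff}$ (automatic, it is affine) is not what makes it injective: a local isomorphism can fail to be an open immersion. What you need is either the valuative criterion using separatedness of the \emph{source} $U_I$, or, more directly, Zariski's main theorem applied to the quasi-finite, separated, birational morphism $U_I\to V^{\aff}$ with normal target -- exactly as in the proof of Proposition \ref{prop_f_U_proper}. With these two repairs your argument goes through.
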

\begin{proof}
	By \cite[Proposition 2.5.2]{KasSch2} it suffices to check that the functor 
	$\mathcal{D}\textrm{iv-}\mathcal{QA}(X) \to \mathcal{QA}(X)$
	 which sends $I$ to $U_I$ is cofinal, i.e. that for any quasi-affine open subset $U\subset X$ the category $(\mathcal{D}\textrm{iv-}\mathcal{QA}(X))^U$ is connected. We show that even more is true, i.e. that $(\mathcal{D}\textrm{iv-}\mathcal{QA}(X))^U$ has an initial object. 
	 	Let $D = \bigcup_{i\in I} D_i$ be the divisorial part of $X\setminus U$. Clearly, any inclusion $U \to U_J$, for  $J\in \mathcal{D}\textrm{iv-}\mathcal{QA}(X)$, factors via the inclusion $U\to X\setminus D$. By Proposition \ref{prop_map_to_X_hat}, $U_I = X \setminus D$ is an open subset of $U^{\aff}$, hence it is quasi-affine, i.e. $I \in \mathcal{D}\textrm{iv-}\mathcal{QA}(X)$.
\end{proof}

Consider a quasi-abelian category $\eE$ and the right abelian envelope $\aA$ of $\eE$ with its canonical exact structure $\sS_{\textrm{can}}$. We say that $\eE$ is \emph{height-one} if the torsion part $\tT = {}^\perp \eE$ of the canonical torsion pair in $\aA$ (see Proposition \ref{prop_cotilt_is_quasi-ab}) is the Serre subcategory of objects of finite length in $\aA$. 

If $C$ is a reduced curve then the category of torsion-free sheaves on $C$ is height-one.  By Corollary \ref{cor_env_of_E_n-2} and  Proposition \ref{prop_canon_torsion_pair_Coh_1}, the category $\Rref(X)$, for a normal surface $X$, is height-one.

Given a finite set  $I$ of simple objects in $\aA$ and the Serre subcategory $\sS_I$ generated by them, we denote by $\fF_I\subset \aA/\sS_I$ the right orthogonal to $\tT/\sS_I \subset \aA/\sS_I$.

Let $\eE$ be a height-one category and $\eE^1$ the set of isomorphism classes of simple objects in $\aA_r(\eE, \sS_{\textrm{can}})$. Recall that, for a category $\cC$, its center is denoted by  $Z(\cC)$. Define the \emph{saturated model} of $\eE$ by
	\begin{equation}\label{eqtn_def_of_E_hat}
\wh{\eE} := \underleftarrow{\textrm{2-lim}}\,
\textrm{mod}_{\textrm{fg}}\textrm{--} Z(\fF_I),
\end{equation}	
where the 2-limit in the 2-category of categories is taken over 
\begin{equation}\label{eqtn_diagram_from_Ref}
\{I\subset \eE^1\,|\, I \textrm{ is finite, } \fF_I \textrm{ is CM-affine}\}.
\end{equation}
\begin{THM}\label{thm_final_model_from_Ref}
	For a normal surface $X$, the saturated model $\wh{\Rref(X)}$ is equivalent to $\Coh(\wh{X})$.
\end{THM}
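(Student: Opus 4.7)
The plan is to translate each piece of the definition of $\wh{\Rref(X)}$ into geometric language, reducing the statement to the gluing description of $\wh{X}$ from Lemma \ref{lem_another_def_of_X_hat}.

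First I would identify the indexing 2-diagram. By Proposition \ref{prop_canon_torsion_pair_Coh_1}, the simple objects of $\aA_r(\Rref(X))=\Coh^{\geq 1}(X)$ are precisely the sheaves $\oO_D$ for $D\in X^1$, so a finite $I\subset \Rref(X)^1$ corresponds to a finite set of prime divisors, and the associated Serre subcategory $\sS_I$ is $\bigoplus_{D\in I}\Coh_D^{\geq 1}(X)$. By Lemma \ref{lem_quotient_by_S_D}, the quotient $\Coh^{\geq 1}(X)/\sS_I$ is equivalent to $\Coh^{\geq 1}(U_I)$, where $U_I=X\setminus\bigcup_{D\in I}D$. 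Under this equivalence, the descended torsion part $\tT/\sS_I$ is (by the argument in the proof of Proposition \ref{prop_Ref_U_from_Ref_X}) the canonical torsion part of Proposition \ref{prop_canon_torsion_pair_Coh_1} on $\Coh^{\geq 1}(U_I)$. Hence its right orthogonal $\fF_I$ is exactly $\Rref(U_I)$ by Proposition \ref{prop_Ref_from_Coh}.

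Second, I would identify the CM-affine condition geometrically. By Theorem \ref{thm_CM_aff=q-aff}, $\fF_I=\Rref(U_I)$ is CM affine if and only if $U_I$ is quasi-affine. So the indexing category (\ref{eqtn_diagram_from_Ref}) is canonically equivalent to $\mathcal{D}\textrm{iv-}\mathcal{QA}(X)$ from (\ref{eqtn_set_for_colim}). For such $I$, Proposition \ref{prop_cent_of_E_0} gives $Z(\fF_I)=Z(\Rref(U_I))=H^0(U_I,\oO_{U_I})$, and the right hand side is exactly the coordinate ring of the affinisation $\wh{U_I}=U_I^{\aff}$ (Proposition \ref{prop_aff_of_quasi-affine}). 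Therefore $\textrm{mod}_{\textrm{fg}}\textrm{-}Z(\fF_I)\simeq\Coh(\wh{U_I})$. Functoriality in $I$ (coming from inclusions $I\subset J$, i.e. $U_J\subset U_I$) translates, under Proposition \ref{prop_aff_of_quasi-affine}(2), into the restriction maps $\Coh(\wh{U_I})\to\Coh(\wh{U_J})$ along the open embeddings $\wh{U_J}\hookrightarrow\wh{U_I}$.

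Third, I would finish by invoking Lemma \ref{lem_another_def_of_X_hat}, which writes $\wh{X}=\varinjlim_{\mathcal{D}\textrm{iv-}\mathcal{QA}(X)}\wh{U_I}$ as a gluing of schemes along open immersions. Since $\Coh(-)$ sends such a colimit of schemes to the 2-limit of the corresponding categories of coherent sheaves (equivalently, a coherent sheaf on $\wh X$ is a compatible family of coherent sheaves on the open cover $\{\wh{U_I}\}$, which is precisely the descent/2-limit statement along the finite open cover from Proposition \ref{prop_finite_cov_of_X_hat}), we obtain
\[
\Coh(\wh{X})\simeq\underleftarrow{\textrm{2-lim}}\,\Coh(\wh{U_I})\simeq\underleftarrow{\textrm{2-lim}}\,\textrm{mod}_{\textrm{fg}}\textrm{-}Z(\fF_I)=\wh{\Rref(X)}.
\]

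The most delicate step, and the one I expect to need the most care, is the third one: verifying that the 2-limit as written in (\ref{eqtn_def_of_E_hat}) really computes $\Coh(\wh X)$, i.e.\ checking that the diagram of categories indexed by $\mathcal{D}\textrm{iv-}\mathcal{QA}(X)$ is cofiltered enough (in particular that finite intersections are handled) so that the 2-limit matches the standard sheaf-theoretic gluing along the finite open cover of $\wh X$. Here one uses that $\mathcal{D}\textrm{iv-}\mathcal{QA}(X)$ is stable under unions of indices (since the intersection of two quasi-affines inside a separated normal surface is quasi-affine), which ensures that the transition maps $\wh{U_I}\hookrightarrow\wh{U_{I\cap J}}$ and $\wh{U_J}\hookrightarrow\wh{U_{I\cap J}}$ encode the pairwise overlaps needed for descent, reducing the assertion to the standard fact that coherent sheaves form a stack in the Zariski topology.
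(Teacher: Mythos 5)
Your proposal is correct and follows essentially the same route as the paper's proof: identify the indexing category (\ref{eqtn_diagram_from_Ref}) with $\mathcal{D}\textrm{iv-}\mathcal{QA}(X)$ via Propositions \ref{prop_canon_torsion_pair_Coh_1}, \ref{prop_Ref_U_from_Ref_X} and Theorem \ref{thm_CM_aff=q-aff}, recover $\Coh(\wh{U_I})$ as $\textrm{mod}_{\textrm{fg}}\textrm{--}Z(\fF_I)$ via Proposition \ref{prop_cent_of_E_0}, and conclude by Lemma \ref{lem_another_def_of_X_hat} together with Zariski descent for coherent sheaves. The only blemish is a notational slip in the last paragraph: the overlap $\wh{U_I}\cap\wh{U_J}$ corresponds to $U_{I\cup J}=U_I\cap U_J$, so the relevant transition maps are $\wh{U_{I\cup J}}\hookrightarrow\wh{U_I}$, not $\wh{U_I}\hookrightarrow\wh{U_{I\cap J}}$; your preceding remark that the index set is stable under unions is the correct justification.
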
 
\begin{proof}
	By Lemma \ref{lem_another_def_of_X_hat}, $\wh{X} = \varinjlim \wh{U}$, where the colimit is taken over \eqref{eqtn_set_for_colim}. Since a coherent sheaf can be glued along an open covering, $\Coh(\wh{X}) = \underleftarrow{\textrm{2-lim}}\, \Coh(\wh{U})$, where the 2-limit in the 2-category of categories is also taken over \eqref{eqtn_set_for_colim}.

	The category \eqref{eqtn_diagram_from_Ref}, for $\eE = \Rref(X)$, is isomorphic to (\ref{eqtn_set_for_colim}). Indeed, by Proposition \ref{prop_canon_torsion_pair_Coh_1},
		$X^1$ is the set of isomorphism classes of simple objects in $\aA_r(\Rref(X)) \simeq \Coh^{\geq 1}(X)$ (see Corollary \ref{cor_env_of_E_n-2}). Propositions \ref{prop_canon_torsion_pair_Coh_1} and \ref{prop_Ref_U_from_Ref_X} imply that,
		for a finite set $I\subset X^1$, the category $\fF_I$ is equivalent to $\Rref(U_I)$, where $U_I$ denotes the open complement to $\bigcup_{D_i \in I} D_i \subset X$. In view of Theorem \ref{thm_CM_aff=q-aff}, $U_I$ is quasi-affine if and only if $\Rref(U_I)$ is CM-affine. 
	
	Finally, by Proposition \ref{prop_cent_of_E_0}, the affinisation $U_I^{\aff}$ of $U_I$ is the spectrum of the center $H^0(U_I, \oO_{U_I})$ of $\Rref(U_I)$. Hence, $\wh{\Rref(X)} = \underleftarrow{\textrm{2-lim}}\, \Coh(U_I^{\aff})$, where the limit is taken over (\ref{eqtn_set_for_colim}). The statement follows.
\end{proof}

We define the \emph{geometric model} of a height-one quasi-abelian $\eE$ category by
\begin{equation}\label{eqtn_geom_mode}
\ol{\eE} = \varinjlim \Spec Z(\fF_I)
\end{equation}
where the colimit is taken over \eqref{eqtn_diagram_from_Ref}.

Theorem \ref{thm_final_model_from_Ref} gives a  way to recover the category $\Coh(\wh{X})$ of coherent sheaves on the codim-2-saturated model of $X$ from the category $\Rref(X)$. As a corollary, we can recover $\wh{X}$ itself as a geometric model of $\Rref(X)$.

\begin{COR}\label{cor_reconst_of_X}
	Given a normal surface $X$, $\ol{\Rref(X)}$ is isomorphic to $\wh{X}$. In particular, for a codim-2-saturated surface $X$, we have: $X\simeq \ol{\Rref(X)}$.
\end{COR}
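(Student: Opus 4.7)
The plan is to observe that this corollary is essentially an unpacking of the proof of Theorem \ref{thm_final_model_from_Ref}: the same identifications used there to match the indexing category and the ``fibers'' in $\Coh$ also match them at the level of the underlying schemes. So what is really needed is to verify that the colimit defining $\ol{\Rref(X)}$ coincides with the colimit defining $\wh{X}$ provided by Lemma \ref{lem_another_def_of_X_hat}.

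First, I would identify the indexing categories. The set $\Rref(X)^1$ of isomorphism classes of simple objects in $\aA_r(\Rref(X))$ is, by Corollary \ref{cor_env_of_E_n-2} and Proposition \ref{prop_canon_torsion_pair_Coh_1}, in bijection with $X^1$. For a finite subset $I \subset X^1$, the Serre subcategory $\sS_I \subset \Coh^{\geq 1}(X)$ it generates is exactly $\sS_{D_I}$ in the notation of Section \ref{ssec_restr_Ref_to_open}, so by Proposition \ref{prop_Ref_U_from_Ref_X} the right orthogonal $\fF_I$ to the torsion part of the induced torsion pair on the quotient is equivalent to $\Rref(U_I)$, where $U_I = X \setminus \bigcup_{D \in I} D$. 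Then Theorem \ref{thm_CM_aff=q-aff} says that $\fF_I \simeq \Rref(U_I)$ is CM affine if and only if $U_I$ is quasi-affine. Thus the indexing category \eqref{eqtn_diagram_from_Ref} for $\eE = \Rref(X)$ is naturally isomorphic to the indexing category $\mathcal{D}\textrm{iv-}\mathcal{QA}(X)$ of Lemma \ref{lem_another_def_of_X_hat}.

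Next, I would identify the objects indexed. By Proposition \ref{prop_cent_of_E_0}, $Z(\fF_I) = Z(\Rref(U_I)) = H^0(U_I, \oO_{U_I})$, so $\Spec Z(\fF_I) = U_I^{\aff} = \wh{U_I}$ (the last equality by Proposition \ref{prop_aff_of_quasi-affine}, since $U_I$ is quasi-affine). One must check that this identification is functorial in $I$: an inclusion $I \subset J$ in \eqref{eqtn_diagram_from_Ref} corresponds to the open embedding $U_J \hookrightarrow U_I$, which by Proposition \ref{prop_aff_of_quasi-affine}(2) induces an open embedding $U_J^{\aff} \to U_I^{\aff}$; on the categorical side the same inclusion corresponds to the restriction functor $\fF_I \to \fF_J$, whose induced map on centers agrees with the map $H^0(U_I, \oO_{U_I}) \to H^0(U_J, \oO_{U_J})$ under the Proposition \ref{prop_cent_of_E_0} identification. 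Passing to $\Spec$ yields the same transition maps as in the colimit \eqref{eqtn_def_of_X_hat}. Combining everything,
\[
\ol{\Rref(X)} \;=\; \varinjlim \Spec Z(\fF_I) \;\simeq\; \varinjlim \wh{U_I} \;\simeq\; \wh{X},
\]
where the last isomorphism is Lemma \ref{lem_another_def_of_X_hat}. The second assertion is immediate: if $X$ is codim-2-saturated, then $j_X \colon X \to \wh{X}$ is an isomorphism, hence $X \simeq \ol{\Rref(X)}$.

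The only point requiring care, and the mild obstacle worth being explicit about, is the functoriality check in the second step (cofinality and compatibility of transition maps between the two colimit systems); everything else is a direct translation between geometry and the category-theoretic side already set up in the preceding sections.
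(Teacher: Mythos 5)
Your proposal is correct and follows essentially the same route as the paper: identify the indexing category \eqref{eqtn_diagram_from_Ref} with $\mathcal{D}\textrm{iv-}\mathcal{QA}(X)$ via $\fF_I\simeq\Rref(U_I)$ and Theorem \ref{thm_CM_aff=q-aff}, identify $\Spec Z(\fF_I)$ with $U_I^{\aff}$ via Proposition \ref{prop_cent_of_E_0}, and conclude by Lemma \ref{lem_another_def_of_X_hat}. The paper's proof simply cites the proof of Theorem \ref{thm_final_model_from_Ref} for these identifications; your added remarks on functoriality of the transition maps make explicit a point the paper leaves implicit.
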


\begin{proof}
	As in the proof of Theorem \ref{thm_final_model_from_Ref}, category $\fF_I$ is equivalent to $\Rref(U_I)$, hence $Z(\fF_I) = \Spec H^0(U_I, \oO_{U_I}) = U_I^{\aff}$, see Proposition \ref{prop_cent_of_E_0}. By the proof of Theorem \ref{thm_final_model_from_Ref}, the diagram category (\ref{eqtn_diagram_from_Ref}) is equivalent to (\ref{eqtn_set_for_colim}). The statement follows from Lemma \ref{lem_another_def_of_X_hat}.
\end{proof}
Recall that for a normal surface $X$, $\Rref(X)$ is equivalent to the category $\textrm{CM}(X)$ of maximal Cohen-Macaulay sheaves on $X$. For a reduced curve $C$, $\textrm{CM}(C)$ is equivalent to the category $\fF(C)$ of torsion-free sheaves on $C$, \emph{cf.}  \cite[Proposition 1.5]{Yoshino}.

Then we have an analogous statement for curves. 

\begin{PROP}
	Let $C$ be a reduced curve. 
	Then $C\simeq \ol{\fF(C)}$.
\end{PROP}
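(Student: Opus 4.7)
The proof parallels Corollary~\ref{cor_reconst_of_X}, with dimension one replacing dimension two and the various codimension-two subtleties becoming trivial. By \cite[Theorem~2.1]{SchrVezz} every coherent sheaf on the reduced curve $C$ is a quotient of a vector bundle, and vector bundles are torsion-free, so the classical torsion pair $(\Coh_{\leq 0}(C),\fF(C))$ in $\Coh(C)$ is cotilting. Proposition~\ref{prop_cotilt_is_quasi-ab} then yields: $\fF(C)$ is quasi-abelian with right abelian envelope $\Coh(C)$, and the canonical conflations are the four-term exact sequences $0\to F_1\to F_2\to F_3\to T\to 0$ in $\Coh(C)$ with $F_i\in\fF(C)$ and $T$ of finite length. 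Since $\Coh_{\leq 0}(C)$ coincides with the Serre subcategory of finite length objects of $\Coh(C)$, the category $\fF(C)$ is height-one, and its simples in the envelope are the structure sheaves $k(p)$ of closed points $p\in C$.

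For a finite set $I$ of closed points, set $U_I:=C\setminus I$. The Serre subcategory $\sS_I\subset\Coh(C)$ generated by $\{k(p)\}_{p\in I}$ is the category of finite length sheaves supported on $I$, and the equivalence $\Coh(C)/\sS_I\simeq\Coh(U_I)$ (see \cite[Corollary~I.9.4.8]{EGAI}) sends the torsion part $\Coh_{\leq 0}(C)$ to $\Coh_{\leq 0}(U_I)$. Its right orthogonal is $\fF(U_I)$, hence $\fF_I=\fF(U_I)$. The curve analogue of Proposition~\ref{prop_cent_of_E_0} (same proof, combining the description of $\Coh(C)$ as the right abelian envelope of $\fF(C)$ in the geometric exact structure with Lemma~\ref{lem_center_of_Coh}) gives $Z(\fF_I)=H^0(U_I,\oO_{U_I})$.

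When $U_I$ is affine with $R:=H^0(U_I,\oO_{U_I})$, the identification $\textrm{CM}(R)=\fF(R)$ for reduced one-dimensional $R$ \cite[Proposition~1.5]{Yoshino} makes $\Hom(\oO_{U_I},-)\colon \fF(U_I)\xrightarrow{\simeq}\textrm{CM}(R)$ an equivalence, with $\End(\oO_{U_I})=R=Z(\fF_I)$, so $\fF_I$ is CM-affine and $\Spec Z(\fF_I)=U_I$. For cofinality, given any $I$ with $\fF_I$ CM-affine pick an affine open $W\subseteq U_I$ of $C$; then $W=U_{I'}$ with $I':=C\setminus W\supseteq I$ finite and $U_{I'}$ affine, so the affine $U_{I'}$'s are cofinal in the indexing category of CM-affine $\fF_I$'s (ordered so that morphisms correspond to inclusions of opens, i.e.\ reverse inclusion of $I$'s).

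Finally,
$$\ol{\fF(C)}\;=\;\varinjlim_{I:\,\fF_I\text{ CM-affine}}\Spec Z(\fF_I)\;=\;\varinjlim_{I':\,U_{I'}\text{ affine}} U_{I'}\;\simeq\; C,$$
because the affine opens $U_{I'}$ cover the separated reduced curve $C$ and the colimit along open inclusions glues them back into $C$. The only non-routine element is confirming that the proof of Proposition~\ref{prop_cent_of_E_0} transfers verbatim to reduced curves; this rests only on \cite[Theorem~2.1]{SchrVezz} and the facts that $\fF(C)\subset\Coh(C)$ is closed under extensions and kernels of epimorphisms, both classical.
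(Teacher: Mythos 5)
Your overall architecture matches the paper's (identify the cotilting pair $(\Coh_{\leq 0}(C),\fF(C))$, conclude $\aA_r(\fF(C))\simeq\Coh(C)$ and height-one-ness, identify $\fF_I$ with $\fF(U_I)$ and $Z(\fF_I)$ with $H^0(U_I,\oO_{U_I})$, then compute the colimit). The step that fails is the last one. The indexing category for the colimit (\ref{eqtn_geom_mode}) is the set of \emph{all} finite $I$ with $\fF_I$ CM-affine, and you only prove the implication ``$U_I$ affine $\Rightarrow$ $\fF_I$ CM-affine''. To replace the missing converse you invoke cofinality, but your cofinality goes in the wrong direction: for a \emph{colimit}, finality of the subfamily of affine $U_{I'}$'s requires that every CM-affine $I$ admit a morphism \emph{to} some affine object of the subfamily, i.e.\ an affine $U_{I'}$ with $U_I\subseteq U_{I'}$ (compare the proof of Lemma \ref{lem_another_def_of_X_hat}, where the comma category $(\mathcal{D}\textrm{iv-}\mathcal{QA}(X))^U$ consists of the $U_J$ \emph{containing} $U$). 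Choosing a smaller affine open $W\subseteq U_I$ produces a morphism in the opposite direction and does not remove the object $\Spec Z(\fF_I)=U_I^{\aff}$ from the diagram. This matters: if some $U_I$ with $\fF_I$ CM-affine were not affine, the term $U_I^{\aff}=\Spec H^0(U_I,\oO_{U_I})$ could be a genuinely different scheme (for $U_I$ proper it is a point, and all affine opens inside $U_I$ would map to it, collapsing the colimit). So the formula $\ol{\fF(C)}\simeq C$ cannot be obtained without ruling this out.

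The paper closes exactly this gap: it proves that a reduced curve is affine \emph{if and only if} its category of torsion-free sheaves is CM-affine. The nontrivial direction is argued as in Theorem \ref{thm_CM_aff=q-aff}: one first shows the affinisation $C^{\aff}$ is again a curve, then the CM-affine equivalence $\fF(C)\simeq\fF(C^{\aff})$ induces an equivalence of right abelian envelopes $\Coh(C)\simeq\Coh(C^{\aff})$, and Gabriel--Rouquier reconstruction (Theorem \ref{thm_Rou}) yields $C\simeq C^{\aff}$, i.e.\ $C$ is affine. (Note that for curves the envelope is $\Coh$ itself, not a quotient by a Serre subcategory, which is why one gets an honest isomorphism of schemes rather than an isomorphism outside a closed subset.) With this equivalence in hand the indexing category is literally the family of affine opens with finite complement, and the colimit is $C$; no cofinality argument is needed. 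You should add this converse; the rest of your proof is sound, modulo the minor point that the affine open $W$ you choose must be dense in $C$ for $C\setminus W$ to be finite when $C$ is reducible.
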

\begin{proof}
Category $\Coh(C)$ admits a cotiliting torsion pair $(\tT(C), \fF(C))$,	where the category $\tT(C)$ of torsion sheaves is the Serre subcategory of objects of finite length. Hence, $\Coh(C)$ is the right abelian envelope of $\fF(C)$ with the canonical exact structure by Proposition \ref{prop_cotilt_is_quasi-ab}.

The curve $C$ is affine if and only if $\fF(C)$ is CM-affine. Indeed, similar to the proof of Theorem \ref{thm_CM_aff=q-aff} one can show that the affinisation $C^{\aff}$ is again a curve. Then the equivalence $\fF(C) \simeq \fF(C^{\aff})$ induces an equivalence of the right abelian envelopes $\Coh(C) \simeq \Coh(C^{\aff})$, hence an isomorphism $C\simeq C^{\aff}$ (see Theorem \ref{thm_Rou}).

It follows that the limit in \eqref{eqtn_geom_mode} is taken over affine open subsets of $C$. Hence the statement.
\end{proof}

	\appendix
	
	\section{Abelian envelopes of quasi-abelian categories}\label{sec_ab_env_of_q-a_cat}
	
	We discuss exact categories and their right abelian envelopes. The  main example is that  of quasi-abelian categories endowed with their canonical exact structures. We recall the notion of cotilting torsion pairs and prove that, via the right abelian envelopes, they correspond to quasi-abelian categories.
	
	\vspace{0.3cm}
	\subsection{Exact categories and their right abelian envelopes}~\\
	
	Recall \cite{Quillen, Kel4} that an \emph{exact category}  is an additive category $\eE$ together with a fixed class $\sS$ of \emph{conflations}, i.e. pairs of composable morphisms
	\begin{equation}\label{eqtn_confl}
	X \xrightarrow{i} Y \xrightarrow{d} Z
	\end{equation} 
	such that $i$ is the kernel of $d$ and $d$ is the cokernel of $i$. We shall say that $i$ is an \emph{inflation} and $d$ a \emph{deflation}. The class $\sS$ is closed under isomorphisms and the pair $(\eE, \sS)$ is to satisfy the following axioms:
	\begin{itemize}
		\item[(Ex 0)] $0 \to X \xrightarrow{\Id_X} X$ is a conflation, for any object $X$ in $\eE$,
		\item[(Ex 1)] the composite of two deflations is a deflation,
		\item[(Ex 2)] the pullback of a deflation against an arbitrary morphism exists and is a deflation,
		\item[(Ex 2')] the pushout of an inflation along an arbitrary morphism exists and is an inflation.
	\end{itemize}
	
	We sometimes refer to the choice of $\sS$ as the choice of an \emph{exact structure} on $\eE$. Any additive category $\eE$ admits the \emph{split exact structure} $(\eE, \sS_{\textrm{triv}})$ with $\sS_{\textrm{triv}}$ consisting of split short exact sequences.
	
	Let $\aA$ be an abelian category and $\eE\subset \aA$ a full subcategory closed under extensions. Then short exact sequences in $\aA$ with all terms in $\eE$ define an exact structure on $\eE$ which will be referred to as the \emph{induced exact structure}. 
	
	For objects $E_1$ and $E_2$ in an exact category $(\eE, \sS)$, we denote by $\Ext^1_{(\eE, \sS)}(E_1,E_2)$ the abelian group whose elements are classes of conflations
	$E_2\to X \to E_1$ considered up to isomorphisms which are identical on $E_1$ and $E_2$.  If the exact structure on $\eE$ is clear from the context we sometimes omit it from the notation.

	Let $(\eE, \sS)$ be an exact category and $\aA$ an abelian one. Recall that an additive functor $F\colon \eE \to \aA$ is \emph{right exact} if for any conflation $X \xrightarrow{f} Y \xrightarrow{g} Z$ in $\sS$, the sequence $F(X) \xrightarrow{F(f)}F(Y) \xrightarrow{F(g)} F(Z) \to 0$ is exact. We denote by $\textrm{Rex}(\eE, \aA)$ the category whose objects are right exact functors $\eE\to \aA$. Morphisms in $\textrm{Rex}(\eE, \aA)$ are natural transformations.
	
	Recall that an abelian category $\aA_r(\eE)$ together with a right exact functor $i_R\colon \eE\to \aA_r(\eE)$ is the  \emph{right abelian envelope}  of $\eE$ \cite{BodBon4} if for any abelian category $\aA'$ the composition with $i_R$ yields an equivalence $\textrm{Rex}(\aA_r(\eE), \aA') \xrightarrow{\simeq} \textrm{Rex}(\eE, \aA')$. 
	
	For an example of an exact category without the right abelian envelope see \cite[Example 4.5]{BodBon4}.
	The following criterion allows us to determine if an abelian category is the right abelian envelope of its full subcategory closed under extensions:
	\begin{PROP}\cite[Theorem 4.7]{BodBon4}\label{prop_r_ab_env}
		Let $\aA$ be an abelian category and $\eE\subset \aA$ a full subcategory closed under extensions and kernels of epimorphisms. If every object of $\aA$ is a quotient of an object of $\eE$ then $\aA\simeq \aA_r(\eE)$ is the right abelian envelope of $\eE$ with the induced exact structure.
	\end{PROP}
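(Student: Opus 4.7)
The plan is to verify the universal property of the right abelian envelope: for every abelian category $\aA'$, pre-composition with $i_R\colon \eE\hookrightarrow \aA$ induces an equivalence $\Rex(\aA,\aA')\to\Rex(\eE,\aA')$, where $\eE$ carries the exact structure induced from $\aA$. This splits into essential surjectivity—every right exact $F\colon\eE\to\aA'$ extends to a right exact $\tilde F\colon\aA\to\aA'$ with $\tilde F\circ i_R\simeq F$—and full faithfulness, i.e.\ any such extension is unique up to canonical isomorphism and natural transformations lift uniquely.

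For essential surjectivity, fix $F\in\Rex(\eE,\aA')$. For each $A\in\aA$ the hypothesis produces an epimorphism $E_0\twoheadrightarrow A$ with $E_0\in\eE$; setting $K=\ker(E_0\to A)\in\aA$ and picking an epimorphism $E_1\twoheadrightarrow K$ with $E_1\in\eE$ yields a presentation $E_1\to E_0\to A\to 0$ exact in $\aA$. I would define
\[
\tilde F(A):=\textrm{coker}\bigl(F(E_1)\to F(E_0)\bigr).
\]
For $A\in\eE$ the trivial presentation $0\to A\to A$ recovers $\tilde F(A)\simeq F(A)$. To see the construction is independent of the chosen presentation, given a second one $E_1'\to E_0'\to A\to 0$ I would build a common refinement by forming $E_0\oplus E_0'\twoheadrightarrow A$ and choosing an epimorphism $E_1''\twoheadrightarrow\ker(E_0\oplus E_0'\to A)$ with $E_1''\in\eE$, iterating if necessary. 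This is precisely where the hypothesis that $\eE$ is closed under kernels of (internal) epimorphisms enters: it guarantees that the syzygies appearing in these comparisons stay inside $\eE$, so that $F$, which a priori only sees the induced exact structure on $\eE$, can be applied to them coherently.

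Functoriality of $\tilde F$ on a morphism $f\colon A\to B$ in $\aA$ is handled by the same pattern: lift $f$ to a morphism of presentations by pulling $E_0(B)\twoheadrightarrow B$ back along $f$, choosing an epimorphism from some $E\in\eE$ onto the pullback, and using the comparison-of-presentations argument to show that the induced morphism on cokernels is independent of choices. Right exactness of $\tilde F$ is then verified by a horseshoe-style construction: given a short exact sequence $0\to A'\to A\to A''\to 0$ in $\aA$, one builds termwise-compatible presentations fitting into a short exact sequence of two-term complexes in $\eE$, applies $F$, and concludes via the snake lemma in $\aA'$. Uniqueness of the extension, and the lifting of natural transformations $F\to F'$ to $\tilde F\to \tilde{F'}$, are then formal: any other right exact extension of $F$ must agree with $\tilde F$ on chosen presentations and hence on all cokernels.

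The main obstacle is the coherent well-definedness of $\tilde F$: showing that the cokernel $\textrm{coker}(F(E_1)\to F(E_0))$ does not depend on the presentation and that morphisms in $\aA$ induce canonical morphisms on these cokernels. Both points rest on a delicate interplay of the two hypotheses—enough epimorphisms from $\eE$ to build and refine presentations, and closure under kernels of internal epimorphisms to keep the comparison data inside $\eE$ where $F$ is defined and right exact. Once this coherence is established, the remaining steps (right exactness, compatibility with $i_R$, uniqueness, and the full-faithfulness half of the universal property) are formal, and the identification $\aA\simeq\aA_r(\eE)$ follows.
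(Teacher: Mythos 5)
This proposition is quoted from \cite[Theorem 4.7]{BodBon4} and the paper contains no proof of it, so there is nothing in-paper to compare against. Taken on its own terms, your strategy --- verify the universal property by extending a right exact $F\colon \eE\to\aA'$ to $\tilde F\colon\aA\to\aA'$ via presentations $E_1\to E_0\to A\to 0$ with $\tilde F(A)=\textrm{coker}(F(E_1)\to F(E_0))$, then check independence of choices, functoriality, right exactness and full faithfulness --- is the standard route and it does go through.

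There is, however, an imprecision exactly at the step you single out as the crux. You claim the hypothesis that $\eE$ is closed under kernels of epimorphisms ``guarantees that the syzygies stay inside $\eE$''. It does not: $K=\ker(E_0\to A)$ is in general not in $\eE$ (for $\eE=\Rref(X)$ and $A$ a skyscraper sheaf, $K$ is an ideal sheaf of a point, which is not reflexive). The mechanism actually needed is: every epimorphism $G\to E$ in $\aA$ \emph{between objects of} $\eE$ has kernel in $\eE$, hence is a deflation for the induced exact structure, hence $F(G)\to F(E)$ is an epimorphism. This is what makes presentations comparable: for any $E\in\eE$ with a map $E\to K$, covering the pullback $E\times_K E_1$ by some $G\in\eE$ gives such an epimorphism $G\to E$, so $\textrm{im}(F(E)\to F(E_0))=\textrm{im}(F(G)\to F(E_0))\subseteq \textrm{im}(F(E_1)\to F(E_0))$; consequently $\textrm{im}(F(E_1)\to F(E_0))=\sum_{E\to K}\textrm{im}(F(E)\to F(E_0))$ is independent of $E_1$, and the same pullback-and-cover device handles a change of $E_0$ and functoriality. (Your $E_0\oplus E_0'$ refinement can also be made to work, but injectivity of the induced map on cokernels again requires applying the deflation property to a cover $G\twoheadrightarrow E_0\times_A E_0'$, not merely ``iterating''.) Once this is in place, right exactness is easiest seen by using the same $E_0\twoheadrightarrow A$ for $A$ and $A''$ and a cover $E_0'\twoheadrightarrow\ker(E_0\to A'')$ for $A'$, and the remaining verifications are formal as you say. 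So: correct architecture, but the one sentence that is supposed to explain why the hypotheses suffice is the one that would not survive scrutiny as written.
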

	
	\vspace{0.3cm}
	\subsection{Torsion pairs}\label{ssec_tros_pairs}~\\
	
	Recall that a \emph{torsion pair} $(\tT, \fF)$ in an abelian category $\aA$ is a pair of full subcategories $\tT, \fF$ such that $\Hom_{\aA}(\tT, \fF) =0$ and any object $A\in \aA$ fits into a short exact sequence $0\to T \to A \to F \to 0$, with $T\in \tT$, $F\in \fF$.
	
	Let $(\tT_1, \fF_1)$ be a torsion pair in an abelian category $\aA_1$ and $(\tT_2, \fF_2)$ a torsion pair in an abelian category $\aA_2$. An \emph{equivalence} of $(\tT_1, \fF_1)$ and $(\tT_2, \fF_2)$ is an equivalence $\phi \colon \aA_1 \xrightarrow{\simeq} \aA_2$ which restricts to an equivalence of subcategories $\phi|_{\fF_1}\colon \fF_1 \xrightarrow{\simeq} \fF_2$.
	
	Recall that a torsion pair $(\tT, \fF)$ in an abelian category $\aA$ is \emph{cotilting} if any object of $\aA$ is a quotient of an object of $\fF$.
	
	Given a torsion pair $(\tT, \fF)$ in $\aA$, both subcategories $\tT, \fF\subset \aA$ are closed under extensions,  $\tT\subset \aA$ is closed under quotients and $\fF \subset \aA$ -- under subobjects.
	
	\begin{LEM}\label{lem_ker_coker_in_F}\cite[Section 	5.4]{BvdB}
		Let $(\tT, \fF)$ be a torsion pair in an abelian category $\aA$. Then categories $\tT$ and $\fF$ have kernels and cokernels. 
	\end{LEM}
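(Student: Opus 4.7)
My proof plan rests on the standard closure properties of the two sides of a torsion pair: $\tT\subset\aA$ is closed under quotients and extensions, while $\fF\subset\aA$ is closed under subobjects and extensions (as stated just above the lemma). Consequently, two of the four operations are inherited from $\aA$ directly, and the other two must be constructed by passing through the torsion/torsion-free decomposition of the ambient kernel or cokernel.

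First I handle the easy halves. Given $f\colon T_1\to T_2$ in $\tT$, the cokernel $C$ in $\aA$ is a quotient of $T_2\in\tT$, hence lies in $\tT$; for any $g\colon T_2\to T$ in $\tT$ with $g\circ f=0$, the factorisation through $C$ given by $\aA$ is the only candidate, so $C$ is the cokernel in $\tT$ as well. Dually, for $f\colon F_1\to F_2$ in $\fF$, the kernel $K=\ker_\aA(f)$ is a subobject of $F_1\in\fF$, hence lies in $\fF$, and satisfies the universal property inside $\fF$ for the same reason.

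For the two remaining constructions, let $f\colon T_1\to T_2$ lie in $\tT$, form $K=\ker_\aA(f)$ in $\aA$, and consider its canonical torsion sequence $0\to tK\to K\to fK\to 0$ with $tK\in\tT$ and $fK\in\fF$. I claim $tK$ is the kernel of $f$ in $\tT$. Composing $tK\hookrightarrow K\hookrightarrow T_1$ with $f$ gives zero. Conversely, if $g\colon T\to T_1$ with $T\in\tT$ satisfies $f\circ g=0$, then $g$ factors uniquely through $K$ in $\aA$; postcomposing with $K\to fK$ yields a map $T\to fK$ in $\aA$, which is zero because $\Hom_\aA(\tT,\fF)=0$. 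So $g$ lands in the subobject $tK$, giving the required (unique) factorisation. The cokernel of $f\colon F_1\to F_2$ in $\fF$ is constructed dually: take $C=\mathrm{coker}_\aA(f)$ with decomposition $0\to tC\to C\to fC\to 0$, and check that $fC$ is the cokernel in $\fF$ using the same $\Hom(\tT,\fF)=0$ vanishing.

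There is essentially no obstacle here: both nontrivial cases are the same argument, and the only input beyond the definition of a torsion pair is the existence of torsion/torsion-free decompositions together with the orthogonality $\Hom_\aA(\tT,\fF)=0$. I would present the proof as a single paragraph stating the four constructions and then verify just one of the two nontrivial universal properties (say, the kernel in $\tT$), leaving the dual statement for $\fF$ to the reader.
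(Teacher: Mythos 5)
Your proof is correct and matches exactly what the paper does: the lemma is cited from [BvdB, Section 5.4], and immediately after the statement the paper records precisely your four constructions (cokernels in $\tT$ and kernels in $\fF$ are inherited from $\aA$; kernels in $\tT$ are torsion parts of ambient kernels, cokernels in $\fF$ are torsion-free parts of ambient cokernels). Your verification of the universal property via $\Hom_{\aA}(\tT,\fF)=0$ is the standard argument and fills in the details the paper leaves to the reference.
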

	The kernels in $\tT$ are the torsion parts of the kernels in $\aA$ while the cokernels are the cokernels in $\aA$. Dually, the kernels in $\fF$ are the kernels in $\aA$ while the cokernels are the torsion-free parts of the cokernels in $\aA$.
	
	Recall, that a morphism $f$ in an additive category with kernels and cokernels is \emph{strict} if the canonical map $\textrm{coker}\ker f \to \ker \textrm{coker} f$ is an isomorphism.
	
	\begin{PROP}\label{prop_strict_mono_epi_in_F}
		Let $(\tT, \fF)$ be a torsion pair in an abelian category $\aA$. Let $f$ be a morphism in $\fF$ and $t$ a morphism in $\tT$. Then
		\begin{enumerate}
			\item $f$ is a strict monomorphism if and only if it is a monomorphism in $\aA$ with cokernel in $\fF$. Further, $f$ is a strict epimorphism if and only if it is an epimorphism in $\aA$,
			\item $t$ is a strict monomorphism if and only if it is a monomorphism in $\aA$. Further, $t$ is a strict epimorphism if and only if it is an epimorphism in $\aA$ with kernel in $\tT$.
		\end{enumerate}
	\end{PROP}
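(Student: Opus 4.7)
The plan is to dispatch each of the four assertions by computing kernels, cokernels, images, and coimages in $\fF$ (respectively $\tT$) in terms of the corresponding constructions in $\aA$, using Lemma \ref{lem_ker_coker_in_F} together with the standard properties of a torsion pair: $\fF$ is closed in $\aA$ under subobjects and extensions, while $\tT$ is closed under quotients and extensions, and $\tT\cap\fF=0$. Throughout, for a morphism $f\colon F_1\to F_2$ in $\fF$, I write $K=\ker_{\aA}(f)$, $C=\textrm{coker}_{\aA}(f)$, and let $0\to T\to C\to F\to 0$ be the canonical torsion sequence with $T\in\tT$, $F\in\fF$; for $t\colon T_1\to T_2$ in $\tT$, I write $K=\ker_{\aA}(t)$ and $K'\subset K$ for its torsion part.

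For (1), I first observe that $f$ is a monomorphism in $\fF$ iff $f$ is a monomorphism in $\aA$: the subobject $K\subset F_1$ lies in $\fF$, so if $f$ is mono in $\fF$ the inclusion $K\hookrightarrow F_1$ must vanish. Next, since $\textrm{im}_{\aA}(f)\subset F_2\in\fF$ is torsion-free, Lemma~\ref{lem_ker_coker_in_F} gives $\textrm{coim}_{\fF}(f)=\textrm{im}_{\aA}(f)$, while $\textrm{im}_{\fF}(f)=\ker_{\aA}(F_2\twoheadrightarrow F)$ is the preimage of $T$ in $F_2$. The canonical morphism $\textrm{coim}_{\fF}(f)\to\textrm{im}_{\fF}(f)$ is then the inclusion of $\textrm{im}_{\aA}(f)$ into that preimage, with cokernel identified with $T$; hence $f$ is strict in $\fF$ iff $T=0$, i.e.\ iff $C\in\fF$. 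Combining with the mono criterion yields the first half of (1). For the epimorphism half, the torsion-free quotient $F$ of $C$ is $\textrm{coker}_{\fF}(f)$, so $f$ is epi in $\fF$ iff $C\in\tT$; intersecting this with strictness ($C\in\fF$) forces $C\in\tT\cap\fF=0$, which is exactly epi in $\aA$.

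For (2), I run the dual calculation. Since cokernels in $\tT$ coincide with cokernels in $\aA$, a morphism $t$ is epi in $\tT$ iff epi in $\aA$. Because $\textrm{im}_{\aA}(t)=T_1/K$ is a quotient of $T_1\in\tT$ it lies in $\tT$, so $\textrm{im}_{\tT}(t)=\textrm{im}_{\aA}(t)=T_1/K$, while $\textrm{coim}_{\tT}(t)=\textrm{coker}_{\aA}(K'\hookrightarrow T_1)=T_1/K'$. The canonical morphism $\textrm{coim}_{\tT}(t)\to\textrm{im}_{\tT}(t)$ is the natural surjection $T_1/K'\twoheadrightarrow T_1/K$ with kernel $K/K'\in\fF$; hence $t$ is strict in $\tT$ iff $K=K'$, i.e.\ iff $K\in\tT$. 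The kernel of $t$ in $\tT$ is $K'$, so $t$ is mono in $\tT$ iff $K\in\fF$, and combining with strictness gives $K\in\tT\cap\fF=0$, i.e.\ $t$ mono in $\aA$. The characterization of strict epimorphisms in $\tT$ then follows by combining the strictness criterion ($K\in\tT$) with the epimorphism criterion ($t$ epi in $\aA$). The main obstacle, such as it is, is purely bookkeeping: one must track carefully which of kernels, cokernels, images, and coimages is inherited unchanged from $\aA$ and which is modified by passing to the torsion or torsion-free part; no additional input beyond Lemma \ref{lem_ker_coker_in_F} and the closure properties of $\tT$ and $\fF$ is required.
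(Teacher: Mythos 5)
Your proof is correct and follows essentially the same route as the paper: both compute $\operatorname{coker}\ker$ and $\ker\operatorname{coker}$ inside $\fF$ (resp.\ $\tT$) via Lemma \ref{lem_ker_coker_in_F} and compare them with the constructions in $\aA$. The only cosmetic differences are that you identify the canonical coimage-to-image map directly (where the paper invokes the snake lemma) and you carry out the computation for $\tT$ explicitly rather than by passing to opposite categories.
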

	\begin{proof}
		By Lemma \ref{lem_ker_coker_in_F}, a morphism $f\colon F_1 \to F$ is a monomorphism in $\fF$ if and only if it is a monomorphism in $\aA$. Let $F\to Q$ be the cokernel of $f$ in $\aA$ and $F\to F_Q$ its cokernel in $\fF$. The snake lemma for diagram
		\[
		\xymatrix{0 \ar[r] & 0 \ar[r] & F_Q \ar[r] & F_Q \ar[r] & 0\\
			0 \ar[r] & F_1 \ar[r] \ar[u] &F \ar[r] \ar[u] & Q \ar[r] \ar[u] & 0}
		\] 
		implies that $\ker \textrm{coker} f= \ker(F \to F_Q)$ is isomorphic to $\textrm{coker} \ker f = \textrm{coker}(0 \to F_1) = F_1$ if and only if $T_Q = \ker (Q \to F_Q) \simeq  0$, i.e. $Q\simeq F_Q$ is an object in $\fF$.
		
		An epimorphism $f\colon F \to F_2$ in $\fF$ considered as a morphism in $\aA$ has cokernel $Q$ which is an object of $\tT$. Let $K = \ker_{\aA}f$. The short exact sequence $0 \to \textrm{Im}_{\aA}f \to F_2 \to Q\to 0$ in $\aA$ shows that  $\ker \textrm{coker} f = \ker (F_2 \to 0) = F_2$ is isomorphic to $\textrm{coker} \ker f = \textrm{coker}(K \to F) = \textrm{Im}_{\aA} f$ if and only if $Q =0$, i.e. $f$ is an epimorphism in $\aA$.
		
		The statement for strict mono- and epimorphisms in $\tT$ follows by considering the opposite categories.
	\end{proof}
	
	\vspace{0.3cm}
	\subsection{Quasi-abelian categories and cotilting torsion pairs}~\\

	Recall that an additive  category $\eE$ with kernels and cokernels is \emph{quasi-abelian} if the family of kernel-cokernel sequences defines an exact structure on $\eE$. We say that it is the \emph{canonical exact structure} and denote it by $(\eE, \sS_{\textrm{can}})$.
	
	The following theorem gives (the only) examples of quasi-abelian categories.
	
	\begin{PROP}\label{prop_t_tf_is_q-a}
		Let $(\tT, \fF)$ be a torsion pair in an abelian category $\aA$. Then
		\begin{enumerate}
			\item category $\fF$ is quasi-abelian and the canonical exact structure is the one induced from $\aA$,
			\item category $\tT$ is quasi-abelian and the canonical exact structure is the one induced from $\aA$.
		\end{enumerate}
	\end{PROP}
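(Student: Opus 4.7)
The plan is to prove part (1), then obtain part (2) by passing to the opposite category (where $(\tT,\fF)$ becomes a torsion pair $(\fF^{\opp},\tT^{\opp})$ in $\aA^{\opp}$). The argument for (1) has two steps: first identify the kernel--cokernel pairs in $\fF$ with the class $\sS$ of short exact sequences in $\aA$ whose three terms all lie in $\fF$, and second verify that $(\fF,\sS)$ satisfies the Quillen axioms.

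For the first step I would unpack what a kernel--cokernel pair $F_1 \xrightarrow{i} F \xrightarrow{d} F_2$ in $\fF$ looks like. By Lemma \ref{lem_ker_coker_in_F}, $\ker_{\fF} d = \ker_{\aA} d$, so the equality $F_1 = \ker_{\fF} d$ forces $i$ to be a monomorphism in $\aA$ with image $\ker_{\aA} d$. On the other side, the cokernel in $\fF$ is the torsion-free quotient of the cokernel in $\aA$, so $F_2 = \mathrm{coker}_{\fF}(i)$ means $d$ factors as $F \twoheadrightarrow F/F_1 \twoheadrightarrow F_2$, killing the torsion part of $F/F_1$. The requirement $\ker_{\aA}(d) = F_1$ then forces this torsion part to vanish, for otherwise $\ker_{\aA}(d)$ would strictly contain $F_1$. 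Hence $F_2 \simeq F/F_1 \in \fF$ and the pair is a short exact sequence in $\aA$. Conversely, any short exact sequence in $\aA$ with all terms in $\fF$ is immediately a kernel--cokernel pair in $\fF$ by Lemma \ref{lem_ker_coker_in_F}.

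The second step is to verify the exact category axioms for $\sS$. Axiom (Ex 0) is trivial. By Proposition \ref{prop_strict_mono_epi_in_F}(1) the deflations in $\fF$ are precisely the epimorphisms in $\aA$ between objects of $\fF$ whose $\aA$-kernel again lies in $\fF$; the composition of two such maps is still an epimorphism, and its kernel sits in an extension of the two factor-kernels, hence lies in $\fF$ by extension-closure, proving (Ex 1). For (Ex 2), the pullback in $\aA$ of a deflation $d \colon F\to F_2$ along $g\colon F' \to F_2$ has kernel $\ker_{\aA} d \in \fF$ and is an extension of $F'$ by $\ker d$, hence lies in $\fF$; this pullback in $\aA$ then represents the pullback in $\fF$ and its projection is a deflation. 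Axiom (Ex 2') is dual: an inflation is a monomorphism with $\fF$-cokernel, and the pushout in $\aA$ of such a monomorphism along an arbitrary morphism in $\fF$ is again a monomorphism with the same cokernel, so it lies in $\fF$ by extension-closure and provides the required inflation.

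The only subtle point, and the main technical obstacle, is the first step: recognising that the compatibility $\mathrm{coker}\,\ker d \simeq \ker\,\mathrm{coker}\, d$ in $\fF$ forces the $\aA$-cokernel $F/F_1$ itself to be torsion-free. This is precisely the snake-lemma argument already used in the proof of Proposition \ref{prop_strict_mono_epi_in_F}, so no new work is needed; everything else reduces to elementary diagram chases plus the closure of $\fF$ under extensions and subobjects. The statement for $\tT$ then follows by the opposite-category argument, under which $\tT$ is closed under extensions and quotients dually to $\fF$, and kernel--cokernel pairs in $\tT$ get identified with short exact sequences in $\aA$ with all terms in $\tT$.
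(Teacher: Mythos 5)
Your proposal is correct and follows essentially the same route as the paper: both identify the kernel--cokernel pairs in $\fF$ (resp.\ $\tT$) with the short exact sequences of $\aA$ having all terms in $\fF$ (resp.\ $\tT$), using exactly the snake-lemma characterisation of strict mono- and epimorphisms from Proposition \ref{prop_strict_mono_epi_in_F}. The only difference is cosmetic: where you verify the Quillen axioms (Ex 0)--(Ex 2') by hand, the paper simply invokes the standard fact, recorded earlier in the appendix, that an extension-closed full subcategory of an abelian category carries the induced exact structure.
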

	\begin{proof}
		In a kernel-cokernel sequence both morphisms are strict. Hence, by Proposition \ref{prop_strict_mono_epi_in_F}, kernel-cokernel sequences $F_1 \xrightarrow{i} F \xrightarrow{d} F_2$, in $\tT$ or in $\fF$,  correspond to short exact sequences $0\to F_1 \xrightarrow{i} F \xrightarrow{d} F_2\to 0$ in $\aA$. It follows that the family of kernel-cokernel sequences defines an exact structure on $\tT$ or on $\fF$ and this exact structure  is induced from $\aA$.
	\end{proof}
	
	By \cite[Proposition 1.2.32]{Schne}	any quasi-abelian category $(\eE, \sS_{\textrm{can}})$ with the canonical exact structure admits the right abelian envelope. The category $\aA_r(\eE, \sS_{\textrm{can}})$ is defined as the heart of a \tr e in the derived category of $\eE$. More precisely, $\aA_r(\eE, \sS_{\textrm{can}})$ is the full subcategory of the homotopy category of $\eE$ with objects complexes $ 0 \to E_1\xrightarrow{d_E} E_0 \to 0$ given by monomorphisms $d_E$, localised  by the multiplicative system formed by homotopy classes of morphisms of complexes for which the square
	\[
	\xymatrix{ E_1 \ar[r]^{d_E} & E_0   \\
		F_1 \ar[r]^{d_F} \ar[u] & F_0 \ar[u] }
	\]
	is both cartesian and co-cartesian, \cite[Corollary 1.2.20]{Schne}.
	
	Let $\mathscr{Q}uasi\textrm{-}ab$ denote the groupoid of quasi-abelian categories with equivalences up to functorial isomorphisms as morphisms and $\mathscr{C}otilt$ the groupoid of cotilting torsion pairs with equivalences up to functorial isomorphisms as morphisms.
	Define 
	\begin{equation}\label{eqtn_bij_cotilit_q-a}
	\varphi \colon \mathscr{C}otilt \to \mathscr{Q}uasi\textrm{-}ab
	\end{equation}
	which maps a cotilting torsion pair $(\tT, \fF)$ to the category $\fF$. Proposition \ref{prop_t_tf_is_q-a} ensures that $\varphi$ is well-defined. 
	
	\begin{PROP}\label{prop_cotilt_is_quasi-ab} \emph{cf.} \cite[Proposition B.3]{BvdB}
		Functor $\varphi$ (\ref{eqtn_bij_cotilit_q-a}) is an equivalence. Its quasi-inverse maps a quasi-abelian category $\eE$ to the torsion pair $(\tT, \eE)$ in the right abelian envelope $\aA_r(\eE)$.
	\end{PROP}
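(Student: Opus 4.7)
The plan is to exhibit a functor $\psi \colon \mathscr{Q}uasi\textrm{-}ab \to \mathscr{C}otilt$ quasi-inverse to $\varphi$. On objects I would set $\psi(\eE) := (\tT_{\eE}, \eE)$ inside $\aA := \aA_r(\eE, \sS_{\textrm{can}})$, where
\[
\tT_{\eE} := \{A \in \aA \,|\, \Hom_{\aA}(A, E) = 0 \textrm{ for every } E \in \eE\},
\]
and $\eE$ is identified with a full subcategory of $\aA$ via the fully faithful functor $i_R$. Any equivalence of quasi-abelian categories extends uniquely up to isomorphism to an equivalence of right abelian envelopes, and this restricts to an equivalence of the associated torsion pairs, which takes care of the 1-morphisms in the groupoids.

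To see that $\psi(\eE)$ is a cotilting torsion pair I would check three things. First, using the Schneiders presentation of $\aA_r(\eE, \sS_{\textrm{can}})$ recalled before the proposition, every object $A \in \aA$ is the cokernel of a strict monomorphism $d \colon E_1 \to E_0$ in $\eE$, so $A$ sits in a short exact sequence $0 \to E_1 \to E_0 \to A \to 0$ in $\aA$; this yields the cotilting property at once and shows that $\eE$ generates $\aA$ by epimorphisms. Second, $\Hom_{\aA}(\tT_{\eE}, \eE) = 0$ holds tautologically from the definition of $\tT_{\eE}$. Third, and this is the delicate point, each $A \in \aA$ must admit a canonical short exact sequence $0 \to T \to A \to F \to 0$ with $F \in \eE$ and $T \in \tT_{\eE}$; I would construct $F$ as a maximal $\eE$-valued quotient of $A$ (equivalently, as the image of a universal arrow $A \to \eE$), whereupon $T := \ker(A \to F)$ automatically lies in $\tT_{\eE}$, for any non-trivial map $T \to E$ with $E \in \eE$ would contradict the maximality of $F$. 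These three points give $\varphi\psi(\eE) = \eE$.

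For the opposite composition, let $(\tT, \fF)$ be a cotilting torsion pair in an abelian category $\aA$. Then $\fF$ is closed under extensions and subobjects, hence in particular under kernels of epimorphisms, and by hypothesis every object of $\aA$ is a quotient of one in $\fF$, so Proposition \ref{prop_r_ab_env} identifies $\aA$ with $\aA_r(\fF)$ equipped with the exact structure induced from $\aA$, which coincides with $\sS_{\textrm{can}}$ by Proposition \ref{prop_t_tf_is_q-a}. The torsion part $\tT$ equals the left orthogonal ${}^{\perp}\fF$ in $\aA$ by the general theory of torsion pairs, matching $\tT_{\fF}$, so $\psi\varphi(\tT, \fF) = (\tT, \fF)$. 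The hard part is the existence of the $\eE$-reflection used in the middle paragraph; the most efficient route is to extract it from Schneiders' construction of the canonical \tr e on $\dD(\eE)$ whose heart is $\aA_r(\eE, \sS_{\textrm{can}})$, since the canonical truncation for this \tr e furnishes exactly such a reflection. A direct construction through the localisation-of-monomorphisms model would require a careful calculus-of-fractions argument, which is why the proposition references \cite[Proposition B.3]{BvdB}.
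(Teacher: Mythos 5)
The paper offers no proof of this proposition at all --- it is imported from \cite[Proposition B.3]{BvdB} --- so there is no in-paper argument to compare yours against. Most of your outline is sound and is the standard route: the identification $\aA\simeq\aA_r(\fF,\sS_{\textrm{can}})$ for a cotilting pair via Proposition \ref{prop_r_ab_env} and Proposition \ref{prop_t_tf_is_q-a}, the identity $\tT={}^{\perp}\fF$ for any torsion pair, and the handling of morphisms through the universal property of the envelope are all correct.

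The genuine gap is precisely the step you flag as delicate: the existence of the decomposition $0\to T\to A\to F\to 0$ with $F\in\eE$ and $T\in{}^{\perp}\eE$. A ``maximal $\eE$-valued quotient'' is asserted, not constructed --- there is no Noetherian hypothesis guaranteeing that maximal quotients exist, and the maximality argument (a nonzero $T\to E$ contradicts maximality) is itself incomplete: one would have to manufacture a strictly larger $\eE$-quotient of $A$ from such a map, e.g.\ by a pushout, and verify that the resulting extension of $F$ by $E$ lies in $\eE$ and still receives an epimorphism from $A$. The appeal to ``the canonical truncation for this \tr e'' does not rescue this: an object of the heart is already its own truncation, so the truncation functors produce nothing new. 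The correct construction uses the coimage--image factorisation in $\eE$: writing $A=[E_1\xrightarrow{d}E_0]$ with $d$ a monomorphism, set $F:=\operatorname{coker}_{\eE}(d)\in\eE$ and $T:=[E_1\to \operatorname{im}_{\eE}(d)]$; since $\operatorname{im}_{\eE}(d)\to E_0$ is a strict monomorphism, one obtains $0\to T\to A\to F\to 0$ in $\aA_r(\eE)$, and $\Hom(T,\eE)=0$ because a morphism $T\to E$ is induced by a morphism $\operatorname{im}_{\eE}(d)\to E$ vanishing on the epimorphism $E_1\to\operatorname{im}_{\eE}(d)$. A related slip: you call $d$ a \emph{strict} monomorphism; if it were strict, $A$ would already lie in $\eE$ and the envelope would collapse to $\eE$. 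What you actually use --- that $E_0\to A$ is an epimorphism, whence the cotilting property --- survives, but the wording must be corrected.
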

	
	\section{Reconstructing a scheme $X$ from $\Coh(X)$ and $\QCoh(X)$}\label{sec_X_from_coh_X}
	
	For completeness we briefly recall how to reconstruct a Noetherian scheme $X$ from the category of coherent sheaves following \cite{Gabriel}, \emph{cf.} \cite{Rou3}.
	
	First, we argue that one can equivalently reconstruct $X$ from $\QCoh(X)$, as $\Coh(X)$
	and $\QCoh(X)$ determine each other.
	
	Recall \cite{KasSch2} that an \emph{ind-object} over a category $\cC$ is an object of $\textrm{Fun}(\cC^{\opp}, \textrm{Sets})$ which is isomorphic to an inductive limit of representable functors. By \cite[Corollary 8.6.3]{KasSch2} if $\cC$ is a small abelian category then  the category of ind-objects is equivalent to the category $\Lex(\cC^{\opp}, \Ab)$ of contravariant left exact functors from $\cC$ to the category of abelian groups.

Recall that an abelian category is \emph{locally Noetherian} if is has exact inductive limits and a family of Noetherian generators, \emph{cf.} \cite[Section II.4]{Gabriel}. By \cite[Theorem VI.2.1]{Gabriel} the category of $\oO_X$--modules on a  Noetherian scheme is locally Noetherian. Hence, so is its full subcategory $\QCoh(X)$ of quasi-coherent $\oO_X$--modules. 
	
	\begin{PROP}\label{prop_Coh_and_QCoh}
		Let $X$ be a Noetherian scheme. Then $\QCoh(X)$ is equivalent to the category of ind-objects over $\Coh(X)$ while $\Coh(X) \subset \QCoh(X)$ is equivalent to the subcategory of Noetherian objects.
	\end{PROP}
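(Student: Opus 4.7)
The plan is to prove the second assertion first, then deduce the first. Let $\aA = \QCoh(X)$, which is locally Noetherian as stated in the paragraph preceding the proposition. I would first verify that the Noetherian objects in $\aA$ are precisely the coherent sheaves. For one direction, affine-local reduction: for $\fF \in \Coh(X)$ and an affine $U = \Spec A \subset X$, the restriction $\fF|_U$ corresponds to a finitely generated $A$-module; since $A$ is Noetherian, this module is Noetherian, and any ascending chain of quasi-coherent subsheaves of $\fF$ restricts to a chain of submodules which stabilises on each piece of a finite affine cover. Conversely, if $\fF \in \QCoh(X)$ is Noetherian then $\fF$ is the union of its coherent subsheaves (every quasi-coherent sheaf on a Noetherian scheme is a filtered colimit of coherent subsheaves, e.g. \cite[01PG]{stacks-project}); this filtration must terminate, giving $\fF \in \Coh(X)$.

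Next I would invoke Gabriel's theorem on locally Noetherian categories \cite[Theorem II.4.1]{Gabriel}: if $\aA$ is a locally Noetherian abelian category then the full subcategory $\aA_{\mathrm{noeth}}$ of Noetherian objects is a small abelian subcategory closed under subobjects, quotients and extensions, every object of $\aA$ is a filtered colimit of its Noetherian subobjects, and the inclusion $\aA_{\mathrm{noeth}} \hookrightarrow \aA$ induces an equivalence between $\aA$ and the category of ind-objects over $\aA_{\mathrm{noeth}}$. Applied to $\aA = \QCoh(X)$ together with the identification $\aA_{\mathrm{noeth}} = \Coh(X)$ established in the first step, this gives $\QCoh(X) \simeq \mathrm{Ind}(\Coh(X))$, which by \cite[Corollary 8.6.3]{KasSch2} is the same as $\Lex(\Coh(X)^{\opp}, \Ab)$.

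To realise the equivalence explicitly, I would take the functor $\Phi \colon \QCoh(X) \to \Lex(\Coh(X)^{\opp}, \Ab)$ sending $\fG \mapsto \Hom_{\QCoh(X)}(-, \fG)|_{\Coh(X)}$. Full faithfulness follows from the colimit presentation $\fG = \varinjlim \fG_\alpha$ with $\fG_\alpha \in \Coh(X)$, and essential surjectivity from the fact that any $F \in \Lex(\Coh(X)^{\opp}, \Ab)$ can be written as an ind-limit of representables which, interpreted inside $\QCoh(X)$, produces the preimage. The Noetherian-objects statement is then a tautology: under this equivalence, the representable ind-objects correspond to $\Coh(X)$.

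The main obstacle is a clean argument that Noetherian objects in $\QCoh(X)$ coincide with coherent sheaves; everything else is an application of Gabriel's general theorem. In particular I expect the converse direction (Noetherian $\Rightarrow$ coherent) to require the fact that on a Noetherian scheme every quasi-coherent sheaf is the filtered union of its coherent subsheaves, which is the one geometric input beyond formal category theory.
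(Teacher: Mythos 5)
Your proposal is correct and follows the same overall strategy as the paper: first identify the Noetherian objects of $\QCoh(X)$ with $\Coh(X)$, then invoke Gabriel's Theorem II.4.1 characterising a locally Noetherian category as the category of ind-objects (equivalently, $\Lex(\Coh(X)^{\opp},\Ab)$) over its subcategory of Noetherian objects. The forward direction (coherent $\Rightarrow$ Noetherian) via a finite affine cover is exactly the paper's argument. The one place you diverge is the converse: the paper shows directly that a Noetherian object of $\QCoh(X)$ restricts to a Noetherian object on every open $j\colon U\to X$, by transporting an ascending chain $\{G_i\}\subset j^*F$ to the chain $\{F\times_{j_*j^*F}j_*G_i\}\subset F$ over the adjunction unit, and then concludes coherence affine-locally; you instead exhaust a Noetherian $F$ by its filtered family of coherent subsheaves and use the ascending chain condition to see the family stabilises at $F$ itself. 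Both are valid; the paper's version is self-contained modulo the adjunction, while yours is shorter but imports the standard fact that every quasi-coherent sheaf on a Noetherian scheme is the filtered union of its coherent subsheaves. Your additional explicit description of the equivalence via $\fF\mapsto\Hom_{\QCoh(X)}(-,\fF)|_{\Coh(X)}$ is not needed once Gabriel's uniqueness statement is invoked, but it is harmless and correct.
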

	\begin{proof}
		Let us first show that $\Coh(X) \subset \QCoh(X)$ is the subcategory of Noetherian objects. Let $F$ be a coherent sheaf and $X= \bigcup_{i=1}^n U_i$ an affine open cover of $X$. An ascending chain $\{F_i\}_i \subset F$ stabilizes when restricted to any $U_i$, as coherent modules over Noetherian rings are Noetherian (see \cite[Proposition 6.5]{AtiyahMacdonald}). Hence, $\{F_i\}_i$ stabilizes.
		
		Let now $F\in \QCoh(X)$ be a Noetherian object. We check that for any open inclusion $j\colon U \to X$, the sheaf $j^*F$ is also Noetherian. Given $\{G_i\}\subset j^*F$, $\{j_*G_i\}_i\subset j_*j^*F$ and $\{F_i := F \times_{j_*j^*F} j_*G_i\}\subset F$ are ascending chains,  where the fiber product is taken over the adjunction unit $F\to j_*j^*F$. As $F$ is Noetherian, the chain $\{F_i\}_i$ stabilizes. Hence, so does $\{G_i \simeq j^*F_i\}$. In particular, if $U\subset X$ is affine, $j^*F$ is Noetherian hence coherent. It follows that $F \in \Coh(X)$.
		
		By \cite[Theorem II.4.1]{Gabriel}, given a Noetherian category $\cC$, the category $\Lex(\cC^{\opp}, \Ab)$ is the unique up to an equivalence locally Noetherian category $\dD$ such that $\cC$ is equivalent to the category of Noetherian objects in $\dD$.  As the category $\QCoh(X)$ is locally Noetherian,  the above argument shows that $\QCoh(X) \simeq \Lex(\Coh(X)^{\opp},\Ab)$ is equivalent to the category of ind-objects over  $\Coh(X)$.
	\end{proof}

	The reconstruction of a scheme from the category of (quasi-)coherent sheaves relies on the notion of a spectrum of an abelian category $\aA$. 
	The \emph{spectrum} $\Spec(\aA)$ is the set of isomorphism classes of indecomposable injective objects in $\aA$. By \cite[Theorem VI.1.1]{Gabriel}, for a Noetherian scheme $X$, indecomposable injective objects in $\QCoh(X)$ are in bijection with the set of irreducible closed subsets of $X$ (to an irreducible closed $Z\subset X$ the bijection assigns the injective envelope of $\oO_Z$).

A	 localising subcategory $\bB$ of an abelian category $\aA$ allows one to represent $\Spec(\aA)$ as a disjoint union of $\Spec(\bB)$ and $\Spec(\aA/\bB)$, see \cite[Section IV.1]{Gabriel}.
We say that a localising subcategory $\bB\subset \aA$ is \emph{of finite type} if there exists an object $M \in \aA$ such that $\bB$ is the minimal localising subcategory containing $M$. 

To a  Grothendieck category $\aA$ one assigns the ringed space $R(\aA)$ with  $\Spec (\aA)$ as the underlying set. Open subsets in $R(\aA)$ are of the form $\Spec(\aA/\bB)$, for localising subcategories of finite type $\bB\subset \aA$ while the sheaf $\oO$ is defined via centers, $\oO(\Spec(\aA/\bB)):=Z(\aA/\bB)$. 

\begin{THM}\cite{Gabriel}\label{thm_Gab}
	A Noetherian scheme $X$ is isomorphic to $R(\QCoh(X))$.
\end{THM}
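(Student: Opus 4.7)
The plan is to construct a natural map of ringed spaces $\phi\colon X\to R(\QCoh(X))$ and verify it is a homeomorphism covered by ring isomorphisms on a basis of opens. On underlying sets, $\phi$ sends a point $x\in X$ to the (isomorphism class of the) injective envelope of $\oO_{\overline{\{x\}}}$, using the fact (\cite[Theorem VI.1.1]{Gabriel}) that indecomposable injectives in $\QCoh(X)$ are in bijection with the irreducible closed subsets of $X$, which for a Noetherian scheme are in bijection with points of $X$ via $x\mapsto \overline{\{x\}}$. So $\phi$ is a bijection of underlying sets.

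Next I would identify the topology. Since $X$ is Noetherian, every open $U\subset X$ has closed complement $Z=X\setminus U$ defined by a coherent ideal sheaf $\jJ\subset \oO_X$, and the localising subcategory $\QCoh_Z(X)\subset \QCoh(X)$ of quasi-coherent sheaves set-theoretically supported on $Z$ is generated, as a localising subcategory, by $\oO_X/\jJ$, hence is of finite type. Conversely, if $\bB\subset \QCoh(X)$ is localising of finite type, generated by a quasi-coherent $M$, one writes $M$ as a filtered colimit of its coherent subsheaves (using that $\QCoh(X)$ is locally Noetherian with $\Coh(X)$ the Noetherian objects, by Proposition \ref{prop_Coh_and_QCoh}); the support of $M$ is then a closed subset $Z\subset X$, and one checks $\bB=\QCoh_Z(X)$. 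The standard Gabriel quotient computation yields $\QCoh(X)/\QCoh_Z(X)\simeq \QCoh(U)$, and under $\phi$ the set $\Spec(\QCoh(X)/\QCoh_Z(X))=\Spec(\QCoh(U))$ corresponds (by point~1 applied to $U$) exactly to the points of $U$. Thus $\phi$ takes the Zariski topology on $X$ onto the topology of $R(\QCoh(X))$.

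For the structure sheaf, I would show $Z(\QCoh(U))\simeq H^0(U,\oO_U)$ for every open $U\subset X$, compatibly with restrictions. The map sending $a\in H^0(U,\oO_U)$ to multiplication by $a$ on every quasi-coherent $\oO_U$-module is clearly a ring homomorphism into the center. In the opposite direction, an element $\eta\in Z(\QCoh(U))$ gives an endomorphism $\eta_{\oO_U}\in \End_{\QCoh(U)}(\oO_U)=H^0(U,\oO_U)$, and functoriality forces $\eta$ on every $F$ to agree with multiplication by $\eta_{\oO_U}$ (because every $F$ receives a surjection from a free $\oO_U$-module locally, and $\eta$ commutes with all morphisms). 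Restriction maps on both sides are compatible because the quotient functor $\QCoh(X)\to \QCoh(U)$ sends $\oO_X\mapsto \oO_U$. This identifies $\oO_{R(\QCoh(X))}$ with $\phi_*\oO_X$ on the basis of open sets $\Spec(\QCoh(X)/\QCoh_Z(X))$, and hence as sheaves.

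I expect the main obstacle to be the careful verification that every localising subcategory of finite type in $\QCoh(X)$ is of the form $\QCoh_Z(X)$ for some closed $Z$: one needs the Noetherian hypothesis to reduce a single generator to its coherent subsheaves and then to pass from the support of a coherent generator to the full localising subcategory it produces. Once this classification is in hand, the identification of points, opens, and structure sheaf rings fits together formally to give $X\simeq R(\QCoh(X))$ as ringed spaces.
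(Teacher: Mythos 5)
The paper offers no proof of this statement (it is a citation of Gabriel), but your proposal follows the same route that the surrounding appendix sets up: points via indecomposable injectives, opens via finite-type localising subcategories, and the structure sheaf via centers of quotient categories. The first and third steps of your argument are fine, and you correctly identify the second step as the crux.

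That second step, however, contains a genuine gap as written. You claim that if $\bB$ is a localising subcategory generated by a quasi-coherent sheaf $M$, then writing $M$ as a filtered colimit of its coherent subsheaves shows that the support of $M$ is closed. This fails: the support of a filtered colimit of coherent sheaves is only the \emph{union} of the (closed) supports of the pieces, i.e.\ a specialization-closed subset, which need not be closed. Concretely, on $X=\mathbb{A}^1$ take $M=\bigoplus_{n}k(x_n)$ for an infinite set of closed points $x_n$; the smallest localising subcategory containing $M$ is the category of quasi-coherent sheaves supported set-theoretically on $\{x_n\}_n$, which is not $\QCoh_Z(X)$ for any closed $Z$, and its ``complementary spectrum'' is not open in $X$. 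Worse, with the literal definition of ``finite type'' used here (smallest localising subcategory containing \emph{some} object $M$), \emph{every} localising subcategory of $\QCoh(X)$ is of finite type --- by Gabriel's classification they correspond to specialization-closed subsets $V$, and each is generated by the single object $\bigoplus_{\mathfrak{p}\in V}\oO_{\overline{\{\mathfrak{p}\}}}$ --- so the classification you need would be false. The repair is to interpret ``of finite type'' as ``generated by a Noetherian (equivalently coherent) object,'' as Gabriel does: then the generator has closed support $Z$, a d\'evissage argument shows the localising subcategory it generates is exactly $\QCoh_Z(X)$, and conversely $\QCoh_Z(X)$ is generated by the coherent sheaf $\oO_Z$. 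Your reduction ``pass to the coherent subsheaves of an arbitrary quasi-coherent generator'' cannot substitute for this, for the reason above. Once this point is fixed, the rest of your argument (the bijection on points, $\QCoh(X)/\QCoh_Z(X)\simeq\QCoh(U)$, and $Z(\QCoh(U))\simeq H^0(U,\oO_U)$ compatibly with restriction) assembles correctly into the isomorphism of ringed spaces.
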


If the scheme $X$ is separated of finite type over $k$, one can reconstruct $X$ from $\Coh(X)$ directly, \emph{cf.} \cite{Rou3}.

Define a Serre subcategory $\bB\subset \aA$ to be \emph{of finite type} if it is the smallest Serre subcategory in $\aA$ containing $M$, for some object $M\in \aA$. It is \emph{irreducible} if it is not equal to zero and it is not generated by two proper Serre subcategories.

By \cite[Theorem 4.1]{Rou3}, the map $Z \mapsto \Coh_Z(X)$ is a bijection between closed subsets $Z\subset X$ and 
Serre subcategories of finite type in $\Coh(X)$. Here $\Coh_Z(X)\subset \Coh(X)$ is the full subcategory of sheaves supported on $Z$. 

To an abelian category $\aA$ one assigns the ringed space $r(\aA)$. The underlying set is the set of irreducible Serre subcategories of finite type in $\aA$. Open subsets in $r(\aA)$ are the sets $D(\bB)$ of points of $r(\aA)$ not contained in a given Serre subcategory $\bB\subset \aA$ of finite type while the sheaf $\oO$ is defined as $\oO(D(\bB)) := Z(\aA/\bB)$.

\begin{THM}\cite{Rou3}\label{thm_Rou}
	A separated Noetherian scheme $X$ of finite type over $k$ is isomorphic to $r(\Coh(X))$.
\end{THM}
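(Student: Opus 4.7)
\textbf{Proof proposal for Theorem \ref{thm_Rou}.} The plan is to construct a morphism of ringed spaces $\varphi \colon X \to r(\Coh(X))$, prove it is a homeomorphism on underlying sets, and then check that it is an isomorphism on structure sheaves. I would proceed in three stages: points, topology, and sheaf.

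First, the identification of points. Using the cited bijection $Z \mapsto \Coh_Z(X)$ between closed subsets of $X$ and Serre subcategories of finite type in $\Coh(X)$ \cite[Theorem 4.1]{Rou3}, I would observe that this bijection is a lattice isomorphism: it sends $Z_1 \cup Z_2$ to the Serre subcategory generated by $\Coh_{Z_1}(X)$ and $\Coh_{Z_2}(X)$. Consequently, irreducible Serre subcategories of finite type correspond precisely to irreducible closed subsets of $X$. Since $X$ is Noetherian, every irreducible closed subset has a unique generic point, so I get a bijection $\varphi$ between $|X|$ and the underlying set of $r(\Coh(X))$, sending $x \in X$ to the Serre subcategory $\Coh_{\overline{\{x\}}}(X)$.

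Second, the topology. For a Serre subcategory $\bB = \Coh_Z(X)$ of finite type, the set $D(\bB)$ consists of irreducible Serre subcategories $\Coh_{\overline{\{y\}}}(X)$ not contained in $\bB$, which by the lattice isomorphism corresponds exactly to points $y$ with $\overline{\{y\}} \not\subset Z$, i.e.\ with $y \in X \setminus Z$. Since every open in $r(\Coh(X))$ is by definition of the form $D(\bB)$, and every open in $X$ is the complement of some closed $Z$ (which is of finite type because $X$ is Noetherian), $\varphi$ is a homeomorphism.

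Third, the structure sheaf. For an open $U = X \setminus Z$ with corresponding $\bB = \Coh_Z(X)$, I would identify the quotient $\Coh(X)/\bB$ with $\Coh(U)$: the restriction functor $j^\ast \colon \Coh(X) \to \Coh(U)$ kills $\Coh_Z(X)$ and, because $j \colon U \to X$ is a quasi-compact open immersion of Noetherian schemes, every coherent sheaf on $U$ extends to a coherent sheaf on $X$, so the induced functor $\Coh(X)/\Coh_Z(X) \to \Coh(U)$ is essentially surjective. Fully faithfulness follows from the exact sequence for Hom under the localisation \cite[Corollary I.9.4.8]{EGAI}. Then Lemma \ref{lem_center_of_Coh} gives $Z(\Coh(X)/\bB) = Z(\Coh(U)) = \Gamma(U, \oO_U)$, matching $\oO(D(\bB))$ by definition. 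The restriction maps on both sides are induced by the quotient functors along inclusions $\bB \subset \bB'$, i.e.\ by restrictions $U' \subset U$, and these are compatible by naturality of the center construction. Thus $\varphi$ extends to an isomorphism of ringed spaces.

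The main obstacle will be the identification $\Coh(X)/\Coh_Z(X) \simeq \Coh(X \setminus Z)$ as an \emph{equivalence of abelian categories} (not merely a Gabriel quotient in some abstract sense) together with naturality in $Z$. Once this is in place, and once one knows Rouquier's classification of Serre subcategories of finite type, the remaining verifications are formal. The separatedness hypothesis enters implicitly through this classification; without it the assignment $Z \mapsto \Coh_Z(X)$ could fail to be injective, which is why Gabriel's original reconstruction via $R(\QCoh(X))$ (Theorem \ref{thm_Gab}) is stated for arbitrary Noetherian schemes while Rouquier's refinement to $\Coh(X)$ requires the separated finite-type hypothesis.
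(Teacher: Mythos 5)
The paper offers no proof of this statement at all: Theorem \ref{thm_Rou} is imported verbatim from \cite{Rou3}, and Appendix \ref{sec_X_from_coh_X} only assembles the ingredients (the bijection $Z\mapsto \Coh_Z(X)$, the definition of the ringed space $r(\aA)$, the center computation of Lemma \ref{lem_center_of_Coh}). Your three-stage outline --- points via the lattice isomorphism between closed subsets and Serre subcategories of finite type, topology via $D(\Coh_Z(X))\leftrightarrow X\setminus Z$, and structure sheaf via $Z(\Coh(X)/\Coh_Z(X))=Z(\Coh(X\setminus Z))=\Gamma(X\setminus Z,\oO_X)$ --- is exactly the standard argument and is essentially Rouquier's; the identification $\Coh(X)/\Coh_Z(X)\simeq\Coh(X\setminus Z)$ that you single out as the main obstacle is the same one the paper invokes in Lemma \ref{lem_quotient_by_S_D}. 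Two small caveats. First, in the "points" step you should note that irreducibility of a Serre subcategory is defined against \emph{all} proper Serre subcategories, not only those of finite type; the translation to irreducibility of $Z$ still goes through because arbitrary Serre subcategories of $\Coh(X)$ correspond to specialization-closed subsets and an irreducible closed set cannot be covered by two proper specialization-closed ones, but this deserves a sentence. Second, your closing speculation about where separatedness enters is off: the assignment $Z\mapsto\Coh_Z(X)$ is injective for any Noetherian scheme, since $Z$ is recovered as the union of the supports of the objects of $\Coh_Z(X)$. The separated, finite-type hypotheses are simply those under which Rouquier establishes the classification of finite-type Serre subcategories and the center computation; they are not needed to distinguish closed subsets. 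Neither point is a gap in the argument itself.
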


	\bibliographystyle{alpha}
		\bibliography{../../ref}

\end{document}